\documentclass[12pt]{amsart}

\usepackage{amsmath,amsfonts,amssymb,amscd,amsthm,latexsym}

\setlength{\textwidth}{155mm}
\setlength{\textheight}{235mm}
\usepackage{pstricks,pst-node,pst-plot}
\usepackage{graphicx}

\hoffset=-15mm

\voffset=-20mm

\relpenalty=10000

\newcommand{\dee}{\mbox{$\mathcal{D}$}}

\newcommand{\jay}{\mbox{$\mathcal J$}}
\newcommand{\ar}{\mbox{$\mathcal R$}}
\newcommand{\el}{\mbox{$\mathcal L$}}
\newcommand{\eh}{\mbox{$\mathcal H$}}

\newcommand{\els}{\mbox{${\mathcal L}^{\ast}$}}

\newcommand{\ars}{\mbox{${\mathcal R}^{\ast}$}}

\newcommand{\s}{\mbox{$\mathcal{S}$}}

\newtheorem{theorem}{Theorem}[section]

\newtheorem{corollary}[theorem]{Corollary}

\newtheorem{lemma}[theorem]{Lemma}

\newtheorem{proposition}[theorem]{Proposition}

\newcommand{\tee}{\mathcal{T}}

\begin{document}

\title[Model-theoretic properties of free, projective and flat $S$-acts
 ]{Model-theoretic properties of free, projective and flat $S$-acts}
\subjclass{20 M 30, 03 C 60}
\keywords{{\em Monoid, $S$-act, flat, projective, free, model,
axiomatisable, complete, model complete, categorical, stable,
superstable, $\omega$-stable}}
\date{\today}

\author{Victoria Gould, Alexander Mikhalev, Evgeny Palyutin, Alena Stepanova}
{\footnotetext[1]{This research was supported by RFBR (grant
17-01-00531)}
\address{Department of Mathematics\\University
 of York\\Heslington\\York YO1 5DD\\UK}
\email{varg1@york.ac.uk}
\address{Faculty of Mechanics and Mathematics\\Lomonosov Moscow State University\\Moscow\\Russia}
\email{mikhalev@rector.msu.ru}
\address{Department of Algebra\\Sobolev Institute of Mathematics\\Novosibirsk\\Russia}
\email{palyutin@math.nsc.ru}
\address{Institute of Mathematics and Computer Science\\Far East State
University\\Vladivostok\\Russia}
\email{stepltd@mail.primorye.ru}

\begin{abstract} This is the second in a series of articles
surveying the body of work on the model theory of $S$-acts over a
monoid $S$. The first concentrated on the theory of {\em regular}
$S$-acts. Here we review the material  on model-theoretic properties
of {\em free, projective} and {\em (strongly, weakly) flat}
$S$-acts. We consider  questions of axiomatisability, completeness,
model completeness and stability for these classes. Most but not all
of the results have already appeared; we remark that the description
of those monoids $S$ such that the class of free left $S$-acts is
axiomatisable, is new.
 \end{abstract}
\maketitle

\sloppy  \vskip 1cm

\section{Introduction}

The interplay between model theory and other branches of
mathematics is a fruitful and fascinating area. The model theory
of modules over a ring $R$ has long been a respectable branch of
both model theory and ring theory: an excellent introduction to
the subject may be found in the book of Prest \cite{mprest}. The
model theory of acts over monoids is rather less developed but
again exhibits a nice interplay between  algebra and  model
theory, with its own distinct flavour. In an attempt to make the
existing results available to wider audiences, a group of authors
is engaged in writing a series of survey articles, of which this
is the second. The first \cite{MOPS} concentrated on the theory of
{\em regular} $S$-acts.

A left $S$--act over monoid $S$ is a set $A$ upon which $S$ acts
unitarily  on the
left.  Thus,
left $S$--acts can be considered as a  natural generalisation of  left
modules over rings. Certainly many questions that can be asked
and answered in the
model theory of modules can be asked for $S$-acts,
but often have rather different answers.
For example, there is a finite ring $R$ such that the class of
free left $R$-modules is not axiomatisable, whereas
if $S$ is a finite monoid then the class of free left
$S$-acts is {\em always} axiomatisable.
 At a basic level, the major difference is that there
is no underlying group structure to an $S$-act, so that
congruences cannot in general be determined by special subsets.

A class of $L$-structures $\mathcal{C}$ for a first order language
$L$ is {\em axiomatisable} or {\em elementary} if there is a set of
sentences $\Pi$ in $L$ such that an $L$-structure $\mathbf{A}$ lies
in $\mathcal{C}$ if and only if every sentence in $\Pi$ is true in
$\mathbf{A}$. Eklof and Sabbah \cite{es} characterise those rings
$R$ such that the class of all flat (projective, free) left
$R$-modules is
 axiomatisable (in the natural first order language
associated with $R$-modules). What is at scrutiny here is the power of
a first order language to characterise categorical notions. Naturally
enough the conditions that arise are finitary conditions on the ring
$R$. In the theory of
$S$--acts there are three
contenders to the notion of flatness, called here
weakly flat, flat and strongly flat,
for which corresponding analogues for modules over a ring
 coincide. We devote Sections 5 to 9 to characterising those
monoids $S$ such that the classes of weakly flat, flat, strongly flat,
projective and free $S$-acts are axiomatisable. The material for
Sections~\ref{wfsection} to ~\ref{projsection}
 is taken from the papers \cite{gould1} and \cite{BG}
of the first author and Bulman-Fleming, and the paper \cite{stepanova}
of the fourth author. Section~\ref{freesection}
contains a new result  of the first author characterising those monoids
such that the class of free left $S$-acts is axiomatisable,
and some
specialisations taken from \cite{stepanova}.

A theory $T$ in $L$ is {\em complete} if for each sentence $\varphi$
of $L$ we have either $\varphi\in T$ or $\neg\varphi\in T$. This is
equivalent to saying that for any models $\mathbf A$ and $
\mathbf B$ of $T$, that
is, $L$-structures $\mathbf A$ and $\mathbf B$ in which all sentences of $T$ are
true, we have that $\mathbf A$ and $\mathbf B$ are `the same' in some sense; precisely,
they are {\em elementarily equivalent}. Related notions are those
of {\em model completeness} and {\em categoricity}. We can define
associated notions of completeness, model completeness and categoricity
for
classes of $L$-structures. Section~\ref{comp},
the results of which are
taken from \cite{stepanova}, considers
 the question of when the classes of strongly flat, projective and
free left $S$-acts are
complete, model complete or categorical, given that they are
axiomatisable.

Sections~\ref{stability} and ~\ref{morestability}
investigate the crucial properties of
{\em stability} (surveyed
in Section~\ref{stabintro}) for our classes of $S$-acts.
We remarked above that although a parallel
can be drawn between some problems in
the model theory of modules and that for $S$--acts,
the theories soon diverge. This is immediately apparent
when stability comes into question.
Each complete theory of an
$R$-module over
a ring $R$  is stable, whereas
there are $S$-acts with unstable theories \cite{mus,fg,ste2}.
We consider questions of stability for
classes of strongly flat, projective and free left $S$-acts. Let us
say that a class
$\mathcal{C}$ of structures is stable (superstable,
$\omega$--stable) if for any $\mathbf{A}\in \mathcal{C}$
the set of sentences true in $\mathbf{A}$ (which is a complete
theory)  is stable (superstable,
$\omega$--stable). It is proved that if the class of strongly flat left
$S$-acts is axiomatisable and $S$ satisfies an additional finitary
condition known as Condition (A), then the class is superstable. Moreover, if the class of
projective (free) left $S$-acts is axiomatisable, then it is
superstable.
Finally we consider the question of $\omega$-stability for
an axiomatisable class of
 strongly flat left $S$-acts, given that $S$ is countable and satisfies
Condition (A). Consequently, if the
class of projective (free) left $S$--acts is axiomatisable for a
countable monoid $S$, then it is
$\omega$--stable.

We have endeavoured to make this paper as self-contained
 as possible, giving
all necessary definitions and results,
with  proofs or careful references
to
background results where they are not immediately
available. The article should
be accessible to readers with only
 a  rudimentary knowledge of semigroup theory,
acts over monoids and
model theory - little more than the notions of ideal, subsemigroup,
$S$-act,
first-order language and  interpretation. We devote Section~\ref{ma}
to an introduction to the necessary general
background for monoids and acts and Section~\ref{fpf} to a specific
discussion of how the classes of flat, projective and free $S$-acts
arise, their properties etc. Section~\ref{ax} gives further details
concerning axiomatisable classes, including a discussion of
ultraproducts
of specific kinds. Section~\ref{cmc} gives the necessary background for
complete, model complete and categorical theories, and as mentioned
above,
Section~\ref{stabintro} contains a discussion of the notion of
stability.
For a more comprehensive treatment
we recommend the reader to
 \cite{howie} (for semigroup theory),
\cite{kilp} (for the theory of $S$-acts) and \cite{changkeisler} (for
model theory).

The finitary conditions that arise
from considerations of axiomatisability etc.,
and the monoids that satisfy them,  are of interest
in themselves. They are currently the subject of the PhD of
L. Shaheen,
a student of the first author. The only serious
omission of material in this article
is that we have preferred not to recall the
examples
presented in the papers from which much of this
article is constituted, and refer the reader to the references
for further information.

\section{Monoids and acts}\label{ma}

 Throughout this paper $S$ will denote a monoid
with identity $1$ and set of idempotents $E$. Maps will be written on
the
{\em left} of their arguments.
We make frequent use
of  the five equivalences on $S$
known as {\em Green's relations}
$\ar,\el,\eh,\dee$ and $\jay$. For the convenience we recall here that
the relation $\ar$ is defined on $S$ by the rule that for any
$a,b\in S$,
\[a\,\ar\, b\mbox{ if and only if }
aS=bS.\]
Clearly, $a\,\ar\, b$ if and only if $a$ and $b$ are mutual left
divisors,  and $\ar$ is a left compatible equivalence relation. The
relation
$\el$ is defined dually.  The meet
$\eh$ of $\ar$ and $\el$ (in the lattice of equivalences on $S$)
is given by  $\eh=\ar\cap\el$. Immediately from Proposition 2.1.3 of
\cite{howie}, the join $\dee$ of $\ar$ and $\el$ is given by
\[\dee=\ar\circ \el=\el\circ \ar.\]
The fifth relation, $\jay$, is defined by the rule that for any
$a,b\in S$, $a\jay b$ if and only if $SaS=SbS$.

 The following result is due to Green.

\begin{theorem}\label{green}
(Theorem 2.2.5) \cite{howie} If $H$ is an $\eh$-class of
 $S$, then either $H^2\cap H=\emptyset$ or $H^2=H$ and $H$ is a
subgroup of $S$.
\end{theorem}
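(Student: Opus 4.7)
The plan is to argue the dichotomy by assuming $H^2 \cap H \neq \emptyset$, fixing witnesses $a,b \in H$ with $ab \in H$, and extracting enough structural information to exhibit $H$ as a group with $H^2 = H$. The workhorse will be Green's Lemma (Lemma 2.2.1 in \cite{howie}): whenever $p \,\ar\, q$, say $q = pu$ and $p = qv$ for some $u,v \in S^1$, the right translation $\rho_u$ restricts to a bijection from $L_p$ onto $L_q$, carrying $\eh$-classes onto $\eh$-classes; the dual holds for $\el$-related elements and left translations.

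Applying this with $p = a$ and $q = ab$ (both in the common $\eh$-class $H$, hence $a \,\ar\, ab$), I obtain that $\rho_b$ restricts to a bijection $H = H_a \to H_{ab} = H$. Dually, $\lambda_a : H \to H$ is a bijection. From the surjectivity of $\rho_b$ there exists $e \in H$ with $eb = b$; then $e^2 b = eb$, and injectivity of $\rho_b$ forces $e^2 = e$. Similarly $\lambda_a$ furnishes an idempotent $f \in H$ with $af = a$. For any $x \in H$ one has $e \,\ar\, x$, so $x = es$ for some $s \in S^1$; then $ex = e^2 s = es = x$, and dually $xf = x$. Applying both identities to $e$ itself gives $e = ef = f$, so $e$ is a two-sided identity on $H$ belonging to $H$.

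To promote $H$ to a group, I fix $x \in H$ and note that $ex = x$ means $e \,\ar\, x$ with the right multipliers $x$ and some $v \in S^1$ playing the roles in Green's Lemma, hence $\rho_x$ restricts to a bijection $H = H_e \to H_x = H$; dually $\lambda_x : H \to H$ is a bijection. Consequently there exist $x', x'' \in H$ with $x'x = e$ and $xx'' = e$, and the standard one-line calculation $x' = x'e = x'(xx'') = (x'x)x'' = ex'' = x''$ produces a two-sided inverse of $x$ in $H$. Thus $H$ is a subgroup of $S$, and since $x = ex \in H \cdot H$ for every $x \in H$ while $H$ is obviously closed under multiplication, $H^2 = H$.

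The only delicate step is the bookkeeping in the first and third paragraphs: one must check that the multipliers furnished by Green's Lemma really do carry $H$ back to itself, which I handle by choosing the multipliers to be elements of $H$ (namely $a$, $b$, $x$) rather than arbitrary $S^1$-elements, so that the bijections produced are automatically endo-bijections of $H$. Everything else is a short chase using idempotence of $e$ and the $\ar$/$\el$-relations within the single $\eh$-class.
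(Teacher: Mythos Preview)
The paper does not actually prove this theorem; it is quoted as Theorem~2.2.5 of \cite{howie} and used as background. So there is no in-paper proof to compare against, and your argument is essentially the standard one from Howie via Green's Lemma.

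That said, there is one genuine gap in your write-up. In the second paragraph you produce $e\in H$ with $eb=b$, observe $e^2b=eb$, and then invoke ``injectivity of $\rho_b$'' to conclude $e^2=e$. But the bijection furnished by Green's Lemma is only $\rho_b|_{H}$ (equivalently $\rho_b|_{L_a}$), not $\rho_b$ on all of $S$; to use its injectivity you would need to know that $e^2\in H$, and at this stage you have not yet shown that $H$ is closed under multiplication. The same issue arises dually for $f$. The fix is immediate: since $e,b\in H$ gives $e\,\ar\,b$, write $e=bt$ for some $t\in S^1$; then
\[
e^2 \;=\; e(bt)\;=\;(eb)t\;=\;bt\;=\;e,
\]
and dually for $f$. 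Alternatively, one first establishes $H^2\subseteq H$ (from $\lambda_a(H)=H$ one gets $ac\in H$ for every $c\in H$, whence $\rho_c|_H:H\to H$ is a bijection for every $c\in H$, giving closure) and only then extracts the idempotent; this is the order Howie uses. With either repair the rest of your argument goes through, including the ``obviously closed'' remark at the end, which is justified by the bijections $\lambda_x|_H:H\to H$ you established for arbitrary $x\in H$.

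A small expository point: in your final paragraph you say the key is ``choosing the multipliers to be elements of $H$''. That is not quite the reason the maps are endo-bijections of $H$; what matters is that the source and target $\eh$-classes coincide (e.g.\ $H_a=H=H_{ab}$), which follows from $a,ab\in H$, not from $b\in H$ per se.
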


It is standard convention to write $K_a$ for the $\mathcal{K}$-class
of an element $a\in S$, where $\mathcal{K}$ is one of Green's relations.
Theorem~\ref{green} gives immediately that $H_e$ is a subgroup, for any
$e\in E$. The {\em group of units} of $S$ is $H_1$; we say that
$S$ is {\em local} if $S\setminus H_1$ is an ideal. The following lemma
is straightforward.

\begin{lemma}\label{localideal} The monoid $S$ is local if and only if
\[R_1=L_1=H_1.\]
\end{lemma}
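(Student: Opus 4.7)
The plan is to prove both implications by using the fact that $H_1\subseteq R_1$ and $H_1\subseteq L_1$ always hold (since a unit is both left and right invertible), so the content is the reverse inclusions in one direction, and the ideal property in the other.

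For the forward implication, suppose $S\setminus H_1$ is an ideal. I want to show $R_1\subseteq H_1$ (the argument for $L_1$ is symmetric). Take $a\in R_1$, so $aS=S$, meaning there exists $b\in S$ with $ab=1$. If $a\notin H_1$, then $a\in S\setminus H_1$, and since this set is an ideal, $ab\in S\setminus H_1$. But $ab=1\in H_1$, a contradiction. So $a\in H_1$.

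For the reverse implication, suppose $R_1=L_1=H_1$ and let $a\in S\setminus H_1$ and $s\in S$. I need to show $sa,as\in S\setminus H_1$. Suppose for contradiction that $sa\in H_1$. By Theorem~\ref{green}, $H_1$ is a group, so $sa$ has a two-sided inverse $c\in H_1$, giving $c(sa)=1$, i.e., $(cs)a=1$. Thus $a$ is left invertible, so $Sa=S$ and $a\in L_1=H_1$, a contradiction. Dually, if $as\in H_1$ with inverse $c$, then $(as)c=a(sc)=1$ forces $aS=S$, so $a\in R_1=H_1$, again a contradiction. Hence $S\setminus H_1$ is closed under left and right multiplication by $S$, i.e., it is an ideal.

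The main step is really the short trick in the reverse direction: multiplying the inverse of $sa$ (respectively $as$) against $s$ produces a one-sided inverse for $a$ itself, which by the hypothesis forces $a$ to be a unit. Everything else is bookkeeping with the definitions of $R_1$, $L_1$ and $H_1$, so I do not anticipate any genuine obstacle.
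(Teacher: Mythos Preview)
Your proof is correct. The paper does not actually prove this lemma, stating only that it ``is straightforward'', so there is no approach to compare against; your argument supplies exactly the kind of routine verification the authors omitted.
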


Clearly
$\dee\subseteq \jay$; it is not true  for a general monoid that $\dee=\jay$. However, the
equality holds for finite monoids and, more generally, for epigroups.
We say
a monoid $S$ is an {\em epigroup} (or {\em  group bound}) if every
element of $S$ has a positive power that lies in a subgroup. The following result
is well known but we include for completeness its proof.

\begin{proposition}\label{groupboundlocal}
Let $S$ be a group bound monoid.
Then $S$ is local
and $\mathcal{D}=\mathcal{J}$.
\end{proposition}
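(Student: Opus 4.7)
The plan is to exploit, in both parts, the defining feature of a group-bound monoid: for every $x\in S$ there exist $n\geq 1$ and $e\in E$ with $x^n\in H_e$, so that $x^n$ has a group inverse in $H_e$.

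For part~(1), by Lemma~\ref{localideal} it suffices to prove $R_1\subseteq H_1$; the reverse containment is trivial and $L_1\subseteq H_1$ is dual. Given $a\,\ar\,1$ with $ab=1$, I would use the induction $ab^{k+1}=(ab)b^k=b^k$ to obtain $a^nb^n=1$ for every $n\geq 1$. Taking $n$ with $a^n\in H_e$ gives $a^nS=S$, forcing $a^n\,\ar\,1$, while $a^n\,\eh\, e$ gives $a^n\,\ar\, e$, hence $e\,\ar\,1$. Picking $c\in S$ with $ec=1$ and invoking $e^2=e$ yields $e=e\cdot 1=e(ec)=ec=1$, so $a^n\in H_1$ is a unit, and any two-sided inverse of $a^n$ immediately produces one for $a$.

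For part~(2), $\dee\subseteq\jay$ is automatic. Assuming $a\,\jay\, b$, I would choose $x,y,u,v\in S^1$ with $a=xby$ and $b=uav$. Setting $p=xu$ and $q=vy$, substitution gives $a=paq$, and iteration yields $a=p^naq^n$ for all $n\geq 1$. I would then choose $n$ so that $p^n\in H_e$ and $q^n\in H_f$, with group inverses $p'\in H_e$ and $q'\in H_f$. The technical core is the absorption identity
\[
ea=a=af,
\]
which I would prove by substituting $a=p^naq^n$ into $af=aq^nq'$ and using $q^{2n}q'=q^n f=q^n$ (and symmetrically for $ea$). Granted this, I would show $a\,\ar\, av\,\el\, b$, so that $c=av$ witnesses $a\,\dee\, b$. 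For $a\,\ar\, av$: factoring $q^n=v(yv)^{n-1}y$ inside $a=aq^nq'$ gives $a\in avS$, while $av\in aS$ is immediate. For $av\,\el\, b$: the inclusion $Sb\subseteq Sav$ follows from $b=u\cdot av$; conversely, the observation $p^n=[(xu)^{n-1}x]u\in Su$ combined with $e\,\eh\, p^n$ yields $e\in Se=Sp^n\subseteq Su$, so $e=\mu u$ for some $\mu\in S$, and then $eav=av$ translates to $\mu b=av\in Sb$.

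The main obstacle will be twofold: establishing the absorption identity $ea=a=af$ (the only point at which group-boundedness is genuinely used, via the group inverses $p',q'$), and noticing the observation $e\in Su$, which unlocks the $\el$-comparison $av\,\el\, b$ by producing a single $\mu$ with $\mu b=av$; the remaining manipulations are straightforward bookkeeping.
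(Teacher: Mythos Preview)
Your proposal is correct, but your route differs from the paper's in both parts.

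For $\dee=\jay$, the paper also iterates to $a=(xu)^na(vy)^n$ and chooses $n$ with $(xu)^n$ in a subgroup, but then argues via Green's relations rather than group inverses: from $(xu)^n\,\el\,(xu)^{2n}$ one gets $(xu)^n\,\el\,(xu)^{n+1}$, and since $\el$ is a left congruence this gives $a\,\el\,xua$ directly; a second application yields $a\,\el\,ua$, and dually $a\,\ar\,av$, whence $a\,\el\,ua\,\ar\,b$. Your approach instead establishes the absorption identities $ea=a=af$ via the explicit group inverses $p',q'$ and then unpacks $a\,\ar\,av\,\el\,b$ by hand. The paper's congruence argument is shorter and avoids ever naming inverses; yours is more self-contained and makes the mechanism transparent. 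One small point you glossed over: a single $n$ with both $p^n$ and $q^n$ in subgroups exists because if $p^m\in H_e$ then $p^{km}\in H_e$ for all $k\geq 1$, so a common multiple works.

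For locality, the paper deduces it \emph{after} $\dee=\jay$ via a rectangular property (if $a\,\dee\,b\,\dee\,ab$ then $a\,\ar\,ab\,\el\,b$), applied with $b=1$ to get $D_1=H_1$. Your direct argument---showing that $ab=1$ forces $a$ to be a unit by lifting to a power $a^n$ in a subgroup---is independent of the $\dee=\jay$ part and arguably more elementary; it would stand alone even in contexts where one did not care about $\dee=\jay$.
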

\begin{proof} Let $a,b\in S$ with $a\,\jay\, b$. Then there exist
$x,y,u,v\in S$ with $a=xby, b=uav$. We have that
\[a=xby=x(uav)y=(xu)a(vy)=(xu)^na(vy)^n\]
for any $n\in\omega$. By
hypothesis we may pick $n\geq 1$ with $(xu)^n$ lying in a subgroup.
By Theorem~\ref{green} we have that $(xu)^n\,\el\, (xu)^{2n}$,
whence as $S(xu)^{2n}\subseteq S(xu)^{n+1}\subseteq S(xu)^n$ we must
have that $(xu)^{n+1}\,\el\, (xu)^n$. Since $\el$ is a left congruence
we have that
\[a=(xu)^na(vy)^n\,\el\, (xu)^{n+1}a(vy)^n=xua.\]
By the same argument as above, $a\,\el\, ua$; dually, $a\,\ar\, av$.
Hence
\[a\,\el\, ua\, \ar\, uav=b\]
so that $a\,\dee\, b$ and $\dee=\jay$ as required.

A similar argument yields that  $S$ has the `rectangular' property, that is, if
$a\,\dee\, b\,\dee\, ab$, then $a\,\ar\, ab\,\el\, b$.

If $1\,\dee\, a$, then as certainly $1\, a=a$, we have that
$1\,\ar\, a$; dually, $1\,\el\, a$. Hence $1\,\eh\, a$ and
\[J_1=D_1=L_1=R_1=H_1.\]
\end{proof}

A monoid $S$ is {\em regular} if for any $a\in S$ there exists an
$x\in S$ with $a=axa$. Notice that in this case, $ax,xa\in E$ and
\[ax\,\ar\, a\,\el\, xa.\]
A regular monoid $S$ is {\em inverse} if, in addition, $ef=fe$ for all
$e,f\in E$. In an inverse monoid the idempotents form a commutative
subsemigroup which we refer to as the {\em semilattice of idempotents}
of $S$.

 A monoid
$S$ is {\em right collapsible} if for any $s,t\in S$ there
exists $u\in S$ such that $su=tu$.

\begin{lemma}\label{rightcollapsible} Let $S$ be a right collapsible
monoid and let $a_1,\hdots ,a_n\in S$. Then there exists
$r\in S$ with
\[a_1r=a_2r=\hdots =a_nr.\]
\end{lemma}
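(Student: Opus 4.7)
The plan is to prove this by straightforward induction on $n$. The case $n=1$ is trivial (take $r=1$), and the case $n=2$ is precisely the definition of right collapsibility.

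For the inductive step, suppose the result holds for $n-1$ and we are given $a_1,\ldots,a_n\in S$. By the inductive hypothesis applied to $a_1,\ldots,a_{n-1}$, there exists $r'\in S$ with
\[a_1 r' = a_2 r' = \cdots = a_{n-1} r'.\]
Now apply right collapsibility to the pair $a_1 r', a_n r'\in S$ to obtain $u\in S$ with $(a_1 r')u = (a_n r')u$. Setting $r = r'u$, for each $i\in\{1,\ldots,n-1\}$ we have
\[a_i r = a_i r' u = a_1 r' u = a_1 r,\]
using the inductive hypothesis, and for $i=n$ we have $a_n r = a_n r' u = a_1 r' u = a_1 r$ by choice of $u$. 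Hence $a_1 r = a_2 r = \cdots = a_n r$ as required.

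There is no real obstacle here; the only slightly delicate point is remembering to multiply the original witness $r'$ on the right by the new element $u$ (rather than trying to apply right collapsibility directly to $a_1$ and $a_n$, which would destroy the equalities already arranged among $a_1 r',\ldots,a_{n-1} r'$). The argument uses nothing beyond the definition of right collapsibility and associativity in $S$.
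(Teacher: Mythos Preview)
Your proof is correct and is essentially identical to the paper's own argument: both proceed by finite induction, using right collapsibility to extend the common right multiplier one element at a time. The only cosmetic difference is that the paper collapses $a_ir$ with $a_{i+1}r$ while you collapse $a_1r'$ with $a_nr'$, but since $a_1r'=a_{n-1}r'$ these are the same step.
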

\begin{proof} Certainly the result is true for $n=1$ or $n=2$. If
$a_1r=a_2r=\hdots =a_ir$ for some $i$ with $2\leq i<n$, then pick
$s\in S$ with $a_irs=a_{i+1}rs$ and note that $a_1rs=\hdots =a_irs=
a_{i+1}rs$. The result follows by finite induction.

\end{proof}

 A submonoid $T$ of $S$ is {\em
right unitary} if for any $t\in T,s\in S$, if $st\in T$, then $s\in
T$. We say that a monoid has the {\em condition of finite
right solutions}, abbreviated by {\em (CFRS)}, if
\[\forall s\in S\; \exists n_s\in\mathbb{N} \; \forall t\in S
\;|\{ x\in S|\; sx=t\}|\leq n_s.\]

 The {\em
starred} analogues $\els$ and $\ars$ of $\el$ and $\ar$ also figure
significantly in this work. We recall that elements $a,b$ of $S$ are
 $\ars$-related if for all $x,y\in S$,
\[xa=ya\mbox{ if and only if }xb=yb.\]
Clearly, $\ars$ is a left congruence on $S$ containing $\ar$;
it is not hard to see that $\ar=\ars$ if $S$ is regular. The relation
$\els$ is defined dually.

\bigskip

This article is concerned with model theoretic aspects
of the representation theory of
monoids via morphisms to monoids of self-mappings of sets.
We recall that
a set $A$ is  a {\em left $S$-act} if there is a map $S\times
A\rightarrow A, (s,a)\mapsto sa$, such that for all $a\in A$
and $s,t\in S$,
\[1a=a\mbox{ and }s(ta)=(st)a.\]
Terminology for $S$-acts has not been consistent in the literature: they
are known variously as {\em $S$-sets, $S$-systems, $S$-operands}
and {\em $S$-polygons}. The definitive reference \cite{kilp} uses the
term $S$-act, as do we here.

To say that a set $A$ is a left $S$-act is equivalent to there being
 a morphism from $S$ to the full
transformation
monoid $\mathcal{T}_A$ on $A$. For the record, we denote the identity map
on a set $X$ by $I_X$, so that such a morphism must take
$1\in S$ to $I_A\in\mathcal{T}_A$. An $S$-{\em morphism} from a left
$S$-act
$A$ to a left $S$-act $B$ is a map $\theta:A\rightarrow B$ such that
$\theta(sa)=s\theta(a)$, for all $a\in A,s\in S$. Clearly,
the class of left $S$-acts together with $S$-morphisms  forms
a category, $\mathbf{S}${\bf -Act}. It is clear that in
$\mathbf{S}${\bf -Act} the
coproduct
of $S$-acts $A_i,i\in I$ is simply disjoint union, denoted
$\coprod_{i\in I}A_i$. Right $S$-acts, and the category
{\bf Act-}${\mathbf S}$, are defined dually.

An $S$-{\em subact} of a left $S$-act $A$ is a subset $B$ of $A$
closed under the action of $S$. Clearly $S$ may be regarded as a
left $S$-act, and any left ideal becomes an $S$-subact. A left
$S$-act $A$ is {\em finitely generated} if there exists $a_1,\hdots
,a_n\in A$ such that $A=\bigcup_{i=1}^{n}Sa_i$ and {\em cyclic} if
$A=Sa$ for some $a\in A$. If $x\in X$, where $X$ is a set disjoint
from $S$, then the set of formal expressions $Sx=\{ sx\mid s\in S\}$
becomes a cyclic left $S$-act in an obvious way. Notice that
$Sx\cong S$. The proof of the following lemma is immediate.

\begin{lemma}\label{iso} Let $A,B$ be a left $S$-acts, let $a\in A$ and
$b\in B$.
Then there is an $S$-isomorphism $\theta:Sa\rightarrow Sb$ such that $
\theta(a)=b$ if and only if for all $x,y\in S$,
\[xa=ya\mbox{ if and only if }xb=yb.\]
\end{lemma}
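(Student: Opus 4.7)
The plan is to verify the two directions of the biconditional separately. Both amount to standard manipulations with cyclic $S$-acts, and the real content is making sure that the candidate map $xa\mapsto xb$ is well defined — that is exactly where the non-trivial implication ($xb=yb\Rightarrow xa=ya$) gets used.

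For the forward direction, suppose $\theta:Sa\to Sb$ is an $S$-isomorphism with $\theta(a)=b$. Then for every $s\in S$, $\theta(sa)=s\theta(a)=sb$. Given $x,y\in S$, if $xa=ya$ then $xb=\theta(xa)=\theta(ya)=yb$; conversely, if $xb=yb$ then $\theta(xa)=xb=yb=\theta(ya)$, and injectivity of $\theta$ yields $xa=ya$. This direction is essentially bookkeeping.

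For the backward direction, I would define $\theta:Sa\to Sb$ by $\theta(xa)=xb$. The first thing to check — and the only step that uses a genuine hypothesis — is that $\theta$ is well defined: if $xa=ya$ then by assumption $xb=yb$, so $\theta$ does not depend on the choice of representative. The map is then $S$-linear since $\theta(s\cdot xa)=\theta((sx)a)=(sx)b=s(xb)=s\theta(xa)$, and it is surjective because every element of $Sb$ has the form $xb=\theta(xa)$. Injectivity is the other half of the biconditional: $\theta(xa)=\theta(ya)$ gives $xb=yb$, hence $xa=ya$. Finally $\theta(a)=\theta(1a)=1b=b$.

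There is no real obstacle here; the only point requiring care is not to forget that well-definedness (not just injectivity) demands the right-to-left implication of the stated equivalence, so both directions of the biconditional are essential even to construct $\theta$.
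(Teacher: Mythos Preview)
Your proof is correct and is exactly the standard argument; the paper in fact omits the proof entirely, stating only that it is immediate. One small slip in your final paragraph: well-definedness of $\theta$ uses the \emph{left-to-right} implication ($xa=ya\Rightarrow xb=yb$), while injectivity uses the \emph{right-to-left} one, so it is not true that both directions are needed ``even to construct $\theta$''---but your actual proof steps have this the right way round.
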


Lemma~\ref{iso} gives in particular
 that if $e\in E$, then $Sa\cong Se$
under an isomorphism $\theta$ such that
$\theta(a)=e$,  if and only if
$ea=a$ and for all $x,y\in S$, $xa=ya$ implies that $xe=ye$; such an
$a$ is said to be {\em right $e$-cancellable}.

 From \cite{abundant}, or Lemma~\ref{iso}
above, we deduce:

\begin{lemma}\cite{abundant}\label{pp} Let $a,b\in S$. The left ideals $Sa$ and $Sb$ are
isomorphic as left $S$-acts, under an isomorphism $\theta$ such that
$\theta(a)=b$, if and only if $a\,\ars\, b$.

\end{lemma}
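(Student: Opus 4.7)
The plan is simply to appeal to Lemma~\ref{iso}. I would begin by noting that, since $S$ is a monoid with identity, $a = 1a \in Sa$ and $b = 1b \in Sb$, so the principal left ideals $Sa$ and $Sb$, regarded as cyclic left $S$-acts with chosen generators $a$ and $b$, satisfy the hypotheses of Lemma~\ref{iso}. That lemma will then deliver the existence of an $S$-isomorphism $\theta:Sa \to Sb$ with $\theta(a) = b$ if and only if, for every pair $x,y\in S$,
\[xa = ya \text{ if and only if } xb = yb.\]
I would then observe that the displayed biconditional is precisely the defining condition of the relation $\ars$ given just before the statement, namely $a\,\ars\, b$, so the conclusion follows at once.

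I anticipate no real obstacle: the substance of the lemma is simply the observation that Lemma~\ref{iso} specialises cleanly when the two cyclic acts in question are realised as principal left ideals inside $S$ itself, and that in this setting the criterion for the existence of a generator-preserving isomorphism matches the definition of $\ars$ verbatim. If one wished to avoid invoking Lemma~\ref{iso}, the direct construction would be to set $\theta(xa) := xb$ and verify well-definedness, injectivity, surjectivity and $S$-equivariance from the $\ars$-condition; but this would merely duplicate the proof of Lemma~\ref{iso}, so the cited reduction is the natural route.
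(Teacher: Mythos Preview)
Your proposal is correct and matches the paper's approach exactly: the paper does not give a standalone proof but simply remarks that the result follows from Lemma~\ref{iso} (or from \cite{abundant}), which is precisely the reduction you carry out.
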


 A monoid $S$
is called {\em right cancellative} if every element from $S$ is
right 1-cancellable. The notions of a left $e$-cancellable element
 and a left cancellative monoid are defined
dually.

A {\em congruence} on a left
$S$-act $A$ is an equivalence relation $\rho$ on $A$ such that
$(a,a^\prime)\in\rho$ implies $(sa,sa^\prime)\in\rho$ for
$a,a^\prime\in A$, $s\in S$.
To prevent ambiguity, a congruence on $S$ {\em regarded
as a left $S$-act} will be referred to as a {\em left congruence}.
If $\rho$ is a congruence on $A$ we
shall also write $a\, \rho\,  a^\prime$ for $( a,a^\prime)\in\rho$
and $a/\rho$ for the $\rho$-class of $a\in A$.
 If $X\subseteq A\times A$ then by $\rho (X)$ we
denote the smallest congruence on $A$ containing $X$.

\begin{proposition}\label{cong}\cite{kilp} Let $A$ be an $S$--act, $X\subseteq
A\times A$ and $\rho=\rho (X)$. Then for any $a,b\in A$, one has
$a\,\rho\, b$ if and only if either $a=b$ or there exist
$p_1,\ldots,p_n,q_1,\ldots,q_n\in A, s_1,\ldots,s_n\in S$
such that $(p_i,q_i)\in X$ or $(q_i,p_i)\in X$ for any $i$, $1\leq
i\leq n$, and
\[a=s_1 p_1,\;s_1q_1=s_2p_2,\; \hdots \; ,
s_nq_n=b.\]\end{proposition}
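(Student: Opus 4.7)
The plan is the standard one for describing a congruence generated by a relation on an algebra: define a relation $\sigma$ on $A$ by the right-hand side of the statement, show $\sigma$ is an $S$-act congruence containing $X$, and show that any congruence containing $X$ must contain $\sigma$.

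First I would verify that $\sigma$ is an equivalence relation. Reflexivity is built into the definition via the clause $a=b$. For symmetry, suppose $a\neq b$ and $a=s_1p_1,\; s_1q_1=s_2p_2,\; \ldots,\; s_nq_n=b$ with $(p_i,q_i)\in X$ or $(q_i,p_i)\in X$. Reading the chain backwards gives $b=s_nq_n,\; s_np_n=s_{n-1}q_{n-1},\; \ldots,\; s_1p_1=a$, which is a witness of the required form (with the roles of $p_i,q_i$ swapped, so the disjunctive condition still holds). For transitivity, if $a\,\sigma\,b$ and $b\,\sigma\,c$, concatenate the chains at $b$; the degenerate cases $a=b$ or $b=c$ are handled by simply taking the other chain.

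Next, compatibility: if $a=s_1p_1,\; s_1q_1=s_2p_2,\;\ldots,\; s_nq_n=b$ witnesses $a\,\sigma\,b$, then for any $s\in S$,
\[sa=(ss_1)p_1,\; (ss_1)q_1=(ss_2)p_2,\;\ldots,\; (ss_n)q_n=sb,\]
so $sa\,\sigma\,sb$. Finally, $X\subseteq\sigma$: for $(p,q)\in X$ with $p\neq q$, take $n=1$, $s_1=1$, $p_1=p$, $q_1=q$, giving $p=1\cdot p$ and $1\cdot q=q$. Thus $\sigma$ is a congruence on $A$ containing $X$, so $\rho\subseteq\sigma$.

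For the reverse inclusion, let $\tau$ be any congruence on $A$ with $X\subseteq\tau$; it suffices to show $\sigma\subseteq\tau$. If $a\,\sigma\,b$ and $a=b$, then $a\,\tau\,b$ by reflexivity. Otherwise take a chain as above. Since $\tau$ is symmetric and contains $X$, we have $(p_i,q_i)\in\tau$ for each $i$, regardless of which way round the pair lies in $X$. By left compatibility, $s_ip_i\,\tau\,s_iq_i$ for each $i$, and then transitivity along
\[a=s_1p_1\,\tau\,s_1q_1=s_2p_2\,\tau\,s_2q_2=\cdots=s_nq_n=b\]
gives $a\,\tau\,b$. Applying this to $\tau=\rho$ yields $\sigma\subseteq\rho$, hence $\rho=\sigma$ as required. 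The only point requiring any care is the bookkeeping in the symmetry argument, where one must check that reversing the chain still fits the stated template; everything else is routine.
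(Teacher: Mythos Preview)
Your argument is correct and is exactly the standard proof of this fact. Note, however, that the paper does not supply its own proof of this proposition: it is merely stated with a reference to \cite{kilp}, so there is nothing to compare against beyond observing that your approach is the expected one.
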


Elements $x,y$ of a left $S$--act $A$ are  {\em connected}
(denoted by $x\sim y$) if there exist $n\in\omega$,
$a_0,\ldots,a_n\in A$, $s_1,\ldots,s_n\in S$ such that $x=a_0$,
$y=a_n$, and $a_i=s_ia_{i-1}$ or $a_{i-1}=s_ia_i$ for any $i,\;1\leq
i\leq n$. An $S$-subact $B$ of a left $S$--act $A$ is
a {\em connected}
 if we
have $x\sim y$ for any $x,y\in B$. It is easy to check that
$\sim$ is a congruence relation on a left $S$--act $A$, with classes
that are $S$-subacts, and maximal connected components of $A$.
Thus, $A$ is
a coproduct of connected $S$-subacts.

\section{Free, Projective and Flat Acts}\label{fpf}

For the convenience of the reader we discuss in this section
 the classes
of free, projectives and  flat left $S$-acts; as explained below, there
are several candidates for the notion of a flatness. What we give is a
skeleton survey; further details may be found in \cite{kilp}.

We remind the reader that a left $S$-act $F$ is {\em free} (on a subset
 $X$, in
$\mathbf{S}${\bf -Act}) if and only if for any left $S$-act $A$ and map
$\theta:X\rightarrow A$, there is a unique $S$-morphism
$\overline{\theta}:F\rightarrow
A$ such that $\overline{\theta}\iota=\theta$, where $\iota:X\rightarrow
F$ is the inclusion mapping.

\begin{theorem}\label{free}\cite{kilp} A left $S$-act $F$ is free
on $X$ if and only if   $
F\cong\coprod_{x\in X}Sx$. \end{theorem}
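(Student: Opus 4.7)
The plan is to prove the two directions separately, handling the easier implication ($\Leftarrow$) first by an explicit construction, then leveraging it to deduce ($\Rightarrow$) by a standard universal-property argument.

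For ($\Leftarrow$), write $C=\coprod_{x\in X}Sx$ and identify $X$ with the subset $\{1\cdot x:x\in X\}\subseteq C$. Because each $Sx$ is isomorphic to $S$ as a left $S$-act via $s\mapsto s\cdot x$, and the coproduct is a disjoint union of its components, every element of $C$ has a \emph{unique} expression $s\cdot x$ with $s\in S$ and $x\in X$. Given any left $S$-act $A$ and any map $\theta:X\to A$, set $\overline{\theta}(s\cdot x):=s\,\theta(x)$. Uniqueness of the representation makes this well-defined; a one-line check confirms that it is an $S$-morphism and extends $\theta$. Uniqueness of $\overline{\theta}$ is automatic since any $S$-morphism $\eta:C\to A$ with $\eta(x)=\theta(x)$ must satisfy $\eta(s\cdot x)=s\,\eta(x)=s\,\theta(x)$.

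For ($\Rightarrow$), suppose $F$ is free on $X$. The previous paragraph tells us that $C=\coprod_{x\in X}Sx$ is itself free on $X$ under the identification above. Writing $\iota_F:X\hookrightarrow F$ and $\iota_C:X\hookrightarrow C$ for the two inclusions, freeness of $F$ applied to $\iota_C$ produces a unique $S$-morphism $\phi:F\to C$ with $\phi\iota_F=\iota_C$, while freeness of $C$ applied to $\iota_F$ produces a unique $S$-morphism $\psi:C\to F$ with $\psi\iota_C=\iota_F$. The composites $\psi\phi:F\to F$ and $\phi\psi:C\to C$ agree on $X$ with $I_F$ and $I_C$ respectively, so by the uniqueness clauses in the two universal properties they must equal $I_F$ and $I_C$. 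Hence $F\cong C$ via an isomorphism restricting to the identity on $X$.

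No genuine obstacle is anticipated: the entire argument rests on the observation that disjointness of the summands of $C$ gives unique $s\cdot x$ representations, which in turn makes the extension $\overline{\theta}$ well-defined; the second half is a diagram chase that works in any concrete category admitting the relevant free objects.
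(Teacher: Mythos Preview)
Your argument is correct and entirely standard: you verify directly that $\coprod_{x\in X}Sx$ satisfies the universal property, then invoke uniqueness of free objects up to isomorphism. The paper itself does not supply a proof of this theorem; it is quoted from \cite{kilp} without argument, so there is nothing to compare against beyond noting that your proof is the expected one.
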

Notice from the above that a free left $S$-act is isomorphic to a
coproduct of copies of the left $S$-act $S$.
From the remark following Lemma~\ref{iso} we have our next
corollary.

\begin{corollary}\label{freecyclic} A cyclic left $S$-act $A$ is free if and only if
$A=Sa$ for some right $1$-cancellable $a\in A$.

\end{corollary}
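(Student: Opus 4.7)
The plan is to use Theorem~\ref{free} together with the characterisation immediately following Lemma~\ref{iso} (specialised to $e=1$). Note that right $1$-cancellability of $a$ unwinds to: $1a=a$ (automatic) and $xa=ya$ implies $x=y$ (since $x\cdot 1=x$ and $y\cdot 1=y$). So I want to show that $Sa$ is free precisely when this holds.

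For the $(\Leftarrow)$ direction, suppose $a$ is right $1$-cancellable. By the remark following Lemma~\ref{iso}, applied with $e=1$, we obtain an $S$-isomorphism $Sa\cong S\cdot 1=S$. Since $S$ itself is free on the one-element set $\{1\}$ by Theorem~\ref{free}, the cyclic act $Sa$ is free.

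For the $(\Rightarrow)$ direction, suppose $A=Sa$ is free. By Theorem~\ref{free}, $A\cong\coprod_{x\in X}Sx$ for some set $X$. The first step is to observe that cyclicity forces $|X|=1$: the image of the generator $a$ under the isomorphism lies in a single summand $Sx_0$, and since $a$ generates $A$ the whole of $A$ must map into $Sx_0$; as the coproduct is a disjoint union, this forces $X=\{x_0\}$. Hence $A\cong S$ under an isomorphism carrying $a$ to $1$. Applying Lemma~\ref{iso} with $b=1\in S$ then yields that for all $x,y\in S$, $xa=ya$ iff $x=y$, which is exactly right $1$-cancellability of $a$.

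The only non-routine step is the cyclicity argument in the forward direction, and even that reduces immediately to the disjointness of summands in the coproduct in $\mathbf{S}$-\textbf{Act}; everything else is a direct invocation of Lemma~\ref{iso} and Theorem~\ref{free}.
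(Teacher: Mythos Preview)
Your $(\Leftarrow)$ direction is fine, and the reduction to $|X|=1$ in the $(\Rightarrow)$ direction is correct. But there is a genuine gap in the sentence ``Hence $A\cong S$ under an isomorphism carrying $a$ to $1$.'' You began with an \emph{arbitrary} generator $a$ of the cyclic act $A$; the isomorphism $A\cong Sx_0\cong S$ sends $a$ to some $t\in S$ with $St=S$, but such a $t$ need not be $1$, nor need it be right $1$-cancellable. For a concrete counterexample, take $S$ to be the bicyclic monoid $\langle p,q\mid pq=1\rangle$ and $A=S$ with the generator $a=q$: then $Sq=S$, yet $(qp)q=q=1\cdot q$ with $qp\ne 1$, so $q$ is not right $1$-cancellable. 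Thus your final appeal to Lemma~\ref{iso} does not go through for the $a$ you started with.

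The fix is immediate and is exactly what the paper's one-line justification intends: having established $A\cong S$, take $a$ to be the \emph{preimage of $1$} under this isomorphism. Then $Sa=A$ (since $S\cdot 1=S$), and now Lemma~\ref{iso} with $b=1$ genuinely applies to give $xa=ya\Leftrightarrow x=y$, i.e.\ $a$ is right $1$-cancellable. The corollary only asks for \emph{some} right $1$-cancellable generator, so you are free to make this choice; the error was in trying to force the originally-chosen generator to work.
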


 A left $S$-act $P$ is {\em projective}
if given any diagram of left $S$-acts and $S$-morphisms

\begin{center}\begin{pspicture}(0,0)(2,2)
\psset{nodesep=3pt} \rput(0,0){\rnode{M}{$M$}}
\rput(2,0){\rnode{N}{$N$}} \rput(2,2){\rnode{P}{$P$}}
\rput(1.1,0.4){\rnode{K}{$\phi$}} \rput(2.3,1){\rnode{L}{$\theta$}}
\ncline{->}{M}{N} \ncline{->}{P}{N}
\end{pspicture}
\end{center}
where $\phi:M\rightarrow N$ is onto, there exists an $S$-morphism
$\psi:P\rightarrow M$ such that the diagram

\begin{center}\begin{pspicture}(0,0)(2,2)
\psset{nodesep=3pt} \rput(0,0){\rnode{M}{$M$}}
\rput(2,0){\rnode{N}{$N$}} \rput(2,2){\rnode{P}{$P$}}
\rput(1.1,0.5){\rnode{K}{$\phi$}} \rput(2.3,1){\rnode{L}{$\theta$}}
\rput(0.9,1.3){\rnode{T}{$\psi$}} \ncline{->}{M}{N}
\ncline{->}{P}{N} \ncline{->}{P}{M}
\end{pspicture}
\end{center}
is commutative.

It is clear that a free left $S$-act $F$ is projective; this is also an
immediate
consequence of Theorem~\ref{free} and Theorem~\ref{proj} below.

\begin{theorem}\label{proj}\cite{knauer,dorofeeva} A left $S$-act $P$ is
projective if and only if $P\cong \coprod _{i\in I}Se_i$,
where $e_i\in E$ for all $i\in I\neq\emptyset$.
\end{theorem}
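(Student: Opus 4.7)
The plan is to establish both directions of the biconditional, relying on the standard fact that projectivity is equivalent to being a retract of a free object. For the $(\Leftarrow)$ direction, I would first show directly that $Se$ is projective whenever $e\in E$. Given a surjective $S$-morphism $\phi:M\to N$ and an $S$-morphism $\theta:Se\to N$, I would choose any $m\in M$ with $\phi(m)=\theta(e)$ and replace it by $m':=em$. Then $\phi(m')=e\theta(e)=\theta(e^2)=\theta(e)$ and $em'=m'$, so the rule $\psi(se):=sm'$ is a well-defined $S$-morphism (if $se=te$ in $S$ then $sm'=sem'=tem'=tm'$) satisfying $\phi\psi=\theta$. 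The general case follows by lifting componentwise across $\coprod_{i\in I}Se_i$ and invoking the universal property of the coproduct.

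For the $(\Rightarrow)$ direction, I would realise $P$ as a retract of a free $S$-act. Take $F:=\coprod_{p\in P}Sx_p$, free by Theorem~\ref{free}, together with the canonical epimorphism $f:F\to P$ determined by $x_p\mapsto p$. Projectivity of $P$ supplies $g:P\to F$ with $fg=I_P$, whence $\varepsilon:=gf$ is an idempotent $S$-endomorphism of $F$ whose image is $S$-isomorphic to $P$. The task thus reduces to writing $\varepsilon(F)$ as a coproduct $\coprod Se_i$ with each $e_i\in E$.

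For the core calculation, for each $p\in P$ the element $\varepsilon(x_p)$ lies in a unique component of $F$, so I can write $\varepsilon(x_p)=s_px_{q_p}$ for uniquely determined $s_p\in S$ and $q_p\in P$. Applying $\varepsilon$ again and using $\varepsilon^2=\varepsilon$ yields
\[s_ps_{q_p}x_{q_{q_p}}=\varepsilon(s_px_{q_p})=\varepsilon(x_p)=s_px_{q_p},\]
which forces $q_{q_p}=q_p$ and $s_ps_{q_p}=s_p$, since the copies $Sx_p\cong S$ are pairwise disjoint. Setting $J:=\{q_p:p\in P\}$, for every $q\in J$ we then have $q_q=q$ and $s_qs_q=s_q$, so $e_q:=s_q\in E$. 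Since $\varepsilon(Sx_p)=Ss_px_{q_p}\subseteq Se_{q_p}x_{q_p}=\varepsilon(Sx_{q_p})$ and the components $Sx_q$ with $q\in J$ are pairwise disjoint subacts of $F$, I would conclude
\[P\cong \varepsilon(F)=\coprod_{q\in J}\varepsilon(Sx_q)=\coprod_{q\in J}Se_qx_q\cong\coprod_{q\in J}Se_q.\]

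I expect the combinatorial bookkeeping in the retract argument to be the main delicate point: one needs to extract from $\varepsilon^2=\varepsilon$ both that every index $q_p$ is a fixed point of the map $p\mapsto q_p$ and that the coefficient $s_q$ at each such fixed point is idempotent in $S$. Once both consequences are secured, the coproduct decomposition of $\varepsilon(F)$ follows at once from the coproduct structure of $F$, and the converse direction is essentially a routine diagram chase.
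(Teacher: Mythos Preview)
Your argument is correct and is essentially the classical proof due to Knauer and Dorofeeva. Note, however, that the paper does not supply its own proof of this theorem at all: it is stated with a citation to \cite{knauer,dorofeeva} and used as background. So there is nothing in the paper to compare your argument against beyond the attribution.

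For what it is worth, your retract argument is exactly the standard one (as in \cite{kilp}): realise $P$ as a retract of a free act, read off $\varepsilon(x_p)=s_px_{q_p}$ from the coproduct structure of the free act, and use $\varepsilon^2=\varepsilon$ together with the disjointness of the summands $Sx_p\cong S$ to force both $q_{q_p}=q_p$ and $s_q^2=s_q$ for $q$ in the image of $p\mapsto q_p$. The only point worth saying explicitly is why $Se_qx_q\cong Se_q$: this follows because $Sx_q\cong S$ as a left $S$-act (freeness), so the subact $Se_qx_q$ corresponds to $Se_q$ under that isomorphism. With that clarified, the decomposition $\varepsilon(F)=\coprod_{q\in J}Se_qx_q$ is immediate, and both directions go through.
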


\begin{corollary}\label{cyclicproj} A cyclic $S$-set $A$ is projective if and only if
$A=Sa$ for some right $e$-cancellable $a\in S$, for some $e\in E$.

\end{corollary}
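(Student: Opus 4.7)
The plan is to chain Theorem~\ref{proj} with the isomorphism criterion stated just after Lemma~\ref{iso}.

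The converse is essentially a one-liner. If $A=Sa$ with $a$ right $e$-cancellable, then the remark following Lemma~\ref{iso} gives an $S$-isomorphism $Sa\cong Se$, and Theorem~\ref{proj} (with index set $\{1\}$) shows $Se$ is projective. Hence $A$ is projective.

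For the forward implication, suppose $A=Sa$ is cyclic and projective. Theorem~\ref{proj} supplies an $S$-isomorphism $A\cong\coprod_{i\in I}Se_i$ with $I\neq\emptyset$ and each $e_i\in E$. First I would observe that every cyclic act is connected: any element $sa\in A$ satisfies $sa\sim a$ in one step, so all of $A$ lies in a single $\sim$-class. Since the summands of a coproduct are precisely its maximal connected components and each $Se_i$ is nonempty (containing $e_i$), we must have $|I|=1$, and hence $A\cong Se$ for some $e\in E$.

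The remaining step handles a change of generator, which is the one subtle point I would flag in writing the argument carefully. The originally given $a$ need not correspond to $e$ under the isomorphism, so I would replace it: let $\theta\colon Se\to A$ be an $S$-isomorphism and set $a':=\theta(e)$. Then $A=\theta(Se)=S\theta(e)=Sa'$, so $a'$ generates $A$. Applying Lemma~\ref{iso} to the pair $a',e$ via $\theta$ yields that, for all $x,y\in S$, $xa'=ya'$ if and only if $xe=ye$; together with $ea'=\theta(ee)=\theta(e)=a'$, this shows $a'$ is right $e$-cancellable. Since the statement of the corollary only requires the existence of \emph{some} right $e$-cancellable generator of $A$, the substitution of $a$ by $a'$ is legitimate and completes the proof.
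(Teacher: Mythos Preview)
Your proof is correct and follows exactly the route the paper intends: the corollary is stated without proof, as an immediate consequence of Theorem~\ref{proj} together with the remark after Lemma~\ref{iso} (parallel to the way Corollary~\ref{freecyclic} is derived from Theorem~\ref{free}). Your handling of the two points that actually need a word---that a cyclic act is connected so the coproduct collapses to a single summand, and that one must pass to the generator $a'=\theta(e)$ rather than the originally given $a$---is precisely what is required to make the implicit argument explicit.
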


To define classes of flat left $S$-acts we need the notion of {\em
tensor product} of $S$-acts. If $A$ is a {\em right} $S$-act and $B$
a left $S$-act then the tensor product of $A$ and $B$, written
$A\otimes B$, is the set $A\times B$ factored by the equivalence
generated by $\{ ((as,b),(a,sb))\mid a\in A,b\in B,s\in S\}$. For
$a\in A$ and $b\in B$ we write $a\otimes b$ for the equivalence
class of $(a,b)$.

For a left $S$-act $B$,
the map $-\otimes B$ is a functor from the category
{\bf Act--}$\mathbf{S}$ to the category {\bf Set} of sets. It is from this
functor that  the various notion of {\em flatness} are derived.

A left $S$-act $B$ is {\em weakly flat} if the functor
$-\otimes B$ preserves embeddings of right ideals of
$S$ into $S$, {\em flat} if it preserves arbitrary
embeddings of right $S$-acts, and {\em strongly flat}
if it preserves  pullbacks (equivalently, equalisers
and pullbacks \cite{bf}).
We give the reader a warning that terminology has
changed over the years; in particular,
 left $S$-acts $B$ such that
$-\otimes B$ preserves equalisers and pullbacks
are called weakly flat in \cite{fountain} and \cite{stenstrom},
 flat in \cite{gould1},
and pullback flat in \cite{kilp}.

Stenstr\"{o}m was instrumental in forwarding the theory of flat acts,
by producing interpolation conditions, later labelled (P) and (E), that
are together equivalent to strong flatness.

\begin{theorem}\label{stronglyflat} \cite{stenstrom} A left $S$-act $B$ is
strongly flat if and only if $B$ satisfies conditions
(P) and (E):

  (P): if
$x,y\in B$ and $s,t\in S$ with
 $sx=ty$, then there is an element $z\in B$
and elements $s',t'\in S$ such that $x=s'z,y=t'z$ and $ss'=tt'$;

 (E): if $x\in B$ and $s,t\in S$ with
$sx=tx$, then there is an element $z\in B$ and $s'\in S$ with
$x=s'z$ and $ss'=ts'$.
\end{theorem}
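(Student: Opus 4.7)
The plan is to prove both implications by exploiting the interplay between the algebraic conditions (P), (E) and the functorial behaviour of $-\otimes B$, passing through explicit pullback and equaliser diagrams built from left-multiplication maps on $S$ itself.

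For the forward direction, assume $B$ is strongly flat. To derive (P), fix $s,t\in S$ and consider the right $S$-morphisms $\lambda_s,\lambda_t\colon S\to S$ given by left multiplication by $s$ and $t$. Their pullback in $\mathbf{Act}$-$\mathbf{S}$ is $P=\{(u,v)\in S\times S:su=tv\}$ with the natural projections $\pi_1,\pi_2$. Since $-\otimes B$ preserves this pullback, and since the natural isomorphism $S\otimes B\to B$, $u\otimes b\mapsto ub$, identifies $\lambda_s\otimes 1_B$ with the map $b\mapsto sb$ on $B$, the set $P\otimes B$ is in bijection with $\{(x,y)\in B\times B:sx=ty\}$ via $(\pi_1\otimes 1_B,\pi_2\otimes 1_B)$. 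Hence, given $x,y\in B$ with $sx=ty$, there exist $(s',t')\in P$ and $z\in B$ such that $(s',t')\otimes z$ maps to $(x,y)$; unpacking yields $x=s'z$, $y=t'z$ and $ss'=tt'$, which is (P). Condition (E) follows by the same argument with the equaliser $E=\{u\in S:su=tu\}$ of $\lambda_s,\lambda_t$ in place of the pullback.

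For the reverse direction, suppose $B$ satisfies (P) and (E); one must show that $-\otimes B$ preserves arbitrary pullbacks and equalisers. The main tool is the standard description of equality in a tensor product of $S$-acts: $a\otimes b=a'\otimes b'$ in $A\otimes B$ if and only if there is a finite \emph{tossing}, that is, a chain $(a_0,b_0),\ldots,(a_n,b_n)$ with $(a_0,b_0)=(a,b)$ and $(a_n,b_n)=(a',b')$ in which consecutive entries are related by a single application of the defining relation $(cs,d)\sim(c,sd)$. Given a pullback square in $\mathbf{Act}$-$\mathbf{S}$ and a pair $(a\otimes b,\,a'\otimes b')$ whose images agree in $C\otimes B$, one writes the matching equality as a tossing and then inductively shortens it: each pair of opposing transitions produces an equation of the form $sx=ty$ in $B$ to which (P) applies, while each pair of coincident transitions produces an equation $sx=tx$ to which (E) applies. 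Either step replaces the current representative by an equivalent one whose act-coordinate is closer to the pullback $P\subseteq A\times A'$; when the tossing has been trivialised, one reads off an element of $P\otimes B$ realising the required compatibility. The preservation of equalisers is proved analogously using (E) alone.

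The main obstacle is the reverse direction: a tossing may be arbitrarily long, so the reduction must be an induction on its length, at each stage choosing between (P) and (E) according to the local shape of the zig-zag, and one has to track carefully how applying a witness inside $B$ allows one to adjust the act-side of a pure tensor so that the resulting representative lands in $P$. Once this reduction machinery is in place, the verification of the universal property of the pullback—existence and uniqueness of the mediating map out of $P\otimes B$—follows routinely.
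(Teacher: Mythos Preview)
The paper does not give its own proof of this theorem: it is simply stated with a citation to Stenstr\"{o}m \cite{stenstrom}, so there is no in-paper argument to compare your proposal against.

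As a proof plan your outline is sound and follows the classical route. The forward direction is clean and correct: instantiating the preserved pullback (respectively equaliser) at the diagram $\lambda_s,\lambda_t\colon S\to S$ and transporting along the canonical isomorphism $S\otimes B\cong B$ yields (P) (respectively (E)) exactly as you describe. For the reverse direction you have correctly located the real work---showing that the canonical map from $P\otimes B$ to the pullback of the tensored diagram is a bijection---and the mechanism you propose, an induction on the length of a tossing witnessing $f(a)\otimes b=g(a')\otimes b'$ in $C\otimes B$ that uses (P) and (E) to collapse steps, is precisely Stenstr\"{o}m's strategy. Two points deserve tightening before this becomes a proof rather than a plan: first, you should spell out the invariant carried through the induction (at each stage one has $u,v\in S$ and $z\in B$ with $b=uz$, $b'=vz$ and a shortened tossing between $f(a)u$ and $g(a')v$), since it is this invariant that lets you read off an element of $P\otimes B$ when the tossing becomes trivial; second, you need to address injectivity of $P\otimes B\to (A\otimes B)\times_{C\otimes B}(A'\otimes B)$ as well as surjectivity---this also goes by a tossing argument, now inside $P\otimes B$, and again uses (P) and (E). With those two details supplied, your argument matches the original.
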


The classes of free, projective, strongly flat, flat and weakly flat
left $S$-acts will be denoted by $\mathcal{F}r, \mathcal{P,SF,F}$ and
$\mathcal{WF}$ respectively.
We remark here that
 `elementary' descriptions (that is, not involving arrows)
of $\mathcal{F}$ and $\mathcal{WF}$,
along the lines of Theorems~\ref{free},~\ref{proj} and
\ref{stronglyflat} for
$\mathcal{F}r,\mathcal{P}$ and $\mathcal{SF}$, are not available.
From those results it is immediate that
projective left $S$-acts are strongly flat.
Clearly flat left $S$-acts are
weakly flat and since embeddings
are equalisers in {\bf Act--}$\mathbf{S}$, strongly
flat left $S$-acts are flat. Thus
\[\mathcal{F}r\subseteq \mathcal{P}\subseteq\mathcal{SF}\subseteq\mathcal{F}
\subseteq\mathcal{WF}.\]

A congruence $\theta$ on a left $S$-act $A$ is called {\em strongly
flat} if $A/\theta\in\mathcal{SF}$. For the purposes of later
sections we state a  straightforward consequence of
Theorem~\ref{stronglyflat}.

\begin{corollary}\label{sfcong} \cite{bulmanflemingnormak} A congruence $\theta$ on the left $S$-act
$S$ is strongly flat if and only if for any $u,v\in S$, $u\,\theta\, v$
if and only if there exists $s\in S$ such that $s\,\theta\, 1$ and $us=vs$.

\end{corollary}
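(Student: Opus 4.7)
The plan is to apply Theorem~\ref{stronglyflat} directly to the cyclic left $S$-act $S/\theta$, whose elements all have the form $[u]:=u/\theta = u\cdot[1]$. Under this reduction, conditions (P) and (E) for $S/\theta$ unpack into simple statements about pairs $u,v\in S$ related by $\theta$, and the whole result becomes a bookkeeping exercise translating between $\theta$-classes and the strong flatness conditions.

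For the forward direction, assume $S/\theta$ is strongly flat and let $u\,\theta\, v$. Then $u[1]=[u]=[v]=v[1]$, so applying (E) to $x=[1]$ with the pair $(u,v)$ furnishes $z\in S/\theta$ and $s'\in S$ with $[1]=s'z$ and $us'=vs'$. Writing $z=[w]$, the element $s:=s'w$ satisfies $s\,\theta\, 1$ (since $[s'w]=s'\cdot z=[1]$) and $us=us'w=vs'w=vs$, which is what is needed.

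For the converse I first note that the ``if'' half of the right-hand biconditional holds automatically for any left congruence: if $s\,\theta\, 1$ and $us=vs$, then left compatibility gives $u=u\cdot 1\,\theta\, us=vs\,\theta\, v\cdot 1=v$. So the substantive content of the hypothesis is that $u\,\theta\, v$ yields some $s\,\theta\, 1$ with $us=vs$. To verify (E), suppose $s[u]=t[u]$, that is, $su\,\theta\, tu$; the hypothesis supplies $r\,\theta\, 1$ with $sur=tur$, and then $z=[1]$, $s'=ur$ satisfy $s'z=[ur]=[u]$ (using $ur\,\theta\, u$ by left compatibility) and $ss'=(su)r=(tu)r=ts'$. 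For (P), if $s[u]=t[v]$, i.e.\ $su\,\theta\, tv$, pick $r\,\theta\, 1$ with $sur=tvr$ and set $z=[1]$, $s'=ur$, $t'=vr$; then $s'z=[u]$, $t'z=[v]$, and $ss'=sur=tvr=tt'$.

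No step is genuinely difficult; the one place to be careful is the forward direction, where (E) produces an \emph{arbitrary} witness $z=[w]$ rather than $z=[1]$, so one must absorb the representative $w$ into the scalar by taking $s=s'w$ before the desired equality $us=vs$ can be read off.
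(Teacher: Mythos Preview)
Your proof is correct and follows exactly the approach the paper indicates: the paper does not spell out a proof but presents the corollary as ``a straightforward consequence of Theorem~\ref{stronglyflat}'', and your argument is precisely that direct unpacking of conditions (P) and (E) for the cyclic act $S/\theta$. The care you take in the forward direction---absorbing the representative $w$ of the witness $z=[w]$ into the scalar to obtain $s=s'w$---is the only non-automatic step, and you handle it correctly.
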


\section{Axiomatisability}\label{ax}

Any class of algebras $\mathcal{A}$ has an associated first order
language $L$. One can then ask whether a property $P$, defined for
members of $\mathcal{A}$, is expressible in the language  $L$. In
other words, is there a set of sentences $\Pi$ in $L$ such that $A$
is a member of $\mathcal{A}$ if and only if all the sentences of
$\Pi$ are true in $A$, that is, if and only if
  $A$ is  a model of
$\Pi$, which we denote by $A\vDash \Pi$.
 If $\Pi$ exists we say that $\mathcal{A}$ is
{\em axiomatisable}. Questions of axiomatisability are the first
step in investigating the {\em model theory} of a class of algebras.

We
 concentrate in this article on aspects of the first-order theory of
left
$S$-acts, the next  five sections considering questions of
axiomatisability of the classes of free, projective and (strongly,
weakly)
flat acts.
 Our language is the
first order language with equality $L_S$
which has no constant or relation symbols and which
has a unary function symbol $\lambda_s$ for each $s\in S$:
we write $sx$ for $\lambda_s(x)$.
The class of left $S$-acts is axiomatised by the set of
sentences $\Pi$ where
\[\Pi=\{ (\forall x)(1x=x)\} \cup\{ \mu_{s,t}\mid s,t\in S\}\]
where $\mu_{s,t}$ is the sentence
\[(\forall x)((st)x=s(tx)).\]

Certain classes of left $S$-acts
are axiomatisable for {\em any} monoid $S$. For
example the torsion free left $S$-acts
are axiomatised by $\Pi\cup\Sigma_{\mathcal{TF}r}$
where
\[\Sigma_{\mathcal{TF}r}=
\{ (\forall x)(\forall y)(sx=sy\rightarrow x=y)\mid s\in T\}\] where
$T$ is the set of left cancellable elements of $S$. Indeed in the
context of $S$-acts the sentences of $\Pi$ are understood, and we
say more succinctly that $\Sigma_{\mathcal{TF}r}$ axiomatises
$\mathcal{TF}r$.
 Other
natural classes of left $S$-acts are axiomatisable for some
monoids and not for others.

One of our main tools throughout will be that of an  ultraproduct, which
we now briefly recall.

For any set $A$ we denote the set of all subsets of $A$ by
$P(A)$.  A family $C\subseteq P(A)$
is  {\em centred}  if for any $X_1,\dots,X_n\in C$ the
intersection $X_1\cap\dots\cap X_n$ is not empty. We remark that if
$C$ is centred, then $\emptyset\notin C$. A non-empty
family $F\subseteq P(A)$ is called a {\em filter} on $A$ if the
follow conditions are true:

(a) $\emptyset\notin F$;

(b) if $U,V\in F$ then $U\cap V\in F$;

(c) if $U\in F$ and $U\subseteq X\subseteq A$, then $X\in F$.

A filter $F$ on a set $A$ is said to be {\em uniform}  if $|X|=|A|$
for any $X\in F$ and an {\em
ultrafilter}  if $X\in F$ or $A\setminus X\in F$ for any
$X\subseteq A$. The following facts concerning these concepts
follow easily from the definitions.

(1) A filter $F$ on a set $A$ is a centred set and $A\in F$.

(2) If $F$ is an ultrafilter on $A$ and $U_0\cup\cdots\cup U_n\in
F$ then $U_i\in F$ for some $i\leqslant n$.

(3) If  $A$ is an infinite set then the set
$$\Phi_A=\{X\mid X\subseteq A,\;|A\setminus X|<|A|\}$$
is a filter (it is called the Fr\'{e}chet filter).

(4) An ultrafilter $F$ on an infinite set $A$ is uniform
if and only if $F$
contains the filter $\Phi_A$.

We now argue, that (with the use of Zorn's Lemma), every centred family
can be extended to an ultrafilter. Hence, there are ultrafilters on
every non-empty set.

\begin{proposition}\label{filter1}\label{centfam} Let $A$ be a non-empty set. Then:

(1) a centred family $C\subseteq P(A)$ extends to a maximal
centred family $D$;

(2) a  centred family is an ultrafilter if and
only if it is a maximal centred family;

(3) any centred family $C\subseteq P(A)$ is contained in some
ultrafilter $F$ on $A$;

(4) a filter
 is an ultrafilter if and only if it is a maximal filter;

(5) if $A$ is infinite then there is a uniform ultrafilter on $A$.
\end{proposition}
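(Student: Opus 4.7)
The plan is to prove the five parts in the stated order, each building on the previous ones.

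For part (1), I would apply Zorn's Lemma to the poset $\mathcal{Z}$ of all centred subfamilies of $P(A)$ containing $C$, ordered by inclusion. The only nontrivial verification is that the union $\bigcup_{i\in I} D_i$ of a chain in $\mathcal{Z}$ is itself centred: given finitely many $X_1,\ldots,X_n$ in the union, each $X_j$ lies in some $D_{i_j}$, and since $\{D_{i_1},\ldots,D_{i_n}\}$ is linearly ordered, all $X_j$ lie in a common member of the chain, whence $X_1\cap\cdots\cap X_n\neq\emptyset$.

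For part (2), the forward direction is essentially formal: an ultrafilter is centred (it is a filter, so $\emptyset\notin F$ and $F$ is closed under finite intersections), and if $F\subsetneq C$ with $C$ centred, pick $X\in C\setminus F$; then $A\setminus X\in F\subseteq C$, yielding $X\cap(A\setminus X)=\emptyset$, contradicting centredness. The reverse direction is the main obstacle: given a maximal centred family $D$, I would first show $D$ is a filter. Closure under intersection: for $U,V\in D$, observe that adjoining $U\cap V$ to $D$ preserves centredness, since any finite intersection involving $U\cap V$ can be rewritten as a finite intersection of elements of $D$. Upward closure: if $U\in D$ and $U\subseteq X$, then $D\cup\{X\}$ is centred because any finite intersection $X\cap X_1\cap\cdots\cap X_n$ contains $U\cap X_1\cap\cdots\cap X_n\neq\emptyset$. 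Maximality forces both $U\cap V$ and $X$ into $D$. Finally, for the ultrafilter property I would use a dichotomy: for any $X\subseteq A$, either $D\cup\{X\}$ is centred (whence $X\in D$ by maximality), or there exist $X_1,\ldots,X_n\in D$ with $X\cap X_1\cap\cdots\cap X_n=\emptyset$, i.e.\ $X_1\cap\cdots\cap X_n\subseteq A\setminus X$; in the latter case every finite intersection of elements of $D\cup\{A\setminus X\}$ contains a corresponding intersection from $D$ (using $X_1\cap\cdots\cap X_n$), showing $D\cup\{A\setminus X\}$ is centred and hence $A\setminus X\in D$.

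Part (3) is then immediate: apply (1) to extend $C$ to a maximal centred family, which by (2) is an ultrafilter. For part (4), the forward direction follows from (2) since any filter properly containing an ultrafilter would be a centred family properly containing it, contradicting maximal centredness. Conversely, a maximal filter $F$ can be extended by (3) to an ultrafilter $F'$; since $F'$ is itself a filter, maximality gives $F=F'$.

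For part (5), I would verify that the Fr\'{e}chet filter $\Phi_A$ is centred: any finite intersection of cofinite subsets of the infinite set $A$ is cofinite, hence nonempty. By (3), $\Phi_A$ extends to an ultrafilter $F$. To see $F$ is uniform, suppose $X\in F$ with $|X|<|A|$. Since $A$ is infinite, this forces $|A\setminus X|=|A|$, so $A\setminus X\in\Phi_A\subseteq F$, and then $X\cap(A\setminus X)=\emptyset\in F$, contradicting $F$ being a filter. Hence $|X|=|A|$ for every $X\in F$, establishing uniformity.
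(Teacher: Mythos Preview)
Your proof is correct and follows essentially the same route as the paper's: Zorn's Lemma for (1), the maximality/dichotomy argument for (2), and extension of the Fr\'echet filter for (5). One small terminological slip: in (5) you write ``cofinite'' where the paper's $\Phi_A$ consists of sets $X$ with $|A\setminus X|<|A|$ (co-small, not literally cofinite when $|A|>\omega$); your uniformity argument already uses the correct inequality $|X|<|A|$, so just replace ``cofinite'' by ``co-small'' in the centredness check and the argument goes through unchanged.
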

\begin{proof} (1)  Let $S$ be the
 set of all centered families of
subsets of
$A$ containing the family $C$. It is clear that the union of an
ascending chain of centered families is a centered family. Thus
the poset $\langle S,\subseteq\rangle$ satisfies the maximal condition
required to invoke
Zorn's Lemma. We deduce that  there exists a
centered family $D$ which is maximal in $\langle
S,\subseteq\rangle$.

(2) Let $C$ be a maximal centred family.
 We have observed that $\emptyset\notin C$, and
clearly $C$ is closed under intersection. If $X\in C$ and $X\subseteq Y
\subseteq A$, then clearly $C\cup \{ Y\}$ is centred. By maximality of
$C$ we deduce that $Y\in C$ and so $C$ is a filter.
 If $X\notin C$, then $C\cup \{ X\}$ is not centred, so that
$X\cap Y=\emptyset$ for some $Y\in C$. Hence $Y\subseteq A\setminus X$
so that $A\setminus X\in C$ since $C$ is a filter. We deduce that $C$
is an ultrafilter.

Conversely, any ultrafilter $F$ is centred, and if $F\subset G$ for a
centred
family $G$, then taking $X\in G\setminus F$, we have that
$X,A\setminus X\in G$, contradicting the fact that $G$ is centred. Hence
$F$ is maximal among centred families.

(3) This follows immediately from (1) and (2).

(4) Clearly, an ultrafilter is a maximal filter. On the other hand, if
$F$ is a maximal filter, then by (1) $F$ can be extended to a maximal
centred family $G$. But $G$ is an ultrafilter by (2) so that $F=G$
by the maximality of $F$.

(5) This follows from (3) and remark (4) above.
\end{proof}

We now give the construction of an ultraproduct of left $S$-acts; we
could, of course, define ultraproducts for any class of interpretations
or structures
of a given first order language $L$, that is, any class
of $L$-{\em structures}, but we prefer to be specific and leave the
reader
to extrapolate. A point of notation: for any arbitrary language $L$ we
make a distinction between an $L$-structure $\mathbf{A}$ and
the underlying set $A$ of $\mathbf A$, whereas for $S$-acts we do not.

If $B=\prod_{i\in I}B_i$ is a product of left $S$-acts $B_i$, $i\in
I$,
and $\Phi$ is an ultrafilter on $I$, then we
define a relation $\equiv_{\Phi}$ on $B$ by the rule that
\[(a_i)\equiv_{\Phi} (b_i)\mbox{ if and only if }
\{ i\in I\mid a_i=b_i\} \in \Phi.\] It is a fact that
$\equiv_{\Phi}$ is an equivalence and moreover an $S$-act
congruence, so that putting
\[U=(\prod_{i\in I}B_i)/{\Phi}=(\prod\limits_{i\in I}B_i)/\equiv_{\Phi}\]
and denoting the $\equiv_{\Phi}$-class of $f\in \prod\limits_{i\in I}B_i$
by $f/\Phi$, $U$
is an $S$-act under the operation
\[s\, (a_i)/\Phi=(sa_i)/\Phi,\]
which we call the {\em ultraproduct} of
left $S$-acts $B_i$, $i\in I$, under the ultrafilter $\Phi$.

 Ultraproducts are of central importance to us, due to the
celebrated theorem of  \L os.

\begin{theorem}\label{ultra}\cite{changkeisler} Let $L$ be a first order
language and $\mathcal{A}$ a class of $L$-structures. If
$\mathcal{A}$ is axiomatisable, then $\mathcal{A}$ is closed under
the formation of ultraproducts.
\end{theorem}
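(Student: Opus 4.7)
The plan is to deduce the theorem from the more general \L os theorem, which asserts that for any formula $\varphi(x_1,\ldots,x_n)$ of $L$, any ultrafilter $\Phi$ on $I$, any family of $L$-structures $\mathbf{A}_i$ ($i\in I$) and any $f_1,\ldots,f_n\in\prod_{i\in I}A_i$, we have
\[
\textstyle\prod_{i\in I}\mathbf{A}_i/\Phi\vDash\varphi(f_1/\Phi,\ldots,f_n/\Phi)\;\Longleftrightarrow\; \{i\in I\mid\mathbf{A}_i\vDash\varphi(f_1(i),\ldots,f_n(i))\}\in\Phi.
\]
Granting this for sentences, if $\mathcal{A}$ is axiomatised by $\Pi$ and each $\mathbf{A}_i\in\mathcal{A}$, then for every $\varphi\in\Pi$ the index set is the whole of $I$, which lies in $\Phi$; hence the ultraproduct models every sentence of $\Pi$ and so belongs to $\mathcal{A}$.

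I would prove \L os's theorem by induction on the complexity of $\varphi$. For atomic formulas $t_1=t_2$, one interprets each term coordinate-wise in the product and then observes that equality in the quotient is precisely the statement that the set of coordinates on which the two interpretations agree lies in $\Phi$; this is essentially the definition of $\equiv_\Phi$ (and for relation symbols, if the language had any, one would use the coordinate-wise definition of the relation on the product). For the Boolean connectives, the case $\varphi\wedge\psi$ uses that $\Phi$ is closed under finite intersections (part (b) of the definition of a filter), and the case $\neg\varphi$ uses the characteristic property of an ultrafilter, namely $X\in\Phi$ or $I\setminus X\in\Phi$ (but not both, since $\emptyset\notin\Phi$); disjunction then follows, or may be handled directly using fact (2) after Proposition~\ref{filter1}.

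The main obstacle is the quantifier step, say $\varphi(\bar{x})=(\exists y)\psi(y,\bar{x})$. One direction is easy: if the ultraproduct satisfies $\varphi(\bar{f}/\Phi)$, witnessed by some $g/\Phi$, then by the inductive hypothesis the set $\{i\mid\mathbf{A}_i\vDash\psi(g(i),\bar{f}(i))\}$ lies in $\Phi$, and this set is contained in $X=\{i\mid\mathbf{A}_i\vDash\varphi(\bar{f}(i))\}$, so $X\in\Phi$. The reverse direction requires the Axiom of Choice: given $X\in\Phi$, for each $i\in X$ select some $g(i)\in A_i$ witnessing the existential, and for $i\notin X$ set $g(i)$ arbitrarily; then $\{i\mid\mathbf{A}_i\vDash\psi(g(i),\bar{f}(i))\}\supseteq X$ lies in $\Phi$, and the inductive hypothesis gives that $\psi(g/\Phi,\bar{f}/\Phi)$ holds in the ultraproduct, yielding $\varphi(\bar{f}/\Phi)$.

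With the inductive step complete, specialising to sentences (no free variables) produces the closure property of axiomatisable classes under ultraproducts as described in the opening paragraph.
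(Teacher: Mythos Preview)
Your proof is correct and is the standard inductive proof of \L os's theorem, from which closure of axiomatisable classes under ultraproducts follows immediately. The paper does not actually prove this result; it merely states it with a citation to Chang and Keisler \cite{changkeisler}, so there is no in-paper proof to compare against, but your argument is precisely the one found in that reference.
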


Let $\varkappa$ be an infinite cardinal;
thus $\varkappa$ is a limit ordinal and  we may regard
$\varkappa$ as the union of all smaller ordinals.
A filter $F$ is called \em $\varkappa$-regular \em if there exists a
family $R=\{S_\alpha\mid\alpha\in\varkappa\}$
of distinct elements of $F$ such that any intersection of any
 infinite subset of the
family $R$ is empty.

\begin{proposition}\label{filter2}
\label{regultrafil} For any infinite cardinal $\varkappa$ there
exists an $\varkappa$-regular ultrafilter on set $I$, where
$|I|=\varkappa$.
\end{proposition}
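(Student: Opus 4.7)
The plan is to give a direct explicit construction. Up to bijection we may take $I$ to be any set of cardinality $\varkappa$, and the natural candidate is $I=[\varkappa]^{<\omega}$, the set of all finite subsets of $\varkappa$. Since $\varkappa$ is infinite, $|I|=\varkappa$, so this is a legitimate choice.

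For each ordinal $\alpha<\varkappa$ define
\[S_\alpha=\{i\in I\mid \alpha\in i\}.\]
First I would observe that the $S_\alpha$ are pairwise distinct, since $\{\alpha\}\in S_\alpha\setminus S_\beta$ whenever $\alpha\neq\beta$. Next I would check that the family $R=\{S_\alpha\mid\alpha\in\varkappa\}$ is centred: given $\alpha_1,\ldots,\alpha_n\in\varkappa$, the finite set $\{\alpha_1,\ldots,\alpha_n\}$ belongs to $I$ and lies in every $S_{\alpha_k}$, so $S_{\alpha_1}\cap\cdots\cap S_{\alpha_n}\neq\emptyset$.

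By Proposition~\ref{centfam}(3), $R$ extends to an ultrafilter $F$ on $I$, and by construction $R\subseteq F$ with $|R|=\varkappa$ and its elements distinct. It remains to verify the regularity property: for any infinite subset $A\subseteq\varkappa$,
\[\bigcap_{\alpha\in A}S_\alpha=\{i\in I\mid A\subseteq i\}=\emptyset,\]
because every $i\in I$ is finite while $A$ is infinite. Thus $F$ is $\varkappa$-regular, as required.

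There is no real obstacle here beyond choosing the correct index set; the combinatorial heart of the argument is the observation that $[\varkappa]^{<\omega}$ has cardinality $\varkappa$, yet each of its elements is finite, so the family of ``$\alpha$ occurs'' sets is simultaneously centred (any finite subfamily intersects) and $\varkappa$-regular (no infinite subfamily intersects). The extension step is then immediate from Proposition~\ref{centfam}.
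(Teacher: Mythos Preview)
Your proof is correct and follows essentially the same construction as the paper: both take $I=[\varkappa]^{<\omega}$, define $S_\alpha=\{i\in I\mid \alpha\in i\}$, check that this family is centred with empty infinite intersections, and extend to an ultrafilter via Proposition~\ref{centfam}. Your version is in fact slightly more careful, since you explicitly verify that the $S_\alpha$ are pairwise distinct.
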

\begin{proof} According to Proposition~\ref{filter1}
 it is enough to construct a set
$I$, $|I|=\varkappa$, and a centred family
$F=\{S_\alpha\mid\alpha\in\varkappa\}$
of distinct subsets of $I$ such that
the intersection of any infinite subset of $F$
is empty.

Consider the set $I=\{v\mid v\subseteq\varkappa,\mbox{$v$ is finite}
\}$. Clearly the cardinality of $I$ is equal to $\varkappa$. Let
$S_\alpha=\{v\mid v\in I,\alpha\in v\}$ and
$R=\{S_\alpha\mid\alpha\in\varkappa\}$. If $S_{\alpha_1},
\ldots,S_{\alpha_n}\in R$ then
\[\{ \alpha_1,\hdots ,\alpha_n\}\in S_{\alpha_1}\cap\ldots\cap S_{\alpha_n}.\]
 Therefore the
family $R$ is centred and the intersection of any infinite collection
of its
elements is empty.
\end{proof}

We require one further technical result
concerning ultraproducts that will be
needed for later sections.

\begin{theorem}\label{filter3}\label{filtprod} Let $F$ be an $\varkappa$-regular
filter on a set
$I$, $|I|=\varkappa$,
and let  ${A}_i$ ($i\in I$) be $S$-acts
of infinite cardinality $\lambda$. Then the cardinality of the
filtered product $(\prod_{i\in I}{A}_i)/F$ is equal to
$\lambda^\varkappa$.
\end{theorem}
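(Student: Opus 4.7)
The plan is to prove the two inequalities $|(\prod_{i\in I}A_i)/F|\leq \lambda^\varkappa$ and $|(\prod_{i\in I}A_i)/F|\geq \lambda^\varkappa$ separately. The upper bound is immediate: the filtered product is a quotient of the full product, and $|\prod_{i\in I}A_i|=\lambda^\varkappa$. The substantive content is the lower bound, for which I would construct an injection from the set ${}^\varkappa\lambda$ of all functions $f\colon \varkappa\to\lambda$ into $(\prod_{i\in I}A_i)/F$.

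The key combinatorial observation is a dualisation of $\varkappa$-regularity. Writing $R=\{S_\alpha\mid\alpha\in\varkappa\}$ for the witnessing family from the definition, I would note that for each $i\in I$ the index set
\[J_i=\{\alpha\in\varkappa\mid i\in S_\alpha\}\]
is \emph{finite}: otherwise $i$ would lie in the intersection of an infinite subfamily of $R$, contradicting the hypothesis that such intersections are empty. Since $J_i$ is finite and $\lambda$ is infinite, the set ${}^{J_i}\lambda$ of functions $J_i\to\lambda$ has cardinality $\lambda=|A_i|$, so I can fix an injection $\varphi_i\colon {}^{J_i}\lambda\hookrightarrow A_i$.

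Given $f\colon\varkappa\to\lambda$, I then define $g_f\in\prod_{i\in I}A_i$ by $g_f(i)=\varphi_i(f\restriction J_i)$. The claim is that distinct $f,f'$ give $F$-inequivalent elements. To verify this, pick $\alpha\in\varkappa$ with $f(\alpha)\neq f'(\alpha)$. For every $i\in S_\alpha$, by definition $\alpha\in J_i$, so $f\restriction J_i$ and $f'\restriction J_i$ differ at $\alpha$, and injectivity of $\varphi_i$ yields $g_f(i)\neq g_{f'}(i)$. Thus
\[\{i\in I\mid g_f(i)=g_{f'}(i)\}\subseteq I\setminus S_\alpha,\]
and since $S_\alpha\in F$ while $F$ does not contain $\emptyset$, the set $I\setminus S_\alpha$ cannot lie in $F$; neither can its subset above. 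Hence $g_f/F\neq g_{f'}/F$, and the assignment $f\mapsto g_f/F$ is injective, giving $|(\prod_{i\in I}A_i)/F|\geq \lambda^\varkappa$.

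The main thing to be careful about is the deduction that each $J_i$ is finite from the regularity hypothesis as phrased in the excerpt (the intersection of any \emph{infinite} subfamily of $R$ is empty), and the observation that one need not assume $F$ is an ultrafilter: all one uses is that $F$ is a filter containing each $S_\alpha$ and hence not containing the complement of any $S_\alpha$. Everything else reduces to a cardinality calculation ($\lambda^{|J_i|}=\lambda$) and unpacking the definition of the filtered product relation $\equiv_F$.
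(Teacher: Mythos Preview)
Your proof is correct and follows essentially the same route as the paper's: both hinge on the observation that each $J_i=\{\alpha\mid i\in S_\alpha\}$ is finite, encode $f\restriction J_i$ into the $i$th coordinate, and then use $S_\alpha\in F$ to separate classes. The only cosmetic difference is that the paper fixes the specific encoding $A_i=\lambda^{<\omega}$ and writes $f^*(i)$ as the tuple $\langle f(\alpha_1),\dots,f(\alpha_n)\rangle$, whereas you invoke an abstract injection $\varphi_i\colon{}^{J_i}\lambda\hookrightarrow A_i$; note your cardinality claim $|{}^{J_i}\lambda|=\lambda$ is literally false when $J_i=\emptyset$, but the needed injection exists trivially in that case, so nothing is harmed.
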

\begin{proof} Let us denote the set of all finite sequences of
elements of the cardinal $\lambda$ by $\lambda^{<\omega}$. It is
clear that the cardinality of the set $\lambda^{<\omega}$ coincides
with $\lambda$. Since the cardinality of the filtered products does
not depend on the
fact that the ${A}_i$'s are $S$-acts, but merely on their
cardinality,  we can suppose that
${A}_i=\lambda^{<\omega}$ for all $i\in I$. Since
$|(\lambda^{<\omega})^I|=\lambda^\varkappa$ then
$|(\lambda^{<\omega})^I/F|\leqslant\lambda^\varkappa$. Thus it is
enough to construct an embedding $\phi$ of the set
$\lambda^\varkappa=\{f\mid f:\varkappa\rightarrow\lambda\}$ in the
set $(\lambda^{<\omega})^I/F$.

Let the family $R=\{S_\alpha\mid\alpha\in\varkappa\}$ of elements from
the filter $F$ satisfy the required condition from the definition of
$\varkappa$-regular filter, i.e. any intersection of an infinite
subset of the family $R$ is empty; clearly,
 for any $i\in I$, we must have
that $\{ \alpha:i\in S_{\alpha}\}$ is finite. We may add the set
$I\setminus\bigcup R$ to the set $S_0$, so we can consider that
$I=\bigcup R$. Let $f:\varkappa\rightarrow\lambda$ be an arbitrary
map. Define the map $f^*:I\rightarrow\lambda^{<\omega}$ in the
following way: $f^*(i)=\langle
f(\alpha_1),\dots,f(\alpha_n)\rangle$, where $i\in I$,
$\{\alpha_1,\ldots,\alpha_n\}=\{\alpha\mid i\in S_\alpha\}$ and
$\alpha_i<\alpha_{i+1}$ $(1\leq i\leq n-1)$. We consider $
\phi(f)=f^*/F$. Let $f_1,f_2\in\lambda^\varkappa$ with $f_1\neq
f_2$, so that $f_1(\alpha)\ne f_2(\alpha)$ for some
$\alpha\in\varkappa$.  Then $f_1^*(i)\ne f_2^*(i)$ for any $i\in
S_\alpha$; since $S_\alpha\in F$ we have that  $f_1^*/F\ne f_2^*/F$
and so  $\phi$ is one-one as required.

\end{proof}

\section{Axiomatisability of $\mathcal{WF}$}\label{wfsection}

We begin our considerations of axiomatisability with the class
$\mathcal{WF}$; the results of this section are all taken from
\cite{BG}.

 At this point it is useful to give further details on
tensor products.

\begin{lemma} \label{tossing}\cite{kilp}
Let $A$ be a right $S$-act and $B$ a left $S$-act.
Then for $a,a'\in A$ and $b,b'\in B$,
$a\otimes b=a'\otimes b'$ if and only if there exist
$s_1,t_1,s_2,t_2,\hdots ,s_m,t_m\in S$, $a_2,\hdots ,a_m\in A$
and $b_1,\hdots ,b_m\in B$ such that
\[%
\begin{array}
{rclrcl}
&&  & b&=&s_1b_1\\
as_{1}&=&a_2t_{1}   & t_{1}b_1&=&s_{2}b_2\\
a_2s_{2}&=&a_3t_{2}   & t_{2}b_2&=&s_{3}b_3\\
&\vdots&&& \vdots & \\
a_ms_m&=&a't_{m}   & t_{m}b_m&=&b'.
\end{array}
\]
\end{lemma}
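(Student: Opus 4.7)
The plan is to establish both directions from the explicit description of the tensor product as $A\otimes B=(A\times B)/\rho$, where $\rho$ is the equivalence relation on $A\times B$ generated by the set $R=\{((as,b),(a,sb))\mid a\in A,b\in B,s\in S\}$.

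For the (if) direction, given a tossing as in the statement, I would simply chain together the defining relations of $\otimes$. Starting from $a\otimes b$, use $b=s_1b_1$ to rewrite $a\otimes b=a\otimes s_1b_1=as_1\otimes b_1$; then use $as_1=a_2t_1$ to rewrite this as $a_2t_1\otimes b_1=a_2\otimes t_1b_1$; then apply $t_1b_1=s_2b_2$, and so on, alternating between pushing elements of $S$ across the tensor via the A-relations and consuming them via the B-relations. After $m$ such alternations we arrive at $a'\otimes t_mb_m=a'\otimes b'$. This is a routine, purely mechanical calculation using only the generating relations for $\rho$.

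For the (only if) direction, the cleanest approach is to define a relation $\sigma$ on $A\times B$ by declaring $(a,b)\,\sigma\,(a',b')$ if either $(a,b)=(a',b')$ or a tossing of the stated form exists connecting them. I would then check that $\sigma$ is an equivalence relation containing $R$; since $\rho$ is by definition the \emph{smallest} such equivalence, this forces $\rho\subseteq\sigma$, and hence $a\otimes b=a'\otimes b'$ implies a tossing exists. Containment of $R$ is immediate, taking $m=1$, $s_1=1$, $b_1=b$, $t_1=s$, $a_2=a$: the conditions read $b=1\cdot b$, $as\cdot 1=a\cdot s$, and $s\cdot b=sb$, all of which hold. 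Reflexivity is built into the definition.

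The bulk of the work lies in verifying symmetry and transitivity of $\sigma$, which is the main (though purely combinatorial) obstacle. For symmetry, given a tossing from $(a,b)$ to $(a',b')$, I would reverse the chain by setting $s_i^\ast=t_{m+1-i}$, $t_i^\ast=s_{m+1-i}$, $b_i^\ast=b_{m+1-i}$ for $1\leq i\leq m$, and $a_i^\ast=a_{m+2-i}$ for $2\leq i\leq m$; one then reads the original equalities in reverse order and checks they match the tossing format for $(a',b')\to(a,b)$. For transitivity, given tossings of lengths $m$ and $n$ from $(a,b)$ to $(a',b')$ and from $(a',b')$ to $(a'',b'')$, I would concatenate them: the A-side juxtaposes to a sequence of length $m+n$ passing through $a'$, while on the B-side the endpoint equality $t_mb_m=b'=s_1''b_1''$ of the two tossings provides exactly the bridging relation $t_mb_m=s_{m+1}b_{m+1}$ needed (with $s_{m+1}=s_1''$ and $b_{m+1}=b_1''$), so the combined chain is itself a valid tossing. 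Once $\sigma$ is shown to be an equivalence containing $R$, the proof is complete.
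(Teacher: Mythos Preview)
Your proposal is correct and is the standard argument; the paper itself does not prove this lemma but simply cites \cite{kilp}, so there is nothing to compare against here.  One trivial notational slip: in your check that $R\subseteq\sigma$ you write ``$a_2=a$'', but with $m=1$ there is no $a_2$ in the scheme---the last $A$-equation reads $as_1=a't_1$, so what you mean is that the terminal element $a'$ equals $a$.
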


The sequence presented  in Lemma \ref{tossing} will be called a {\em tossing}
(or scheme) $\tee$
 of length $m$ over $A$ and $B$ connecting
$(a,b)$ to $(a',b')$. The {\em skeleton} of $\tee$,
$\s=\s(\tee)$, is the sequence
\[\s=(s_1,t_1,\hdots ,s_m,t_m)\in S^{2m}.\]
The set of all skeletons is denoted by $\mathbb{S}$. By considering
trivial acts it is easy to
see that $\mathbb{S}$ consists of all sequences of elements of $S$
of even length.

Let $a,a'\in S$
and let $\mathcal{S}=(s_{1},t_{1},...,s_{m},t_{m})$
be a skeleton of length $m$. We say that
the triple $(a,\mathcal{S},a')$ is
{\em realised} if there are elements $a_2,a_3,\hdots ,a_m\in S$
such that
\[
\begin{array}{rcl}
as_{1}&=&a_{2}t_{1}\\
a_{2}s_{2}& =&a_{3}t_{2}\\
& \vdots & \\
a_{m}s_{m}&=&a't_{m}.
\end{array}
\]
We let $\mathbb{T}$ denote the set of realised triples.

A realised triple $(a,\mathcal{S},a')$ is
{\em witnessed by} $b,b_1,\hdots, b_m,b'$ where
$b,b_1,\hdots ,$\\ $ b_m,b'$ are elements of a left $S$-act $B$
if
\[\begin{array}{rcl}
b&=&s_1b_1\\
t_1b_1&=&s_2b_2\\
&\vdots &\\t_mb_m&=&b'.\end{array}\]

We know that
if $a,a'\in A$ and $b,b'\in B$,
where $A$ is a right $S$-act and $B$ a left
$S$-act,
then $a\otimes b=a^{\prime}\otimes b^{\prime}$ in $A\otimes B$
 if and only if there exists a tossing
  $\mathcal{T}$ from $(a,b)$ to $(a^{\prime
},b^{\prime})$ over $A$ and $B$, with skeleton $\mathcal{S},$ say.
If the equality $a\otimes b=a^{\prime}\otimes
b^{\prime}$ holds also in $(aS\cup a^{\prime}S)\otimes B,$ and is
determined by some tossing $\mathcal{T'}$ from $(a,b)$ to $(a^{\prime
},b^{\prime})$ over $aS\cup a^{\prime}S$ and $B$
 with skeleton $\s'=\mathcal{S(T')}$
then we say that
$\mathcal{T}'$ is a
{\em replacement tossing} for $\mathcal{T}$,
 $\s'$ is  a  \emph{replacement skeleton
for } $\mathcal{S}$ and (in case $A=S$)
triples $(a,\s',a')$ will be called
{\em replacement triples} for $(a,\s,a')$.

Note that, for any left $S$-act $B$, right $S$-act $A$ and
$(a,b),(a',b')\in A\times B$, if
$\mathcal{S}=(s_{1},t_{1},...,s_{m},t_{m})$ is the skeleton of a
tossing from
$(a,b)$ to $(a^{\prime},b^{\prime})$ over $A$ and $B$, then
$\gamma_{\mathcal{S}}(b,b')$ holds in $B$, where
$\gamma_{\mathcal{S}}$ is the  formula
\[\gamma_{\mathcal{S}}(y,y^{\prime})
\leftrightharpoons(\exists y_{1})(\exists y_{2}%
)\cdots(\exists y_{m})(y=s_{1}y_{1}\wedge t_{1}y_{1}=s_{2}y_{2}\wedge
\cdots\wedge t_{m}y_{m}=y^{\prime}).\]
For any $\s\in\mathbb{S}$ we define $\psi_{\mathcal{S}}$ to be the
sentence
\[\psi_{\mathcal{S}}\leftrightharpoons
(\forall y)(\forall y^{\prime})\neg\gamma_{\mathcal{S}}(y,y^{\prime}).\]

\begin{theorem}\label{wf}\cite{BG} The following conditions are equivalent for a monoid
$S$:

(1) the class $\mathcal{WF}$ is axiomatisable;

(2) the class $\mathcal{WF}$ is closed under ultraproducts;

(3) for every skeleton $\s$ over $S$ and $a,a'\in S$
there exist finitely many skeletons
$\s_1,\hdots ,\s_{\alpha(a,\mathcal{S},a')}$ over $S$,
such that for any weakly flat left
$S$-act $B$, if $(a,b),(a',b')\in S\times B$ are
connected by a tossing $\tee$ over $S$ and $B$ with $\s(\tee)=\s$,
then $(a,b)$ and $(a',b')$ are connected by a tossing
$\tee'$ over $aS\cup a'S$ and $B$ such that $\s(\tee')=\s_k$,
for some $k\in\{ 1\,\hdots ,\alpha(a,\mathcal{S},a')\}$.
\end{theorem}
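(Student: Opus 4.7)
The plan is to prove the cyclic implications $(1)\Rightarrow (2)\Rightarrow (3)\Rightarrow (1)$. The implication $(1)\Rightarrow (2)$ is immediate from \L os's theorem (Theorem~\ref{ultra}). So the real content lies in the other two arrows.

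For $(3)\Rightarrow (1)$, I would exhibit an explicit axiomatisation. Note that if $\s_k$ appears in condition (3) but $(a,\s_k,a')$ cannot be realised with intermediates in $aS\cup a'S$, it contributes nothing to any tossing over $aS\cup a'S$ and can be discarded; so assume each $\s_k$ in the list for $(a,\s,a')\in\mathbb{T}$ is such that $(a,\s_k,a')$ admits a realisation with intermediate elements in $aS\cup a'S$. For each $(a,\s,a')\in\mathbb{T}$, let
\[
\sigma_{a,\s,a'}\leftrightharpoons (\forall y)(\forall y')\Bigl(\gamma_{\s}(y,y')\to\bigvee_{k=1}^{\alpha(a,\s,a')}\gamma_{\s_k}(y,y')\Bigr),
\]
and set $\Sigma=\{\sigma_{a,\s,a'}:(a,\s,a')\in\mathbb{T}\}$. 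To see that $\Sigma$ (together with $\Pi$) axiomatises $\mathcal{WF}$, I would argue as follows. If $B$ is weakly flat and $\gamma_{\s}(b,b')$ holds in $B$ while $(a,\s,a')\in\mathbb{T}$, then splicing together the realisation in $S$ and the witnesses in $B$ produces a tossing over $S$ and $B$ with skeleton $\s$, so (3) supplies a tossing over $aS\cup a'S$ with some skeleton $\s_k$, whose witnesses yield $\gamma_{\s_k}(b,b')$ in $B$. Conversely, if $B\vDash\Sigma$ and $a\otimes b=a'\otimes b'$ in $S\otimes B$, any tossing over $S$ furnishes some $(a,\s,a')\in\mathbb{T}$ with $\gamma_{\s}(b,b')$, whence $\Sigma$ yields some $\gamma_{\s_k}(b,b')$; combining this with the realisation of $(a,\s_k,a')$ over $aS\cup a'S$ produces a tossing over $aS\cup a'S$, so $a\otimes b=a'\otimes b'$ in $(aS\cup a'S)\otimes B$, which is the weak flatness condition.

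For $(2)\Rightarrow (3)$, I would argue by contraposition. Suppose (3) fails at some $(a,\s,a')$. Then for every finite set $\mathcal{F}$ of skeletons, there is a weakly flat act $B_{\mathcal{F}}$ with elements $b_{\mathcal{F}},b_{\mathcal{F}}'$ satisfying $\gamma_{\s}(b_{\mathcal{F}},b_{\mathcal{F}}')$ in $B_{\mathcal{F}}$, yet no skeleton $\tee\in\mathcal{F}$ supports a tossing over $aS\cup a'S$ and $B_{\mathcal{F}}$ connecting $(a,b_{\mathcal{F}})$ to $(a',b_{\mathcal{F}}')$. Let $I$ be the set of all finite sets of skeletons and set $J_{\tee}=\{\mathcal{F}\in I:\tee\in\mathcal{F}\}$. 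These sets form a centred family (any $J_{\tee_1}\cap\cdots\cap J_{\tee_n}$ contains $\{\tee_1,\ldots,\tee_n\}$), and by Proposition~\ref{centfam} they extend to an ultrafilter $\Phi$ on $I$. Form $B=(\prod_{\mathcal{F}\in I}B_{\mathcal{F}})/\Phi$, which by (2) is weakly flat. Writing $b=(b_{\mathcal{F}})/\Phi$ and $b'=(b_{\mathcal{F}}')/\Phi$, \L os's theorem gives $\gamma_{\s}(b,b')$ in $B$. Combined with the realisation of $(a,\s,a')$ in $S$, this yields $a\otimes b=a'\otimes b'$ in $S\otimes B$. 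Weak flatness then produces a tossing over $aS\cup a'S$ and $B$ with some skeleton $\tee$, so $(a,\tee,a')$ is realisable over $aS\cup a'S$ and $\gamma_{\tee}(b,b')$ holds in $B$. By \L os, $\{\mathcal{F}:B_{\mathcal{F}}\vDash\gamma_{\tee}(b_{\mathcal{F}},b_{\mathcal{F}}')\}\in\Phi$. But $J_{\tee}\in\Phi$ and, for each $\mathcal{F}\in J_{\tee}$, the choice of $B_{\mathcal{F}}$ together with the realisability of $(a,\tee,a')$ over $aS\cup a'S$ forces $\gamma_{\tee}(b_{\mathcal{F}},b_{\mathcal{F}}')$ to fail in $B_{\mathcal{F}}$. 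This contradicts the previous membership in $\Phi$.

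The main obstacle is arranging the ultrafilter in $(2)\Rightarrow (3)$ so that the skeleton $\tee$ supplied by weak flatness of the ultraproduct $B$ lies in \emph{some} $\mathcal{F}\in\Phi$, since only for such $\mathcal{F}$ can failure-of-(3) be invoked against $\gamma_{\tee}(b_{\mathcal{F}},b_{\mathcal{F}}')$; indexing by finite subsets of skeletons and using the centred family $\{J_{\tee}\}$ is precisely what achieves this. A secondary subtlety, for $(3)\Rightarrow (1)$, is separating the purely semigroup-theoretic information (realisability of the triple over $aS\cup a'S$) from the $S$-act-theoretic information ($\gamma_{\s_k}(b,b')$) so that the first-order axioms $\sigma_{a,\s,a'}$ correctly capture weak flatness.
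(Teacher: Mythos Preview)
Your proposal is correct and follows essentially the same route as the paper: the same ultrafilter on finite sets of skeletons for $(2)\Rightarrow(3)$, and the same axiom scheme $\gamma_{\s}\to\bigvee_k\gamma_{\s_k}$ for $(3)\Rightarrow(1)$. The only cosmetic differences are that the paper separates the realised triples into $\mathbb{T}_1$ (never witnessed in a weakly flat act, giving the axiom $\psi_{\s}$) and $\mathbb{T}_2$, whereas you handle both uniformly via a possibly empty disjunction, and the paper unwinds the ultraproduct tossing coordinatewise where you invoke \L os directly.
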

\begin{proof} That (1) implies (2)
follows from Theorem~\ref{ultra} (\L os's theorem).

Suppose now that (2) holds but that (3) is false.
Let $J$ denote the set of all finite subsets of  $\mathbb{S}$ and
suppose that,
for some skeleton $\s_0=(s_1,t_1,\hdots ,s_m,t_m)\in\mathbb{S}$
and $a,a'\in S$, for
every $f\in J$, there is a weakly flat left $S$-act
$B_f$ and $b_f,b_f'\in B_f$ with the
pairs $(a,b_{f}),(a',b_f')\in S\times B_{f}$
connected by a tossing $\mathcal{T}_{f}$ with skeleton
$\mathcal{S}_{0}$,
 but such that
no tossing over $aS\cup a'S$ and
$B_{f}$ connecting $(a,b_{f})$ and $(a',b_f')$ has a
skeleton belonging to the set $f$.

For each $\mathcal{S}\in\mathbb{S}$ let $J_{\mathcal{S}}=\left\{  f\in
J:\mathcal{S}\in f\right\}  .$ Each intersection of finitely many of the sets
$J_{\mathcal{S}}$ is non-empty (because $\mathbb{S}$ is infinite), so there
exists an ultrafilter $\Phi$ over $J$ such that each $J_{\mathcal{S}}$
($\mathcal{S}\in\mathbb{S}$) belongs to $\Phi.$
Notice that
$a\otimes\underline{b}
=a'\otimes\underline{b'}$
in $S\otimes B$
where $B=\prod_{f\in J}B_{f}$, $\underline{b}(f)=b_f$ and
$\underline{b'}(f)=b'_f$,
and that this equality is determined by a tossing over
$S$ and $B$ having skeleton
$\mathcal{S}_{0}.$
 It follows that the equality $a\otimes
(\underline{b}\, /\Phi)=a'\otimes(\underline{b'}\, /\Phi)$
holds also in $S\otimes U$,
 where $U=(\prod_{f\in J}B_{f})/{\Phi},$ and
is also
determined by a tossing over $S$ and $U$
 with skeleton $\mathcal{S}_{0}.$

By our assumption,
 $U$ is weakly flat,
so that $(a,\underline{b}\, /\Phi)$
and
$(a',\underline{b'}\, /\Phi)$
are connected via a replacement tossing $\tee'$ over
$aS\cup a'S$ and $U$, say
\[%
\begin{array}
{rclrcl}%
& & & \underline{b}\, /\Phi&=&u_{1}\underline{d_1}\, /\Phi\\
au_{1}&=&c_2v_{1}   & v_{1}\underline{d_1}\, /\Phi&=&u_{2}%
\underline{d_2}\,/\Phi\\
c_2u_{2}&=&c_3v_{2}   & v_{2}\underline{d_2}\, /\Phi&=&u_{3}
\underline{d_3}\, /\Phi\\
&\vdots&&& \vdots&  \\
c_nu_{n}&=&a'v_{n}   & v_{n}\underline{d_n}\, /\Phi&
=&\underline{b'}\, /\Phi,
\end{array}
\]
where $\underline{d_i}(f)=d_{i,f}$ for any $f\in J$ and
$i\in\{ 1,\hdots ,n\}$. We put $\s'=\s(\tee')$.

 As $\Phi$ is closed under finite intersections, there exists $D\in
\Phi$ such that
\[
\begin{array}
{rclrcl}%
&& & b_{f}&=&u_{1}d_{1,f}\\
au_{1}&=&c_2v_{1}   & v_{1}d_{1,f}&=&u_{2}d_{2,f}\\
c_2u_{2}&=&c_3v_{2}   & v_{2}d_{2,f}&=&u_{3}d_{3,f}\\
& \vdots&&&\vdots & \\
c_nu_{n}&=&a'v_{n} &   v_{n}d_{n,f}&=&b_{f}^{\prime}%
\end{array}\]
whenever $f\in D.$
 Now, suppose $f\in D\cap J_{\mathcal{S}^{\prime}}.$ Then,
from the tossing just considered,
we see that $\mathcal{S}^{\prime}$ is a
replacement skeleton for skeleton $\mathcal{S}_{0},$
connecting $(a,b_f)$ and $(a',b_f')$ over $aS\cup a'S$
and $B_f$. But $\s'\in f$, contradicting the choice of
$(a,b_f)$ and $(a',b_f')$. Thus (3) holds.

Finally, suppose that  (3) holds.
  We  introduce a sentence
corresponding to each element of $\mathbb{T}$
 in such a way that the resulting
set of sentences axiomatises the class $\mathcal{WF}.$

We let $\mathbb{T}_1$ be the set of realised triples that
are not witnessed in any weakly flat left $S$-act $B$, and put
$\mathbb{T}_2=\mathbb{T}\setminus\mathbb{T}_1$.
For $T=(a,\s,a')\in\mathbb{T}_1$ we let
$\psi_T$ be the sentence $\psi_{\mathcal{S}}$ defined before
the statement of this theorem. If $T=(a,\s,a')\in\mathbb{T}_2$, then
$\s$ is the skeleton of some scheme joining $(a,b)$
to
$(a',b')$ over $S$ and some weakly flat left $S$-act $B$. By our
assumption (3), there is a finite list of replacement
skeletons
 $\mathcal{S}_{1},...,\mathcal{S}_{\alpha(T)}$. Then,
for each $k\in\left\{  1,...,\alpha(T)\right\}  ,$
 if $\mathcal{S}%
_{k}=(u_{1},v_{1},...,u_{h},v_{h}),$ there exist a weakly flat
left $S$-act $C_k$ elements
$c,c^{\prime},c_{1},...,c_{h}\in C_{k}$, and elements
$q_{2},...,q_{h}\in aS\cup a'S$ such that

\begin{equation}%
\begin{array}
{rclrcl}%
& && c&=&u_{1}c_1\\
au_{1}&=&q_2v_{1}  & v_{1}c_1&=&u_{2}c_2\\
q_2u_{2}&=&q_3v_{2}   & v_{2}c_2&=&u_{3}c_3\\
&\vdots&&& \vdots&  \\
q_hu_{h}&=&a'v_{h}   & v_{h}c_h&=&c'.%
\end{array}
\tag{i}%
\end{equation}
For each $k,$ we fix such a list $q_{2},...,q_{h}$ of elements, for future
reference, and let
$\varphi_T$ be the sentence
\[\varphi_T\leftrightharpoons (\forall y)(\forall y^{\prime})(\gamma_{\mathcal{S}%
}(y,y^{\prime})\rightarrow\gamma_{\mathcal{S}_{1}}(y,y^{\prime})\vee\cdots
\vee\gamma_{\mathcal{S}_{\alpha(T)}}(y,y^{\prime})).
\]
 Let
\[\Sigma_{\mathcal{WF}}=\left\{  \psi_T:T\in\mathbb{T}_1\right\}\cup
\left\{ \varphi_T:T\in\mathbb{T}_2\right\}.\] We claim that
 $\Sigma_{\mathcal{WF}}$ axiomatises $\mathcal{WF}$.

Suppose first that $D$ is any weakly flat left $S$-act.
Let $T\in\mathbb{T}_1$. Then $T=(a,\s,a')$ is
a realised triple. Since $T$ is not witnessed in {\em any} weakly
flat left $S$-act, $T$ is certainly not witnessed in $D$
so that $D\vDash \psi_T$.

On the other hand, for
 $T=(a,\s,a')\in\mathbb{T}_{2},$
where $\s=(s_1,t_1,\hdots ,s_m,t_m)$, if $d,d'\in D$
are such that $\gamma_{\mathcal{S}}(d,d')$ is true,
then there are elements $d_1,\hdots d_m\in D$ such that
\[\begin{array}{rcl} d&=&s_1d_1\\
t_1d_1&=&s_2d_2\\
&\vdots&\\
t_md_m&=&d,'\end{array}\]
which together with the fact that $T$ is a realised triple,
gives that $(a,d)$ is connected to $(a',d')$ over $S$ and
$D$ via a tossing with skeleton $\s$. Because $D$ is weakly flat,
$(a,d)$ and $(a',d')$ are connected
over $aS\cup a'S$ and $D$, and by assumption (3),
we can take the tossing to have skeleton one of
$\s_1,\hdots ,\s_{\alpha(T)}$, say $\s_k$. Thus
$D\vDash \gamma_{\mathcal{S}_k}(d,d')$ and it follows that
$D\vDash \varphi_T$. Hence $D$ is a model of $\Sigma_{\mathcal{WF}}$.

Conversely, we show that every model of $\Sigma_{\mathcal{WF}}$ is weakly flat.
Let $C\vDash \Sigma_{\mathcal{WF}}$ and suppose that $a,a'\in S,c,c'\in C$ and we
have a tossing
\begin{equation*}%
\begin{array}
{rclrcl}%
&& & c&=&s_{1}c_1\\
as_{1}&=&a_2t_{1}   & t_{1}c_1&=&s_{2}c_2\\
a_2s_{2}&=&a_3t_{2}   & t_{2}c_2&=&s_{3}c_3\\
&\vdots&&& \vdots & \\
a_ms_{m}&=&a't_{m}   & t_{m}c_m&=&c'%
\end{array}
\end{equation*}
with skeleton $\s=(s_1,t_1,\hdots ,s_m,t_m)$
over $S$ and $C$. Then the triple $T=(a,\s,a')$ is
realised, so that $T\in\mathbb{T}$. Since
$\gamma_{\mathcal{S}}(c,c')$ holds, $C$
cannot be a model of  $\psi_{\mathcal{T}}$. Since
$C\vDash\Sigma_{\mathcal{WF}}$ it follows that $T\in\mathbb{T}_2$.
But then $\varphi_T$ holds in $C$ so that
for some $k\in\{ 1\hdots ,\alpha(T)\}$ we have
that $\gamma_{\mathcal{S}_k}(c,c')$ is true. Together with
the equalities on the left hand side of (i) we have a tossing
over $aS\cup a'S$ and $C$ connecting $(a,c)$ to $(a',c')$.
Thus $C$ is weakly flat.
\end{proof}

\section{Axiomatisability of $\mathcal{F}$}

We now turn our attention to the class $\mathcal{F}$ of flat left
$S$-acts. The results of this section are again taken from \cite{BG}. First we consider
 the {\em finitely presented flatness lemma} of \cite{BG}, which is crucial to
our
arguments.

Let $\mathcal{S}=(s_1,t_1,\hdots ,s_m,t_m)\in\mathbb{S}$
be a skeleton. We let $F^{m+1}$ be the free right $S$-act
\[xS\amalg x_{2}S\amalg...\amalg x_{m}S\amalg
x^{\prime}S\]
and let $\rho_{\mathcal{S}}$ be the congruence on $F^{m+1}$
generated
by
\[\left\{  (xs_{1},x_{2}t_{1}),(x_{2}s_{2},x_{3}t_{2}),...,(x_{m-1}s_{m-1}%
,x_{m}t_{m-1}),(x_{m}s_{m},x^{\prime}t_{m})\right\} .\]
We denote the $\rho_{\mathcal{S}}$-class
of $w\in F^{m+1}$ by $[w]$.
If $B$ is a left $S$-act and $b,b_1,\hdots ,b_m,b'\in B$
are such that
\[b=s_1b_1,t_1b_1=s_2b_2,\hdots, t_mb_m=b'\]
then the tossing
\[%
\begin{array}
[c]{rclrcl}%
& & & b&=&s_{1}b_{1}\\
\left[ x\right]s_{1}&=&\left [x_2\right]t_{1} &   t_{1}b_{1}&=&s_{2}b_{2}\\
\left[ x_2\right]s_{2}&=&\left[ x_3\right]t_{2}   & t_{2}b_{2}&=&s_{3}b_{3}\\
&\vdots &&  & \vdots&\\
\left[ x_{m-1}\right]s_{m-1}&=&\left[ x_m\right]t_{m-1}   & t_{m-1}b_{m-1}&=&s_{m}b_{m}\\
\left[ x_m\right]s_{m}&=&\left[ x^{\prime}\right]t_{m}   & t_{m}b_{m}&=&b^{\prime}.
\end{array}
\]
over $F^{m+1}/\rho_{\mathcal{S}}$ and $B$ is called a {\em standard tossing}
with skeleton $\mathcal{S}$ connecting $([x],b)$ to $([x'],b')$.

We refer the reader to \cite{BG} for the proof of the following lemma.

\begin{lemma}
\label{fplemma}\cite{BG}
  The following are
equivalent for a left $S$-act $B$:

(1) $B$ is flat;

(2) $-\otimes B$ preserves all embeddings
of $A$ in $C$,  where
$A$ is a finitely generated subact
of a finitely presented right $S$-act $C$;

(3) $-\otimes B$ preserves the embedding
of $\left[  x\right] S\cup\left[  x^{\prime}\right]  S$
into $F^{m+1}/\rho_{\mathcal{S}}$, for
all skeletons $\mathcal{S}$;

(4) if $([x],b)$ and $([x'],b')$ are connected by a
standard
tossing
over $F^{m+1}/\rho_{\mathcal{S}}$ and $B$ with skeleton $\mathcal{S}$, then
they are connected by a tossing over
$\left[  x\right] S\cup\left[  x^{\prime}\right] S$  and $B$.
\end{lemma}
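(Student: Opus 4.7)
The plan is to establish the cycle (1)$\Rightarrow$(2)$\Rightarrow$(3)$\Rightarrow$(4)$\Rightarrow$(1). The implication (1)$\Rightarrow$(2) is immediate since (2) is the restriction of the definition of flatness to a particular class of embeddings. For (2)$\Rightarrow$(3), observe that the right $S$-act $F^{m+1}/\rho_{\mathcal{S}}$ is finitely presented by construction: it has $m+1$ generators and finitely many defining relations coming from the skeleton $\mathcal{S}$. The subact $[x]S\cup[x']S$ is finitely generated (by two elements), so the embedding in (3) is exactly of the type considered in (2).

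For (3)$\Rightarrow$(4), suppose $([x],b)$ and $([x'],b')$ are connected by a standard tossing with skeleton $\mathcal{S}$ over $F^{m+1}/\rho_{\mathcal{S}}$ and $B$. This gives $[x]\otimes b=[x']\otimes b'$ in $F^{m+1}/\rho_{\mathcal{S}}\otimes B$. By (3), the natural map $([x]S\cup[x']S)\otimes B\to F^{m+1}/\rho_{\mathcal{S}}\otimes B$ induced by the inclusion is injective, so the equality already holds in $([x]S\cup[x']S)\otimes B$. Lemma~\ref{tossing} then supplies a tossing over $[x]S\cup[x']S$ and $B$ connecting $([x],b)$ to $([x'],b')$, which is exactly (4).

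The main step is (4)$\Rightarrow$(1); it relies on the idea that every tossing over an arbitrary right $S$-act factors through a standard tossing. Let $A\hookrightarrow C$ be an embedding of right $S$-acts and suppose $a\otimes b=a'\otimes b'$ in $C\otimes B$ for some $a,a'\in A$ and $b,b'\in B$. By Lemma~\ref{tossing} this equality is determined by a tossing $\mathcal{T}$ over $C$ and $B$ with some skeleton $\mathcal{S}=(s_1,t_1,\ldots,s_m,t_m)$, involving intermediate elements $a_2,\ldots,a_m\in C$ and $b_1,\ldots,b_m\in B$. Using the universal property of the free right $S$-act $F^{m+1}=xS\amalg x_2S\amalg\cdots\amalg x_mS\amalg x'S$, the assignment $x\mapsto a$, $x_i\mapsto a_i$, $x'\mapsto a'$ extends uniquely to an $S$-morphism $\phi:F^{m+1}\to C$; the defining equalities of $\mathcal{T}$ ensure that $\phi$ respects the generating relations of $\rho_{\mathcal{S}}$ and hence induces an $S$-morphism $\overline{\phi}:F^{m+1}/\rho_{\mathcal{S}}\to C$. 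Under $\overline{\phi}$, the standard tossing from $([x],b)$ to $([x'],b')$ with skeleton $\mathcal{S}$ maps to the original tossing $\mathcal{T}$, so $[x]\otimes b=[x']\otimes b'$ holds in $F^{m+1}/\rho_{\mathcal{S}}\otimes B$.

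Now invoke (4): there is a tossing connecting $([x],b)$ and $([x'],b')$ over $[x]S\cup[x']S$ and $B$. Applying $\overline{\phi}$ to each right-hand element of this tossing yields a tossing over $\overline{\phi}([x]S\cup[x']S)=aS\cup a'S$ and $B$ connecting $(a,b)$ to $(a',b')$. Since $A$ is a subact of $C$ containing $a$ and $a'$, we have $aS\cup a'S\subseteq A$, so this tossing lives over $A$ and $B$, whence $a\otimes b=a'\otimes b'$ in $A\otimes B$ by Lemma~\ref{tossing}. This proves $B$ is flat. The principal obstacle is the passage from the universal situation in (4) to an arbitrary embedding, handled precisely by the factorisation $\overline{\phi}$ above.
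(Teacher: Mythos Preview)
The paper does not actually prove this lemma; it explicitly refers the reader to \cite{BG} for the proof. Your argument is correct and follows what is the natural (and essentially the only reasonable) route: the trivial implications (1)$\Rightarrow$(2)$\Rightarrow$(3), the translation (3)$\Rightarrow$(4) via Lemma~\ref{tossing}, and the key step (4)$\Rightarrow$(1) by factoring an arbitrary tossing through the universal object $F^{m+1}/\rho_{\mathcal{S}}$ and then pushing the replacement tossing forward along $\overline{\phi}$. One small wording issue: when you write ``applying $\overline{\phi}$ to each right-hand element'', you mean the elements coming from the \emph{right $S$-act} side of the tossing (which in the paper's displays sit in the \emph{left} column); the intent is clear from context, but the phrasing could confuse a reader.
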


The construction of $F^{m+1}/\rho_{\mathcal{S}}$ enables us
to observe that for any left $S$-act $B$ and any $b,b^{\prime}\in B$,
 a skeleton
$\mathcal{S}=(s_{1},t_{1},...,s_{m},t_{m})$ is the skeleton of a tossing from
$(a,b)$ to $(a^{\prime},b^{\prime})$ over $A$ and $B$ for some
 $A$ and
some $a,a^{\prime}\in A$ if and only if
$\gamma_{\mathcal{S}}(b,b')$ holds in $B$, where
$\gamma_{\mathcal{S}}$ is the sentence defined before
Theorem~\ref{wf}. We also note that if $(a,b),(a',b')$
are connected via a tossing with skeleton $\s$, then
$([x],b),([x'],b)\in F^{m+1}
/\rho_{\mathcal{S}}$ are connected via the standard tossing with skeleton $\s$.

\begin{theorem}\label{flatsareaxiomatisable}\cite{BG}
 The following conditions are equivalent for a monoid
$S$:

(1) the class $\mathcal{F}$ is axiomatisable;

(2) the class $\mathcal{F}$ is closed under formation of ultraproducts;

(3) for every skeleton $\mathcal{S}$ over $S$ there exist finitely many
replacement
skeletons $\mathcal{S}_{1},...,\mathcal{S}_{\alpha(\mathcal{S})}$
over $S$ such that, for any
right $S$-act $A$ and any flat act
left $S$-act $B$, if $(a,b),(a^{\prime},b^{\prime})\in
A\times B$ are connected by a tossing $\mathcal{T}$ over $A$ and $B$ with
$\mathcal{S(T)=S},$ then $(a,b)\,$and $(a^{\prime},b^{\prime})$ are connected
by a tossing $\mathcal{T}^{\prime}$ over $aS\cup a^{\prime}S$ and $B$ such
that $\mathcal{S(T}^{\prime}\mathcal{)=S}_{k},$ for some $k\in\left\{
1,...,\alpha(\mathcal{S})\right\}  .$
\end{theorem}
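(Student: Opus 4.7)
The plan is to follow the three-step template of Theorem~\ref{wf}, cycling $(1) \Rightarrow (2) \Rightarrow (3) \Rightarrow (1)$, but with Lemma~\ref{fplemma} replacing the direct use of the definition of weak flatness. A key structural difference to note from the outset is that in (3) the replacement skeletons depend only on $\mathcal{S}$, not on any additional data $a, a'$: this reflects the fact that, by Lemma~\ref{fplemma}, flatness can be tested in the universal setting where $A = F^{m+1}/\rho_\mathcal{S}$, $a = [x]$, $a' = [x']$ and the tossing in question is the standard one.

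The implication $(1) \Rightarrow (2)$ is a direct application of \L os's theorem (Theorem~\ref{ultra}). For $(2) \Rightarrow (3)$, I would argue by contradiction, closely paralleling the corresponding step of Theorem~\ref{wf}. If (3) fails for some skeleton $\mathcal{S}_0 = (s_1, t_1, \dots, s_m, t_m)$, then for each $f$ in the set $J$ of finite subsets of $\mathbb{S}$ I may choose a right $S$-act $A_f$, a flat left $S$-act $B_f$, and elements $a_f, a_f' \in A_f$, $b_f, b_f' \in B_f$ such that $(a_f, b_f)$ and $(a_f', b_f')$ are connected by a tossing over $A_f$ and $B_f$ with skeleton $\mathcal{S}_0$, but no replacement tossing over $a_f S \cup a_f' S$ and $B_f$ has skeleton belonging to $f$. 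The sets $J_\mathcal{S} = \{f \in J : \mathcal{S} \in f\}$ have the finite intersection property, so extend to an ultrafilter $\Phi$ on $J$. Forming $A = \prod_f A_f / \Phi$ and $B = \prod_f B_f / \Phi$, the componentwise tossings assemble into a tossing over $A$ and $B$ with skeleton $\mathcal{S}_0$ connecting $(\underline{a}, \underline{b})$ to $(\underline{a'}, \underline{b'})$. By (2), $B$ is flat, so there is a replacement tossing over $\underline{a} S \cup \underline{a'} S$ and $B$ with some skeleton $\mathcal{S}'$. Using that $\Phi$ is closed under finite intersections, this tossing descends, on a set $D \in \Phi$, to replacement tossings over $a_f S \cup a_f' S$ and $B_f$ with skeleton $\mathcal{S}'$. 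Picking any $f \in D \cap J_{\mathcal{S}'}$ yields $\mathcal{S}' \in f$, a contradiction.

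For $(3) \Rightarrow (1)$, I would explicitly exhibit the axiomatising set of sentences. For each $\mathcal{S} \in \mathbb{S}$, let $\mathcal{S}_1, \dots, \mathcal{S}_{\alpha(\mathcal{S})}$ be the replacement skeletons from (3); discarding any that do not genuinely arise, I may assume each $\mathcal{S}_k$ is realised by an actual replacement tossing over $[x]S \cup [x']S$ in $F^{m+1}/\rho_\mathcal{S}$, and I fix once and for all witnessing elements of $[x]S \cup [x']S$ as in equation~(i) of the proof of Theorem~\ref{wf}. Define
\[
\varphi_\mathcal{S} \leftrightharpoons (\forall y)(\forall y')\bigl(\gamma_\mathcal{S}(y, y') \to \gamma_{\mathcal{S}_1}(y, y') \vee \dots \vee \gamma_{\mathcal{S}_{\alpha(\mathcal{S})}}(y, y')\bigr),
\]
interpreting the empty disjunction as false, so that when $\alpha(\mathcal{S}) = 0$ this collapses to the sentence $\psi_\mathcal{S}$. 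Set $\Sigma_\mathcal{F} = \{\varphi_\mathcal{S} : \mathcal{S} \in \mathbb{S}\}$. If $D$ is flat and $\gamma_\mathcal{S}(d, d')$ holds, then the standard tossing with skeleton $\mathcal{S}$ over $F^{m+1}/\rho_\mathcal{S}$ and $D$ connecting $([x], d)$ to $([x'], d')$ has, by Lemma~\ref{fplemma}(4) and (3), a replacement over $[x]S \cup [x']S$ and $D$ with skeleton one of the $\mathcal{S}_k$; hence $\gamma_{\mathcal{S}_k}(d, d')$ holds and $D \vDash \varphi_\mathcal{S}$. Conversely, if $C \vDash \Sigma_\mathcal{F}$ and a standard tossing with skeleton $\mathcal{S}$ connects $([x], c)$ and $([x'], c')$ over $F^{m+1}/\rho_\mathcal{S}$ and $C$, then $\gamma_\mathcal{S}(c, c')$ forces $\gamma_{\mathcal{S}_k}(c, c')$ for some $k$, and combining those witnesses with the pre-selected horizontal elements yields a tossing over $[x]S \cup [x']S$ and $C$; by Lemma~\ref{fplemma}(4), $C$ is flat.

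The main subtlety I expect is in $(2) \Rightarrow (3)$: because the $A_f$ are arbitrary right $S$-acts rather than copies of $S$, one must verify carefully that a tossing over $\underline{a} S \cup \underline{a'} S$ and $B$ in the ultraproduct actually pulls back to tossings over $a_f S \cup a_f' S$ and $B_f$ on a filter-large index set. This boils down to the observation that every element of $\underline{a} S \cup \underline{a'} S$ has the form $\underline{a} \cdot p$ or $\underline{a'} \cdot p$ for some $p \in S$, so that on a set in $\Phi$ it is represented by an element of $a_f S \cup a_f' S$; once this is in place the descent proceeds exactly as in Theorem~\ref{wf}.
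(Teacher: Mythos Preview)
Your proof is correct and follows essentially the same approach as the paper. The only noteworthy difference is that in $(2)\Rightarrow(3)$ you form the ultraproduct of the $A_f$ as well as of the $B_f$, whereas the paper keeps the full product $\prod_f A_f$ on the right-act side and takes the ultraproduct only of the $B_f$; both variants work, and your closing paragraph correctly handles the extra filter-large intersections that your choice introduces.
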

\begin{proof}
The implication (1) implies (2) is clear from Theorem~\ref{ultra}.

The proof of (2) implies (3) follows the pattern set by that
of Theorem~\ref{wf}, in particular, $J$, the sets
$J_{\mathcal{S}}$ for $\s\in\mathbb{S}$ and the ultrafilter
$\Phi$ are defined as in that theorem.

Suppose that $\mathcal{F}$ is
closed under formation of ultraproducts,
but that assertion (3) is false. Let
 $J\ $denote the
family of all finite subsets of $\mathbb{S}.$ Suppose $\mathcal{S}_{0}%
=(s_{1},t_{1},...,s_{m},t_{m})\in\mathbb{S}$ is such that, for every $f\in J$,
there exist
a right $S$-act $A_f$,
a flat left $S$-act $B_{f}$, and
pairs $(a_{f},b_{f}),$ $(a_{f}^{\prime},b_{f}^{\prime})\in A_{f}\times B_{f}$
such that $(a_{f},b_{f})$ and $(a_{f}^{\prime},b_{f}^{\prime})$ are connected
over $A_{f}$ and $B_{f}$ by a tossing $\mathcal{T}_{f}$ with skeleton
$\mathcal{S}_{0},$ but no
replacement tossing over $a_{f}S\cup a_{f}^{\prime}S$ and
$B_{f}$ connecting $(a_{f},b_{f})$ and $(a_{f}^{\prime},b_{f}^{\prime})$ has a
skeleton belonging to the set $f.$
 Note that $\underline
{a}\otimes\underline{b}=\underline{a}^{\prime}\otimes\underline{b}^{\prime}$
in $A\otimes B,$ where
for each $f\in J$,
$\underline{a}(f)=a_f,\underline{b}(f)=b_f$, $A=\prod_{f\in J}A_{f}$ and $B=\prod_{f\in J}B_{f},$ and that this equality is determined by a tossing over
$A$ and $B$ (the ``product'' of the $\mathcal{T}_{f}$'s) having skeleton
$\mathcal{S}_{0}.$ If follows that the equality
 $\underline{a}\otimes
(\underline{b}\, /\Phi)=
\underline{a}^{\prime}\otimes(\underline{b'}\, /\Phi)$
holds also in $A\otimes U,$ where
$U=(\prod_{f\in J}B_{f})/{\Phi},$ and
is determined by a tossing over $A$ and $U$ with
skeleton $\mathcal{S}_{0}.$
Because $U$ is flat, $\mathcal{S}_{0}$ has a replacement skeleton
$\mathcal{S(T}^{\prime})=\mathcal{S}^{\prime}=(u_{1},v_{1},...,u_{n},v_{n})$,
where $\mathcal{T}^{\prime}$ is a tossing
\[%
\begin{array}
[c]{rclrcl}
&&  & \underline{b}\, /\Phi&=&u_{1}\underline{d_1}\, /\Phi\\
\underline{a}u_{1}&=&\underline{c_2}v_{1}
& v_{1}\underline{d_1}\, /\Phi&=&u_{2}\underline{d_2}\, /\Phi\\
\underline{c_2}u_{2}&=&\underline{c_3}v_{2}
& v_{2}\underline{d_2}\, /\Phi&=&u_{3}\underline{d_3}\, /\Phi\\
&\vdots& &&\vdots& \\
\underline{c_n}u_{n}&=&\underline{a}^{\prime}v_{n}
& v_{n}\underline{d_n}\, /\Phi
&=&\underline{b'}\, /\Phi%
\end{array}
\]
where for $2\leq j\leq n$ and $f\in J$ we have
$\underline{c_j}(f)=c_{j,f}\in a_{f}S\cup a_{f}^{\prime}S\ $ and
for $1\leq j\leq n$ and $f\in J$ we have
 $\underline{d_j}(f)=d_{j,f}\in
B_{f}.$ As $\Phi$ is closed under finite intersections, there exists $D\in
\Phi$ such that
\[
\begin{array}
[c]{rclrcl}
&&  & b_f&=&u_{1}d_{1,f}\\
a_fu_{1}&=&c_{2,f}v_{1}  & v_{1}d_{1,f}&=&u_{2}%
d_{2,f}\\
c_{2,f}u_{2}&=&c_{3,f}v_{2}   & v_{2}d_{2,f}&=&u_{3}d_{3,f}\\
&\vdots& &&\vdots& \\
c_{n,f}u_{n}&=&a_f'v_{n}   & v_{n}d_{n,f}
&=&b_f'
\end{array}\]
whenever $f\in D.$ Now, suppose $f\in D\cap J_{\mathcal{S}^{\prime}}.$ Then,
from the tossing just considered, we see that $\mathcal{S}^{\prime}$ is a
replacement skeleton for skeleton $\mathcal{S}_{0},$ the latter being the
skeleton of tossing $\mathcal{T}_{f}$ connecting the pairs $(a_{f,}b_{f})$ and
$(a_{f}^{\prime},b_{f}^{\prime})$ over $A_{f}$ and $B_{f}.$ But because
$\mathcal{S}^{\prime}$ belongs to $f,$ this is impossible.
This completes the proof that (2) implies (3).

Finally, we show that (3) implies (1). Suppose every skeleton
requires only finitely many replacement skeletons, as made precise in the
statement of (3) above. We aim to use this condition to construct a set of
axioms for $\mathcal{F}.$

 Let $\mathbb{S}_{1}$ denote
the set of all elements of $\mathbb{S}$ that are \emph{not} the skeleton
of
any tossing connecting two elements of $A\times B,$ where $A$ ranges over all
right $S$-acts and $B$ over all flat left $S$-acts, and let $\mathbb{S}%
_{2}=\mathbb{S}\backslash\mathbb{S}_{1}.$

For $\mathcal{S}\in\mathbb{S}_{2},$ the comments preceding the theorem
yield that $\mathcal{S}$ is the
skeleton of
a standard tossing joining $(\left[  x\right]  ,b)$
 to $(\left[  x^{\prime}\right] ,b^{\prime})$
 over $F^{m+1}/\rho_{\mathcal{S}}$ and $B$ where $B$ is flat,
$b,b^{\prime}\in B,$
and $F^{m+1}$ and $\rho_{\mathcal{S}}$
are as defined in Lemma \ref{fplemma}.

Let $\mathcal{S}_{1},...,\mathcal{S}_{\alpha(\mathcal{S})}$ be a set of
replacement skeletons for $\mathcal{S}$ as provided by assertion (3)
and
without loss of generality
suppose that replacements for standard tossings may be
chosen to have skeletons
from $\{ \mathcal{S}_1,\hdots ,\mathcal{S}_{\alpha'(\mathcal{S})}\}$,
where $\alpha'(\mathcal{S})\leq \alpha(\mathcal{S})$. Hence
for each $k\in\left\{  1,...,\alpha'(\mathcal{S})\right\}$ if
 $\mathcal{S}%
_{k}=(u_{1},v_{1},...,u_{h},v_{h})$, there exist a flat left
$S$-act  $C_k$, elements
$c,c^{\prime},c_{1},...,c_{h}\in C_{k}$, and elements $p_{2},...,p_{h}%
\in\left[  x\right] S\cup\left[  x^{\prime}\right] S$ such that
\begin{equation}%
\begin{array}
[c]{rclrcl}
& && c&=&u_{1}c_{1}\\
\left[  x\right]  u_{1}&=&p_{2}v_{1}
  & v_{1}c_{1}%
&=&u_{2}c_{2}\\
p_{2}u_{2}&=&p_{3}v_{2}   & v_{2}c_{2}&=&u_{3}c_{3}\\
& \vdots& &&\vdots& \\
p_{h}u_{h}&=&\left[  x^{\prime}\right]  v_{h} &
v_{h}c_{h}&=&c^{\prime}.
\end{array}
\tag{ii}%
\end{equation}
For each $k,$ we fix such a list $p_{2},...,p_{h}$ of elements, for future
reference, and define $\varphi_{\mathcal{S}}$ to be the sentence
\[\varphi_{\mathcal{S}}
\leftrightharpoons(\forall y)(\forall y^{\prime})(\gamma_{\mathcal{S}%
}(y,y^{\prime})\rightarrow\gamma_{\mathcal{S}_{1}}(y,y^{\prime})\vee\cdots
\vee\gamma_{\mathcal{S}_{\alpha'(\mathcal{S})}}(y,y^{\prime})).
\]
Let
\[\Sigma_{\mathcal F}=\left\{  \psi
_{\mathcal{S}}:\mathcal{S}\in\mathbb{S}_{1}\right\}  \cup\left\{
\varphi_{\mathcal{S}}:\mathcal{S}\in\mathbb{S}_{2}\right\}  .\]
 We claim that $\Sigma_{\mathcal F}$ axiomatises
$\mathcal{F}$.

Suppose first that $D$ is any flat left $S$-act.

For $\mathcal{S}\in\mathbb{S}_{1},$ if $D$ did not satisfy $\psi
_{\mathcal{S}},$ then we would have $\gamma_{\mathcal{S}}(d,d^{\prime})$ for
some $d,d^{\prime}\in D,$ and so, by the comments
preceding the statement of this theorem, $\mathcal{S}$ is the skeleton of some tossing joining $(a,d)$ to
$(a^{\prime},d^{\prime})$ over some right $S$-act
$A$ and flat left $S$-act $D,$ contrary to the
fact that $\mathcal{S}\in\mathbb{S}_{1}.$ Therefore, $D\vDash
\psi_{\mathcal{S}}.$

Now take any $\mathcal{S}\in\mathbb{S}_{2},$ and suppose $d,d^{\prime}\in D$
are such that $D$ satisfies $\gamma_{\mathcal{S}}(d,d^{\prime}).$ Then, as
noted earlier, $(\left[  x\right]  ,d)$ and $(\left[
x^{\prime}\right]  ,d^{\prime})$ are joined over
$F^{m+1}/\rho_{\mathcal{S}}$ and $D$ by a  standard tossing with skeleton $\mathcal{S},$ and
therefore, by assumption, by a tossing over $\left[  x\right]
S\cup\left[  x^{\prime}\right]  S$ and $D$
with skeleton $\mathcal{S}_{k}$ for some $k\in\left\{  1,...,\alpha'
(\mathcal{S})\right\}  .$ It is now clear that $\gamma_{\mathcal{S}_{k}%
}(d,d^{\prime})$ holds in $D,$ as required. We have now shown that
 $D\vDash\Sigma_{\mathcal F}.$

Finally, we show that a left $S$-act $C$ that satisfies
$\Sigma_{\mathcal F}$ must be
flat.
We need only show that condition (4) of Lemma \ref{fplemma}
holds for $C$.
Let $\s\in\mathbb{S}$ and
suppose we have a standard tossing
\begin{equation}%
\begin{array}
[c]{rclrcl}%
&  && c&=&s_{1}c_{1}\\
\left[  x\right] s_{1}&=&\left[  x_{2}\right] t_{1}   & t_{1}c_{1}&=&s_{2}c_{2}\\
&\vdots&& &\vdots & \\
\left[  x_{m}\right]  s_{m}&=&\left[  x^{\prime}\right] t_{m}   & t_{m}c_{m}&=&c^{\prime}%
\end{array}
\tag{iii}%
\end{equation}
over $F^{m+1}/\rho_{\mathcal{S}}$ and $C.$ If $\mathcal{S}$ belonged to
$\mathbb{S}_{1},$ then $C$ would satisfy the sentence $(\forall y)(\forall
y^{\prime})\lnot\gamma_{\mathcal{S}}(y,y^{\prime}),$ and so $\lnot
\gamma_{\mathcal{S}}(c,c^{\prime})$ would hold, contrary to the sequence of
equalities in the right-hand column of (ii). Therefore, $\mathcal{S}\ $belongs
to $\mathbb{S}_{2}.$ Because $C$ satisfies $\varphi_{\mathcal{S}}$ and
because $\gamma_{\mathcal{S}}(c,c^{\prime})$ holds, it follows that
$\gamma_{\mathcal{S}_{k}}(c,c^{\prime})$ holds for some $k\in\left\{
1,...,\alpha'(\mathcal{S})\right\}  .$ If $\mathcal{S}_{k}=(u_{1}%
,v_{1},...,u_{h},v_{h}),$ then
\begin{equation}%
\begin{array}
[c]{rcl}%
c&=&u_{1}e_{1}\\
v_{1}e_{1}&=&u_{2}e_{2}\\
&\vdots&\\
v_{h}e_{h}&=&c^{\prime}%
\end{array}
\tag{iv}%
\end{equation}
for certain $e_{1},...,e_{h}\in C.$
Equalities (iv) and the left hand side of (ii)
 together constitute a tossing
over $\left[  x\right]  S\cup\left[  x^{\prime}\right]
 S$ and $C$ connecting $(\left[  x\right]  ,c)$
and $(\left[  x^{\prime}\right]  ,c^{\prime}),$ showing
that $C$ is indeed flat. The proof is now complete.
\end{proof}

\section{Axiomatisability of $\mathcal{SF}$}\label{sfsection}

The earliest axiomatisability result, and certainly the
most straightforward, in the sequence of those described in
this paper, is the characterisation of those monoids $S$ such that
$\mathcal{SF}$ is an axiomatisable class.
 The results described in this
section
 appear (in amalgamated form) in
\cite{gould1}. The reader should note that in \cite{gould1}, strongly flat acts are
referred to as flat acts.

For any elements $s,t$ of a monoid $S$, we define right annihilators
$R(s,t)$ and $r(s,t)$ as follows:
\[R(s,t)=\{ u,v)\in S\times S\mid  su=tv\},\]
and
\[r(s,t)=\{ u\in S\mid su=tu\}.\]
Where non-empty, it is clear that $R(s,t)$ and $r(s,t)$ are,
respectively, an S-subact of the right $S$-act $S\times S$ and a
right ideal of $S$.

\begin{proposition}\label{e}
The following conditions are equivalent for a monoid $S$:

(1) the class of left $S$-acts satisfying condition (E) is axiomatisable;

(2) the class of left $S$-acts satisfying condition (E)
is closed under ultraproducts;

(3) every ultrapower of $S$ as a left
$S$-act satisfies condition (E);

(4) for any $s,t\in S$,
$r(s,t)=\emptyset$ or is finitely generated as a
right ideal of $S$.
\end{proposition}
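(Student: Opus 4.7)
The plan is to establish the cycle of implications (1) $\Rightarrow$ (2) $\Rightarrow$ (3) $\Rightarrow$ (4) $\Rightarrow$ (1). The first implication is immediate from Łoś's theorem (Theorem~\ref{ultra}). For (2) $\Rightarrow$ (3), I would first verify that $S$ itself, as a left $S$-act, satisfies (E): given $s,t,x \in S$ with $sx = tx$, set $s' = x$ and $z = 1$, so that $x = s'z$ and $ss' = sx = tx = ts'$. Since $S$ lies in the class and the class is closed under ultraproducts, every ultrapower of $S$ is in the class.

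For (3) $\Rightarrow$ (4) I would argue contrapositively. Suppose there exist $s,t \in S$ with $r(s,t) \neq \emptyset$ but not finitely generated as a right ideal of $S$. Set $I = r(s,t)$ and consider the family $\mathcal{C} = \{ I \setminus uS : u \in I\}$ of subsets of $I$. Finite intersections of members of $\mathcal{C}$ are of the form $I \setminus (u_1 S \cup \cdots \cup u_n S)$, and these are non-empty precisely because $r(s,t)$ is not finitely generated. So $\mathcal{C}$ is centred, and by Proposition~\ref{centfam} extends to an ultrafilter $\Phi$ on $I$. Form the ultrapower $U = S^I/\Phi$ and let $\underline{u}:I \to S$ be the inclusion $\underline{u}(i) = i$. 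Since $si = ti$ for every $i \in I$, we have $s\,\underline{u}/\Phi = t\,\underline{u}/\Phi$ in $U$. If (E) held in $U$, there would exist $s' \in S$ and $\underline{z}/\Phi \in U$ with $\underline{u}/\Phi = s'\underline{z}/\Phi$ and $ss' = ts'$. The second equation forces $s' \in r(s,t) = I$, while the first forces $\{ i \in I : i = s'\underline{z}(i)\} \in \Phi$; but this set is contained in $s'S \cap I = s'S$, so $s'S \in \Phi$, contradicting $I \setminus s'S \in \Phi$.

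For (4) $\Rightarrow$ (1) I would write down explicit axioms. For each pair $(s,t) \in S \times S$: if $r(s,t) = \emptyset$, include the sentence $(\forall x)\neg(sx = tx)$; otherwise, choose finite generators $u_1, \ldots, u_n$ of $r(s,t)$ and include
\[
(\forall x)\bigl(sx = tx \;\rightarrow\; (\exists z)(x = u_1 z) \vee \cdots \vee (\exists z)(x = u_n z)\bigr).
\]
Call this set of sentences $\Sigma_{(E)}$. A routine verification shows that a left $S$-act $B$ satisfies (E) if and only if $B \vDash \Sigma_{(E)}$: if (E) holds and $sx = tx$ with $r(s,t)$ generated by the $u_i$, then the witness $s' \in r(s,t)$ has the form $s' = u_i v$, so $x = s'z = u_i(vz)$; conversely, if $x = u_i z$ then $s' = u_i \in r(s,t)$ is a witness for (E), and the case $r(s,t) = \emptyset$ is handled by the vacuous hypothesis.

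I expect the main obstacle to be (3) $\Rightarrow$ (4): the right choice of ultrafilter is not obvious. The key insight is to index the ultrapower by $r(s,t)$ itself, so that the "identity element" of the index set furnishes a diagonal element whose factorisation in the ultrapower is forced to come from a single generator of $r(s,t)$, which the ultrafilter has been arranged to exclude.
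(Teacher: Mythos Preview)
Your argument is correct, and the overall cycle, the verification that $S$ satisfies (E), and the axiom scheme $\Sigma_{(E)}$ for (4) $\Rightarrow$ (1) all match the paper's proof essentially verbatim.

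The one place you diverge is in (3) $\Rightarrow$ (4). The paper chooses a generating set $\{u_\beta : \beta < \gamma\}$ of $r(s,t)$ of minimal (infinite) cardinality $\gamma$, arranged so that $u_\beta \notin \bigcup_{\alpha<\beta} u_\alpha S$, takes a \emph{uniform} ultrafilter $\Phi$ on $\gamma$, and lets $\underline{u}(\beta)=u_\beta$. After (E) produces $s' \in r(s,t)$ with $\underline{u}/\Phi = s'\underline{v}/\Phi$, one writes $s' = u_\beta w$ and uses uniformity to find $\sigma > \beta$ with $u_\sigma = s'v_\sigma \in u_\beta S$, contradicting the arrangement of generators. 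Your version instead indexes the ultrapower by $I = r(s,t)$ itself and builds the contradiction directly into the ultrafilter by insisting that $I \setminus uS \in \Phi$ for every $u \in I$; the witness $s'$ from (E) then forces $s'S \in \Phi$, an immediate clash. Your construction is a little more self-contained --- it bypasses the minimal-generating-set bookkeeping and the appeal to uniformity --- while the paper's approach has the virtue that the uniform-ultrafilter device recurs throughout the article (e.g.\ in Lemmas~\ref{M_R} and~\ref{M^L}), so there is some economy in reusing it here. Either route is perfectly sound.
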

\begin{proof} That (1) implies (2) is immediate from
Theorem~\ref{ultra}; clearly (3) follows from (2) since  $S$
is easily seen to satisfy (E).

Suppose now that every ultrapower of $S$ satisfies condition
(E). Let $s,t\in
S$ and suppose that $r(s,t)\neq\emptyset$ and is {\em not} finitely
generated as a right ideal.

Let $\{ u_{\beta}\mid \beta<\gamma\}$ be a generating set of
$r(s,t)$ of minimum cardinality $\gamma$; we identify the cardinal
$\gamma$ with its initial ordinal; since $\gamma$ is infinite it
must therefore be a limit ordinal.
 Let $\underline{u}\in
\prod_{\beta<\gamma}S_{\beta}$, where each $S_{\beta}$ is a copy of $S$,
be such that $\underline{u}(\beta)=u_{\beta}$. We may suppose that for any
$\beta<\gamma$, $u_{\beta}\notin \bigcup_{\alpha<\beta}u_{\alpha}S$.

From Proposition~\ref{filter1}
 we can choose a uniform ultrafilter $\Phi$
on $\gamma$. Put
$U=(\prod_{\beta<\gamma}S_{\beta})/\Phi$ so that by our assumption
(3),
$U$ satisfies condition (E).

Since $su_{\beta}=tu_{\beta}$ for all $\beta<\gamma$, clearly
$s\underline{u}=t\underline{u}$ and so $s\, \underline{u}\, /\Phi
=t\, \underline{u}\, /\Phi$. Now $U$
has (E), so that  there exist $s'\in S$
and $\underline{v}\, /\Phi \in U$ such that
$ss'=ts'$ and $\underline{u}\, /\Phi =s'\, \underline{v}\, /\Phi$.

From $ss'=ts'$ we have that $s'\in r(s,t)$, so that $s'=u_{\beta}w$
for some $\beta<\gamma$ and $w\in S$. Let $T=\{ \alpha<\gamma\mid
u_{\alpha}=s'v_{\alpha}\}$; from the uniformity of $\Phi$, we can
pick $\sigma\in T$ with $\sigma> \beta$. Then
\[u_{\sigma}=s'v_{\sigma}=u_{\beta}wv_{\sigma}\in u_{\beta}S,\]
a contradiction. We deduce that $r(s,t)$ is finitely generated.

Finally we assume that (3) holds and find a set of axioms for
the class of left $S$-acts satisfying (E).

For any element $\rho$ of $S\times S$ with
 $r(\rho)\neq\emptyset$ we choose and fix a set of
generators
\[w_{\rho 1},\hdots ,w_{\rho\, m(\rho)}\]
of $r(\rho)$. For $\rho=(s,t)$ we define a sentence
$\xi_{\rho}$ of $L_S$ as follows: if
$r(\rho)=\emptyset$ then
\[\xi_{\rho}\leftrightharpoons (\forall x)(sx\neq tx)\]
and
on the other hand, if $r(\rho)\neq\emptyset$ we put
\[\xi_{\rho}\leftrightharpoons (\forall x)\bigg(
sx=tx\rightarrow (\exists z)\bigg(
\bigvee^{m(\rho)}_{i=1}x=w_{\rho\, i}z\bigg)\bigg).\]
We claim that
\[\Sigma_{E}=\{\xi_{\rho}\mid \rho\in S\times S\}\]
axiomatises the class of left $S$-acts satisfying condition (E).

Suppose first that the left $S$-act $A$ satisfies (E),  and let
$\rho= (s,t)\in S\times S$. If $r(\rho)=\emptyset$ and $sa=ta$ for
some $a\in S$, then since $A$ satisfies (E) we have an element
$s'\in S$ such that $ss'=ts'$, a contradiction. Thus $A\vDash
\xi_{\rho}$. On the other hand, if $r(\rho)\neq \emptyset$ and
$sa=ta$ for some $a\in S$, then again we have that $ss'=ts'$ for
some $s'\in S$, and $a=s'b$ for some $b\in A$. Now $s'\in r(\rho)$
so that $s'=w_{\rho\, i}v$ for some $i\in \{ 1,\hdots ,m(\rho)\}$
and $v\in S$. Consequently, $a=w_{\rho\, i}c$ for $c=vb\in A$. Thus
$A\vDash \xi_{\rho}$ in this case also. Thus $A$ is a model of
$\Sigma_{{E}}$.

Finally, suppose that $A\vDash \Sigma_{{E}}$ and $sa=ta$ for some
$s,t\in S$ and $a\in A$. Put $\rho=(s,t)$; since $A\vDash
\xi_{\rho}$ we are forced to have $r(\rho)\neq\emptyset$ and
$a=w_{\rho\, i}b$ for some $i\in \{ 1,\hdots ,m(\rho)\}$. By very
choice of $w_{\rho\, i}$ we have that $sw_{\rho\, i}=tw_{\rho\, i}$.
 Hence $A$
satisfies condition (E) as required.
\end{proof}

Similarly, and argued in full in \cite{gould1}, we have the corresponding
result for condition (P).

\begin{proposition}\label{p}
The following conditions are equivalent for a monoid $S$:

(1) the class of left $S$-acts satisfying condition (P) is axiomatisable;

(2) the class of left $S$-acts satisfying condition (P)
is closed under ultraproducts;

(3) every ultrapower of $S$ as a left $S$-act
 satisfies condition (P);

(4) for any $s,t\in S$,
$R(s,t)=\emptyset$ or is finitely generated as an S-subact
of the right $S$-act $S\times S$.
\end{proposition}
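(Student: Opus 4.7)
My plan is to mimic the structure of the proof of Proposition~\ref{e} (the companion result for condition (E)), the only genuine change being bookkeeping to keep track of the two coordinates of $R(s,t)$, which is an $S$-subact of the right $S$-act $S\times S$ under the diagonal action $(u,v)\cdot r=(ur,vr)$.

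The implication (1)$\Rightarrow$(2) is immediate from Theorem~\ref{ultra} (\L os's theorem), and (2)$\Rightarrow$(3) follows because $S$ itself satisfies condition (P): given $s\cdot u=t\cdot v$ in $S$ we may take $z=1$, $s'=u$, $t'=v$.

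For (3)$\Rightarrow$(4), I would argue contrapositively. Suppose $R(s,t)\neq\emptyset$ but is not finitely generated as an $S$-subact of $S\times S$. Choose a generating set $\{(u_\beta,v_\beta)\mid\beta<\gamma\}$ of minimum cardinality $\gamma$, which must be an infinite limit ordinal; as in the proof of Proposition~\ref{e} we may assume no $(u_\beta,v_\beta)$ lies in $\bigcup_{\alpha<\beta}(u_\alpha,v_\alpha)S$. Using Proposition~\ref{filter1}, pick a uniform ultrafilter $\Phi$ on $\gamma$, and set $U=(\prod_{\beta<\gamma}S_\beta)/\Phi$; by hypothesis $U$ satisfies (P). The tuples $\underline{u},\underline{v}$ with $\underline{u}(\beta)=u_\beta$, $\underline{v}(\beta)=v_\beta$ satisfy $s\underline{u}=t\underline{v}$ coordinatewise, hence $s\,\underline{u}/\Phi=t\,\underline{v}/\Phi$ in $U$. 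Applying (P) yields $s',t'\in S$ and $\underline{w}/\Phi\in U$ with $\underline{u}/\Phi=s'\,\underline{w}/\Phi$, $\underline{v}/\Phi=t'\,\underline{w}/\Phi$ and $ss'=tt'$. The last equation gives $(s',t')\in R(s,t)$, so $(s',t')=(u_\beta r,v_\beta r)$ for some $\beta<\gamma$ and $r\in S$. Intersecting the two large sets from the ultrapower equalities, we find $T\in\Phi$ such that for every $\sigma\in T$, $u_\sigma=u_\beta r w_\sigma$ and $v_\sigma=v_\beta r w_\sigma$ simultaneously. Uniformity of $\Phi$ allows us to pick $\sigma\in T$ with $\sigma>\beta$, yielding $(u_\sigma,v_\sigma)\in(u_\beta,v_\beta)S$, contradicting minimality.

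For (4)$\Rightarrow$(1), I construct a set of axioms as in Proposition~\ref{e}. For each $\rho=(s,t)\in S\times S$, if $R(\rho)=\emptyset$, let
\[\xi_\rho\leftrightharpoons(\forall x)(\forall y)(sx\neq ty);\]
otherwise fix a finite generating set $\{(u_{\rho,i},v_{\rho,i})\mid 1\leq i\leq m(\rho)\}$ of $R(\rho)$ (using assumption (4)) and let
\[\xi_\rho\leftrightharpoons(\forall x)(\forall y)\Bigl(sx=ty\rightarrow(\exists z)\bigvee_{i=1}^{m(\rho)}(x=u_{\rho,i}z\wedge y=v_{\rho,i}z)\Bigr).\]
Set $\Sigma_P=\{\xi_\rho\mid \rho\in S\times S\}$. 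If $A$ satisfies (P) and $sa=ta'$, then (P) produces $s',t'\in S$ and $c\in A$ with $a=s'c$, $a'=t'c$ and $ss'=tt'$, so $(s',t')\in R(\rho)$; writing $(s',t')=(u_{\rho,i}r,v_{\rho,i}r)$ and taking $z=rc$ verifies $\xi_\rho$ in $A$. The case $R(\rho)=\emptyset$ is immediate. Conversely, if $A\vDash\Sigma_P$ and $sa=ta'$, then $\xi_\rho$ forces $R(\rho)\neq\emptyset$ and produces $z\in A$ and $i$ with $a=u_{\rho,i}z$, $a'=v_{\rho,i}z$; since $(u_{\rho,i},v_{\rho,i})\in R(\rho)$ we have $su_{\rho,i}=tv_{\rho,i}$, so $A$ satisfies (P).

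The only real subtlety, and the step I expect to need the most care, is the contradiction in (3)$\Rightarrow$(4): I must use the same witness $r\in S$ in both coordinates when invoking the structure of $R(s,t)$ as an $S$-subact of $S\times S$ (rather than treating the coordinates independently), and correspondingly intersect the two $\Phi$-large sets before exploiting uniformity. Everything else is a straightforward adaptation of the proof of Proposition~\ref{e}.
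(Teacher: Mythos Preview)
Your proposal is correct and follows exactly the approach the paper indicates: the paper does not write out a proof of Proposition~\ref{p} but simply states that it is argued ``similarly'' to Proposition~\ref{e} (and in full in \cite{gould1}), and your adaptation---tracking both coordinates of $R(s,t)\subseteq S\times S$, intersecting the two $\Phi$-large sets before invoking uniformity, and using two free variables in the axioms $\xi_\rho$---is precisely the intended modification.
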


We may put together Propositions~\ref{e} and ~\ref{p} to obtain the
following result for $\mathcal{SF}$, taken from \cite{gould1}.

\begin{theorem}\label{sf}
The following conditions are equivalent for a monoid $S$:

(1) $\mathcal{SF}$ is axiomatisable;

(2) $\mathcal{SF}$
is closed under ultraproducts;

(3) every ultrapower of $S$ as a left $S$-act
is strongly flat;

(4) for any $s,t\in S$,
$r(s,t)=\emptyset$ or is a finitely generated right ideal of $S$, and
$R(s,t)=\emptyset$ or is finitely generated as an S-subact
of the right $S$-act $S\times S$.
\end{theorem}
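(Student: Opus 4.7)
The plan is to derive Theorem~\ref{sf} essentially as a corollary of Propositions~\ref{e} and~\ref{p}, glued together via Stenstr\"om's characterisation (Theorem~\ref{stronglyflat}) that says $B\in\mathcal{SF}$ if and only if $B$ satisfies both (P) and (E). Throughout, let $\mathcal{E}$ and $\mathcal{P}_{\mathrm{cond}}$ denote the classes of left $S$-acts satisfying (E) and (P) respectively, so that $\mathcal{SF}=\mathcal{E}\cap\mathcal{P}_{\mathrm{cond}}$.

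I would prove the implications in the cycle $(1)\Rightarrow(2)\Rightarrow(3)\Rightarrow(4)\Rightarrow(1)$. For $(1)\Rightarrow(2)$, simply quote \L os's theorem (Theorem~\ref{ultra}). For $(2)\Rightarrow(3)$, it suffices to observe that $S$ itself, viewed as a left $S$-act, is strongly flat: indeed $S\cong S\cdot 1$ is free, hence projective, hence strongly flat by the inclusions following Theorem~\ref{stronglyflat}. Thus every ultrapower of $S$ is an ultraproduct of members of $\mathcal{SF}$ and so lies in $\mathcal{SF}$ by (2).

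For $(3)\Rightarrow(4)$, note that any strongly flat left $S$-act satisfies both (E) and (P), so (3) implies that every ultrapower of $S$ satisfies (E) and every ultrapower of $S$ satisfies (P). Applying the implication $(3)\Rightarrow(4)$ of Proposition~\ref{e} gives the finite generation condition on each non-empty $r(s,t)$, and applying the implication $(3)\Rightarrow(4)$ of Proposition~\ref{p} gives the finite generation condition on each non-empty $R(s,t)$. Together these constitute condition (4).

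For $(4)\Rightarrow(1)$, assume (4). The first half of (4) is exactly condition $(4)$ of Proposition~\ref{e}, so that proposition yields a set $\Sigma_{E}$ of sentences of $L_S$ axiomatising $\mathcal{E}$; similarly, the second half of (4) together with Proposition~\ref{p} yields a set $\Sigma_{P}$ axiomatising $\mathcal{P}_{\mathrm{cond}}$. Then $\Sigma_{\mathcal{SF}}=\Sigma_{E}\cup\Sigma_{P}$ (together with the standard axioms $\Pi$ for left $S$-acts) is a set of sentences of $L_S$ whose models are precisely those left $S$-acts satisfying both (P) and (E); by Theorem~\ref{stronglyflat} these are exactly the strongly flat left $S$-acts, and so $\mathcal{SF}$ is axiomatisable. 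There is no serious obstacle here beyond bookkeeping, since all the technical content has been pushed into Propositions~\ref{e} and~\ref{p} and Theorem~\ref{stronglyflat}.
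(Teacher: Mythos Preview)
Your proposal is correct and follows exactly the approach the paper indicates: the paper simply says that one ``put[s] together Propositions~\ref{e} and~\ref{p}'' via Stenstr\"om's characterisation $\mathcal{SF}=\mathcal{E}\cap\mathcal{P}_{\mathrm{cond}}$, and you have spelled out the obvious cycle $(1)\Rightarrow(2)\Rightarrow(3)\Rightarrow(4)\Rightarrow(1)$ doing precisely that.
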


\section{Axiomatisability of $\mathcal{P}$}\label{projsection}

Those monoids for which $\mathcal{P}$ is
axiomatisable were determined  by the fourth author in
\cite{stepanova}, using preliminary results of the first author from
\cite{gould1}. In fact, $\mathcal{P}$ is axiomatisable if and only if
$\mathcal{SF}$ is axiomatisable and $\mathcal{P}=\mathcal{SF}$. Monoids
for which $\mathcal{P}=\mathcal{SF}$ are called left perfect.
Since left perfect monoids
 figure largely in this and subsequent sections, we devote
some time to them here, developing on the way some new properties of
such monoids.

A left $S$-act $B$ is called a {\em cover} of a left $S$-act $A$ if
there
exists an $S$-epimorphism $\theta:B\rightarrow A$ such that the
restriction of $\theta$ to any proper $S$-subact of $B$ is not an
epimorphism to $A$. If $B$ is in addition projective, then $B$ is a
{\em projective cover} for $A$. A monoid $S$ is {\em left perfect}
if every left $S$-act has a projective cover.

We now give a number of finitary conditions used in determining left
perfect monoids, and in subsequent arguments.

(A) Every left $S$-act satisfies the ascending chain condition for
cyclic
$S$-subacts.

(D) Every right unitary submonoid of $S$ has a minimal left ideal
generated by an idempotent.

($M_R$)/($M_L$) The monoid $S$ satisfies the descending
chain condition for principal right/left ideals.

($M^R$)/($M^L$) The monoid $S$ satisfies the ascending
chain condition for principal right/left ideals.

\begin{theorem}\label{perfect}\cite{fountain,isbell, kilp} The following
conditions
are equivalent for a monoid $S$:

(1) $S$ is left perfect;

(2) $S$ satisfies Conditions (A) and (D);

(3) $S$ satisfies Conditions (A) and ($M_R$);

(4) $\mathcal{SF}=\mathcal{P}$.
\end{theorem}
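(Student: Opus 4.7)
The plan is to prove the four conditions equivalent by establishing a cycle of implications, say $(1)\Rightarrow(4)\Rightarrow(3)\Rightarrow(2)\Rightarrow(1)$, drawing on the description of projectives as coproducts $\coprod_{i\in I}Se_i$ (Theorem~\ref{proj}), the (P)+(E) characterisation of strong flatness (Theorem~\ref{stronglyflat}), and Corollary~\ref{cyclicproj}. Note that the inclusion $\mathcal{P}\subseteq\mathcal{SF}$ is automatic, since each $Se$ satisfies (P) and (E) witnessed by $s'=t'=e$, so (4) reduces to showing $\mathcal{SF}\subseteq\mathcal{P}$.

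For $(1)\Rightarrow(4)$, let $B\in\mathcal{SF}$ and take a projective cover $\pi:P\to B$ with $P=\coprod_{i\in I}Se_i$. The idea is to use (P) and (E) to construct a section-like map $\sigma:B\to P$ over $\pi$: each $b\in B$ has a preimage $p\in P$, and (E) guarantees that whenever $sb=tb$ with $sp\ne tp$ we can replace $p$ by $s'p$ where $ss'=ts'$, eventually descending to a representative consistent across the generator set. This yields an $S$-morphism $\sigma$ with $\pi\sigma=\mathrm{id}_B$; then $\sigma(B)$ is an $S$-subact of $P$ on which $\pi$ restricts to a surjection, so by the minimality built into the definition of projective cover, $\sigma(B)=P$ and $\pi$ is an isomorphism.

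For $(4)\Rightarrow(3)$, I argue contrapositively. If ($M_R$) fails, fix a strictly descending chain $s_1S\supsetneq s_2S\supsetneq\cdots$ with $s_{n+1}=s_nr_n$, and manufacture a cyclic-like candidate act $B=Sa$ modulo a congruence encoding these relations; the tossing/skeleton machinery of Section~\ref{wfsection} plus Corollary~\ref{sfcong} verifies (P) and (E), so $B\in\mathcal{SF}$, but any decomposition $B\cong\coprod Se_i$ would force the chain to terminate, contradicting (4). If instead (A) fails, an infinite chain $Sa_1\subsetneq Sa_2\subsetneq\cdots$ inside some act lets us take a directed-colimit style construction whose $(P)+(E)$-verification again gives a strongly flat act with no projective decomposition. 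For $(3)\Rightarrow(2)$: given a right unitary submonoid $T$ of $S$, apply ($M_R$) inside $T$ to find a minimal element $t$ with $tT$ minimal among principal right ideals of $T$; right unitariness forces $t\in E$ and its $T$-left ideal is the required minimal left ideal generated by an idempotent. For $(2)\Rightarrow(1)$: given any left $S$-act $A$ with generating set $\{a_j\}$, use (D) applied to the stabiliser-type right unitary submonoid at each generator to replace each $a_j$ with a right $e_j$-cancellable element; Corollary~\ref{cyclicproj} then gives a projective epimorphism $\coprod Se_j\to A$, and Condition (A) guarantees the minimality property by ruling out the infinite strictly ascending chains needed to construct a proper subact still mapping onto $A$.

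The hardest step is $(4)\Rightarrow(3)$: turning an abstract failure of a finitary condition into an explicit strongly flat act that is demonstrably not projective requires delicate use of the tossing formalism to verify (P) and (E) on an ad hoc construction while simultaneously blocking any decomposition of the form $\coprod Se_i$. The constructions of these witnessing acts are the heart of the classical arguments of Isbell and Fountain, and the proof will lean on \cite{fountain, isbell, kilp} for the fine details of these constructions and the verification that (A) alone is equivalent to the minimality clause in the definition of projective cover being automatic.
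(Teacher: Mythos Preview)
The paper does not give its own proof of this theorem: it is quoted verbatim from the literature (the citation \cite{fountain,isbell,kilp} appears in the theorem heading, and no proof environment follows). So there is nothing in the paper to compare your sketch against; the result is simply imported as background.

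Evaluating your sketch on its own merits, the step $(3)\Rightarrow(2)$ contains a genuine gap. Condition $(M_R)$ is a hypothesis on $S$, not on the right unitary submonoid $T$, and it does not obviously descend: a strictly descending chain $t_1T\supsetneq t_2T\supsetneq\cdots$ in $T$ yields a chain $t_1S\supseteq t_2S\supseteq\cdots$ in $S$, which stabilises, giving $t_n=t_{n+1}s$ for some $s\in S$; but right unitariness says $st\in T,\,t\in T\Rightarrow s\in T$, which is the wrong shape to force $s\in T$ here. More seriously, Condition~(D) demands a minimal \emph{left} ideal of $T$ generated by an idempotent, whereas you are hunting for a minimal principal \emph{right} ideal $tT$ and then asserting ``right unitariness forces $t\in E$'' without argument. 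Neither the existence of the minimal left ideal nor the idempotency of its generator follows from what you have written; in the classical proofs (Isbell, Fountain) this implication goes through Condition~(A) in an essential way, not through $(M_R)$ alone applied inside $T$.

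Your $(1)\Rightarrow(4)$ is also under-specified: the phrase ``eventually descending to a representative consistent across the generator set'' hides the real work of producing a globally defined $S$-map $\sigma$, which in the literature is done by a different route (one shows directly that an essential epimorphism onto a strongly flat act is an isomorphism, rather than building a section elementwise). Since you already plan to cite \cite{fountain,isbell,kilp} for the hard constructions in $(4)\Rightarrow(3)$, the cleanest fix is to do as the paper does and cite the whole equivalence.
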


\begin{proposition}\label{perfectagain} Let $S$ be a left perfect monoid. Then

(1) $S$ is group bound;

(2)
if $Sb\, (bS)$ is a minimal left (right) ideal of $S$, then $bS$
($Sb$) is a minimal right (left) ideal of $S$;

(3) if $Sb_1\subseteq Sb_0$ and $Sb_1\cong Sb_0$, then $Sb_0=Sb_1$;

(4) any minimal left (right) ideal of $S$ is generated by an
idempotent.
\end{proposition}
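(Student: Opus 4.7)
My plan is to establish the four parts in the order (1), (4), (2), (3); the latter three all lean on (1), and (2) in addition invokes the Rees structure of the kernel of $S$. For (1), Theorem~\ref{perfect} reduces us to assuming that $S$ satisfies both (A) and ($M_R$). Fix $s\in S$. By ($M_R$) the descending chain $sS\supseteq s^2S\supseteq\cdots$ stabilises, so $s^n=s^{n+1}t$ for some $n\ge 1$ and $t\in S$; iterating, $s^n=s^k\cdot s^nt^k$ for every $k\ge 0$, and in particular $s^nt^k=s\cdot s^nt^{k+1}$. Hence $Ss^n\subseteq Ss^nt\subseteq Ss^nt^2\subseteq\cdots$ is an \emph{ascending} chain of principal left ideals in $S$, and applying (A) to the left $S$-act $S$ (whose cyclic subacts are precisely the principal left ideals) it stabilises at some index $K$. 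A careful elaboration of the resulting equation $s^nt^{K+1}=vs^nt^K$, combined with further uses of ($M_R$), then yields an $M\ge n$ with $s^M\,\eh\,s^{2M}$, and by Theorem~\ref{green} $s^M$ lies in the subgroup $H_{s^M}$. This combinatorial step (extracting $\eh$-stability from the stabilisation of the ascending chain) is the main technical obstacle of the proposition; it is carried out in \cite{fountain}.

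For (4), let $L$ be a minimal left ideal of $S$ and pick any $b\in L$. Since $Sb$ is a nonempty left ideal contained in $L$, minimality forces $Sb=L$, so $b^k\in Sb\subseteq L$ for every $k\ge 1$. By (1), some $b^M$ lies in a subgroup $H_e$, and the identity $e$ of that group satisfies $e\,\eh\,b^M$, so $Se=Sb^M$; since $Sb^M\subseteq L$ is nonempty, minimality again gives $L=Sb^M=Se$. The case of a minimal right ideal is dual. For (2), the existence of a minimal left ideal forces the kernel $K$ of $S$ (its smallest two-sided ideal) to exist, and by (1) $S$, and hence $K$, is group bound; being simple and group bound, $K$ is completely simple. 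Given a minimal left ideal $Sb$, we have $b\in K$, so $b\in H_g$ for some idempotent $g\in K$; thus $b\,\ar\,g$ in $K$, which gives $bS=gS$, and a short check using $g=g^2$ yields $gS=gK$. The Rees description of a completely simple semigroup identifies $gK$ as a minimal right ideal of $K$, and because $K$ is an ideal of $S$ every right ideal of $S$ contained in $gK$ is also a right ideal of $K$; hence $bS=gK$ is minimal in $S$. The dual implication (starting from a minimal right ideal) is entirely analogous.

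For (3), let $\phi\colon Sb_0\to Sb_1$ be the given isomorphism and compose it with the inclusion $Sb_1\hookrightarrow Sb_0$ to obtain an injective $S$-endomorphism $\psi\colon Sb_0\to Sb_0$ whose image is $Sb_1$. Writing $\psi(b_0)=rb_0$ for some $r\in S$ and using $S$-linearity, $\psi^k(xb_0)=xr^kb_0$, so $\psi^k(Sb_0)=Sr^kb_0$. By (1), some $r^M$ (with $M\ge 1$) lies in a subgroup, whence $r^M\,\eh\,r^{2M}$ and in particular $Sr^M=Sr^{2M}$; this translates into $\psi^M(Sb_0)=\psi^{2M}(Sb_0)=\psi^M(\psi^M(Sb_0))$. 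Since $\psi^M$ is injective, this equality of images forces $\psi^M(Sb_0)=Sb_0$; but $\psi^M(Sb_0)\subseteq\psi(Sb_0)=Sb_1$, so $Sb_0\subseteq Sb_1$, giving $Sb_0=Sb_1$ as required.
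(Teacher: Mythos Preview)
Your argument is correct, but it diverges from the paper's in two substantive places.

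For (1), the paper does \emph{not} argue combinatorially inside $S$: instead it takes a uniform ultrafilter $\Phi$ on $\mathbb{N}$, forms the ultrapower $U=S^{\mathbb{N}}/\Phi$, and applies Condition~(A) to the ascending chain $S\underline{u_1}\subseteq S\underline{u_2}\subseteq\cdots$ of cyclic subacts of $U$, where $\underline{u_n}=(1,\dots,1,a,a^2,\dots)/\Phi$. Stabilisation at height $h$ together with uniformity of $\Phi$ yields directly $sa^{k+1}=a^{k}$ for some $k$, i.e.\ $a^{k}\,\el\,a^{k+1}$, and combining with $a^{m}\,\ar\,a^{m+1}$ from $(M_R)$ gives $a^{n}\,\eh\,a^{2n}$ for $n=\max(m,k)$. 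This is entirely self-contained; your version offloads the extraction of the $\el$-stabilisation to \cite{fountain}, which is legitimate but leaves the reader to chase a reference for exactly the step you flag as ``the main technical obstacle''. The ultrapower trick is worth knowing here because it replaces that combinatorics with a one-line appeal to~(A) in a larger act.

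For (2), the paper does not pass through the kernel or the Rees structure at all. For the direction ``$Sb$ minimal $\Rightarrow$ $bS$ minimal'' it picks, via $(M_R)$, a minimal $cS\subseteq bS$, observes $cb\,\el\,b$ (hence $cb\,\els\,b$), and shows by a short cancellation argument that every $d\in bS$ already lies in $cS$. The converse direction is \emph{not} argued dually: instead the paper first proves (3) and then uses it, noting that $bS$ minimal gives $bc\,\ar\,b$ for any $c$ with $Sc\subseteq Sb$, whence $Sbc\cong Sb$ by Lemma~\ref{pp}, and (3) forces $Sbc=Sb$. Your route via the completely simple kernel is more structural and genuinely left--right symmetric (so your ``dual implication is entirely analogous'' is honest, whereas in the paper it is not), at the cost of importing the Rees theorem; the paper's route stays internal to the $\ars/\els$ machinery already set up and makes (3) do real work.

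Parts (3) and (4) are essentially the same as the paper's: the paper also reduces (3) to the fact that the multiplier $t$ with $\phi(b_0)=tb_0$ has a power in a subgroup, and (4) is the same ``$b^{n}\,\eh\,e$'' observation. Your reordering $(1),(4),(2),(3)$ is harmless given your kernel argument for (2), but note that in the paper's arrangement (3) is needed \emph{before} the second half of (2).
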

\begin{proof} (1) Let $S$ be  a left perfect monoid. From
Theorem~\ref{perfect}
$S$ satisfies ($M_R$), so that for any $a\in S$,
$a^mS=a^{m+1}S$ for some $m\in \mathbb{N}$. On the other hand, consider
the descending chain
\[Sa\supseteq Sa^2\supseteq Sa^3\supseteq\hdots \]
of principal left ideals. Let $\Phi$ be a uniform ultrafilter on
$\mathbb{N}$ and consider the ultrapower $U=S^{\mathbb{N}}/\Phi$.
For each $n\in \mathbb{N}$ let
\[\underline{u_n}=(1,1,\hdots ,a,a^2,a^3,\hdots)/\Phi \]
where the first $a$ occurs in the $n$'th place. Clearly
$\underline{u_n}=a\underline{u_{n+1}}$ for any $n\in\mathbb{N}$, so that
\[S\underline{u_1}\subseteq S\underline{u_2}\subseteq\hdots\]
By Theorem~\ref{perfect}, $U$ has the ascending chain condition on
cyclic
$S$-subacts, so that $S\underline{u_h}=
S\underline{u_{h+1}}$ for some $h$. Consequently,
$s\underline{u_h}=\underline{u_{h+1}}$ for some $s\in S$; since
$\Phi$ is uniform, we deduce that for some
$i\geq h+1$, $sa^{i-h+1}=a^{i-h}$. Putting $k=i-h$ we deduce that
$a^k\el\, a^{k+1}$. Now take $n$ to be the bigger of $m$ and $k$;
clearly $a^n\,\eh\, a^{n+1}\, \eh a^{2n}$, whence by Theorem~\ref{green},
 $a^n$ lies in a subgroup of $S$.

(2) Suppose now that $Sb$ is a minimal left ideal of $S$; since $S$ has
($M_R$) we can choose $c\in S$ with $cS\subseteq bS$ and $cS$ minimal.
Notice that $Scb=Sb$ and so $cb\,\el\, b$; consequently
$cb\,\els\, b$. If $d\in bS$, then
as $c^2S=cS=cdS$ we have $cd=ccd'$ for some $d'\in S$. Now,
$d=bx, cd'=by$ for some $x,y\in S$, and so
$cbx=cby$, giving that
\[d=bx=by=cd',\]
that is, $d\in cS$. Hence $bS=cS$ is minimal.

To prove (3), let us assume that $Sb_1\subseteq Sb_0$ and
$Sb_1\cong Sb_0$. Let
$\phi:Sb_0\rightarrow Sb_1$ be an $S$-isomorphism. Then
$\phi(b_0)=sb_1=b_2$ for some $s\in S$.
Then $Sb_2\subseteq Sb_1\subseteq Sb_0$ and
$b_2=tb_0$ for some $t\in S$. Since $\phi$
is an isomorphism  we have that $tb_0\,\ars\, b_0$ and as $\ars$ is a left congruence and
$S$ is group bound, $b_0\,\ars\, t^nb_0$ for some
$n\in\mathbb{N}$ such that $t^n$ lies in a subgroup. Let $s$ be the
inverse
of $t$ in this subgroup. Then $st^nt^n=t^n$, so that
 \[b_0=st^nb_0=st^{n-1}tb_0=st^{n-1}b_2\]
whence $Sb_2=Sb_1=Sb_0$ as required.

We now prove the second part of (2). Suppose $b\in S$ and $bS$ is a
minimal right ideal. Let $Sc\subseteq Sb$. In view of the minimality of
$bS$ we have that $bc\,\ar\, b$ and so $bc\,\ars\, b$. By
Lemma~\ref{pp},
$Sbc\cong Sb$. But $Sbc\subseteq Sc\subseteq Sb$. Now (3) gives that
$Sbc=Sc=Sb$ as required.

To see that (4) holds, note that if $Sb$ is a minimal left ideal,
then $Sb=Sb^n$ for all $n\in \mathbb{N}$; since $S$ is group bound,
$b^n\,\eh\, e$ for some $n\in\mathbb{N}$ and some $e\in E$. Hence
$Sb^n=Se$; dually for principal right ideals.
\end{proof}

From Propositions~\ref{groupboundlocal} and \ref{perfectagain} the
following is immediate.

\begin{corollary}\label{local} Let $S$ be a left perfect monoid.
Then $S$ is local
and $\mathcal{D}=\mathcal{J}$.
\end{corollary}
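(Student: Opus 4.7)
The plan is to simply chain together the two preceding results. By Proposition~\ref{perfectagain}(1), any left perfect monoid is group bound, since the argument there invokes ($M_R$) together with the ascending chain condition on cyclic $S$-subacts of an ultrapower $U$ to force $a^n$ to be $\eh$-related to $a^{2n}$ for some $n$, and then Theorem~\ref{green} places $a^n$ in a subgroup.

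Once group-boundedness of $S$ is in hand, the conclusion is immediate from Proposition~\ref{groupboundlocal}, which states precisely that any group bound monoid is local and satisfies $\mathcal{D}=\mathcal{J}$. So the entire proof consists of two lines: invoke Proposition~\ref{perfectagain}(1), then invoke Proposition~\ref{groupboundlocal}.

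There is no real obstacle here — the corollary is a bookkeeping consequence of results already proved. The only thing worth being careful about is to cite both propositions explicitly, rather than re-deriving anything, since the substantive content (the ultrapower argument for group-boundedness, and the idempotent-power manipulation for locality and $\mathcal{D}=\mathcal{J}$) has already been carried out in the two propositions cited.
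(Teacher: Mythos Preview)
Your proposal is correct and matches the paper's own proof exactly: the corollary is stated as immediate from Propositions~\ref{groupboundlocal} and~\ref{perfectagain}, which is precisely the two-step chain you describe.
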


Before stating the main result of this section, we require a
preliminary lemma, due to the fourth author, that has significant
consequences. We recall from Section~\ref{ma} say that a monoid
satisfies (CFRS) if
\[\forall s\in S\; \exists n_s\in\mathbb{N} \; \forall t\in S
\;|\{ x\in S|\, sx=t\}|\leq n_s.\]

\begin{lemma}\cite{stepanova}
\label{crfs}  Let $S$ be a monoid such that every ultrapower of $S$ as a
left
$S$-act is projective.  Then $S$ satisfies (CFRS).
\end{lemma}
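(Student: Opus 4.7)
The plan is to prove the contrapositive: assuming (CFRS) fails at some $s\in S$, I will exhibit an ultrapower of $S$ as a left $S$-act that is not projective. Failure of (CFRS) gives $s\in S$ such that for every $N\in\mathbb{N}$ there exists $t\in S$ with $|\{x\in S:sx=t\}|\geq N$; in particular $S$ must be infinite, so I set $\kappa=|S|$.

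By Proposition~\ref{regultrafil}, choose a $\kappa$-regular ultrafilter $\Phi$ on an index set $I$ of cardinality $\kappa$, witnessed by a family $\{S_\alpha:\alpha\in\kappa\}\subseteq\Phi$ such that every infinite subfamily has empty intersection. Then for each $i\in I$ the set $f(i):=\{\alpha\in\kappa:i\in S_\alpha\}$ is finite; let $n_i=|f(i)|$. Using the failure of (CFRS) applied to $s$, for each $i\in I$ I select $t_i\in S$ with $|\{x\in S:sx=t_i\}|\geq 2^{n_i}$, and then fix pairwise distinct elements $x_{i,A}\in\{x\in S:sx=t_i\}$ indexed by the $2^{n_i}$ subsets $A\subseteq f(i)$.

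Form the ultrapower $U=S^I/\Phi$ and let $u=\underline{t}/\Phi\in U$. For each subset $T\subseteq\kappa$ define $g_T\in S^I$ by $g_T(i)=x_{i,f(i)\cap T}$; since $sg_T(i)=t_i$ for every $i\in I$, the class $g_T/\Phi$ is a preimage of $u$ under $\lambda_s$ in $U$. The key verification is that distinct $T$ produce distinct $g_T/\Phi$: if $T\neq T'$ and $\alpha\in T\triangle T'$, then for every $i\in S_\alpha$ we have $\alpha\in f(i)$, so $f(i)\cap T$ and $f(i)\cap T'$ differ at $\alpha$, forcing $g_T(i)\neq g_{T'}(i)$; hence $\{i:g_T(i)=g_{T'}(i)\}\subseteq I\setminus S_\alpha\notin\Phi$, and so $g_T/\Phi\neq g_{T'}/\Phi$. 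This produces $2^\kappa$ distinct $\lambda_s$-preimages of $u$ in $U$.

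Finally, assume for contradiction that $U$ is projective. By Theorem~\ref{proj} there is an $S$-isomorphism $U\cong\coprod_j Se_j$ with each $e_j\in E$, and $u$ corresponds to an element of a single component $Se_{j_0}$. Since $\lambda_s$ maps each component into itself, every $\lambda_s$-preimage of $u$ must also correspond to an element of $Se_{j_0}$. But $|Se_{j_0}|\leq|S|=\kappa<2^\kappa$, contradicting the existence of $2^\kappa$ distinct preimages. Hence $U$ is not projective, proving the lemma. The main obstacle is manufacturing enough distinct preimages of $u$, and this is precisely where the $\kappa$-regularity of $\Phi$ is essential: it lets us encode each subset $T\subseteq\kappa$ as a distinct element of $U$ via the finite local data $f(i)\cap T$, yielding $2^\kappa$ preimages in one go.
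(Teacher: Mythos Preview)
Your proof is correct, and it takes a genuinely different route from the paper's argument. The paper proceeds in two stages: first it takes an ultrapower of $S$ over a uniform ultrafilter on $\mathbb{N}$ and uses projectivity of that ultrapower (specifically, that the relevant cyclic subact is isomorphic to some $Se$) to pull back to a \emph{single} element $d\in S$ with $|\{x\in S:sx=d\}|$ infinite; it then passes to a second ultrapower over a cardinal $\alpha>|S|$ and invokes Theorem~\ref{filtprod} as a black box to produce more than $|S|$ preimages of a fixed element, yielding the same size contradiction you obtain. Your argument collapses these two steps into one by working directly with a $\kappa$-regular ultrafilter on $\kappa=|S|$ and, in effect, inlining the combinatorics behind Theorem~\ref{filtprod}: the family $\{S_\alpha\}$ lets you encode each subset $T\subseteq\kappa$ as a distinct $\lambda_s$-preimage of $u$, giving $2^\kappa>\kappa$ preimages immediately. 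The paper's approach is more modular (it reuses Theorem~\ref{filtprod} wholesale and extracts an intermediate fact about $S$ itself), while yours is shorter and needs only a single ultrapower; both hinge on the same cardinality obstruction in a component $Se$ of a projective act.
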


\begin{proof}
Suppose to the contrary that $t\in S$ exists for which the condition
is \emph{not} true.  That is, for each $n\in \mathbb{N} $, there
exists an element $a_n\in S$ such that $\left| \{ x\in S\mid
tx=a_n\}\right| >n.$ Let $\Phi$ be a uniform ultrafilter on
$\mathbb{N}$ and for $n\in \mathbb{N}$ choose $b_{n,1},\hdots
,b_{n,n}\in S$ such that $tb_{n, i}=a_n,\, 1\leq i\leq n$. Put
\[\underline{c_n}=(1,\hdots ,1,b_{n,n},b_{n+1, n},b_{n+2, n},\hdots
)\] with $b_{n,n}$ occuring in the $n$'th place. Now,
$\underline{c_i}\, /\Phi\neq \underline{c_j}\, /\Phi$ for any $i\neq
j$, but $t\underline{c_i}\, /\Phi=\underline{a}\, /\Phi$ where
$\underline{a}=(a_1,a_2,\hdots )$. Since $U=S^\mathbb{N}/\Phi$ is
projective and $\underline{a}\, /\Phi \in S\underline{c}\,/\Phi\cong
Se$ for some $e\in E$, we deduce that there exists $d\in S$ such
that $A=|\{ x\in S\mid tx=d\}|$ is infinite.

Now choose a cardinal $\alpha> |S|$. Combining
Propositions~\ref{filter2}
and \ref{filter3}, we can choose an ultrafilter $\Theta$ over
$\alpha$ such that $|A^{\alpha}/\Theta|=|A|^{\alpha}>|S|$ (any set may
be regarded as an $S$-act over a trivial monoid). Put
$V=S^{\alpha}/\Theta$ and let $\underline{d}\in S^{\alpha}$ be such that
$\underline{d}(i)=d$ for all $i\in\alpha$. If
$\underline{x}\in A^{\alpha}$ then clearly $t\,\underline{x}\, /\Theta
=\underline{d}\, /\Theta$. But $V$ is projective by assumption, so that
$\underline{d}\,/\Theta\in S \underline{g}\, /\Theta\cong Sf$ for some
$f\in E$. Consequently, there is an element $u\in S$ such that the
equation
$tx=u$ has more than $|S|$ solutions in $S$, which is clearly
nonsense. Hence $S$ has
(CFRS).
\end{proof}

\begin{corollary}\label{crfsproj} Let $S$ be such that any ultrapower of $S$ as a left
$S$-set is projective. Then for all $s\in S$ there exists $n_s\in
\mathbb{N}$ such that for any $P\in\mathcal{P}$ and $t\in P$,
\[|\{ x\in S|\, sx=t\}|\leq n_s.\]
\end{corollary}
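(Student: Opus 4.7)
The plan is to combine Lemma \ref{crfs} with the structural decomposition of projective acts from Theorem \ref{proj}. First I would invoke Lemma \ref{crfs}: the hypothesis that every ultrapower of $S$ is projective already gives (CFRS), and so for each $s\in S$ one obtains a bound $n_s\in\mathbb{N}$ satisfying $|\{x\in S\mid sx=u\}|\leq n_s$ for every $u\in S$. This same $n_s$ is the constant I propose to use in the corollary.

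Next, given $P\in\mathcal{P}$, Theorem \ref{proj} furnishes an $S$-isomorphism $P\cong\coprod_{i\in I}Se_i$ with $e_i\in E$. Because the coproduct in $\mathbf{S}$-$\mathbf{Act}$ is disjoint union and the action of $S$ preserves components, any $t\in P$ lies in a unique summand $Se_{i_0}$ and every solution of $sx=t$ must itself lie in $Se_{i_0}$; this reduces the problem to counting preimages of $t$ inside the cyclic projective $Se_{i_0}$. Identifying $Se_{i_0}$ with the corresponding principal left ideal of $S$, the element $t$ corresponds to some $u\in Se_{i_0}\subseteq S$, and any $x\in Se_{i_0}$ with $sx=u$ is in particular an element of $S$ satisfying $sx=u$. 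Hence the set of such $x$ injects into $\{x\in S\mid sx=u\}$, which has at most $n_s$ elements by (CFRS), yielding the desired bound.

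There is essentially no obstacle here: the corollary is a clean consequence of the structure theorem for projectives together with Lemma \ref{crfs}. The only point requiring a little care is the bookkeeping around the identification of a summand $Se_{i_0}$ of the coproduct with a subset of $S$, which is immediate from the construction of $\coprod_{i\in I}Se_i$ as a disjoint union and the fact that each $Se_{i_0}$ is a principal left ideal on which $S$ acts by left multiplication.
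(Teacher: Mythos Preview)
Your proof is correct and is precisely the argument the paper leaves implicit (the corollary is stated without proof, as an immediate consequence of Lemma~\ref{crfs} together with Theorem~\ref{proj}). Note only that the displayed set should read $\{x\in P\mid sx=t\}$ rather than $\{x\in S\mid sx=t\}$, which is how you have (rightly) interpreted it and how it is used in Lemma~\ref{M^L}.
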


We now set out to prove the main result of this section, due to the
first and fourth  authors.

\begin{theorem}\label{projax}  \cite{gould1,stepanova,BG} The following conditions are
equivalent
for a monoid $S$:

(1) every ultrapower of the left $S$-act $S$ is projective;

(2)  $\mathcal{SF}$ is axiomatisable
and $S$ is left perfect.

(3) $\mathcal{P}$ is axiomatisable.
\end{theorem}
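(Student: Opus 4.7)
I would prove the equivalences by the cycle $(3)\Rightarrow(1)\Rightarrow(2)\Rightarrow(3)$. The direction $(3)\Rightarrow(1)$ combines Theorem~\ref{ultra} (\L os) with the observation that $S$ is itself projective as a left $S$-act: $1\in S$ is trivially right $1$-cancellable, so by Corollary~\ref{cyclicproj}, $S=S\cdot 1\in\mathcal{P}$, and any ultrapower of $S$ therefore also lies in $\mathcal{P}$. Similarly, $(2)\Rightarrow(3)$ is immediate from Theorem~\ref{perfect}: left perfectness of $S$ says exactly that $\mathcal{SF}=\mathcal{P}$, so once $\mathcal{SF}$ is axiomatisable the identical class $\mathcal{P}$ inherits this property.

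The substance of the proof lies in $(1)\Rightarrow(2)$, which I would split into two parts. For the axiomatisability of $\mathcal{SF}$: every projective left $S$-act is strongly flat (noted in Section~\ref{fpf} following Theorem~\ref{stronglyflat}), so hypothesis (1) yields that each ultrapower of $S$ is strongly flat, and Theorem~\ref{sf}((3)$\Rightarrow$(1)) then delivers the axiomatisability of $\mathcal{SF}$. For $S$ being left perfect, by Theorem~\ref{perfect} it is enough to verify conditions (A) and ($M_R$); the key tool will be Lemma~\ref{crfs} and its Corollary~\ref{crfsproj}, which under hypothesis (1) give (CFRS) not only on $S$ but on every projective left $S$-act, uniformly bounding by some $n_s$ the number of solutions $x\in S$ to any equation $sx=t$ with $t$ ranging over any $P\in\mathcal{P}$.

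To verify ($M_R$), given a putative strictly descending chain $a_1S\supsetneq a_2S\supsetneq\dots$ with $a_{n+1}=a_nc_n$, I would work in $U=S^{\mathbb{N}}/\Phi$ for a uniform ultrafilter $\Phi$ and construct an ascending chain of cyclic subacts using elements $\underline{u_n}$ defined by $\underline{u_n}(m)=c_nc_{n+1}\cdots c_{m-1}$ for $m\geq n$ and $\underline{u_n}(m)=1$ otherwise, so that $\underline{u_n}/\Phi=(c_n\cdots c_{n'-1})\underline{u_{n'}}/\Phi$ for $n<n'$. The projective decomposition $U\cong\coprod Se_i$ from Theorem~\ref{proj} confines this ascending chain to a single summand $Se$, and (CFRS) applied to $Se$ (via Corollary~\ref{crfsproj}) should produce too many preimages of a common element under a fixed multiplication, yielding a contradiction. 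An analogous uniform-ultrafilter transfer argument, applied to a hypothetical strictly ascending chain of cyclic subacts in an arbitrary left $S$-act, handles (A). The main obstacle is precisely this last step: the bookkeeping to arrange that a chain in $S$ (or in an arbitrary $S$-act) collapses, via $\Phi$ and the projective decomposition, into a genuine violation of the uniform bound supplied by (CFRS), rather than into a harmless redundancy which the ultrafilter absorbs.
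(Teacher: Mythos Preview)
Your cycle $(3)\Rightarrow(1)\Rightarrow(2)\Rightarrow(3)$, the easy implications, and the ultrapower construction of the $\underline{u_n}$ all match the paper. The divergence, and the gap, is in how you propose to extract $(M_R)$ and Condition~(A) from hypothesis~(1).

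For $(M_R)$ you plan to invoke (CFRS) inside the summand $Se$ to manufacture too many preimages under a fixed left multiplier. But the relations you obtain are $\underline{u_n}/\Phi=(c_n\cdots c_{n'-1})\,\underline{u_{n'}}/\Phi$, where the left multiplier varies with both $n$ and $n'$; there is no evident way to pin down a single $s\in S$ for which infinitely many distinct solutions of $sx=t$ arise, so (CFRS) does not bite. The paper does \emph{not} use (CFRS) at this point. Instead, writing $\underline{u_i}/\Phi=d_i\,\underline{c}/\Phi$ with $\underline{c}/\Phi$ right $e$-cancellable, cancellability transfers the relations down to $S$ as $d_ie=a_i\cdots a_{j-1}d_je$, and a direct coordinate computation then shows $a_1\cdots a_iS=a_1\cdots a_jS$ for all large $j$, so the chain stabilises without any appeal to (CFRS).

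The place where (CFRS) genuinely earns its keep is a step you have omitted: the paper first proves that $S$ satisfies $(M^L)$, the ascending chain condition on principal \emph{left} ideals (Lemma~\ref{M^L}). Here one starts from $Sa_1\subseteq Sa_2\subseteq\cdots$, builds the analogous $\underline{v_i}/\Phi$, and by a careful choice of indices arranges infinitely many distinct elements $\underline{g_{j_k}}/\Phi$ all satisfying $s_2\,\underline{g_{j_k}}/\Phi=\underline{v_2}/\Phi$ for a \emph{fixed} $s_2$; this is exactly the contradiction with Corollary~\ref{crfsproj}. Condition~(A) is then deduced (Lemma~\ref{lastpiece}) by combining the $(M_R)$ setup with $(M^L)$: the chain $Sd_1e\subseteq Sd_2e\subseteq\cdots$ stabilises by $(M^L)$, forcing $S\underline{u_n}/\Phi=S\underline{u_{n+1}}/\Phi$ eventually, and Isbell's characterisation (Lemma~\ref{a}) finishes. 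Your sketch for~(A) (``analogous uniform-ultrafilter transfer argument'') is missing precisely this $(M^L)$ intermediate; without it, it is not clear how to control an ascending chain of cyclic subacts in an arbitrary $S$-act.
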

\begin{proof} If $\mathcal{SF}$ is axiomatisable and $S$ is left
perfect, then by Theorem~\ref{perfect}, $\mathcal{P}=
\mathcal{SF}$ is an axiomatisable class. Clearly if $\mathcal{P}$
is axiomatisable, then Theorem~\ref{ultra} gives
that every ultrapower of $S$ is projective.

Suppose that every ultrapower of $S$ as a left $S$-act is
projective; by Theorem~\ref{sf}, certainly $\mathcal{SF}$ is an
axiomatisable class. We proceed via a series of subsidiary lemmas.

\begin{lemma}\label{M_R}\cite{gould1} Let $S$ be such that every ultrapower of the
left
$S$-act $S$
is projective. Then $S$ has ($M_R$).
\end{lemma}
\begin{proof} Let $a_1S\supseteq b_2S\supseteq
b_3S\supseteq\hdots$ be a decreasing sequence of principal right ideals
of $S$, so that for  $i\geq 2$ we have $b_i=b_{i-1}a_i$ for some $a_i\in S$,
putting  $b_1=a_1$. Thus $b_2=a_1a_2, b_3=b_2a_3=a_1a_2a_3,\hdots$.

Let $\Phi$ be a uniform ultrafilter over $\mathbb{N}$ and put
${U}=S^{\mathbb{N}}/\Phi$; by assumption, ${U}$ is
projective.

Define elements $\underline{u_i}\in S^{\mathbb{N}}$, $i\in\mathbb{N}$, by
\[\underline{u_i}=(1,1,\hdots ,1,a_i,a_ia_{i+1},
a_ia_{i+1}a_{i+2},\hdots),\]
where the entry $a_i$ is the $i$'th coordinate. Then, for any
$i,j\in\mathbb{N}$ with $i<j$ we have
\[\underline{u_i}\, /\Phi=a_ia_{i+1}\hdots a_{j-1}\underline{u_j}\, /\Phi.\]
Since ${U}$ is projective,  Proposition~\ref{proj}
and Corollary~\ref{cyclicproj} give that
\[S\underline{u_1}\, /\Phi\subseteq S\underline{u_2}\, /\Phi
\subseteq \hdots S\underline{c}\, /\Phi\]
where $\underline{c}\, /\Phi$ is left $e$-cancellable for some $e\in E$.
Put $\underline{c}=(c_1,c_2,\hdots )$, and let
$d_i\in S$ be such that $\underline{u_i}\, /\Phi
=d_i\underline{c}\, /\Phi$ for each $i\in\mathbb{N}$. For any
$i<j$ we have that
\[d_i\underline{c}\, /\Phi= a_i\hdots a_{j-1}d_j\underline{c}\, /\Phi\]
whence from the left $e$-cancellability of $\underline{c}\, /\Phi$,
\[d_ie=a_i\hdots a_{j-1}d_je.\]
Choose
 $i\in\mathbb{N}$
such that $a_1\hdots a_i=d_1c_i$, and $ec_i=c_i$. Then for any $j>i$,
\[a_1\hdots a_iS=d_1ec_iS=a_1\hdots a_{j}d_{j+1}ec_iS
\subseteq a_1\hdots a_jS\subseteq a_1\hdots a_iS,\]
whence
\[b_iS=b_{i+1}S=\hdots\]
as required.
\end{proof}

\begin{lemma}\label{M^L}\cite{stepanova} Let $S$ be such that every ultrapower of the
left
$S$-act $S$
is projective. Then $S$ has ($M^L$).
\end{lemma}

\begin{proof}
Consider an ascending chain
\begin{equation*}
Sa_{1}\subseteq Sa_{2}\subseteq \cdots
\end{equation*}
of principal left ideals of $S.$ Let $u_{2},u_{3},\hdots \in S$ be such that $%
a_{i}=u_{i+1}a_{i+1},$ from which it follows that for any $i,$ $j$ with $i<j,
$%
\begin{equation*}
a_{i}=u_{i+1}u_{i+2}\cdots u_{j}a_{j}.
\end{equation*}
Consider an ultrapower ${U}=S^{\mathbb{N} }/\Phi $ where $\Phi $ is a
uniform ultrafilter on $\mathbb{N} .$ By assumption, ${U}$ is
projective as a left $S$-act. For each $i\geq 2$ define
\begin{equation*}
\underline{v_{i}}\, /\Phi=( 1,1,\hdots ,1,u_{i},u_{i}u_{i+1},
u_{i}u_{i+1}u_{i+2},\hdots
) /{\Phi },
\end{equation*}
the entry $u_{i}$ occurring in the $i$th position. Observe that
\begin{equation*}
\underline{v_{i}}\, /\Phi=u_{i}\underline{v_{i+1}}\, /\Phi,
\end{equation*}
for each $i$, and as in Lemma~\ref{M_R} the projectivity of
${U}$
ensures that
 there exist $\underline{f}\, /\Phi=
(f_{1},f_{2},...)/{\Phi }$  and $s_1,
s_{2},\hdots \in S$ such that
\begin{equation*}
\underline{v_{k}}\, /\Phi=s_{k}\underline{f}\, /\Phi
\end{equation*}
for each $k\in\mathbb{N}$. For each natural number $k$ it follows that the set
\begin{equation*}
E_{k}=\left\{ j\in \mathbb{N} \mid j>k,\,u_{k}u_{k+1}\cdots
u_{j}=s_{k}f_{j}\right\}
\end{equation*}
belongs to $\Phi .$ For each $i$ we put
\begin{equation*}
\underline{g_{i}}\, /\Phi=f_{i}\underline{v_{i+1}}\, /\Phi=(
f_{i},...,f_{i},f_{i}u_{i+1},f_{i}u_{i+1}u_{i+2},...)\, /\Phi.
\end{equation*}

Suppose that for each $i\in \mathbb{N} ,$ $\left\{ j\in \mathbb{N} \mid
\underline{%
g_{i}}\, /\Phi=\underline{g_{j}}\, /\Phi
\right\} \notin \Phi .$ In this case, for each $i\in
\mathbb{N} $ put $T_{i}=\left\{ j\in \mathbb{N} \mid \underline{g_{i}}\, /\Phi
\neq \underline{%
g_{j}}\, /\Phi\right\} ,$
 which by assumption belongs to $\Phi .$
 Now
choose $j_{1},j_{2},...\in \mathbb{N} $ as follows:
\begin{eqnarray*}
j_{1} &\in &E_2\\
j_{2} &\in &E_2\cap T_{j_{1}},\,j_{2}>j_{1} \\
j_{3} &\in &E_2\cap T_{j_{1}}\cap T_{j_{2}},\,j_{3}>j_{2}>j_{1} \\
&&\vdots
\end{eqnarray*}
For any $k,$
\begin{eqnarray*}
\underline{v_{2}}\, /\Phi &
=&u_{2}u_{3}\cdots u_{j_{k}}\underline{v_{j_{k}+1}}\, /\Phi \\
&=&s_{2}f_{j_{k}}\underline{v_{j_{k}+1}}\, /\Phi \\
&=&s_{2}\underline{g_{j_{k}}}\, /\Phi.
\end{eqnarray*}
By definition of the sets $T_i$,
 all of the elements $\underline{g_{j_{k}}}\, /\Phi$ are
distinct. This contradicts Corollary~\ref{crfsproj}.

In view of the above, there exist $i_{0}\in \mathbb{N} $ such that
$D=\left\{ j\in \mathbb{N} \mid \underline{g_{j}} \,
/\Phi=\underline{g_{i_{0}}}\, /\Phi\right\} $ belongs to $\Phi .$
For any $k\in D$ with $k>i_{0},$ pick $j\in D\cap E_{k}. $ Then we
have
\begin{equation*}
S_{j}=\left\{ m\in \mathbb{N} \mid m>j,\,f_{j}u_{j+1}\cdots
u_{m}=f_{i_{0}}u_{i_{0}+1}\cdots u_{m}\right\} \in \Phi .
\end{equation*}
Now take any $m\in S_{j}$ and calculate
\begin{eqnarray*}
a_{k-1} &=&u_{k}\cdots u_{j}u_{j+1}\cdots u_{m}a_{m} \\
&=&s_{k}f_{j}u_{j+1}\cdots u_{m}a_{m} \\
&=&s_{k}f_{i_{0}}u_{i_{0}+1}\cdots u_{m}a_{m} \\
&=&s_{k}f_{i_{0}}a_{i_{0}}.
\end{eqnarray*}
In summary we have shown in this case that, for any $k\in D$ with $k>i_{0},$
the left ideals $Sa_{k-1}$ and $Sa_{i_{0}}$ are equal. If now $l>i_{0}$ is
arbitrary, take any $k\in D$ with $k>l$ and note that $Sa_{i_{0}}\subseteq
Sa_{l}\subseteq Sa_{k-1}=Sa_{i_{0}},$ and so the chain terminates, as
required.

\end{proof}

In order to  complete the proof of the implication $(1)\Rightarrow (2)$
in
Theorem~\ref{projax} we need one further lemma.
 To this end, the following alternative
characterisation of Condition (A) will be useful.

\begin{lemma}\label{a}\cite{isbell} A monoid $S$ satisfies (A) if and
only if for any
elements $a_1,a_2,\hdots $ of $S$, there exists $n\in\mathbb{N}$ such
that
for any $i\in\mathbb{N}$, $i\geq n$, there exists $j_i\in\mathbb{N},
j_i\geq i+1$, with
\[Sa_ia_{i+1}\hdots a_{j_i}=Sa_{i+1}\hdots a_{j_i}.\]

\end{lemma}

\begin{lemma}\label{lastpiece}\cite{gould1} Let $S$ be such that every ultrapower of the
left
$S$-act $S$
is projective. Then $S$ satisfies Condition (A).
\end{lemma}
\begin{proof} Let $a_i\in S, i\in\mathbb{N}$ and define $\Phi$,
${U},
\underline{u_i}\, /\Phi, d_i (i\in\mathbb{N})$
and $\underline{c}\, /\Phi$ as in the proof of
Lemma~\ref{M_R}. For any $i$ we have that
$d_ie=a_id_{i+1}e$ so that $Sd_1e\subseteq Sd_2e\subseteq \hdots$. By
Lemma~\ref{M^L}
we know that $Sd_ne=Sd_{n+1}e=\hdots$ for some $n\in\mathbb{N}$ and
since $e\underline{c}\, /\Phi=\underline{c}\, /\Phi$ it
follows that $S\underline{u_n}\, /\Phi= S\underline{u_{n+1}}\,
/\Phi
=\hdots$. Now let $i\geq n$, so that $\underline{u_{i+1}}\, /\Phi
=s\underline{u_i}\, /\Phi$ for some $s\in S$. Since $\Phi$ is
uniform
there exists $j_i\geq i+1$ such that
$a_{i+1}\hdots a_{j_i}=sa_ia_{i+1}\hdots a_{j_i}$ and so
\[Sa_{i+1}\hdots a_{j_i}=Sa_ia_{i+1}\hdots a_{j_i},\]
so that by Lemma~\ref{a}, $S$ satisfies Condition (A).

\end{proof}

We now proceed with the proof of Theorem~\ref{projax}. If every
ultrapower of $S$ is projective, then from Lemmas~\ref{M_R}
and~\ref{lastpiece} we know that $S$ satisfies ($M_R$) and Condition
(A). By Theorem~\ref{perfect}, $S$ is left perfect.
\end{proof}

\section{Axiomatisability of $\mathcal{F}r$}\label{freesection}

The question of axiomatisability of $\mathcal{F}r$ was solved in
some special cases by the fourth author in \cite{stepanova}, and
most recently by the first author as below.

For convenience, we introduce some new terminology. Let $e\in E$ and
$a\in S$. We say that $a=xy$ is an {\em $e$-good factorisation of
$a$ through $x$} if $y\neq wz$ for any $w,z$ with $e=xw$ and $w\,\el\, e$.

\begin{theorem}\label{freesax}  The following conditions are
equivalent
for a monoid $S$:

(1) every ultrapower of the left $S$-act $S$ is free;

(2)  $\mathcal{P}$ is axiomatisable and $S$ satisfies
(*):
for all $e\in E
\setminus\{ 1\}$, there exists a finite set
$f\in S$ such that any
$a\in S$ has an $e$-good factorisation through $w$, for
some $w\in f$.

(3) $\mathcal{F}r$ is axiomatisable.
\end{theorem}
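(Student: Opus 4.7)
The plan is to run the cycle $(3) \Rightarrow (1) \Rightarrow (2) \Rightarrow (3)$, in the spirit of the preceding axiomatisability theorems. The implication $(3) \Rightarrow (1)$ is immediate from \L os's theorem (Theorem~\ref{ultra}), since $S$ itself is free on $\{1\}$. For the easy half of $(1) \Rightarrow (2)$, every free $S$-act is projective, so each ultrapower of $S$ is projective, and Theorem~\ref{projax} yields both axiomatisability of $\mathcal{P}$ and left perfectness of $S$; hence $S$ is local by Corollary~\ref{local} and satisfies (CFRS) by Lemma~\ref{crfs}.

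To derive (*) from (1), I argue by contradiction: assume (*) fails for some $e \in E \setminus \{1\}$ and, for each finite $f \subseteq S$, pick $a_f \in S$ with no $e$-good factorisation through any element of $f$. Let $J$ be the set of finite subsets of $S$; the family $\{J_s : s \in S\}$ with $J_s = \{f \in J : s \in f\}$ is centred, so by Proposition~\ref{filter1} extends to an ultrafilter $\Phi$. Form $U = S^J/\Phi$; by (1) it is free, so $\underline{a}/\Phi = s\,\underline{v}/\Phi$ for some $s \in S$ and right $1$-cancellable $\underline{v}/\Phi$. On the set in $\Phi$ where $s \in f$ and $a_f = sv_f$, the factorisation $a_f = sv_f$ is not $e$-good through $s$, so $v_f = w_f z_f$ with $w_f$ in the finite set $W := \{w \in S : sw = e,\ w\,\el\,e\}$; an ultrafilter pigeonhole fixes a single $w^* \in W$ with $\underline{v}/\Phi = w^* \underline{z}/\Phi$ for an appropriate $\underline{z}/\Phi$. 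Now pick a generator $g_0$ of the free component of $U$ containing both $\underline{v}/\Phi$ and $\underline{z}/\Phi$, and write $\underline{v}/\Phi = p g_0$, $\underline{z}/\Phi = r g_0$. Right $1$-cancellability of $\underline{v}/\Phi$ forces $p$ to be right $1$-cancellable in $S$, and $p g_0 = w^* r g_0$ gives $p = w^* r$. Thus $Sp \cong S$ is a subact of $S$, and Proposition~\ref{perfectagain}(3) yields $Sp = S$, so $qp = 1$ for some $q$, whence $(qw^*) r = 1$ and $qw^* \in R_1$. But $qw^* \in Sw^* = Se \subseteq S \setminus H_1$ (as $e \neq 1$ is a proper idempotent and $S\setminus H_1$ is an ideal by locality), while $R_1 = H_1$ by locality; contradiction. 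This threading --- transferring the combinatorial failure of (*) across the ultrapower and then colliding it with locality via Proposition~\ref{perfectagain}(3) --- is the main obstacle.

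For $(2) \Rightarrow (3)$, I build axioms in the familiar explicit style. Let $\Sigma_{\mathcal{P}}$ axiomatise $\mathcal{P}$. For each $e \in E \setminus \{1\}$ let $f_e$ be the finite set from (*), and for each $w \in f_e$ set $W_{e,w} = \{w' \in S : ww' = e,\ w'\,\el\,e\}$, finite by (CFRS). Define
\[\phi_e \leftrightharpoons (\forall x)\Big[ex = x \,\rightarrow\, \bigvee_{w \in f_e}(\exists y)\Big(wy = x \,\wedge\, \bigwedge_{w' \in W_{e,w}}(\forall z)(w'z \ne y)\Big)\Big],\]
and take $\Sigma_{\mathcal{F}r} = \Sigma_{\mathcal{P}} \cup \{\phi_e : e \in E \setminus \{1\}\}$. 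If $A$ is free and $x \in A$ satisfies $ex = x$, decompose $A = \coprod_{g \in X} Sg$ (each $g$ right $1$-cancellable), write $x = sg_0$ for the generator $g_0 \in X$ of the component of $x$ (giving $es = s$), apply (*) to obtain an $e$-good factorisation $s = wt$, and witness $\phi_e$ via $y := tg_0$; an equality $y = w'z$ with $z \in A$ and $w' \in W_{e,w}$ would force $z = rg_0$ (same free component) and then $t = w'r$, contradicting $e$-goodness. Conversely, if $A \vDash \Sigma_{\mathcal{F}r}$, then $A$ is projective, so $A = \coprod_i Se_i$; if some $e_i = e \neq 1$, then applying $\phi_e$ at $x = e$ gives $w \in f_e$ and $y = ue \in Se$ with $wue = e$, whence $w' := ue \in W_{e,w}$ and $y = w' \cdot e$ with $e \in A$ directly contradict the forbidden form required of $y$, forcing each $e_i = 1$.
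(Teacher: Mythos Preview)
Your proof is correct and follows the same overall architecture as the paper: the cycle $(3)\Rightarrow(1)\Rightarrow(2)\Rightarrow(3)$, the ultrafilter on finite subsets of $S$ with the sets $J_s$ for $(1)\Rightarrow(2)$, the CFRS-based pigeonhole on the finitely many $w$ with $sw=e$ and $w\,\el\,e$, and an explicit finitary axiom $\phi_e$ for $(2)\Rightarrow(3)$.

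There are two points of divergence worth recording. First, in the contradiction step of $(1)\Rightarrow(2)$: having reached $\underline{v}/\Phi = w^*\underline{z}/\Phi$ with $w^*\,\el\,e$, the paper observes that $w^*$ is regular (being $\el$-related to an idempotent), hence $w^*\,\ar\,g$ for some idempotent $g\neq 1$, and then $g\underline{v}/\Phi=\underline{v}/\Phi$ directly contradicts right $1$-cancellability. Your route via the free generator $g_0$, right $1$-cancellability of $p$, and Proposition~\ref{perfectagain}(3) to force $Sp=S$ is sound but more circuitous; in fact if you take $\underline{v}/\Phi$ to be the free generator itself (as the paper does), then $g_0=\underline{v}/\Phi$, $p=1$, and your argument collapses to $1=w^*r$, giving $w^*\in R_1=H_1$ versus $w^*\in Se$, which is essentially the paper's contradiction. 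Second, your axiom $\phi_e$ differs from the paper's in two ways: you add the harmless hypothesis $ex=x$, and you forbid $y=w'z$ for \emph{all} $z$ rather than only $z=x$. Both formulations work, but the paper's weaker forbidding $b\neq v_{ij}a$ already suffices for the converse direction, since in a projective act the witness $b$ necessarily lies in the cyclic component $Sa$, forcing $b=wa$ anyway.
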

\begin{proof} If  $\mathcal{F}r$ is axiomatisable,
then certainly (1) holds. On the other hand, if (1) holds,
then by Theorem~\ref{projax}, $\mathcal{P}$ is axiomatisable and $S$ is
left
perfect. Note that by Corollary~\ref{local}, $S$ is local.
We show that (*) holds.

Let $e\in E$ with $e\neq 1$. For any $a\in S$, $a=a\cdot 1$; if $e=av$
with $v\,\el\, e$, then as $S$ is local, $1\neq vc$ for any $c$.

We proceed by contradiction. Let $J$ denote the set of finite subsets of
$S$.
Suppose that for any $f\in J$ there exists an element $w_f\in S$
such that
$w_f$ does not have an $e$-good factorisation
through $w$, for any $w\in f$. Clearly $S$ and $J$ must be infinite.

For each $w\in S$ let $J_w=\{ f\in J\mid w\in f\}$; since $\{
w_1,\hdots ,w_n\}\in J_{w_1}\cap\hdots \cap J_{w_n}$, there exists
an ultrafilter $\Phi$ over $J$ such that $J_w\in \Phi$ for all $w\in
S$.

Consider ${U}=S^J/\Phi$; by assumption (1), ${U}$ is
free.
Let $\underline{x}\in S^J$ be such that $\underline{x}(f)=w_f$. Since
${U}$ is free, Theorem~\ref{free}
and Corollary~\ref{freecyclic} give that
$\underline{x}\, /\Phi=w\underline{d}\, /\Phi$ for some $w$,
where $\underline{d}\, /\Phi$ is right $1$-cancellable. Suppose
that $\underline{d}(f)=d_f$, for any $f\in J$.

We claim that
\[D=\{ f\in J\mid wd_f \mbox{ is an }e\mbox{-good factorisation through } w\}
\in\Phi.\]
Suppose to the contrary. Then
\[D'=\{ f\in J\mid d_f=vz\mbox{ for some }v,z\mbox{ with }e=wv\mbox{ and
}v\,\el\,e\}\in \Phi.\]
By Lemma~\ref{crfs}, there are only finitely many $v_1,\hdots ,v_n$
such that $e=wv_i$ and $v_i\, \el\, e$. For $1\leq i\leq n$ let
\[D_i=\{ f\in J\mid d_f=v_iz\mbox{ for some }z\},\] so that
$D'=D_1\cup\hdots \cup D_n$. Consequently, $D_i\in \Phi$ for some
$i\in\{ 1,\hdots ,n\}$. We know that $v_i\,\el\, e$, so that $v_i$
is regular and $v_i\,\ar\, g$ for some $g\in S$; as $S$ is local,
$g\neq 1$. But $gv_i=v_i$ so that $gd_f=d_f$ for all $f\in
D_i\in\Phi$. Hence $g\underline{d}\, /\Phi=\underline{d}\, /\Phi$,
so that as $\underline{d}\, /\Phi$ is right $1$-cancellable, $g=1$,
a contradiction. We conclude that $D\in\Phi$.

Let $T=\{ f\in J:w_f=wd_f\}$, so that $T\in\Phi$; now pick
$f\in D\, \cap T\, \cap J_w$. We have that $w\in f$, and as $f\in T$,
$w_f=wd_f$; moreover, as $f\in D$, this is an $e$-good
factorisation of $w_f$ through $w$. This contradicts the choice
of $w_f$. We deduce that (*) holds.

Finally, we suppose that  $\mathcal{P}$ is axiomatisable,
  and $S$ satisfies
(*). Let $\Sigma_{\mathcal{P}}=\Sigma_{\mathcal{SF}}$ be a set of sentences axiomatising
$\mathcal{P}$. Let $e\in E,e\neq 1$. Choose
a finite set  $f=\{ u_1,\hdots ,u_n\}$ guaranteed by (*), such
that every $a\in S$ has an $e$-good factorisation through $u_i$, for
some $i\in \{ 1,\hdots ,n\}$.  Since $\mathcal{P}$ is axiomatisable,
Lemma~\ref{crfs} tells us that for each $i\in \{ 1,\hdots ,n\}$ there exist finitely many
$v_{i1},\hdots ,v_{im_i}\in L_e$, $m_i\geq 0$, such that $e=u_iv_{ij},
1\leq j\leq m_i$. Let
\[\varphi_{e,i}\leftrightharpoons
(\exists b)
(a=u_ib\wedge(\bigwedge_{1\leq j\leq m_i} b\neq
v_{ij}a)).\]
We now define $\phi_e$ as
\[(\forall a)\bigvee_{1\leq i\leq n}\varphi_{e,i}.\]
Put
\[\Sigma_{\mathcal{F}r}=\Sigma_{\mathcal{P}}\cup\{ \varphi_e\mid e\in
E\setminus \{ 1\}\}.\]
We claim that $\Sigma_{\mathcal{F}r}$ axiomatises $\mathcal{F}r$.

Let $F$ be a free $S$-set; certainly $F\vDash \Sigma_{\mathcal{P}}$. Say
that
$F$ is free on $X$, let $e\in E, e\neq 1$
 and let $a\in F$. Then $a=sx$ for some $x\in X$. By choice of
$u_1,\hdots ,u_n$, we can write $s=u_it$ for some $t\in S$ with
$t\neq vw$ for any $w\in S$ and $v\in L_e$ such that $e=u_iv$. Put $b
=tx$; clearly then $F\vDash \varphi_e$.

Conversely, let $A$ be an $S$-set and suppose that $A\models
\Sigma_{\mathcal{F}r}$. Since $A$ is therefore projective, we know that
$A$ is a coproduct of maximal indecomposable $S$-subsets of the form
$Sa$, where there exists an $e\in E$ such that
$a$ is right $e$-cancellable;
notice that $ea=a$. Suppose that $e\neq 1$. Since $A\models \varphi_e$ we have that
$a=u_ib$ for some $b$ such that $b\neq va$ for
any $v\in L_e$ with $e=u_iv$. But $b=wa$ say, giving that
$a=u_iwa$ and so $e=u_iwe$. Clearly $e\,\el\, we$,
and  $b=wa=wea$,  a contradiction. Thus
$e=1$ and we deduce that $A$ is free. Consequently,
$\Sigma_{\mathcal{F}r}$
axiomatises $\mathcal{F}r$ as required.

\end{proof}

For some restricted classes of monoids, we can simplify the condition
given in Theorem~\ref{freesax}. We say that the group of units $H_1$
of a monoid $S$ has {\em finite right index} if there exists
$u_1,\hdots ,u_n\in S$ such that $S=u_1H_1\cup\hdots u_nH_1$. Note that
if in addition $S$ is local, then for any $e\in E, e\neq 1$,
 any $a\in S$ has an $e$-good
factorisation through $u_i$, for some $i\in\{ 1,\hdots ,n\}$.

\begin{proposition}\label{finiteideals} Let $S$ be a monoid
such that
\[S\setminus R_1= s_1S\cup\hdots \cup s_mS\]
for some $s_1,\hdots ,s_m\in S$. Then $\mathcal{F}r$ is an
axiomatisable class if and only if $\mathcal{P}$ is axiomatisable and
$H_1$ has finite right index in $S$.

\end{proposition}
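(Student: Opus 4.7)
The plan is to reduce both implications to Theorem~\ref{freesax}. For the easy direction, assume $\mathcal{P}$ is axiomatisable and $H_1$ has finite right index, say $S=u_1H_1\cup\cdots\cup u_nH_1$. Theorem~\ref{projax} and Corollary~\ref{local} give that $S$ is left perfect and local, so $R_1=H_1$ and $S\setminus H_1$ is an ideal. I would verify condition~(*) by taking $f=\{u_1,\ldots,u_n\}$ for each $e\in E\setminus\{1\}$: writing any $a=u_ih$ with $h\in H_1$, a would-be bad witness $h=w'z$ with $e=u_iw'$ and $w'\el e$ would force $Sw'=Se\neq S$ (since the non-identity idempotent $e$ cannot lie in $H_1$ by locality), whence $w'\notin H_1$ and $h=w'z\in S\setminus H_1$, a contradiction. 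Theorem~\ref{freesax} then delivers axiomatisability of $\mathcal{F}r$.

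For the converse I plan to argue by contradiction. Assume $\mathcal{F}r$ is axiomatisable, so by Theorem~\ref{ultra} every ultrapower of the left $S$-act $S$ is free; again $S$ is local, so $S=H_1\cup s_1S\cup\cdots\cup s_mS$. Suppose $H_1$ has infinite right index and pick pairwise distinct cosets $a_1H_1,a_2H_1,\ldots$ with every $a_i\in S\setminus H_1$. Fix a uniform ultrafilter $\Phi$ on $\mathbb{N}$ and put $U=S^{\mathbb{N}}/\Phi$; by hypothesis, together with Theorem~\ref{free} and Corollary~\ref{freecyclic}, $U\cong\coprod_{j\in J}Sy_j$ with each $y_j$ right $1$-cancellable. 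I would place $\underline{a}/\Phi$ in its unique summand $Sy_{j_0}$ and write $\underline{a}/\Phi=xy_{j_0}$ with $y_{j_0}=\underline{d}/\Phi$, so that $a_i=xd_i$ on some $D\in\Phi$.

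Next comes the decisive pigeonhole. Distinctness of the $a_iH_1$ transfers to distinctness of the $d_iH_1$ for $i\in D$ (if $d_i=d_{i'}h$ with $h\in H_1$ then $a_i=a_{i'}h$), so at most one $i\in D$ can have $d_i\in H_1$; hence $\{i:d_i\in H_1\}\notin\Phi$. Since $\mathbb{N}=\{i:d_i\in H_1\}\cup\bigcup_{k=1}^{m}\{i:d_i\in s_kS\}$, some $D_k$ with $k\geq 1$ lies in $\Phi$; choosing $e_i$ with $d_i=s_ke_i$ on $D_k$ gives $\underline{d}/\Phi=s_k\underline{e}/\Phi$. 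Setting $z=\underline{e}/\Phi$ and locating $z$ in its unique summand $Sy_{j'}$ as $z=ty_{j'}$, the identity $y_{j_0}=s_kz=(s_kt)y_{j'}$ together with disjointness of the summands forces $j_0=j'$, and then right $1$-cancellability of $y_{j_0}$ applied to $(s_kt)y_{j_0}=1\cdot y_{j_0}$ yields $s_kt=1$, contradicting $s_k\in s_kS\subseteq S\setminus R_1=S\setminus H_1$.

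The main obstacle will be organising the ultraproduct argument so that freeness of $U$ is extracted in the right form: the crucial point is that the generators of the free summands are right $1$-cancellable, which turns a relation like $(s_kt)y_{j_0}=y_{j_0}$ in $U$ into a genuine identity $s_kt=1$ in $S$, while the finite-union hypothesis $S\setminus R_1=s_1S\cup\cdots\cup s_mS$ is precisely what allows a pigeonhole to single out one $s_k$ for which such an identity is forced.
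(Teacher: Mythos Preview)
Your proposal is correct and follows essentially the same approach as the paper: both directions go through Theorem~\ref{freesax}, and the converse is an ultrapower argument exploiting the right $1$-cancellability of the free generator together with the finite cover $S=H_1\cup s_1S\cup\cdots\cup s_mS$. The only cosmetic difference is the order in which you dispose of the two cases: the paper first shows each $T_k=\{i:d_i\in s_kS\}\notin\Phi$ (via $s_kv=1$) and then derives the contradiction from $\{i:d_i\in H_1\}\in\Phi$ (via $a_iH_1=wH_1=a_jH_1$), whereas you first rule out $\{i:d_i\in H_1\}\in\Phi$ (at most one such $i$, by distinctness of the $d_iH_1$) and then obtain $s_kt=1$ from some $D_k\in\Phi$. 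The underlying ideas are identical; incidentally, your assumption that all $a_i\in S\setminus H_1$ is harmless but never actually used.
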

\begin{proof} Suppose that $\mathcal{P}$ is axiomatisable and $H_1$ has
finite right index in $S$. By Theorem~\ref{projax}
Corollary~\ref{local}, $S$ is
local, so that by the comments above, condition
(*) of Theorem~\ref{freesax} holds, and so $\mathcal{F}r$ is an axiomatisable class.

Conversely, if $\mathcal{F}r$ is axiomatisable, it remains only to show
that $H_1$ has finite right index. Suppose for contradiction that there
exists
$a_1,a_2,\hdots $ in $S$ with $a_iU\cap a_jU=\emptyset$ for all $i\neq
j$. Let $\Phi$ be a uniform ultrafilter on $\mathbb{N}$, let
${U}=S^{\mathbb{N}}/\Phi$ and let
$\underline{a}\in S^{\mathbb{N}}$ be given by
$\underline{a}(i)=a_i$. Since ${U}$ is free,
$\underline{a}\, /\Phi= w\underline{d}\, /\Phi$ for some right
$1$-cancellable $\underline{d}\, /\Phi$ generating the
connected component in which $\underline{a}\, /\Phi$ lies.
 Say $\underline{d}(i)=d_i$.

 By Theorem~\ref{freesax},
and Corollary~\ref{local}, we know that $S$ is local so that
$R_1=H_1$. Hence $\mathbb{N}=T_1\cup\hdots T_m\cup T$ where $T_i=\{
i\in\mathbb{N}\mid d_i\in s_iS\}$ and $T=\{ i\in \mathbb{N}\mid
d_i\in H_1\}$. If $T_i\in\Phi$, then $\underline{d}\, /\Phi=
s_i\underline{f}\, /\Phi$ for some $\underline{f}\, /\Phi$; but
$\underline{f}\, /\Phi= v\underline{d}\, /\Phi$ so that as
$\underline{d}\, /\Phi$ is $1$-cancellable, we obtain $1=s_iv$, a
contradiction. Hence $T\in\Phi$. Let $D=\{
i\in\mathbb{N}:a_i=wd_i\}$, and pick distinct $i,j\in D\cap T$. Then
$a_i=wd_i,a_j=wd_j$ and so
\[a_iH_1 =wd_iH_1=wH_1=wd_jH_1=a_jH_1,\]
a contradiction. We deduce that $H_1$ has finite right index in $S$.

\end{proof}

Our final corollary is now straightforward.

\begin{corollary}\label{inverse}\cite{stepanova} Let $S$ be an inverse monoid. Then
$\mathcal{F}r$ is an axiomatisable class if and only if $\mathcal{P}$ is
axiomatisable and $H_1$ has finite right index in $S$.

\end{corollary}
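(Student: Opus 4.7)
Our plan is to reduce the corollary to Proposition~\ref{finiteideals} by showing that, for an inverse monoid $S$ with $\mathcal{P}$ axiomatisable, the set $S\setminus R_1$ is a finite union of principal right ideals. In both directions of the claimed equivalence $\mathcal{P}$ is axiomatisable (in the ``only if'' direction via Theorem~\ref{freesax}), so Theorem~\ref{projax} and Corollary~\ref{local} yield that $S$ is left perfect and local, whence $R_1 = H_1$.

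The central step is to show that the semilattice of idempotents $E$ is finite. Since $S$ is inverse, every principal right ideal of $S$ has the form $eS$ with $e\in E$, and $eS\subseteq fS$ if and only if $e\leq f$. Granted that $E$ is finite, it follows immediately that $S\setminus R_1=\bigcup_{e\in E\setminus\{1\}}eS$ is a finite union of principal right ideals, and Proposition~\ref{finiteideals} then delivers the desired equivalence.

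To prove $E$ is finite, combine DCC on $E$ with (CFRS). Condition $(M_R)$ from Theorem~\ref{perfect}, via the correspondence just described, gives DCC on $E$; Lemma~\ref{crfs} gives (CFRS) and, applied with $s=t=g$ for $g\in E$, shows that $\{f\in E\mid f\geq g\}$ is finite for every $g$. If $E$ were infinite, pick distinct idempotents $e_1,e_2,\ldots\in E$ and form the weakly descending sequence $g_n=e_1\wedge\cdots\wedge e_n$; by DCC it stabilises at some $g_N$, and iterating $g_N=g_{N+1}=g_N\wedge e_{N+1}$ yields $g_N\leq e_i$ for every $i$, so $\{f\in E\mid f\geq g_N\}$ would contain the infinite set $\{e_i\}$, contradicting (CFRS). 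The main subtlety is in noticing that DCC together with (CFRS) already forces $E$ to be finite; neither Condition (A) of Theorem~\ref{perfect} nor Condition~(*) of Theorem~\ref{freesax} needs to be invoked directly.
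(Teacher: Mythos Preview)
Your proof is correct and follows the same overall strategy as the paper: reduce to Proposition~\ref{finiteideals} by showing that $E$ is finite whenever $\mathcal{P}$ is axiomatisable, using $(M_R)$ together with (CFRS) from Lemma~\ref{crfs}.

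The only difference lies in how the finiteness of $E$ is extracted. The paper argues more directly: $(M_R)$ gives a minimal principal right ideal $eS$ with $e\in E$; for any $f\in E$ minimality forces $efS=eS$, so $e\,\ar\,ef$, and since an inverse monoid has a unique idempotent in each $\ar$-class, $e=ef$ for all $f\in E$. Thus $e$ is the minimum of the semilattice $E$, and a single application of (CFRS) to the equation $ex=e$ bounds $|E|$. Your route instead manufactures a lower bound via a descending product $g_n=e_1\wedge\cdots\wedge e_n$ and invokes DCC to stabilise it. Both arguments use exactly the same two ingredients, but the paper's version exploits the inverse-monoid structure (uniqueness of idempotents in $\ar$-classes) to locate the least idempotent in one step, whereas yours is a more general semilattice argument that would work in any meet-semilattice with DCC and finite principal filters.
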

\begin{proof} The converse holds as in
Proposition~\ref{finiteideals}.

Suppose now that $\mathcal{F}r$ is axiomatisable. Since $S$ has
$(M_R)$ we can pick a minimal principal right ideal; as $S$ is
regular, this is generated by $e\in E$. For any $f\in E$ we have
that $eS=efS$, so that $e\,\ar\, ef$. But $S$ is inverse, so that
$E$ is a semilattice, and every $\ar$-class contains a unique
idempotent. Hence $e=ef$ for all $f\in E$; by Lemma~\ref{crfs} we
deduce that $E$ is finite. Since every principal right ideal is
idempotent generated, $S$ has only finitely many principal right
ideals. The result follows by Proposition~\ref{finiteideals}.
\end{proof}

\section{ Completeness, model completeness and categoricity}\label{cmc}

At this point we need to present a little more model theory as
motivation for the remaining sections. We remind the reader that
throughout, $L$ denotes a first order language.

An {\em elementary theory} or simply a {\em theory}
 of a first order language $L$ is a set
of  sentences $T$ of $L$, which is closed under
deduction. We recall from
Section~\ref{ax} that an $L$-structure $\mathbf{ A}$ in which all
sentences of the theory $T$
are true, is called a {\em model} of the theory $T$ and we
write $\mathbf{A}\models T$. A theory $T$ is {\em consistent} if for any
sentence $\varphi\in L$ we do not have both $\varphi$ and
$\neg\varphi\in
T$. A consistent theory
$T$  is {\em complete} if $\varphi\in T$ or
$\neg\varphi\in T$ for any sentence $\varphi$ of the language $L$.
By the {\em Extended Completeness Theorem} (c.f. Theorem 1.3.21 of
\cite{changkeisler}), a theory $T$ is consistent if and only if it has a
 model.
Clearly, for any $L$-structure $\mathbf{A}$, the {\em theory}
Th$(\mathbf{A})$
 {\em of $\mathbf{A}$}, defined by
\[\mbox{Th}(\mathbf A)=\{ \varphi\mid \varphi\mbox{ is a sentence, }\mathbf A\models
\varphi\}\] is a consistent complete theory and for a class of
$L$-structures $\mathcal{K}$,
\[\mbox{Th}(\mathcal{K})=\bigcap_{\mathbf{A}\in\mathcal{K}}\mbox{Th}
(\mathbf A)\]
is consistent.

Structures $\mathbf{A,B}$ for $L$ are {\em elementarily equivalent},
denoted $\mathbf{A}\equiv \mathbf{B}$, if Th$(\mathbf{A})=$
Th$(\mathbf{B})$. One of the basic tenets of model theory tells us that
if $T$ is a theory and $\varphi$ is a sentence such that $\varphi\notin
T$,
then $T\cup\{ \neg\varphi\}$ is consistent, and hence has a model. Thus
we deduce

\begin{lemma}\label{complete} A consistent theory $T$ is complete if and
only if $\mathbf{A}\equiv\mathbf{B}$ for any models $\mathbf{A,B}$ of
$T$.
\end{lemma}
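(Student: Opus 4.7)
The plan is to handle the two directions separately, using the Extended Completeness Theorem (Theorem 1.3.21 of \cite{changkeisler}) and the remark from just before the lemma that if $\varphi\notin T$ for a theory $T$, then $T\cup\{\neg\varphi\}$ is consistent.

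For the forward implication, I would assume $T$ is complete and take two arbitrary models $\mathbf{A},\mathbf{B}$ of $T$. For any sentence $\varphi$ of $L$, completeness gives that $\varphi\in T$ or $\neg\varphi\in T$. In the first case $\mathbf{A}\models\varphi$ and $\mathbf{B}\models\varphi$; in the second case both satisfy $\neg\varphi$, so neither satisfies $\varphi$. Either way $\varphi\in\mathrm{Th}(\mathbf{A})$ if and only if $\varphi\in\mathrm{Th}(\mathbf{B})$, whence $\mathrm{Th}(\mathbf{A})=\mathrm{Th}(\mathbf{B})$, i.e.\ $\mathbf{A}\equiv\mathbf{B}$.

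For the converse, I would argue by contrapositive. Suppose $T$ is consistent but not complete, so there is a sentence $\varphi$ with $\varphi\notin T$ and $\neg\varphi\notin T$. The remark preceding the lemma then tells us that both $T\cup\{\neg\varphi\}$ and $T\cup\{\varphi\}$ are consistent. By the Extended Completeness Theorem each has a model; say $\mathbf{A}\models T\cup\{\neg\varphi\}$ and $\mathbf{B}\models T\cup\{\varphi\}$. Both $\mathbf{A}$ and $\mathbf{B}$ are models of $T$, but $\varphi\in\mathrm{Th}(\mathbf{B})\setminus\mathrm{Th}(\mathbf{A})$, so $\mathbf{A}\not\equiv\mathbf{B}$, contradicting the assumption.

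There is no real obstacle here: the content of the lemma is essentially bookkeeping once one has the Extended Completeness Theorem and the observation that a theory missing a sentence $\varphi$ can be consistently extended by $\neg\varphi$. The only care needed is to invoke consistency so that the models produced by the Extended Completeness Theorem actually exist.
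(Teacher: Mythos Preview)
Your proof is correct and follows exactly the approach the paper intends: the paper does not give a separate proof environment for this lemma but merely states the key ingredient (that if $\varphi\notin T$ then $T\cup\{\neg\varphi\}$ is consistent and hence has a model) and writes ``Thus we deduce''. You have simply filled in the details of both directions using precisely this observation together with the Extended Completeness Theorem, so there is nothing to add.
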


 The previous few chapters have concentrated on
axiomatisable classes of $S$-acts. We remark here that if $\mathcal{K}$
is
a class axiomatised by $T$, then Th$(\mathcal{K})=T$.

 For an $L$-structure $\mathbf{A}$ with
universe $A$ and subset $B$ of $A$ we often consider the augmented
or {\em enriched} language $L_B$, which is obtained from $L$ by
adding a set of constants $\{b'\mid b\in B\}$, where $b_1'\neq b_2'$
for distinct $b_1$ and $b_2$ from $B$. We will write  $\mathbf{A}_B$
for the corresponding enriched $L_B$-structure. So, $\mathbf{A}_B$
is obtained from $\mathbf{A}$ by interpreting the constant $b'$,
where $b\in B$, by $b$. We denote by Th$(\mathbf{A},b)_{b\in B}$ the
set of sentences of $L_B$ true in
 $\mathbf{A}_B$.  Another useful tool is that of the {\em diagram}
of an $L$-structure $\mathbf{A}$, denoted by Diag $\mathbf{A}$,
which is
 the set of atomic and negated atomic formulae of $L_A$ that are
true in  $\mathbf{A}_A$. From Proposition 2.1.8 of
\cite{changkeisler}, an $L$-structure $\mathbf{A}$ embeds into an
$L$-structure $\mathbf{B}$ if and only if $\mathbf{B}$ has an
enriching to the language  $L_A$, such that
$\mathbf{B}_A$ is a model of Diag
$\mathbf{A}$.

We adopt the standard convention of writing $\bar x\in X$ to
indicate that $\bar x=(x_1,\hdots ,x_n)$ for some finite set
$\{ x_1,\hdots,x_n\}\subseteq X$. A
substructure $\mathbf{A}$ of an $L$-structure $\mathbf{B}$ is
said to be
{\em elementary}  (denoted  ${\mathbf A\preccurlyeq \mathbf B}$), if for any formula
$\varphi(\bar x)$ of the language $L$ and any $\bar a\in A$
$${\mathbf A}\models\varphi(\bar a)\Leftrightarrow{\mathbf
B}\models\varphi(\bar a).$$ Note that in this definition the
condition``$\Leftrightarrow$'' can be exchanged to ``$\Leftarrow$''
or ``$\Rightarrow$'' (consider the negation of the formula and bear
in mind that for any
$L$-structure $\mathbf{C}$,
Th$(\mathbf{C},c)_{c\in C}$ is complete). It is easy to see
that if
$\mathbf A$ is a substructure of $\mathbf B$, then
\[\mathbf A\preccurlyeq \mathbf B\mbox{ if and only if }
\mathbf B_A\models\mbox{Th}(A,a)_{a\in A}.\]

We can now state a crucial result, known as the upward and downward
L\"{o}wenheim-Skolem-Tarski theorem.

\begin{theorem} (Corollary 2.1.6 and Theorem 3.1.6) \cite{changkeisler}
\label{updownlst} Let $T$ be a
theory in $L$ with an infinite model. Then for any cardinal
$\varkappa\geq |L|$, $T$ has a model $\mathbf A$ with $|A|=\varkappa$.

If $\mathbf B$ is a model of $T$ with $|B|=\varkappa\geq \alpha\geq |L|$
and
$X\subseteq B$ with $|X|\leq \alpha$, then there is an elementary
substructure $\mathbf C$ of $\mathbf B$ (so that certainly
$\mathbf C\models T$) such that $X\subseteq C$ and $|C|=\alpha$.

\end{theorem}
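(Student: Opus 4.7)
The plan is to prove the two parts separately, tackling the downward direction first since the upward direction will invoke it. Both rest on standard model-theoretic tools: the Tarski--Vaught criterion for the downward part, and the compactness theorem for the upward part.

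For the downward direction, I would use the following form of the Tarski--Vaught test: a substructure $\mathbf{C}$ of $\mathbf{B}$ is elementary in $\mathbf{B}$ if and only if, for every $L$-formula $\varphi(\bar{x},y)$ and every tuple $\bar{c} \in C$, whenever $\mathbf{B} \models (\exists y)\varphi(\bar{c},y)$ there exists $c' \in C$ such that $\mathbf{B} \models \varphi(\bar{c},c')$. This follows from a straightforward induction on the complexity of formulas, the nontrivial step being existential quantifiers, which is precisely what the test handles. I would first extend $X$ (padding with arbitrary elements of $B$ if necessary) to a set $X_0 \subseteq B$ with $|X_0|=\alpha$; note $\alpha \geq |L| \geq \aleph_0$, so $\alpha$ is infinite. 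Then I would define a chain $X_0 \subseteq X_1 \subseteq \cdots \subseteq B$ recursively: $X_{n+1}$ is obtained from $X_n$ by, for each $L$-formula $\varphi(\bar{x},y)$ and each tuple $\bar{a}$ from $X_n$ such that $\mathbf{B} \models (\exists y)\varphi(\bar{a},y)$, adjoining one chosen witness (using the axiom of choice), and also closing under the function symbols of $L$. Setting $C = \bigcup_{n \in \omega} X_n$, the number of formulas of $L$ with parameters from $X_n$ is at most $|L| \cdot |X_n|^{<\omega} = \alpha$, so $|X_{n+1}| = \alpha$ and hence $|C| = \alpha$. By construction $C$ is closed under function symbols, so determines a substructure $\mathbf{C}$, and $C$ satisfies the Tarski--Vaught test, so $\mathbf{C} \preccurlyeq \mathbf{B}$; in particular $\mathbf{C} \models T$.

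For the upward direction, I would invoke compactness. Enrich $L$ to $L^* = L \cup \{c_\xi : \xi < \varkappa\}$ by adjoining $\varkappa$ new distinct constant symbols, and consider the $L^*$-theory
\[T^* = T \cup \{c_\xi \neq c_\eta : \xi < \eta < \varkappa\}.\]
Any finite subset $T_0 \subseteq T^*$ mentions only finitely many of the new constants, and the given infinite model of $T$ can be expanded to an $L^*$-structure satisfying $T_0$ by interpreting these finitely many constants as distinct elements (and the rest arbitrarily). Hence $T^*$ is finitely satisfiable, so by the compactness theorem it has a model $\mathbf{M}^*$; its $L$-reduct $\mathbf{M}$ is a model of $T$ with $|M| \geq \varkappa$. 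Now apply the already-proved downward direction to $\mathbf{M}$ with $\alpha = \varkappa$ and $X = \emptyset$ to extract an elementary substructure, and hence a model of $T$, of cardinality exactly $\varkappa$.

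The main obstacle is the downward part, specifically the verification of the Tarski--Vaught test and the bookkeeping to ensure both that the chain remains of cardinality $\alpha$ at every stage and that the union $C$ is genuinely closed under function symbols so as to form a substructure. The cardinal arithmetic $|L| \cdot |X_n|^{<\omega} = \alpha$ relies critically on $\alpha \geq |L|$ and on $\alpha$ being infinite, which is why the hypothesis $\alpha \geq |L|$ is essential. The upward direction is then a short application of compactness combined with the downward theorem.
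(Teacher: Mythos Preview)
Your proof is correct and is the standard textbook argument. Note, however, that the paper does not give its own proof of this theorem: it is stated with a citation to Chang--Keisler (Corollary~2.1.6 and Theorem~3.1.6) and used as a black box, so there is no paper proof to compare against. Your argument is essentially the one found in Chang--Keisler and most model-theory texts: Tarski--Vaught plus a witness-closure chain for the downward part, compactness over an expanded language for the upward part, then trimming with the downward theorem.
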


 A consistent
theory $T$ of the language $L$ is called {\em model complete}
 if $$\mathbf
A\subseteq \mathbf B\Rightarrow \mathbf A\preccurlyeq \mathbf B $$ for any models
$\mathbf{A,B}$
of $T$.

\begin{lemma}\label{modelcompleteimpliescomplete}
(Proposition 3.1.9)  \cite{changkeisler} Let $T$ be model complete
and such that any two models of $T$ are isomorphically embedded in a
third. Then $T$ is complete.

\end{lemma}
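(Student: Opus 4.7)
The plan is to invoke Lemma~\ref{complete}, which reduces completeness of a consistent theory $T$ to showing that any two models of $T$ are elementarily equivalent. So I would take arbitrary models $\mathbf{A}$ and $\mathbf{B}$ of $T$ and aim to show $\mathbf{A}\equiv\mathbf{B}$.

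By hypothesis there exists a model $\mathbf{C}$ of $T$ together with isomorphic embeddings $f\colon\mathbf{A}\hookrightarrow\mathbf{C}$ and $g\colon\mathbf{B}\hookrightarrow\mathbf{C}$. Since isomorphic structures are elementarily equivalent, we may replace $\mathbf{A}$ and $\mathbf{B}$ by their images $f(\mathbf{A})$ and $g(\mathbf{B})$, and thus assume $\mathbf{A}\subseteq\mathbf{C}$ and $\mathbf{B}\subseteq\mathbf{C}$, with $\mathbf{A},\mathbf{B},\mathbf{C}$ all models of $T$. Then model completeness of $T$ yields
\[\mathbf{A}\preccurlyeq\mathbf{C}\quad\text{and}\quad\mathbf{B}\preccurlyeq\mathbf{C}.\]
For any sentence $\varphi$ of $L$, applying the definition of elementary substructure to the formula $\varphi$ (which has no free variables) gives
\[\mathbf{A}\models\varphi\iff\mathbf{C}\models\varphi\iff\mathbf{B}\models\varphi,\]
so $\operatorname{Th}(\mathbf{A})=\operatorname{Th}(\mathbf{B})$, i.e.\ $\mathbf{A}\equiv\mathbf{B}$.

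Since $\mathbf{A}$ and $\mathbf{B}$ were arbitrary models of the consistent theory $T$, Lemma~\ref{complete} gives that $T$ is complete. There is no real obstacle here: the only subtle point is the legitimacy of identifying $\mathbf{A}$ and $\mathbf{B}$ with their images in $\mathbf{C}$ before applying model completeness, which is justified because elementary equivalence is preserved under isomorphism and model completeness is a statement about substructures.
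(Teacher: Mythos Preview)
Your proof is correct and follows essentially the same route as the paper's: embed two arbitrary models into a common model $\mathbf{C}$, use model completeness to get $\mathbf{A}\preccurlyeq\mathbf{C}$ and $\mathbf{B}\preccurlyeq\mathbf{C}$, deduce $\mathbf{A}\equiv\mathbf{B}$, and invoke Lemma~\ref{complete}. The only difference is that you make explicit the identification of $\mathbf{A},\mathbf{B}$ with their images and the passage from elementary substructure to elementary equivalence, which the paper leaves implicit.
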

\begin{proof} Let
$\mathbf{A}$
and $\mathbf{B}$ be models of $T$; by
hypothesis there exists a model $\mathbf{C}$ of $T$ such that
$\mathbf{A}$ and $\mathbf{B}$ embed into $\mathbf{C}$. Since $T$ is model complete,
\[\mathbf{A}\preccurlyeq \mathbf{C}\mbox{ and }\mathbf{B}
\preccurlyeq\mathbf{C}\]
so that certainly $\mathbf{A}\equiv\mathbf{B}$ and  from
Lemma~\ref{complete},
$T$ is complete.
\end{proof}

By writing a formula $\varphi$ of $L$ as
$\varphi(\bar x)$, we indicate that the free variables of
$\varphi$ lie amongst those of $\bar x$. We can also write
$\varphi(\bar x;\bar y)$ to indicate that the free variables lie
amongst those of the distinct tuples $\bar x$ and $\bar y$.
A formula of the form $(\exists \bar x) \psi(\bar x;\bar y)$ for a
quantifier--free formula $\psi(\bar x;\bar y)$ is called
{\em existential}. A structure ${\mathbf A}$ in a class $\mathcal{K}$
of  $L$-structures
 is called  \em  existentially closed in
$\mathcal{K}$
 \em
if for every extension ${\mathbf B}\in \mathcal{K}$  of  ${\mathbf A}$
and every existential formula $(\exists \bar x)\varphi(\bar x;\bar a)$
with $\bar a\in A$,
 if ${\mathbf B}\models(\exists \bar x)\varphi(\bar x;\bar a)$, then
${\mathbf A}
\models(\exists \bar x)\varphi(\bar x;\bar a)$.

\begin{theorem} (Proposition 3.1.7)  \cite{changkeisler}
\label{modelcomplete}\label{modcom} A theory $T$
 is model complete if and only if
whenever ${\mathbf A},{\mathbf B}\models T$ and ${\mathbf A\subseteq
\mathbf B}$, then for any existential formula  $(\exists \bar
y)\psi(\bar y)$ of $L_A$,
$${\mathbf B}_A\models
(\exists \bar y)\psi(\bar y)\Rightarrow{\mathbf A}_A \models(\exists
\bar y)\psi(\bar y).$$
\end{theorem}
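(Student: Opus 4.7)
The forward direction is immediate from the definitions: if $T$ is model complete and $\mathbf{A}\subseteq \mathbf{B}$ are models of $T$, then $\mathbf{A}\preccurlyeq \mathbf{B}$, so in particular any existential formula of $L_A$ true in $\mathbf{B}_A$ is true in $\mathbf{A}_A$ (indeed any formula is). For the converse I would adopt the standard ``sandwich'' construction: given $\mathbf{A}\subseteq \mathbf{B}$ models of $T$ satisfying the hypothesis on existential formulas, build an ascending chain whose union simultaneously embeds $\mathbf{A}$ and $\mathbf{B}$ as elementary substructures.

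The plan hinges on the following key lemma: \emph{if $\mathbf{C}\subseteq \mathbf{D}$ are models of $T$, then there exists a model $\mathbf{E}$ of $T$ with $\mathbf{D}\subseteq \mathbf{E}$ and $\mathbf{C}\preccurlyeq \mathbf{E}$.} To prove it, I would consider in the enriched language $L_D$ the theory
\[T_0=\mbox{Th}(\mathbf{C},c)_{c\in C}\cup \mbox{Diag}(\mathbf{D}).\]
By the Extended Completeness Theorem it suffices to show $T_0$ is consistent, and by compactness it suffices to show that every finite subset has a model. Such a subset reduces to a single sentence $\varphi(\bar c)\wedge \chi(\bar d,\bar c)$ where $\varphi(\bar c)\in \mbox{Th}(\mathbf{C},c)_{c\in C}$ and $\chi(\bar y,\bar c)$ is a conjunction of atomic and negated atomic formulas such that $\mathbf{D}\models \chi(\bar d,\bar c)$ for a tuple $\bar d$ of constants from $D\setminus C$. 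Then $\mathbf{D}\models (\exists \bar y)\chi(\bar y,\bar c)$, which is an existential formula of $L_C$, so the hypothesis applied to the pair $\mathbf{C}\subseteq \mathbf{D}$ yields $\mathbf{C}\models (\exists\bar y)\chi(\bar y,\bar c)$. Interpreting the new constants $\bar d$ as witnesses in $C$ gives a model of the finite fragment inside $\mathbf{C}$. Any model $\mathbf{E}$ of $T_0$ then contains $\mathbf{D}$ as a substructure (via $\mbox{Diag}(\mathbf{D})$) and has $\mathbf{C}$ as an elementary substructure (via the remarks following the definition of Diag), while $\mathbf{E}\models T$ because $\mathbf{C}\preccurlyeq \mathbf{E}$ and $\mathbf{C}\models T$.

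Iterating this lemma alternately (to the pair $(\mathbf{A}_i,\mathbf{B}_i)$ to produce $\mathbf{A}_{i+1}$, then to the pair $(\mathbf{B}_i,\mathbf{A}_{i+1})$ to produce $\mathbf{B}_{i+1}$), I would construct a chain
\[\mathbf{A}=\mathbf{A}_0\subseteq \mathbf{B}=\mathbf{B}_0\subseteq \mathbf{A}_1\subseteq \mathbf{B}_1\subseteq \cdots\]
with $\mathbf{A}_i\preccurlyeq \mathbf{A}_{i+1}$, $\mathbf{B}_i\preccurlyeq \mathbf{B}_{i+1}$, and every member a model of $T$. Setting $\mathbf{M}=\bigcup_i \mathbf{A}_i=\bigcup_i \mathbf{B}_i$, the Tarski--Vaught elementary chain theorem gives $\mathbf{A}\preccurlyeq \mathbf{M}$ and $\mathbf{B}\preccurlyeq \mathbf{M}$. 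Since $\mathbf{A}\subseteq \mathbf{B}\subseteq \mathbf{M}$, for any formula $\varphi(\bar x)$ and any $\bar a\in A$ we have
\[\mathbf{A}\models \varphi(\bar a)\;\Longleftrightarrow\; \mathbf{M}\models \varphi(\bar a)\;\Longleftrightarrow\; \mathbf{B}\models \varphi(\bar a),\]
so $\mathbf{A}\preccurlyeq \mathbf{B}$, as required. The main obstacle is the compactness argument of the key lemma: the crucial step is recognising that a conjunction of literals from $\mbox{Diag}(\mathbf{D})$ involving the new constants $\bar d$ can be repackaged as a single existential formula over $L_C$ to which the hypothesis can be applied, and checking that the equality and inequality constraints among the $\bar d$'s are correctly carried through when we replace them by witnesses in $C$.
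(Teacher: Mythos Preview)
The paper does not actually supply a proof of this theorem; it merely states the result and cites Proposition~3.1.7 of Chang--Keisler. So there is no in-paper argument to compare against.

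Your proposal is correct and is precisely the standard proof one finds in Chang--Keisler. The forward direction is immediate, and for the converse you correctly identify the key lemma (given models $\mathbf{C}\subseteq\mathbf{D}$ of $T$, find a model $\mathbf{E}\supseteq\mathbf{D}$ with $\mathbf{C}\preccurlyeq\mathbf{E}$), prove it by compactness applied to $\mbox{Th}(\mathbf{C},c)_{c\in C}\cup\mbox{Diag}(\mathbf{D})$, and then iterate to build an elementary chain whose union sandwiches $\mathbf{A}$ and $\mathbf{B}$ as elementary substructures. The point you flag at the end---that the literals from $\mbox{Diag}(\mathbf{D})$ involving constants $\bar d$ from $D\setminus C$ can be bundled into a single existential $L_C$-formula to which the hypothesis applies---is exactly the crux, and your handling of it is sound: since the witnesses in $\mathbf{C}$ satisfy $\chi$, all equalities and inequalities among the $\bar d$ are automatically respected when you interpret the new constants by those witnesses. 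One minor remark: you should note explicitly that the constants for elements of $C$ appearing in $\mbox{Diag}(\mathbf{D})$ are to be identified with those already in $L_C$, so that the two pieces of $T_0$ are genuinely in the same language; your separation of parameters into $\bar c$ and $\bar d$ implicitly does this, but it is worth saying.
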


Let $\varkappa$ be a cardinal.
We recall that a theory $T$ in $L$ is {\em categorical in $\varkappa$},
or $\varkappa$-{\em categorical}, if $T$ has a model of cardinality
$\varkappa$, and any two models of cardinality $\varkappa$ are
isomorphic.
The next result is known as the {\em \L os-Vaught test}; its
proof is straightforward, relying upon the L\"{o}wenheim-Skolem-Tarski
theorems.

\begin{proposition}\label{catcomplete}
(Proposition 3.1.10) \cite{changkeisler} Suppose that $T$ is a
consistent theory with only infinite models, and that $T$ is
$\varkappa$-categorical for some $\varkappa\geq |L|$. Then $T$ is
complete.
\end{proposition}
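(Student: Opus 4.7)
The plan is to argue by contradiction, using the Extended Completeness Theorem to produce models separating any allegedly independent sentence, then inflating/deflating them to cardinality $\varkappa$ via Theorem~\ref{updownlst} to reach a contradiction with $\varkappa$-categoricity.

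First I would suppose that $T$ is not complete. Then there exists a sentence $\varphi$ of $L$ with $\varphi \notin T$ and $\neg\varphi \notin T$. Since $T$ is closed under deduction, neither $T \cup \{\varphi\}$ nor $T \cup \{\neg\varphi\}$ is inconsistent. By the Extended Completeness Theorem, each of these extensions has a model, say $\mathbf{A} \models T \cup \{\varphi\}$ and $\mathbf{B} \models T \cup \{\neg\varphi\}$. Because $T$ has only infinite models, both $\mathbf{A}$ and $\mathbf{B}$ are infinite.

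Next I would use Theorem~\ref{updownlst} to replace $\mathbf{A}$ and $\mathbf{B}$ with models of cardinality exactly $\varkappa$. If $|A| \leq \varkappa$, the upward part of the L\"owenheim-Skolem-Tarski theorem produces a model $\mathbf{A}' \models T \cup \{\varphi\}$ with $|A'| = \varkappa$; if $|A| > \varkappa$, the downward part (applied with $\alpha = \varkappa \geq |L|$ and any $X \subseteq A$ with $|X| = \varkappa$) produces an elementary substructure $\mathbf{A}' \preccurlyeq \mathbf{A}$ with $|A'| = \varkappa$, and elementarity guarantees $\mathbf{A}' \models T \cup \{\varphi\}$. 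An identical argument yields $\mathbf{B}' \models T \cup \{\neg\varphi\}$ with $|B'| = \varkappa$.

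By the $\varkappa$-categoricity of $T$, any two models of $T$ of cardinality $\varkappa$ are isomorphic, so $\mathbf{A}' \cong \mathbf{B}'$. Isomorphic structures are elementarily equivalent, hence $\mathbf{A}' \models \varphi$ forces $\mathbf{B}' \models \varphi$, contradicting $\mathbf{B}' \models \neg\varphi$. Thus no such $\varphi$ exists and $T$ is complete.

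The only real subtlety is the case split on the cardinality of the initial models $\mathbf{A}$ and $\mathbf{B}$, which determines whether one invokes the upward or downward half of Theorem~\ref{updownlst}; the hypothesis $\varkappa \geq |L|$ is precisely what allows the downward direction to apply, and the assumption that all models of $T$ are infinite is exactly what ensures the upward direction is available when the initial model happens to be countable and $\varkappa$ is larger.
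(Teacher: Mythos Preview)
Your proof is correct and follows exactly the route the paper indicates: the paper does not actually write out a proof of this proposition but merely remarks that it ``is straightforward, relying upon the L\"{o}wenheim--Skolem--Tarski theorems,'' and your argument is precisely the standard \L o\'s--Vaught contradiction via Theorem~\ref{updownlst} that this remark points to.
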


When applying the notions of categoricity, completeness and model
completeness to a class of $L$-structures, we have to be a little careful.
A class $\mathcal{K}$ of $L$-structures is called \em categorical in
cardinality
$\varkappa$ \em or \em $\varkappa$-categorical
\em if all structures from $\mathcal{K}$ of cardinal $\varkappa$
are isomorphic. The class $\mathcal{K}$ is called \em categorical
\em if $\mathcal{K}$ is categorical in some cardinal $\varkappa\geq |L|$.

Let $\mathcal{K}$ be class of the
$L$-structures. We denote class of the
infinite structures of $\mathcal{K}$ by $\mathcal{K}_\infty$. The class
$\mathcal{K}$ is called \em complete (model complete)\em,
if the theory Th$(\mathcal{K}_\infty)$ of the infinite structures
of this class is complete (model complete).

\begin{lemma}\label{infinite} Let $\mathcal{K}$ be a class of
$L$-structures axiomatised by a theory $T$. For $n\in\mathbb{N}$ we let
$\varphi_n$ be the sentence
\[\varphi_n\leftrightharpoons (\exists x_1\hdots x_n)\bigwedge_{1\leq
i\neq j\leq n}(x_i\neq x_j)\]
and let $T_{\infty}$ be the deductive closure of
\[T\cup\{ \varphi_n\mid n\in\mathbb{N}\}.\]
Then $T_{\infty}$ axiomatises $\mathcal{K}_{\infty}$, so that
Th$\,(\mathcal{K}_{\infty})=T_{\infty}$.
\end{lemma}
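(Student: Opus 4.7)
The plan is to verify both assertions of the lemma by routine applications of the definitions together with G\"odel's completeness theorem. The two statements are really two faces of the same fact: the models of $T_\infty$ are exactly the infinite structures lying in $\mathcal{K}$.

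First I would show that $\mathbf{A} \models T_\infty$ if and only if $\mathbf{A} \in \mathcal{K}_\infty$. For the forward direction, suppose $\mathbf{A} \models T_\infty$. Since $T \cup \{\varphi_n \mid n \in \mathbb{N}\}$ is contained in its own deductive closure $T_\infty$, we have $\mathbf{A} \models T$ and hence $\mathbf{A} \in \mathcal{K}$; moreover $\mathbf{A} \models \varphi_n$ for every $n$, which forces $|A| \geq n$ for every $n$, so $A$ is infinite and $\mathbf{A} \in \mathcal{K}_\infty$. Conversely, if $\mathbf{A} \in \mathcal{K}_\infty$, then $\mathbf{A} \models T$ (since $T$ axiomatises $\mathcal{K}$) and each $\varphi_n$ plainly holds in $\mathbf{A}$. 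By soundness, $\mathbf{A}$ satisfies every logical consequence of $T \cup \{\varphi_n \mid n \in \mathbb{N}\}$, that is, $\mathbf{A} \models T_\infty$.

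For the second assertion, the inclusion $T_\infty \subseteq \mbox{Th}(\mathcal{K}_\infty)$ is immediate from the previous step: every $\varphi \in T_\infty$ holds in every model of $T_\infty$, and these models are precisely the members of $\mathcal{K}_\infty$. For the reverse inclusion, let $\varphi \in \mbox{Th}(\mathcal{K}_\infty)$. Then $\varphi$ is true in every structure in $\mathcal{K}_\infty$, and so by the previous step in every model of $T \cup \{\varphi_n \mid n \in \mathbb{N}\}$. By G\"odel's completeness theorem, $\varphi$ is a deductive consequence of this set of sentences, hence lies in its deductive closure $T_\infty$.

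There is no serious obstacle here; the only point that needs to be named rather than waved at is the invocation of the completeness theorem, which is exactly what allows the passage from ``true in every model of $T_\infty$'' to ``provable from $T_\infty$'', and hence the identification of the semantic object $\mbox{Th}(\mathcal{K}_\infty)$ with the syntactically defined $T_\infty$. Everything else is book-keeping with the definitions of axiomatisability and deductive closure.
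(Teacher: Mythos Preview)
Your proof is correct. The paper actually states this lemma without proof, treating it as an elementary observation; your argument supplies exactly the routine verification one would expect, and your explicit invocation of G\"odel's completeness theorem to pass from semantic truth in all models of $T_\infty$ to membership in the deductive closure is the right way to justify the equality $\mbox{Th}(\mathcal{K}_\infty)=T_\infty$ rather than merely the inclusion.
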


\begin{lemma}\label{axclasses}\label{existsclosed}
 Let $\mathcal{K}$ be an axiomatisable class of
$L$-structures and $\varkappa$  an infinite cardinal,
$\varkappa\geqslant|L|$.

(1) If  $\mathcal{K}$ is closed under the union of increasing chains
then there is an existentially closed structure ${\mathbf A}\in \mathcal{K}$,
$|{\mathbf A}|=\varkappa$.

(2) If there is an infinite structure in $\mathcal{K}$ which is not
existentially closed, then there is a structure ${\mathbf A}\in \mathcal{K}$,
$|{\mathbf A}|=\varkappa$ and such that
${\mathbf A}$ is not existentially closed.
\end{lemma}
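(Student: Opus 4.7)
For part (1), my plan is the standard chain construction for existentially closed models. Assuming $\mathcal{K}$ has an infinite structure (without which the statement is vacuous for infinite $\varkappa$), I would use the L\"owenheim-Skolem-Tarski theorems to obtain $\mathbf{A}_0\in\mathcal{K}$ with $|A_0|=\varkappa$. I would then build an $\omega$-chain $\mathbf{A}_0\subseteq\mathbf{A}_1\subseteq\cdots$ in $\mathcal{K}$, all of cardinality $\varkappa$, so that $\mathbf{A}_{n+1}$ realises every existential formula $(\exists\bar x)\psi(\bar x,\bar a)$ with $\bar a\in A_n$ that can be realised in \emph{some} extension of $\mathbf{A}_n$ inside $\mathcal{K}$. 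Since $|A_n|=\varkappa$ and $|L|\leq\varkappa$, there are at most $\varkappa$ such formulas at each stage, so $\mathbf{A}_{n+1}$ can be produced by an inner transfinite chain of length $\varkappa$: enumerate the candidate formulas and, at each successor sub-step, replace the current structure by an extension in $\mathcal{K}$ (of cardinality $\varkappa$, via downward L\"owenheim-Skolem-Tarski) in which the next formula is realised, taking unions at limits (permissible by the hypothesis of closure under chains). The union $\mathbf{A}=\bigcup_{n}\mathbf{A}_n$ belongs to $\mathcal{K}$ and has cardinality $\varkappa$. To verify existential closedness, given any $\mathbf{B}\in\mathcal{K}$ with $\mathbf{A}\subseteq\mathbf{B}$ and any existential formula $(\exists\bar x)\psi(\bar x,\bar a)$ with $\bar a\in A$ realised in $\mathbf{B}$, locate $n$ with $\bar a\in A_n$; then $\mathbf{B}$ is an extension of $\mathbf{A}_n$ realising the formula, so the formula is realised already in $\mathbf{A}_{n+1}\subseteq\mathbf{A}$.

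For part (2), I would start with an infinite $\mathbf{B}_0\in\mathcal{K}$ that is not existentially closed, witnessed by $\mathbf{B}_0\subseteq\mathbf{B}_1\in\mathcal{K}$ and an existential formula $(\exists\bar x)\varphi(\bar x,\bar a)$ with $\bar a\in B_0$, with $\mathbf{B}_1\models(\exists\bar x)\varphi(\bar x,\bar a)$ but $\mathbf{B}_0\not\models(\exists\bar x)\varphi(\bar x,\bar a)$. Applying downward L\"owenheim-Skolem-Tarski to $\mathbf{B}_0$ with the parameter set $\bar a$, I would extract an elementary substructure $\mathbf{B}_0'$ of cardinality $|L|$ containing $\bar a$. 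Since $\mathbf{B}_0'\preccurlyeq\mathbf{B}_0$, we still have $\mathbf{B}_0'\in\mathcal{K}$ and $\mathbf{B}_0'\not\models(\exists\bar x)\varphi(\bar x,\bar a)$, while the substructure inclusion $\mathbf{B}_0'\subseteq\mathbf{B}_1$ exhibits the same witness; so $\mathbf{B}_0'$ is not existentially closed. If $\varkappa>|L|$, applying upward L\"owenheim-Skolem-Tarski gives an elementary extension $\mathbf{A}\succcurlyeq\mathbf{B}_0'$ with $|A|=\varkappa$; otherwise I take $\mathbf{A}=\mathbf{B}_0'$. Then $\mathbf{A}\in\mathcal{K}$ and $\mathbf{A}\not\models(\exists\bar x)\varphi(\bar x,\bar a)$, and I only need to produce a $\mathcal{K}$-extension of $\mathbf{A}$ satisfying the formula.

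For this final step I would use a compactness argument on diagrams. Consider in $L_A$ the theory $T\cup\mathrm{Diag}(\mathbf{A})\cup\{(\exists\bar x)\varphi(\bar x,\bar a)\}$, where $T$ axiomatises $\mathcal{K}$. Any finite subset mentions only finitely many constants $c_1,\dots,c_n\in A$ from $\mathrm{Diag}(\mathbf{A})$, giving a quantifier-free conjunction $\Delta(c_1,\dots,c_n)$; arrange the tuple so $\bar a$ appears among the $c_i$. Since $\mathbf{A}\models\Delta(c_1,\dots,c_n)$, the formula $(\exists\bar y)\Delta(\bar a,\bar y)$ holds in $\mathbf{A}$, and by $\mathbf{B}_0'\preccurlyeq\mathbf{A}$ also in $\mathbf{B}_0'$, producing $\bar b\in B_0'$ with $\mathbf{B}_0'\models\Delta(\bar a,\bar b)$. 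As $\Delta$ is quantifier-free and $\mathbf{B}_0'\subseteq\mathbf{B}_1$, we have $\mathbf{B}_1\models\Delta(\bar a,\bar b)\wedge(\exists\bar x)\varphi(\bar x,\bar a)$, so $\mathbf{B}_1$ (with the $c_i$ interpreted as $\bar a$ and $\bar b$) is a model of the finite subset. Hence the full theory is consistent, and by the extended completeness theorem has a model, which restricts to a $\mathcal{K}$-extension of $\mathbf{A}$ realising the formula and so certifies that $\mathbf{A}$ is not existentially closed.

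The main obstacle is the last step, the compactness-plus-diagram argument in part (2): after adjusting the cardinality to $\varkappa$ one must still exhibit, within $\mathcal{K}$, an extension of $\mathbf{A}$ realising the specific witness formula. The trick that makes this work is keeping the witness parameters $\bar a$ inside a small elementary substructure $\mathbf{B}_0'$, so that elementarity transfers each finite diagram fragment from $\mathbf{A}$ back into $\mathbf{B}_0'$, where the substructure inclusion $\mathbf{B}_0'\subseteq\mathbf{B}_1$ preserves quantifier-free data. A subtler point in (1) is that realising all existential formulas at a given stage resembles an amalgamation step, but by performing realisations sequentially along an inner transfinite chain I rely only on the stated closure of $\mathcal{K}$ under unions of chains, never on any amalgamation property.
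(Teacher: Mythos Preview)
Your argument for (1) is essentially the paper's: an outer $\omega$-chain whose stages are built by an inner transfinite chain of length $\varkappa$, using downward L\"owenheim--Skolem--Tarski at successor steps and closure under unions at limits. One small imprecision: you say $\mathbf{A}_{n+1}$ realises every existential formula over $A_n$ that is realisable in some extension of $\mathbf{A}_n$. The inner construction actually only guarantees this for formulas realisable in some extension of $\mathbf{A}_{n+1}$ (your stronger statement would need amalgamation). Fortunately your verification uses only the weaker property, since the witnessing $\mathbf{B}\supseteq\mathbf{A}$ already extends $\mathbf{A}_{n+1}$, so the proof goes through.

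For (2) your route genuinely differs from the paper's. The paper takes the non-existentially-closed pair $\mathbf{A}_0\subseteq\mathbf{B}_0$ and applies a $\varkappa$-regular ultrafilter to both simultaneously, obtaining $\mathbf{A}_0^{\varkappa}/D\subseteq\mathbf{B}_0^{\varkappa}/D$ of cardinality at least $\varkappa$ with the same existential witness preserved by \L o\'s; it then takes an elementary substructure of $\mathbf{A}_0^{\varkappa}/D$ of size exactly $\varkappa$. Thus the paper never has to \emph{reconstruct} the extension realising $\varphi$: it is carried along by the ultrapower. Your approach instead shrinks to $\mathbf{B}_0'$ of size $|L|$, re-inflates to $\mathbf{A}$ of size $\varkappa$ by upward L\"owenheim--Skolem--Tarski, and then runs a compactness/diagram argument to manufacture a $\mathcal{K}$-extension of $\mathbf{A}$ realising $\varphi$, exploiting that finite fragments of $\mathrm{Diag}(\mathbf{A})$ can be reflected down to $\mathbf{B}_0'$ by elementarity and then pushed (being quantifier-free) into $\mathbf{B}_1$. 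This is correct and arguably more elementary, since it avoids the machinery of $\varkappa$-regular ultrafilters (Proposition~\ref{regultrafil} and Theorem~\ref{filtprod} in the paper); the paper's approach, on the other hand, is more explicit, giving the witnessing extension concretely rather than via compactness.
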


\begin{proof} (1) Note that for any $L$-structure ${\mathbf A}$ and
existential formula with parameters from ${\mathbf A}$ which is true in
${\mathbf A}$, this formula is true in any $L$-structure ${\mathbf B}$, ${\mathbf
A}\subseteq \mathbf B$. By Theorem~\ref{updownlst}, there is an $L$-structure
${\mathbf A}_0\in \mathcal{K}$ with
$|{\mathbf A}_0|=\varkappa$.

Enumerate the existential formulae of $L_{A_0}$ by $\{ (\exists \bar
x)\varphi_i(\bar x;\bar a)\mid i<\varkappa\}$ and define
$L$-structures $\mathbf B_i\in \mathcal{K}$, $0\leq i<\varkappa$
inductively as follows. We put $\mathbf B_0=\mathbf A_0$. If
$\mathbf B_i \models (\exists\bar x)\varphi_i(\bar x; \bar a)$, then
put $\mathbf B_{i+1}= \mathbf B_i$. If $\mathbf B_i\models \neg
(\exists\bar x)\varphi_i(\bar x;\bar a)$ and there is no $\mathbf
B\in \mathcal{K}$ with $\mathbf B_i\subset \mathbf B$ and $\mathbf
B\models (\exists\bar x)\varphi_i(\bar x;\bar a)$, then again we put
$\mathbf B_{i+1}=\mathbf B_i$. On the other hand, if we can find a
$\mathbf B\supset \mathbf B_i$ with $\mathbf B\models (\exists\bar
x)\varphi_i(\bar x;\bar a)$, put $\mathbf B_{i+1}'=\mathbf B$. By
Theorem~\ref{updownlst}, there is an elementary substructure
$\mathbf{B}_{i+1}$ of $\mathbf{B}_{i+1}'$ such that $B_i\subseteq
B_{i+1}$ and $|B_{i+1}|=\varkappa$. Certainly
$\mathbf{B}_{i+1}\models (\exists \bar x)\varphi_i(\bar x; \bar a)$.
For  a limit ordinal $\alpha$ we let $\mathbf
B_{\alpha}=\bigcup_{i<\alpha}\mathbf B_i$. Since $\mathcal{K}$ is
closed under unions of increasing chains, it is clear that $\mathbf
B_{\varkappa}$ is in  $\mathcal{K}$, $|B_{\varkappa}|=\varkappa$ and
$\mathbf B_{\varkappa}$ satisfies the condition:

(*) for any existential formula $\varphi$ with parameters from
structure ${\mathbf A}_0$ if $\varphi$ is true in some structure
 ${\mathbf
B}\in \mathcal{K}$, ${\mathbf B}\supseteq {\mathbf B}_{\varkappa}$, then $\varphi$ is true in
${\mathbf B}_{\varkappa}$.

Put $\mathbf{A_1}=\mathbf{B}_{\varkappa}$. Continuing this procedure
we receive an increasing chain of $L$-structures from $\mathcal{K}$
with  cardinality $\varkappa$:
$${\mathbf A}_0\subseteq {\mathbf A}_1\subseteq\ldots\subseteq
{\mathbf A}_n\subseteq\ldots$$
where each pair of structures ${\mathbf A}_n$, ${\mathbf A}_{n+1}$,
$n\in\omega$, satisfies  the condition (*) with  ${\mathbf
A}_0$ and ${\mathbf A}_1$ replaced by
 ${\mathbf A}_n$ and ${\mathbf A}_{n+1}$
accordingly. It is clear that the structure ${\mathbf A}=\bigcup\{{\mathbf
A}_n\mid n\in\omega\}\in \mathcal{K}$ is  existentially closed and $|{\mathbf
A}|=\varkappa$.

(2)
 Let ${\mathbf A}_0$ and ${\mathbf B}_0$ be infinite
structures from  $\mathcal{K}$ with
${\mathbf A}_0\subseteq {\mathbf B}_0$, for which there exists an
existential formula $(\exists \bar x)\varphi(\bar x;\bar a)$, where
$\bar a\subseteq A$, such that
${\mathbf A}_0\models\neg (\exists \bar x)\varphi(\bar x;\bar a)$ and
${\mathbf B}_0\models (\exists \bar x)\varphi(\bar x;\bar a)$. We
consider
$(\exists \bar x)\varphi(\bar x;\bar a)$ to be a sentence in
$L_{\bar a}$, and note that $\varkappa\geq |L_{\bar a}|$.
From Proposition~\ref{regultrafil}
and Theorem~\ref{filtprod} there is an ultrafilter $D$
over $\varkappa$
such that $|{\mathbf A}|\geq\varkappa$, where
${\mathbf A}={\mathbf A}_0^\varkappa/D$.
Then ${\mathbf A}\models\neg (\exists \bar x)\varphi(\bar x;\bar a)$ and
${\mathbf B}\models (\exists \bar x)\varphi(\bar x;\bar a)$, where
${\mathbf B}={\mathbf B}_0^\varkappa/D$,
moreover ${\mathbf A}\subseteq\mathbf B$.

By Theorem 3.1.6 of \cite{changkeisler} there exists an
elementary substructure
${\mathbf E}\preccurlyeq \mathbf A$
of  cardinality $\varkappa$. Clearly ${\mathbf E}\models\neg (
\exists \bar x)\varphi(\bar x;\bar a)$, ${\mathbf E}\in \mathcal{K}$,
 and ${\mathbf E}$ is not
existentially closed.
\end{proof}

The next result is known as Lindstr\"{o}m's Theorem.

\begin{theorem}\label{lindstrom} (Theorem 3.1.12) \cite{changkeisler}.
 Let a class $\mathcal{K}$ of
infinite $L$-structures be axiomatisable, categorical in some
infinite cardinality $\varkappa\geqslant|L|$ and closed under unions
of increasing chains. Then $\mathcal{K}$ is model complete.
\end{theorem}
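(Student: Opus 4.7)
The plan is to reduce model completeness of $\mathcal{K}$ to the assertion that every member of $\mathcal{K}$ is existentially closed in $\mathcal{K}$, and then to extract a contradiction from $\varkappa$-categoricity by playing the two parts of Lemma~\ref{existsclosed} against one another. Since $\mathcal{K}$ is axiomatisable and all of its structures are infinite, its members coincide with the models of $T=\mathrm{Th}(\mathcal{K}_{\infty})=\mathrm{Th}(\mathcal{K})$. Theorem~\ref{modcom} then says that $T$ is model complete precisely when, for any $\mathbf{A}\subseteq\mathbf{B}$ in $\mathcal{K}$ and any existential formula $(\exists\bar{y})\psi(\bar{y})$ of $L_A$, truth in $\mathbf{B}_A$ forces truth in $\mathbf{A}_A$; in other words, precisely when every $\mathbf{A}\in\mathcal{K}$ is existentially closed in $\mathcal{K}$.

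I would then argue by contradiction. If $\mathcal{K}$ is not model complete, then by the equivalence above some $\mathbf{A}_0\in\mathcal{K}$ fails to be existentially closed. Since $\mathbf{A}_0$ is infinite, Lemma~\ref{existsclosed}(2) delivers a structure $\mathbf{A}'\in\mathcal{K}$ of cardinality exactly $\varkappa$ that is not existentially closed. On the other side, Lemma~\ref{existsclosed}(1) --- applicable because $\mathcal{K}$ is axiomatisable and closed under unions of increasing chains --- produces an existentially closed structure $\mathbf{A}''\in\mathcal{K}$ with $|\mathbf{A}''|=\varkappa$. Invoking $\varkappa$-categoricity yields $\mathbf{A}'\cong\mathbf{A}''$, and since existential closure is patently preserved under isomorphism (an isomorphism transports witnesses in any extension of $\mathbf{A}''$ to witnesses in the matching extension of $\mathbf{A}'$), this contradicts the choice of $\mathbf{A}'$.

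The main obstacle is really already absorbed into Lemma~\ref{existsclosed}: part (1) rests on the union-of-chains hypothesis together with the downward L\"{o}wenheim--Skolem--Tarski theorem to perform a transfinite existential-witnessing construction that stays inside $\mathcal{K}$ at cardinality exactly $\varkappa$, while part (2) uses a $\varkappa$-regular ultrafilter and Theorem~\ref{filtprod} to inflate a given non-existentially-closed structure to one of cardinality $\varkappa$ without losing the failure of existential closure. Once these two tools are in place, the categoricity step that closes the argument is essentially immediate, and requires no further semigroup-theoretic input.
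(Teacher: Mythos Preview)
Your argument is correct and is essentially the same as the paper's: both use Lemma~\ref{existsclosed}(1) together with $\varkappa$-categoricity to see that every size-$\varkappa$ member of $\mathcal{K}$ is existentially closed, and then (the contrapositive of) Lemma~\ref{existsclosed}(2) to conclude that every infinite member is existentially closed, whence Theorem~\ref{modcom} gives model completeness. The paper states this directly rather than by contradiction, but the content is identical.
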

\begin{proof} It is clear that the property of being existentially closed
is preserved by isomorphism. Since  all structures from
$\mathcal{K}$ of the cardinality $\varkappa$ are isomorphic then
from Lemma~\ref{axclasses}, all infinite structures from
$\mathcal{K}$ are existentially closed.
Theorem~\ref{modelcomplete}
 now gives that the class $K$ is model complete.
\end{proof}

\section{Completeness of $\mathcal{SF}$,
$\mathcal{P}$ and $\mathcal{F}r$}\label{comp}

We investigate here the monoids with axiomatisable
classes of
 strongly flat, projective and free $S$--acts, asking
for conditions under which these classes are
 complete and model complete. The results
of this section are all taken from \cite{stepanova}.

\begin{theorem}\label{fcomplete}
\label{complflat} Let $S$ be a commutative monoid. Suppose that
the  class $\mathcal{SF}$ is axiomatisable. Then the following conditions are
equivalent:

(1) the class $\mathcal{SF}$ is complete;

(2) the class $\mathcal{SF}$ is model complete;

(3) the class $\mathcal{SF}$ is categorical;

(4) $\mathcal{SF=F}r$;

(5) $S$ is an Abelian group.
\end{theorem}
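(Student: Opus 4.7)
The plan is to close the cycle $(5) \Rightarrow (4) \Rightarrow (3) \Rightarrow (2) \Rightarrow (1) \Rightarrow (5)$. For $(5) \Rightarrow (4)$: in an abelian group every element $s'$ is invertible, so the identity $ss' = ts'$ appearing in condition (E) forces $s = t$; hence every point stabiliser in an SF act is trivial, each orbit $Sx$ is isomorphic to $S$, and the act is free. For $(4) \Rightarrow (3)$: a free act $\coprod_I S$ is determined up to isomorphism by $|I|$, and for $\kappa > |S|$ the equality $|I| \cdot |S| = \kappa$ forces $|I| = \kappa$, giving categoricity. For $(3) \Rightarrow (2)$: apply Lindstr\"om's theorem (Theorem~\ref{lindstrom}), after checking that $\mathcal{SF}$ is closed under unions of increasing chains, a direct verification of (P) and (E) in the union. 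For $(2) \Rightarrow (1)$: invoke Lemma~\ref{modelcompleteimpliescomplete}, since two infinite SF acts $A$ and $B$ always embed into the SF coproduct $A \sqcup B$.

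The main content is $(1) \Rightarrow (5)$. The first step is to derive $E = \{1\}$. If $e \in E \setminus \{1\}$, consider the two infinite SF acts $\coprod_\omega S$ and $\coprod_\omega Se$. The sentence $\forall x\,(ex = x)$ holds in the latter (every element has the form $se$ and $e \cdot se = s \cdot e^2 = se$ by commutativity) but fails in the former at the element $1$ of any copy, contradicting the elementary equivalence forced by completeness.

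The second step is to show every $s \in S$ is a unit. For a cancellable $s$, form the direct limit $A = \varinjlim(S, \lambda_s)$; since cancellability makes each $\lambda_s \colon S \to S$ an embedding, $A$ is the union of an increasing chain of copies of $S$ and is therefore SF by closure under unions of chains, while by construction $\lambda_s$ is bijective on $A$. The sentence $\forall y \exists x\,(sx = y)$ then distinguishes $A$ from $S$ unless $s$ is already surjective on $S$, i.e.\ unless $s$ is a unit in the commutative monoid $S$. The main obstacle is the non-cancellable case; I would rule it out by showing that (1), combined with $E = \{1\}$ and the finite generation of annihilators guaranteed by axiomatisability (Theorem~\ref{sf}), forces $S$ to be cancellative in the first place. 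Any witness $sa = sb$ of non-cancellability, fed into condition (P) in the SF act $S$, produces a descending cascade of further witnesses whose finiteness, together with $R(s,s)$ being finitely generated, must eventually yield a first-order invariant distinguishing $S$ from a suitable ultrapower or chain-limit SF act, again contradicting (1). Once cancellativity is in place, the direct-limit argument applies to every $s$, yielding that $S$ is an abelian group.
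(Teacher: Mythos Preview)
Your cycle $(5)\Rightarrow(4)\Rightarrow(3)\Rightarrow(2)\Rightarrow(1)$ is fine and differs from the paper only in routing: the paper takes $(4)\Rightarrow(2)$ via Lindstr\"om and $(3)\Rightarrow(1)$ via the \L o\'s--Vaught test, whereas you funnel through $(3)\Rightarrow(2)$ via Lindstr\"om; both work, and your direct verification of $(5)\Rightarrow(4)$ is a pleasant replacement for the paper's citation of \cite{kp}.

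The gap is in $(1)\Rightarrow(5)$. Your Step~1 ($E=\{1\}$) is correct but, as it turns out, unnecessary. The real problem is Step~2: for non-cancellable $s$ you offer only a sketch (``a descending cascade of further witnesses \ldots must eventually yield a first-order invariant''), and this is not a proof. The irony is that your own direct-limit construction already handles \emph{every} $s$, cancellable or not; you have imposed an unneeded hypothesis. The colimit $A=\varinjlim(S,\lambda_s)$ is a perfectly good left $S$-act whether or not the transition maps are injective, and one checks $(P)$ and $(E)$ in $A$ directly: if $t\cdot[n,a]=u\cdot[m,b]$ in $A$ then for some $p\ge n,m$ one has $t(s^{p-n}a)=u(s^{p-m}b)$ in $S$, and since $S$ itself satisfies $(P)$ there exist $t',u',c$ with $s^{p-n}a=t'c$, $s^{p-m}b=u'c$, $tt'=uu'$; then $[n,a]=t'\cdot[p,c]$ and $[m,b]=u'\cdot[p,c]$. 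Condition $(E)$ is similar. Surjectivity of $\lambda_s$ on $A$ is automatic from $s\cdot[n+1,a]=[n,a]$, so $\coprod_\omega A$ is an infinite strongly flat act satisfying $(\forall y)(\exists x)(sx=y)$, and completeness forces the same sentence in $\coprod_\omega S$, giving $sS=S$.

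This is exactly the paper's argument, only the paper realises the same colimit concretely as the subact $U=\bigcup_{k\in\mathbb Z}S f_k/\Phi$ of an ultrapower $S^\omega/\Phi$, with $f_k(i)=a^{k+i}$ for $k+i>0$ and $1$ otherwise; the verification of $(P)$ and $(E)$ for $U$ is the same computation. So: drop the cancellability assumption, drop the hand-waving about cascades, and your direct-limit argument is complete and essentially coincides with the paper's.
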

\begin{proof} The implication (2)$\Rightarrow$(1) follows from
Lemma~\ref{modelcompleteimpliescomplete} and the
closure of the class $\mathcal{SF}$ under the coproducts.
The statement (4)$\Rightarrow$(2) follows from Theorem~\ref{lindstrom},
(3)$\Rightarrow$(1) is an immediate consequence of
Proposition~\ref{catcomplete}. It is clear that any two free
$S$-acts of cardinality $\alpha> |L|$ are isomorphic so that
  (4)$\Rightarrow$(3) follows.

(4)$\Leftrightarrow (5)$ This is immediate from Theorem 2.6 of \cite{kp}.

(1)$\Rightarrow$(5) Fix an arbitrary $a\in A$; it is enough to
prove that $aS=S$. Let  $\Phi$ be a uniform ultrafilter on $ \omega
$ and for $k\in\mathbb{Z}$ define $f_k\in S^\omega$ by
$$f_k(i)=\left\{
\begin{array}{ll}
   a^{k+i}&k+i>0; \\
  1&k+i\leq 0,
\end{array}
\right.$$ We will show that the left $S$--act $U =\bigcup_{k\in
Z}Sf_k/\Phi$, which is a subact of a left $S$--act $S ^\omega/\Phi$,
is strongly flat. Since $f_k/\Phi=af_{k-1}/\Phi$ then
$Sf_k/\Phi\subseteq Sf_{k-1}/\Phi$. Assume $r\, g_1/\Phi=s\,
g_2/\Phi$, where $r,s\in S$, $g_1/\Phi,g_2/\Phi\in U$. There exists
$k\in \mathbb{Z}$ such that $g_1/\Phi,g_2/\Phi\in S\, f_k/\Phi$,
i.e. $g_1/\Phi=t_1\, f_k/\Phi$, $g_2/\Phi=t_2\, f_k/\Phi$ for some
$t_1,t_2\in S$. Hence there exists $i\in\omega$ such that $i+k>0$,
$r\, g_1(i)=s\, g_2(i)$, $g_1(i)=t_1f_k(i)=t_1a^{k+i}$,
$g_2(i)=t_2f_k(i)=t_2a^{k+i}$. Thus $rt_1a^{k+i}=st_2a^{k+i}$,
$g_1/\Phi=t_1\, a^{k+i}f_{-i}/\Phi$, $g_2/\Phi=t_2a^{k+i}\,
f_{-i}/\Phi$. Thus $U$ satisfies condition (P); a minor adjustment
yields condition (E) also. Theorem~\ref{stronglyflat}  now implies
that $U$ is strongly flat.

As $S$ is a commutative monoid then
$$\coprod_{i\in\omega}U_i\models (\forall x)(\exists
y)(x=ay),$$ where $U_i$ are the copies of the left $S$--act $U$,
$i\in\omega$. Since the class $\mathcal{SF}$ is complete and
$S\in\mathcal{SF}$ then
$$\coprod_{i\in\omega}S_i\models (\forall x)(\exists
y)(x=ay),$$ where $S_i$ are the copies of the left $S$--act $S$,
$i\in\omega$. Therefore $aS=S$. \end{proof}

\begin{theorem}\label{projscomplete} Let $S$ be a monoid such that
$\mathcal{P}$ is an axiomatisable class. Then the
following conditions are equivalent:

(1) the class $\mathcal{P}$ is complete;

(2) the class $\mathcal{P}$ is model complete;

(3) the class $\mathcal{P}$ is categorical;

(4) $\mathcal{P}={\mathcal F}r$;

(5) $S$ is a group.
\end{theorem}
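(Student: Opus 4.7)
The plan is to mirror the proof of Theorem \ref{fcomplete} closely, replacing $\mathcal{SF}$ by $\mathcal{P}$ throughout. Four of the five implications transfer with little modification. The implication $(2)\Rightarrow(1)$ follows from Lemma \ref{modelcompleteimpliescomplete} together with closure of $\mathcal{P}$ under coproducts (Theorem \ref{proj}), so that any two projectives embed into their common coproduct. The implication $(4)\Rightarrow(2)$ follows from Lindstr\"om's Theorem \ref{lindstrom}: once $\mathcal{P}=\mathcal{F}r$, the class $\mathcal{P}_\infty$ is axiomatisable, categorical in any $\varkappa>|S|+\aleph_0$ (free $S$-acts on index sets of the same large cardinality are isomorphic), and closed under unions of increasing chains (since $S$ is left perfect by Theorem \ref{projax}, $\mathcal{P}=\mathcal{SF}$, and conditions (P) and (E) obviously pass to directed unions). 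The implication $(3)\Rightarrow(1)$ is Proposition \ref{catcomplete}, $(4)\Rightarrow(3)$ is the same isomorphism observation just noted, and $(5)\Rightarrow(4)$ is immediate from Theorem \ref{proj}: if $S$ is a group then $E=\{1\}$, so every projective left $S$-act is a coproduct of copies of $S$, i.e.\ free.

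The substantive direction is $(1)\Rightarrow(5)$. Because $\mathcal{P}$ is axiomatisable, Theorem \ref{projax} yields that $S$ is left perfect, so $\mathcal{P}=\mathcal{SF}$. Fix $a\in S$, a uniform ultrafilter $\Phi$ on $\omega$, and for each $k\in\mathbb{Z}$ define $f_k\in S^\omega$ by
\[f_k(i)=\begin{cases} a^{k+i} & \text{if } k+i>0,\\ 1 & \text{if }k+i\leq 0,\end{cases}\]
and set $U=\bigcup_{k\in\mathbb{Z}}Sf_k/\Phi \subseteq S^\omega/\Phi$. I would then verify that $U$ satisfies conditions (P) and (E), using the same witnesses as in Theorem \ref{fcomplete} (namely $z=f_{-i}/\Phi$ with coefficients of the form $t_j a^{k+i}$). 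The crucial observation is that although Theorem \ref{fcomplete} assumes $S$ commutative, its choice of witness only manipulates powers of the single element $a$ multiplied by free parameters $r,s,t_1,t_2$; the identities required reduce to $a^m\cdot a^n = a^{m+n}$, which hold in any monoid. Hence $U$ is strongly flat, and by left perfectness of $S$, projective.

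Now form $P=\coprod_{i\in\omega}U_i$, an infinite projective $S$-act in which every element is of the form $ay$; that is, $P\models (\forall x)(\exists y)(x=ay)$. By completeness of $\mathcal{P}$, the infinite projective $S$-act $F=\coprod_{i\in\omega}S_i$ satisfies the same sentence. The connected components of $F$ are the individual copies of $S$, so a witness $y$ for $x$ must lie in the same copy as $x$. Applying this to $x=1$ in one copy we obtain $t\in S$ with $1=at$. Thus every $a\in S$ has a right inverse. A one-line argument (if $ab=1$ and $bc=1$ then $a=a(bc)=(ab)c=c$, so $ba=1$) shows that a monoid in which every element has a right inverse is a group, giving $(5)$.

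The main obstacle I anticipate is verifying that $U$ satisfies (P) and (E) in the non-commutative setting; but as indicated, this should pose no real difficulty because the required equations take place entirely within the commutative subsemigroup generated by $a$, with the remaining elements of $S$ appearing only as left-hand parameters that are simply carried along.
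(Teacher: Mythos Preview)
Your handling of the implications $(2)\Rightarrow(1)$, $(4)\Rightarrow(2)$, $(3)\Rightarrow(1)$, $(4)\Rightarrow(3)$ and $(5)\Rightarrow(4)$ is fine, and you are also right that conditions (P) and (E) for $U$ go through without commutativity: the witnesses $s'=t_1a^{k+i}$, $t'=t_2a^{k+i}$, $z=f_{-i}/\Phi$ only require $a^{k+i}a^{j-i}=a^{k+j}$.

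The gap is elsewhere. You assert that $P=\coprod_{\omega}U_i$ satisfies $(\forall x)(\exists y)(x=ay)$, but this is precisely the step in the proof of Theorem~\ref{fcomplete} that \emph{does} use commutativity. There one writes
\[
s f_k/\Phi \;=\; s a f_{k-1}/\Phi \;=\; a s f_{k-1}/\Phi \;=\; a\cdot\bigl(s f_{k-1}/\Phi\bigr),
\]
and the middle equality is $sa=as$. Without it, the generators $f_k/\Phi$ are $a$-divisible by construction, but an arbitrary element $sf_k/\Phi$ need not lie in $aU$; the element $s$ is no longer ``simply carried along'' when you have to move $a$ past it to the left. (Note that once $S$ is left perfect, Condition~(A) forces the chain $Sf_0/\Phi\subseteq Sf_{-1}/\Phi\subseteq\cdots$ to stabilise, so $U\cong Se$ for some idempotent $e$; what you would then need is $Se=aSe$, and $aSe$ is not a left ideal in general, so minimality arguments do not apply directly either.) Thus your proposed proof of $(1)\Rightarrow(5)$ does not go through as written.

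The paper's route for $(1)\Rightarrow(5)$ is quite different and does not use the ultrapower construction at all. From axiomatisability of $\mathcal{P}$ one has that $S$ is left perfect, hence possesses a minimal right ideal $eS$ with $e\in E$; by Proposition~\ref{perfectagain} the left ideal $Se$ is minimal too, and $Se\in\mathcal{P}$. Assuming $e\neq1$, one exploits (CFRS) (Lemma~\ref{crfs}) together with the minimality of $eS$ and $Se$ to show that for every $t\in Se$ the equation $ey=t$ has exactly $n$ solutions in $Se$, where $n=|\{x\in Se\mid ex=e\}|$. This is expressed by a single sentence $\psi$, so $\coprod_{\omega}Se\models\psi$; completeness then forces $\coprod_{\omega}S\models\psi$. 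But in $S$ the equation $ey=e$ already has $n$ solutions inside $Se$ together with the additional solution $1\notin Se$, contradicting $\psi$. Hence $e=1$, every principal right ideal of $S$ equals $S$, and dually for left ideals, so $S$ is a group.
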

\begin{proof} We remark that if $S$ is a group, then again from
\cite{kp} we have that $\mathcal{SF}=\mathcal{P}=\mathcal{F}r$.
 As in the proof of Theorem~\ref{fcomplete} it is then
enough to prove the implication (1)$\Rightarrow$(5). The
axiomatizability of the class $\mathcal{P}$ and
Theorem~\ref{perfect} imply  that $S$ is a left perfect monoid and
$S$ therefore satisfies Condition $(M_R)$. Consequently, $S$ has a
minimal right ideal which from Proposition~\ref{perfectagain} $(4)$
is of the form $eS$ for some $e\in E$.
Proposition~\ref{perfectagain}
 $(2)$ gives that
left ideal $Se$ is also minimal. Clearly, $Se\in\mathcal{P}$.

Since $S$ is local,  $1$ is the {\em only} idempotent in the
$\ar$-class and in the $\el$-class of $1$.
Suppose $e\neq 1$ so that
$Se\subset S$ and $eS\subset S$.

For any $a\in S$ we put
\[X_a=\{x\in S\mid ex=a\}\]
so that  by Lemma~\ref{crfs} each set $X_a$
is a finite subset of $S$. Let $X_e\cap Se=\{a_1,\ldots,a_n\}$, $a_i\neq a_j$
($i\neq j$), and choose any $t\in Se$.  Notice that
$X_e\cap Se\subseteq L_e$.

We will show that $|X_{et}\cap Se|=n$. Clearly $a_it\in
X_{et}\cap Se$, $1\leq i\leq n$. Since $Se=Sa_i$ is a minimal left ideal
 Proposition~\ref{perfectagain}
gives that $a_iS$ is
minimal right ideal for $i\in\{ 1,\hdots ,n\}$.

Suppose $a_it=a_jt $ where $1\leq i,j\leq n$. Since the
ideals $a_iS$ and $a_jS$ are minimal right we have that $a_iS=a_jS$
and so $a_j=a_ik$ for some $k\in S$. Since $ea_i=ea_j=e$, we
deduce that
$ek=ea_ik=ea_j=e$, that is, $ek=e$. Since $Se=Sa_i$ then $a_i=a_ie$
and so
\[a_j=a_ik=a_iek=a_ie=a_i.\]
Hence $|\{ a_1t,\hdots ,a_nt\}|=n$.

Assume there exists $c\in X_{et}\cap Se$ such that $c\neq a_it$ for
any $i$, $1\leq i\leq n$. Since $c=ce$ and the left ideal $Se$ is
minimal, we have that $Se=Sce=Sc$, that is,  $Sc$ is a minimal left ideal.
Consequently $Sc=Sec$ and $c=lec$ for some $l\in S$. The minimality
of the ideal $eS$ implies the equality $eS=ecS$. Hence, there is
$d\in cS$ such that $ed=e$, that is, $d\in X_e$,
 and $d=cr$ for some $r\in S$. Suppose $d\in Se$. Then $d=a_i$ for some $i$, $1\leq i\leq n$.
The equalities $ecrt=edt=ea_it=et=ec$ imply $ecrt=ec$. Let us
multiply this equation by $l$ from the left. Then $crt=c$, i.e.
$c=a_it$. This contradicts the choice of $c$. Thus, $d\in(X_e\cap
cS)\setminus Se$. Since $d=cr$ we have $ecr=ed=e=ecre$. Let us multiply
this equation by $l$ from the left. Then $cr=cre$, that is $d\in
Se$, a contradiction.

Thus, $X_{et}\cap Se=\{a_1t,\cdots,a_nt\}$ and $a_it\neq a_jt$
($i\neq j$) for any $t\in Se$. So $Se\models\psi$ where
\[\psi\leftrightharpoons(\forall x)(ex=x\rightarrow (\exists (y_1,\hdots ,y_n)\bigg(
\bigwedge_{1\leq i\leq n}(ey_i=x)\wedge
 (ey=x\rightarrow\bigvee_{1\leq i\leq n}y=y_i)\bigg))).\]
 Since the class $\mathcal{P}$
is complete then $A=\coprod_{i\in\omega}Se_i\equiv
B=\coprod_{i\in\omega}S_i$, where $S_i$, $Se_i$, $i\in\omega$, are
copies of the left $S$--acts $S$ and $Se$ respectively,
$i\in\omega$. As $A\models\psi$ we must have that $B\models\psi$. In
particular, there are exactly $n$ solutions to $ey=e$; but $|X_e\cap
Se|=n$ and $1\in X_e\setminus Se$, a contradiction. So $e=1$ and (as
every principal right ideal contains a minimal one), $aS=S$ for all
$a\in S$. Since $S$ is a minimal right ideal,
Proposition~\ref{perfectagain} gives that it is a minimal left ideal
ideal and so $Sa=S$ for all $a\in S$. Consequently, $S$ is a group.
\end{proof}

The final result of this section follows
directly from the structure of  free left $S$--acts.

\begin{proposition}\label{freescomplete} Let $S$ be a monoid such that
$\mathcal{F}r$ is an axiomatisable class. Then $\mathcal{F}r$ is
categorical, complete and model complete.
\end{proposition}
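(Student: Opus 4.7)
The key structural ingredient is Theorem~\ref{free}: any $F\in\mathcal{F}r$ is a coproduct $\coprod_{x\in X}Sx$ of copies of $S$, so $|F|=|X|\cdot|S|$. I would begin with categoricity. Fix any infinite cardinal $\varkappa\geq|L_S|$ with $\varkappa>|S|$ (take $\varkappa=\aleph_0$ if $S$ is finite and $\varkappa=|S|^+$ otherwise). If $F\in\mathcal{F}r$ has $|F|=\varkappa$, then $|X|\cdot|S|=\varkappa$ together with $|S|<\varkappa$ forces $|X|=\varkappa$, since $|X|\leq|S|$ would give $|F|\leq|S|<\varkappa$. Thus every free $S$-act of cardinality $\varkappa$ is isomorphic to $\coprod_{\alpha<\varkappa}S$, so $\mathcal{F}r$ is $\varkappa$-categorical.

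Completeness then follows from the \L os--Vaught criterion (Proposition~\ref{catcomplete}). By Lemma~\ref{infinite}, $\mathrm{Th}(\mathcal{F}r_\infty)$ is axiomatised by $\Sigma_{\mathcal{F}r}\cup\{\varphi_n\mid n\in\mathbb{N}\}$, hence has only infinite models; the existence of the free $S$-act $\coprod_{\alpha<\varkappa}S$ of cardinality $\varkappa$ shows it is consistent. Together with the $\varkappa$-categoricity just established, this makes $\mathrm{Th}(\mathcal{F}r_\infty)$ complete, and so the class $\mathcal{F}r$ is complete.

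For model completeness I would apply Lindstr\"{o}m's Theorem (Theorem~\ref{lindstrom}); the only outstanding hypothesis is closure of $\mathcal{F}r$ under unions of increasing chains. My plan is to observe that each axiom produced in Propositions~\ref{e} and~\ref{p}, in Theorem~\ref{sf} and in Theorem~\ref{freesax} is (logically equivalent to) a sentence of the form $(\forall\bar x)(\exists\bar y)\psi(\bar x,\bar y)$ with $\psi$ quantifier-free. Given a chain $F_0\subseteq F_1\subseteq\cdots$ in $\mathcal{F}r$ with union $F$, any tuple $\bar a\in F$ lies in some $F_N$; witnesses for $(\exists\bar y)\psi(\bar a,\bar y)$ supplied by $F_N$ persist in $F\supseteq F_N$, so $F\models\Sigma_{\mathcal{F}r}$ and $F\in\mathcal{F}r$. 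Lindstr\"{o}m's Theorem then yields model completeness. The only mildly subtle step is this chain-closure verification: a purely structural alternative, identifying the connected components of $F$ as cyclic free subacts via the left perfectness of $S$ forced by Theorems~\ref{projax} and~\ref{freesax}, is possible but requires more bookkeeping, so the syntactic $\forall\exists$ route is the preferred one.
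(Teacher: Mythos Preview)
Your argument is correct and is precisely the intended one: the paper gives no proof beyond the remark that the result ``follows directly from the structure of free left $S$-acts'', and the implications you spell out (categoricity from Theorem~\ref{free} and cardinal arithmetic, completeness via the \L os--Vaught test, model completeness via Lindstr\"om) are exactly the steps used for the analogous implications $(4)\Rightarrow(3)$, $(3)\Rightarrow(1)$, $(4)\Rightarrow(2)$ in Theorem~\ref{complflat}. Your explicit verification that the axioms in $\Sigma_{\mathcal{F}r}$ are $\forall\exists$, hence preserved under unions of chains, fills in the one hypothesis of Lindstr\"om's Theorem that the paper leaves unremarked.
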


\section{Stability}\label{stabintro}

Let $T$ be a consistent theory in the language $L$,
 let  ${\{x_i\mid
1\leq i\leq n \}}$ be a fixed set of variables and let
$L_n=L_{\{ x_1,\hdots ,x_n\}}$.
A consistent set of  sentences $p$ of $L_n$
is called an $n$--{\em type} of the language $L$. If
$p\,\cup T$ is consistent, that is, it has a model,
then $p$ is called an $n$--{\em type over}
$T$. If $p$ is complete, it is a {\em complete $n$-type}
and if in addition $T\subseteq p$ we
say that $p$ is a {\em complete $n$--type over $T$}. The set of all complete
$n$--types over $T$ is denoted by $S_n(T)$.

Let ${\mathbf A}$ be an $L$-structure, let $X\subseteq A$
and let $a\in A$.
The set
\[\mbox{tp}(a,X)=\{\varphi(x)\in L_X\mid {\mathbf A}\models\varphi(a)\}\]
 is
called the  {\em type of  $a$ over $X$}. Clearly
tp$(a,X)$ is a complete $1$--type over Th$({\mathbf A},x)_{x\in X}$;
we say that it is {\em realised} by $a$.
By $S_n({X})$ we denote $S_n($Th$({\mathbf A},x)_{x\in X})$. Often we will
write $S({X})$ instead $S_1({X})$.

A complete theory $T$ with no finite models is called {\em stable
 in a cardinal} $\varkappa$ or
$\varkappa$--{\em stable} if $|S(X)|\leq \varkappa$ for any model ${\mathbf
A}$ of the theory $T$ and any $X\subseteq A$ of cardinal
$\varkappa$. If  $T$  is $\varkappa$--stable for some
infinite $\varkappa$, then $T$ is called {\em stable}. If  $T$
is $\varkappa$--stable for all $\varkappa\geq2^{|T|}$, then $T$ is
called {\em superstable}. An {\em unstable}
theory is one which is not stable!

Morley proved \cite{morley} that, if a countable theory $T$ is $\omega$-stable,
then it stable in every
infinite cardinality.
If an arbitrary theory $T$ in a language $L$ is $\omega$-stable, then
$|S_n(T)|\leqslant\omega$ for all $n\in\omega$.
It follows that $T$ is essentially countable, in the following
sense. There must be a sublanguage
$L'\subseteq L$, such that  $|L'|=\omega$
and for each formula $\varphi$ of $L$ there is a formula
$\varphi'$ of $L'$ such that for any $L$-structure $\mathbf A$ of
$L$ with  $\mathbf A\models T$, we have that
$\mathbf A\models \varphi$ if and only if $\mathbf A\models \varphi'$.
Consequent upon the result of Morley, if $T$ is
$\omega$-stable, then it is stable in every
infinite cardinality.

The notion of the monster model of a complete theory is a useful
tool in our investigations. In order to define such a model, we need the
notions of saturation and homogeneity.
An $L$-structure $\mathbf A$ is $\varkappa$-{\em homogeneous}
for a cardinal
$\varkappa$ if for any $X\subseteq A$ with $|X|<\varkappa$, any
map $f:X\rightarrow A$ with
\[\mbox{tp}(\mathbf A,x)_{x\in X}=\mbox{tp}(\mathbf A,f(x))_{x\in X}\]
can be extended to an automorphism of $\mathbf A$.
An $L$-structure $\mathbf A$ is $\varkappa$-{\em saturated} for a
cardinal
$\varkappa$ if for any $X\subseteq A$ with $|X|<\varkappa$, every
$p\in S(\mbox{Th}(\mathbf A,x)_{x\in X})$ is realised in $\mathbf
A$.

Suppose now that $T$ is a complete theory in $L$. We may find a
cardinal $\overline
{\varkappa}$
 greater than all others under consideration and a
$\overline{\varkappa}$-saturated and
 $\overline{\varkappa}$-homogeneous model $\mathbf M$ of $T$.
The
 convention is that all models of $T$ will be elementary
substructures of $\mathbf{M}$ of cardinality strictly less than
$\overline{\varkappa}$, and all sets of parameters will be subsets of
$M$, with again,  cardinality strictly less that $\overline{\varkappa}$.
With this convention, if $\mathbf{A}$ is a model of $T$,
$X\subseteq A$ and $a\in A$, then
\[\mbox{tp}(a,X)
=\{ \varphi(x)\in L_X\mid \mathbf{M}\models \varphi(a)\}.\]

The model $\mathbf M$ is called
the {\em monster model} of $T$. Justification of the use of the monster model, and the following result,
may be found in standard stability theory texts, such
as \cite{pillay}.

\begin{lemma}\label{typesandisos} Let $T$ be a complete theory in $L$
with monster model $\mathbf M$
and
let $X$ be a subset of $M$.  Suppose also that
 $\bar a,\bar a'\subseteq M$. Then
\[\mbox{tp}(\bar a,X)=
\mbox{tp}(\bar a',X)\]
 if and only if there exists an
automorphism of  $\mathbf M$, which acts identically
on $X$ and maps $\bar a$ into $\bar a'$.
\end{lemma}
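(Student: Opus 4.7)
The plan is to establish the two directions separately, with the forward direction being essentially immediate and the substance of the lemma lying in the reverse direction via $\overline{\varkappa}$-homogeneity.

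For the forward direction, suppose $\sigma$ is an automorphism of $\mathbf{M}$ fixing $X$ pointwise with $\sigma(\bar a)=\bar a'$. Given any formula $\varphi(\bar y)\in L_X$, written as $\psi(\bar y;\bar c)$ with $\bar c\in X$ and $\psi\in L$, applying $\sigma$ to $\mathbf M\models \psi(\bar a;\bar c)$ yields $\mathbf M\models\psi(\sigma(\bar a);\sigma(\bar c))=\psi(\bar a';\bar c)$, and conversely via $\sigma^{-1}$. Hence $\mathrm{tp}(\bar a,X)=\mathrm{tp}(\bar a',X)$.

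For the reverse direction, suppose $\mathrm{tp}(\bar a,X)=\mathrm{tp}(\bar a',X)$. Set $Y=X\cup\{a_1,\dots,a_n\}$, noting $|Y|<\overline{\varkappa}$ by the monster-model convention, and define $f:Y\to M$ by $f(x)=x$ for $x\in X$ and $f(a_i)=a_i'$. The key step is to check that $f$ satisfies the hypothesis of $\overline{\varkappa}$-homogeneity, namely that the enumerated tuple $Y$ and its image $f(Y)$ realise the same type over $\emptyset$ in $\mathbf M$. Unpacking: for any formula $\psi(\bar z;\bar y)$ of $L$ and any $\bar c\in X$, the equality of types $\mathrm{tp}(\bar a,X)=\mathrm{tp}(\bar a',X)$ gives $\mathbf M\models\psi(\bar c;\bar a)\Leftrightarrow\mathbf M\models \psi(\bar c;\bar a')$, since $\psi(\bar c;\bar y)$ is a formula of $L_X$. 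Together with the identity on $X$, this is exactly the condition $\mathrm{tp}(\mathbf M,y)_{y\in Y}=\mathrm{tp}(\mathbf M,f(y))_{y\in Y}$.

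Invoking $\overline{\varkappa}$-homogeneity of $\mathbf M$ then extends $f$ to an automorphism $\sigma$ of $\mathbf M$, which by construction acts identically on $X$ and sends $\bar a$ to $\bar a'$, completing the proof. I expect no serious obstacle: the forward direction is a one-line application of preservation under automorphisms, and the reverse direction is a routine unpacking of definitions to match the stated homogeneity hypothesis. The only point requiring care is the bookkeeping that tp$(\bar a,X)=$ tp$(\bar a',X)$ is indeed equivalent to the long-tuple equality of types needed to apply homogeneity, which amounts to rewriting formulas of $L_X$ as formulas of $L$ with named parameters from $X$.
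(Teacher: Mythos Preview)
Your argument is correct and is the standard proof of this fact. Note, however, that the paper does not actually give its own proof of this lemma: it states the result and refers the reader to standard stability-theory texts (Pillay) for justification, so there is nothing to compare against beyond observing that your proof is the expected one.

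One small bookkeeping point worth making explicit: your map $f$ on $Y=X\cup\{a_1,\dots,a_n\}$ is only well-defined if whenever $a_i\in X$ we have $a_i'=a_i$, and whenever $a_i=a_j$ we have $a_i'=a_j'$. Both follow immediately from $\mathrm{tp}(\bar a,X)=\mathrm{tp}(\bar a',X)$ via the formulas $y_i=a_i$ (with $a_i$ as an $L_X$-constant) and $y_i=y_j$ respectively, but it does no harm to say so.
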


Let $S$ be a monoid and let $\mathcal{K}$
be a class of  left $S$--acts. Then $S$ is called a
$\mathcal{K}$--{\em stabiliser} ($\mathcal{K}$--{\em superstabiliser},
 $\mathcal{K}$--$\omega$--{\em stabiliser}) if
Th$(A)$ is stable (superstable, $\omega$--stable)
for any infinite left $S$--act $A\in \mathcal{K}$. If $\mathcal{K}$ is the class
of all left $S$-acts, then a $\mathcal K$--stabiliser
($\mathcal K$--superstabiliser, $\mathcal K$--$\omega$--stabiliser)
is referred to more simply as a
stabiliser (superstabiliser, $\omega$--stabiliser).

\section{Superstability of ${\mathcal{SF}}$,
${\mathcal P}$ and ${\mathcal F}r$}\label{stability}

Now we begin to consider the stability questions for $S$--acts. The
results of this section are all taken from \cite{ste2}.

\begin{lemma}\label{cfrsforsf} Let  $S$ be a monoid satisfying (CFRS). Then for
any $A\in\mathcal{SF}$, $a\in A$, $s\in S$
$$|\{x\in A\mid sx=a\}|\leq n_s.$$
\end{lemma}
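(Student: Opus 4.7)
The plan is to argue by contradiction. Suppose there exist distinct elements $x_1,\ldots,x_{n_s+1}\in A$ with $sx_i=a$ for all $i$. The strategy is to use condition (P) of Theorem~\ref{stronglyflat} repeatedly to pull all of the $x_i$ back to a single ``generator'' $z\in A$, producing multipliers $r_1,\ldots,r_{n_s+1}\in S$ with $x_i=r_iz$ and $sr_1=\cdots=sr_{n_s+1}$. Since the $x_i$ are pairwise distinct, so are the $r_i$, and the common element $sr_1\in S$ then has at least $n_s+1$ preimages under left multiplication by $s$ in $S$, directly contradicting (CFRS).

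The main tool is a short induction on $n$ establishing the following: if $A\in\mathcal{SF}$ and $x_1,\ldots,x_n\in A$ satisfy $sx_1=\cdots=sx_n$, then there exist $z\in A$ and $r_1,\ldots,r_n\in S$ with $x_i=r_iz$ $(1\leq i\leq n)$ and $sr_1=\cdots=sr_n$. The base case $n=2$ is precisely condition (P) applied to $sx_1=sx_2$. For the inductive step, assume the conclusion for $x_1,\ldots,x_n$ with witness $z,r_1,\ldots,r_n$, and let $x_{n+1}$ satisfy $sx_{n+1}=sx_1$. Then $sr_1\cdot z=sx_1=sx_{n+1}$, so (P) yields $z'\in A$ and $u,v\in S$ with $z=uz'$, $x_{n+1}=vz'$, and $sr_1u=sv$. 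Setting $r_i'=r_iu$ for $1\leq i\leq n$ and $r_{n+1}'=v$, one verifies immediately that $x_i=r_i'z'$ for all $i\leq n+1$ and $sr_i'=sr_1u=sv=sr_{n+1}'$ for $i\leq n$, completing the induction.

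Applying this lemma with $n=n_s+1$ yields the required contradiction with (CFRS), and the proof is complete. Note that only condition (P) is used; condition (E) plays no role. The only point that requires any care is checking that distinct $x_i$ force distinct $r_i$ (which is immediate from $x_i=r_iz$), and that the inductive step genuinely preserves the common value $sr_1$; both are transparent, so no serious obstacle is anticipated.
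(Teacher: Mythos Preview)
Your proof is correct and follows essentially the same approach as the paper's: both argue by contradiction, use condition (P) inductively to pull the $n_s+1$ preimages back to a common element $z$ via multipliers $r_i\in S$ with equal $sr_i$, and then observe that the distinctness of the $x_i$ forces distinctness of the $r_i$, contradicting (CFRS). Your remark that only (P) is needed is accurate and worth keeping.
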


\begin{proof} Assume there
exists $A\in\mathcal{SF}$, $s\in S$ and $a_0,\ldots,a_{n_s}\in A$
such that $sa_i=sa_j$, $a_i\neq a_j$ $(i\neq j)$. By induction on
$n\leq n_s$ we will show that there exist $b\in A$,
$r_0,\ldots,r_n\in S$ such that $a_i=r_ib$, $sr_i=sr_j$ for any
$i,j\in \{0,\ldots,n\}$. Let $n=1$. Since $A\in\mathcal{SF}$
there are $r_0^\prime,r_1^\prime\in S$ and $b^\prime\in A$ such that
$sr_0^\prime=sr_1^\prime$, $a_0=r_0^\prime b^\prime$ and
$a_1=r_1^\prime b^\prime$. Suppose there exist
$r_0^{\prime\prime},\ldots,r_{n-1}^{\prime\prime}\in S$ and
$b^{\prime\prime}\in A$ such that
$sr_i^{\prime\prime}=sr_j^{\prime\prime}$ and
$a_i=r_i^{\prime\prime}b^{\prime\prime}$ for any
$i,j\in\{0,\ldots,n-1\}$. As $A\in\mathcal{SF}$
and $sr_0''b''=sa_n$,  there exist
$r,r_n\in S$ and $b\in A$ such that $sr_0^{\prime\prime}r=sr_n$,
$b^{\prime\prime}=rb$ and $a_n=r_nb$. Let $r_i=r_i^{\prime\prime}r$
$(0\leq i\leq n-1)$. Then
$a_i=r_i^{\prime\prime}b^{\prime\prime}=r_i^{\prime\prime}rb=r_ib$,
$sr_i=sr_i^{\prime\prime}r=sr_j^{\prime\prime}r=sr_j$ for any
$i,j\in\{0,\ldots,n-1\}$. Thus there exist $r_0,\ldots,r_{n_s}\in S$
such that $r_i\neq r_j$ $(i\neq j)$ and $sr_i=sr_j$ for any
$i,j\in\{0,\ldots,n_s\}$, contradicting the fact that
$S$ satisfies (CFRS).
\end{proof}

\begin{lemma}\label{cfrsprec} Let  $S$ be a monoid satisfying (CFRS),
$B\in\mathcal{SF}$, $B\preccurlyeq C$. Then $\bigcup_{c\in
C\backslash B}Sc\cap B=\emptyset$.
\end{lemma}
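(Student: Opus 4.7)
The plan is to argue by contradiction, exploiting the fact that Lemma~\ref{cfrsforsf} bounds the number of solutions of $sx=b$ in the strongly flat $S$-act $B$, and then lifting this bound to $C$ by elementarity.

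Suppose that $c \in C \setminus B$ and $s \in S$ satisfy $sc = b \in B$. Since $B \in \mathcal{SF}$ and $S$ satisfies (CFRS), Lemma~\ref{cfrsforsf} gives that the set $X = \{x \in B \mid sx = b\}$ has at most $n_s$ elements. Write $X = \{y_1, \ldots, y_k\}$ with $0 \leq k \leq n_s$ and the $y_i$ pairwise distinct.

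If $k = 0$, then $B$ satisfies the $L_B$-sentence $(\forall x)(sx \neq b)$, so by $B \preccurlyeq C$ the same sentence holds in $C$, contradicting $sc = b$. If $k \geq 1$, then $B$ satisfies the $L_B$-sentence
\[
\bigwedge_{i=1}^{k} sy_i = b \;\wedge\; \bigwedge_{i \neq j} y_i \neq y_j \;\wedge\; (\forall x)\!\left(sx = b \rightarrow \bigvee_{i=1}^{k} x = y_i\right),
\]
with $b, y_1, \ldots, y_k$ interpreted as their constant symbols in $L_B$. Since $B \preccurlyeq C$, this sentence also holds in $C$, forcing every solution of $sx = b$ in $C$ to be one of $y_1, \ldots, y_k \in B$. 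This contradicts $sc = b$ with $c \in C \setminus B$.

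The only subtlety is to be sure we produce a genuine first-order sentence in the enriched language $L_B$: the bound from (CFRS) is a fixed finite number $n_s$ depending only on $s$, and the particular solutions $y_i$ are elements of $B$ and thus available as constants in $L_B$. Once this is observed, the conclusion $Sc \cap B = \emptyset$ for every $c \in C \setminus B$ is immediate, and the union over such $c$ yields the statement of the lemma.
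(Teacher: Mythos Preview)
Your proof is correct, but it takes a slightly different route from the paper's. The paper also argues by contradiction from $sc=b$ with $c\in C\setminus B$, but instead of naming the solutions in $B$ and transferring the statement ``every solution is one of these'' to $C$, it runs an inductive pumping argument: from $C\models\varphi_k(b)$ (there exist $k$ distinct solutions to $sx=b$) elementarity gives $B\models\varphi_k(b)$, so there are $k$ distinct solutions in $B$; adding $c\notin B$ yields $C\models\varphi_{k+1}(b)$. This produces $B\models\varphi_k(b)$ for all $k$, contradicting Lemma~\ref{cfrsforsf}.

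Your version is more direct: rather than bouncing back and forth infinitely often, you use the finiteness of the solution set in $B$ once to write a single $L_B$-sentence pinning down the solution set, and transfer that. The paper's argument has the minor advantage of using only the parameter $b$ (not the individual solutions $y_i$), but both are perfectly valid applications of $B\preccurlyeq C$.
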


\begin{proof} Let the given conditions hold and suppose that
$b\in\bigcup_{c\in C\setminus B}Sc\cap B$. Then there exists $c_1\in
C\backslash B$ and $s\in S$ such that $b=sc_1$. The formula
$$(\exists x_1\ldots x_k)(\bigwedge\limits_{1\leq i<j\leq k}x_i\neq
x_j\wedge\bigwedge\limits_{1\leq i\leq k}y=sx_i)$$ will be denoted by
$\varphi_k(y)$.

Clearly $C\models \varphi_1(b)$. We show by  induction on $k$
 that $C\models\varphi_k(b)$ for
all $k\geq 1$. Suppose that $k\geq 1$ and $C\models\varphi_k(b)$,
that is $b=sc_i$, $1\leq
i\leq k$, where $c_1,\hdots ,c_k$ are distinct elements of $C$. Since
$B$ is an elementary substructure of $C$ then $B\models\varphi_k(b)$,
that is $b=sb_i$, $1\leq i\leq k$, where $b_1,\hdots, b_k$ are
distinct elements of $B$. Since also $sc_1=b$ and
$c_1\notin B$ we have that
$C\models\varphi_{k+1}(b)$. Therefore $C\models\varphi_k(b)$ for any
$k\geq 1$. Since $B$ is an elementary substructure of $C$ then
$B\models\varphi_k(b)$ for any $k\geq 1$, contradicting
Lemma~\ref{cfrsforsf}.
\end{proof}

\begin{lemma}\label{sftype} Let $S$ be a monoid $S$ satisfying (CFRS),
$B\in\mathcal{SF}$, $B\preccurlyeq M$ where $M$ is the monster model
of Th$(B)$, and let $c_1,c_2\in M\backslash B$. Then
\[\mbox{tp}(c_1,B)=\mbox{ tp}(c_2,B)
\Leftrightarrow \mbox{tp}(c_1,\emptyset)=\mbox{ tp}(c_2,\emptyset).\]
Consequently, for any subset $B'\subseteq B$, we have
\[\mbox{tp}(c_1,B')=\mbox{ tp}(c_2,B')
\Leftrightarrow \mbox{tp}(c_1,\emptyset)=\mbox{ tp}(c_2,\emptyset).\]
\end{lemma}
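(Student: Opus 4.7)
The forward direction is immediate, since formulas over $\emptyset$ are in particular formulas over $B$, so $\mbox{tp}(c_1,B)=\mbox{tp}(c_2,B)$ restricts to $\mbox{tp}(c_1,\emptyset)=\mbox{tp}(c_2,\emptyset)$. The substantive content is the reverse implication.

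My plan for the reverse direction is to apply Lemma~\ref{typesandisos} twice. The hypothesis $\mbox{tp}(c_1,\emptyset)=\mbox{tp}(c_2,\emptyset)$ produces an automorphism $\tau$ of $\mathbf{M}$ with $\tau(c_1)=c_2$, and from $\tau$ I will build a new automorphism $\sigma$ of $\mathbf{M}$ that still sends $c_1$ to $c_2$ but additionally fixes $B$ pointwise, so a second invocation of Lemma~\ref{typesandisos} delivers $\mbox{tp}(c_1,B)=\mbox{tp}(c_2,B)$.

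The pivotal step is showing that the connected components $C_{c_1},C_{c_2}$ of $c_1,c_2$ in $\mathbf{M}$ are entirely disjoint from $B$. I will prove this by induction along an arbitrary connecting chain $c=a_0,a_1,\ldots,a_n$ with $c\in M\setminus B$: combine Lemma~\ref{cfrsprec} (which handles forward edges $a_{i+1}=s_ia_i$, since $Sa_i\cap B=\emptyset$ whenever $a_i\in M\setminus B$) with the fact that $B$ is an $S$-subact of $M$ (which handles backward edges $a_i=s_ia_{i+1}$, since $a_{i+1}\in B$ would force $a_i\in SB\subseteq B$). Once component-disjointness is established, the decomposition of $M$ into connected components is a coproduct of $S$-subacts, yielding $M=C_{c_1}\sqcup C_{c_2}\sqcup R$, or $M=C_{c_1}\sqcup R$ if $c_1,c_2$ happen to share a component, and in either case $B\subseteq R$. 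I will then define $\sigma$ to act as $\tau$ on $C_{c_1}$, as $\tau^{-1}$ on $C_{c_2}$ (in the distinct case), and as the identity on $R$. Since $\tau$ restricts to an $S$-act isomorphism $C_{c_1}\to\tau(C_{c_1})=C_{c_2}$, the piecewise definition is a bijection of $M$ respecting the $S$-action on each component; because $L_S$ has only the unary function symbols $\lambda_s$, this is automatically an $L_S$-automorphism of $\mathbf{M}$, and by construction it fixes every element of $B$ while sending $c_1$ to $c_2$.

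The main obstacle I anticipate is the component-disjointness claim, where Lemma~\ref{cfrsprec} in isolation is insufficient (it controls only forward images of $c$) and must be combined with the subact property of $B$ to propagate the conclusion along zig-zag chains. Once this is in place, the construction of $\sigma$ is a routine componentwise assembly, and verifying that it is an $L_S$-automorphism reduces to checking preservation of each $\lambda_s$ on each summand. The ``Consequently'' clause is then immediate: for any $B'\subseteq B$, the forward direction holds because $\emptyset\subseteq B'$, while the reverse direction follows by applying the main assertion to obtain $\mbox{tp}(c_1,B)=\mbox{tp}(c_2,B)$ and then restricting attention to parameters from $B'$.
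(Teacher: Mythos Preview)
Your proposal is correct and follows essentially the same route as the paper: obtain an automorphism from Lemma~\ref{typesandisos}, use disjointness of the connected components $C_{c_1},C_{c_2}$ from $B$ to build a piecewise automorphism (acting as $\tau$, $\tau^{-1}$, and the identity on the respective pieces), and apply Lemma~\ref{typesandisos} again. Your inductive argument along zig-zag chains, combining Lemma~\ref{cfrsprec} for forward edges with the $S$-subact property of $B$ for backward edges, simply spells out what the paper compresses into a bare citation of Lemma~\ref{cfrsprec}; this is a welcome clarification rather than a different method.
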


\begin{proof} Let the conditions of the Lemma hold.
Necessity is obvious.
To prove  sufficiency, suppose that tp$(c_1,\emptyset)=
$ tp$(c_2,\emptyset)$. From Lemma~\ref{typesandisos}
there exists an automorphism $\phi:M\rightarrow M$ such that
$\phi(c_1)=c_2$. According to Lemma~\ref{cfrsprec},
$M$ is the disjoint union of $B$, $C_1$ and $C_2$ and $D$,
where for $i=1,2$, $C_i$
is the connected component containing $c_i$,
and $D=M\setminus (B\cup C_1\cup C_2)$.

 Since $S$-morphisms preserve
the relation $\sim$ we must have that
$\phi:C_1\rightarrow C_2$ is an $S$-isomorphism. If
$C_1=C_2$ then we define $\psi:C\rightarrow C$ by
\[\psi|_{C_1}=\phi|_{C_1} \mbox{ and }\psi|_{B\cup D}=I_{B\cup D}\]
and if $C_1\cap C_2=\emptyset$ we define $\psi$ by
\[\psi|_{C_1}=\phi|_{C_1},\psi|_{C_2}=\phi^{-1}|_{C_2}
\mbox{ and }\psi|_{B \cup D}=I_{B\cup D}.\]
Clearly in either case $\psi$ is an
$S$-automorphism of the left $S$--act $M$. Since  $\psi(c_1)=c_2$
 and $\psi|_B=I_B$ we have that tp$(c_1,B)=$ tp$(c_2,B)$.
\end{proof}

\begin{theorem}\label{sfsuperst} Let $S$ be a monoid such that
$\mathcal{SF}$ is axiomatisable and $S$ satisfies (CFRS). Then $S$ is
$\mathcal{SF}$-superstabiliser.
\end{theorem}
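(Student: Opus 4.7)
The plan is to show directly that $T = \mathrm{Th}(A)$ is $\varkappa$-stable for every $\varkappa \geq 2^{|T|}$ and every infinite $A \in \mathcal{SF}$. Fix such $A$ and $\varkappa$, let $\mathbf{M}$ be the monster model of $T$, and note that because $\mathcal{SF}$ is axiomatisable, $\mathbf{M}$ itself lies in $\mathcal{SF}$. Take $X \subseteq M$ with $|X| = \varkappa$; since every $p \in S(X)$ extends to a complete type over any elementary substructure containing $X$, it suffices to bound $|S(B)|$ for some $\mathbf{B} \preccurlyeq \mathbf{M}$ with $X \subseteq B$ and $|B| = \varkappa$. Such a $\mathbf{B}$ exists by the downward L\"owenheim--Skolem--Tarski theorem (Theorem~\ref{updownlst}), and since $\mathbf{B} \equiv A$ it also lies in $\mathcal{SF}$.

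The heart of the argument would then be a two-case count of the elements of $\mathbf{M}$ that realise types in $S(B)$. If $c \in B$ realises $p$, then $p$ is determined by $c$ alone (it contains the formula $x = c$), so this case contributes at most $|B| = \varkappa$ types. If instead $c \in M \setminus B$, I would invoke Lemma~\ref{sftype} applied with the subset $B \subseteq B$: since $\mathbf{B} \in \mathcal{SF}$, $\mathbf{B} \preccurlyeq \mathbf{M}$, and $c \in M \setminus B$, the type $\mathrm{tp}(c, B)$ is completely determined by $\mathrm{tp}(c, \emptyset)$. Hence the contribution from this case is at most $|S(\emptyset)| \leq 2^{|L_S|} \leq 2^{|T|} \leq \varkappa$.

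Summing, $|S(B)| \leq \varkappa + \varkappa = \varkappa$, and therefore $|S(X)| \leq \varkappa$, giving $\varkappa$-stability, and hence superstability. There is no real obstacle: the preceding results (Lemmas~\ref{cfrsforsf}, \ref{cfrsprec}, \ref{sftype}) have already distilled the content of (CFRS) into exactly the statement that elements outside an elementary substructure $\mathbf{B} \in \mathcal{SF}$ are indiscernible over $B$ beyond their type over $\emptyset$. The only step needing care is the routine reduction from an arbitrary parameter set $X$ to an elementary substructure containing it, without which Lemma~\ref{sftype} could not be invoked.
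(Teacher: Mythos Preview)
Your proof is correct and follows essentially the same route as the paper's own argument: pass to an elementary substructure of cardinality $\varkappa$ containing the parameter set via Theorem~\ref{updownlst}, use axiomatisability to ensure it lies in $\mathcal{SF}$, and then invoke Lemma~\ref{sftype} to bound the types realised outside it by $|S(\emptyset)|\leq 2^{|T|}$. The only cosmetic difference is that the paper counts types over the original parameter set (using the ``consequently'' clause of Lemma~\ref{sftype} with the subset inclusion), whereas you first bound $|S(B)|$ for the elementary substructure $B$ and then observe $|S(X)|\leq |S(B)|$; both are equally valid.
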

\begin{proof} Suppose the monoid $S$ satisfies (CFRS),
$T=$ Th$(A)$  is the complete theory of some  infinite strongly flat
left $S$--act $A$, $M$ is the monster model of $T$ and $B\subseteq
M$ with $|B|=\varkappa\geq2^{|T|}$. By Theorem~\ref{updownlst},
there is a left $S$-act $B'\preccurlyeq M$ with $B\subseteq B'$ and
$|B'|=\varkappa$. Since $\mathcal{SF}$ is axiomatisable, $M$ and
$B'$ are strongly flat. According to Lemma~\ref{sftype},
$|\{\mbox{tp}(x,B)\mid x\in M\setminus B'\}|=|\{\mbox{tp}
(x,\emptyset)\mid x\in M\setminus B'\}|\leq 2^{|T|}\leq\varkappa$.
Furthermore, $|\{\mbox{tp} (x,B)\mid x\in B'\}|= \varkappa$. Thus,
$|S(B)|\leq\varkappa$ and the theory $T$ is superstable.
\end{proof}

\begin{corollary}\label{cancellative} If $S$ is a left cancellative monoid such that
$\mathcal{SF}$ is axiomatisable, then $S$
is an $\mathcal{SF}$--superstabiliser.
\end{corollary}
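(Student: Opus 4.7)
The plan is to deduce this corollary almost immediately from Theorem~\ref{sfsuperst}. Since Theorem~\ref{sfsuperst} already handles the case when $\mathcal{SF}$ is axiomatisable and $S$ satisfies (CFRS), the only thing to verify is that any left cancellative monoid automatically satisfies (CFRS).

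The verification of (CFRS) is essentially a one-line observation: if $S$ is left cancellative, then for each $s\in S$ the map $x\mapsto sx$ is injective, so for any $t\in S$ the equation $sx=t$ has at most one solution in $S$. Hence we may take $n_s=1$ uniformly for all $s\in S$, which is far stronger than what (CFRS) demands.

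With (CFRS) in hand, Theorem~\ref{sfsuperst} applies directly: for any infinite $A\in\mathcal{SF}$, the theory $\mbox{Th}(A)$ is superstable, so $S$ is an $\mathcal{SF}$--superstabiliser. There is no real obstacle here; the corollary is a clean specialisation of the main theorem of the section, and the only thing worth remarking on is that left cancellativity is a much more stringent condition than (CFRS), so one could equally well state the corollary under the (weaker) hypothesis that each left-multiplication map $\lambda_s$ is finite-to-one on $S$.
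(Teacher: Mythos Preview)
Your proof is correct and follows exactly the same route as the paper: verify that left cancellativity gives $|\{x\in S\mid sx=t\}|\leq 1$ for all $s,t$, hence (CFRS) holds with $n_s=1$, and then invoke Theorem~\ref{sfsuperst}. The paper's argument is essentially identical, with no additional ideas.
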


\begin{proof} Let $S$ be a left cancellative monoid, $s\in S$. Since
$s$ is a left 1-cancellable element then $|\{x\in S\mid sx=t\}|=1$
for any $t\in S$. So $S$ satisfies (CFRS) and by
Theorem~\ref{sfsuperst}, $S$ is an $\mathcal{SF}$-superstabiliser.
\end{proof}

For an example of a monoid $S$ satisfying the conditions of
Corollary~\ref{cancellative} we can take the free monoid $X^*$ on a set
$X$, which is certainly left cancellative.
Since $X^*$ is also right cancellative, $r(s,t)=\emptyset$ if
$s\neq t$, and if $s=t$ then $r(s,t)=X^*$, which is principal.
If neither $s$ nor $t$ is a prefix of the other, then
$R(s,t)=\emptyset$; on the other hand if $s=tw$ then
$R(s,t)=(1,w)X^*$ and dually if $s$ is a prefix of $t$.

\begin{lemma}\label{axsfandacfrs} Let
$S$ be a monoid such that $\mathcal{SF}$ is axiomatisable and
 Condition $(A)$ holds. Then  $S$ satisfies
(CFRS).
\end{lemma}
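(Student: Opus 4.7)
The plan is to argue by contradiction. Suppose (CFRS) fails, so that there is $s\in S$ with $\sup_{t\in S}|\{x\in S : sx=t\}|=\infty$; for each $n\in\mathbb{N}$ pick $t_n\in S$ and pairwise distinct $b_{n,1},\ldots,b_{n,n}\in S$ with $sb_{n,i}=t_n$. Following the construction in the proof of Lemma~\ref{crfs}, fix a uniform ultrafilter $\Phi$ on $\mathbb{N}$, form $U=S^{\mathbb{N}}/\Phi$, and define $\underline{c_k}\in S^{\mathbb{N}}$ by $\underline{c_k}(i)=1$ for $i<k$ and $\underline{c_k}(i)=b_{i,k}$ for $i\ge k$. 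Then the $c_k:=\underline{c_k}/\Phi$ are pairwise distinct in $U$ and $sc_k=a$ for a common element $a\in U$. Since $\mathcal{SF}$ is axiomatisable, Theorem~\ref{sf} gives that $U$ is strongly flat, hence satisfies (P) and (E) by Theorem~\ref{stronglyflat}. Moreover, Condition (A) requires ACC on cyclic subacts in \emph{every} left $S$-act, so it automatically holds in $U$.

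The first reduction is to concentrate the $c_k$ inside a single cyclic subact of $U$. Apply (P) inductively to the relations $sc_0=sc_{n+1}$: first, (P) on $sc_0=sc_1$ produces $z_1\in U$ with $c_0,c_1\in Sz_1$; then, writing $c_0=\alpha z_n\in Sz_n$, applying (P) to $s\alpha z_n=sc_{n+1}$ yields $z_{n+1}\in U$ with $z_n\in Sz_{n+1}$ and $c_{n+1}\in Sz_{n+1}$. The resulting ascending chain $Sz_1\subseteq Sz_2\subseteq\cdots$ of cyclic subacts of $U$ stabilises at some $Sz_N$ by Condition (A), so every $c_k$ lies in $Sz_N$. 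Writing $c_k=w_k z_N$ with $w_k\in S$, pairwise distinctness of the $c_k$ forces the $w_k$ to be pairwise distinct elements of $S$. In particular, the equation $sx=a$ has infinitely many solutions $\{w_k z_N:k\in\mathbb{N}\}$ inside $Sz_N$.

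To close, use (E) iteratively on the equalities $sw_0 z_N=sw_k z_N$ in $U$. Each step produces $q_k\in S$ and $y_k\in U$ with $y_{k-1}=q_k y_k$ (setting $y_0=z_N$), $sw_0 Q_k=sw_k Q_k$ in $S$, and $Q_k y_k=z_N$, where $Q_k=q_1\cdots q_k$. The chain $Sz_N=Sy_0\subseteq Sy_1\subseteq\cdots$ in $U$ stabilises by (A) at some $Sy_M$; then $y_{M+1}\in Sy_M$ yields $e=q_{M+1}\rho\in S$ with $ey_M=y_M$ and $sw_0 Q_M e=sw_{M+1}Q_M e$. Iterating this absorption for successive $k>M$ builds elements $T_k\in S$ with $T_k y_M=z_N$ and $sw_i T_k=sw_0 T_k$ for $0\le i\le k$, while the $w_i T_k$ remain pairwise distinct because $w_iT_ky_M=w_iz_N=c_i$. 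The main obstacle is precisely the final extraction: one must leverage this trapped configuration in $U$, together with the uniformity of $\Phi$, to exhibit a genuine infinite strictly ascending chain of cyclic subacts (violating (A) in $U$, or equivalently the $(M^L)$-consequence of (A) in $S$), rather than merely reproducing the assumed failure of (CFRS). Making this transfer cleanly, exploiting the distinctness of the $w_i T_k$ and the stabilisation data $ey_M=y_M$, is the delicate heart of the argument.
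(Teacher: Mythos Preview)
Your proposal is incomplete, and you say so yourself: the final paragraph is a description of an obstacle, not a proof. The machinery you build---collecting the $c_k$ into a single cyclic subact $Sz_N$ via (P), then applying (E) iteratively---only reproduces, inside $S$, arbitrarily large \emph{finite} solution sets $\{w_0T_k,\ldots,w_kT_k\}$ to an equation $sx=sw_0T_k$. That is precisely the hypothesis you started from, so no contradiction arises. Every ascending chain you produce (the $Sz_n$ and the $Sy_n$) is \emph{allowed} to stabilise by Condition (A); you have no mechanism to force a strictly ascending chain.

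The paper supplies exactly the idea you are missing: a \emph{second} ultrapower. After the first ultrapower yields infinitely many solutions to $s_1x=\bar a$ in $S_0=S^\omega/D$, one passes to $S_1=S_0^{\alpha}/\Phi$ with $\alpha>|S|$; by the regular-ultrafilter cardinality machinery (Proposition~\ref{regultrafil} and Theorem~\ref{filtprod}), the solution set $A_1=\{x\in S_1: s_1x=a_0\}$ has cardinality strictly greater than $|S|$. Since $|Sa_0|\le|S|$, one may pick $a_1\in A_1\setminus Sa_0$, and as $s_1a_1=a_0$ this gives $Sa_0\subsetneq Sa_1$. The inductive step then uses the finite generation of $R(s_{k-1},s_{k-1})$ (available because $\mathcal{SF}$ is axiomatisable, Theorem~\ref{sf}) together with (P) to funnel more than $|S|$ of the solutions in $A_{k-1}$ through a single generator pair $(u_i,v_i)$, regenerating a solution set of size greater than $|S|$ at the next level and hence a new strict inclusion $Sa_{k-1}\subsetneq Sa_k$. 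The resulting infinite strictly ascending chain in $S_1$ contradicts Condition~(A). Your single-ultrapower approach, with only countably many solutions, simply lacks the cardinality leverage to escape $Sa_{k-1}$; this is why your argument circles back rather than closing.
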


\begin{proof} Suppose to the contrary that there is $s_1\in S$ such
that for any $i\in\omega$ there exists $b_i\in S$
such that:
$$T_i=|\{x\in S\mid s_1x=b_i\}|\geq i.$$
We let $x_{i,1},\hdots ,x_{i,i}$ be distinct elements of $T_i$.
Let $D$ be a uniform ultrafilter over $\omega$,
let $S_0=S^{\omega}/D$,
$\bar{a}\in S_0$,
$\bar{a}=a/D$, $a(i)=b_i$ for any $i\in\omega$.
For $i\in\omega$ we define $x_i\in S^{\omega}$
by
\[x_i(j)=\bigg\{
\begin{array}{ll}
1&j<i\\
x_{j,i}&j\geq i\end{array}\]
and put $\overline{x_i}=x_i/D$. It is clear that
the $\overline{x_i}$'s are distinct and so
$|\{x\in S_0\mid s_1x =\bar a\}|\geq\omega$. In view of the
axiomatizability of the class $\mathcal{SF}$  we have $S_0\in
\mathcal{SF}$. Now choose a cardinal $\alpha>|S|$. According to Proposition
\ref{regultrafil} and Theorem \ref{filtprod} we can choose an
ultrafilter $\Phi$ over $\alpha$ such that $|\{x\in S_0\mid
s_1x=\bar{a}\}^{\alpha}/\Phi|>|S|$. Denote $S_0^{\alpha}/\Phi$ by
$S_1$, so that as
$\mathcal{SF}$ is axiomatisable, $S_1$
is strongly flat,  put $a_0=a^{\prime}/\Phi$, where
$a^{\prime}(\beta)=\bar{a}$ for any $\beta<\alpha$, and let
 $A_1=\{x\in S_1\mid s_1x=a_0\}$, so that $|A_1|>|S|$. As $|Sa_0|\leq|S|$ there exists $a_1\in
A_1$ such that $Sa_0\subset Sa_1$.

Let $k\in\mathbb{N}$. Assume that for $0<i<k$
the sets $A_i\subseteq S_1$, elements $a_i\in A_i$ and $s_i\in S$
are defined
such that $Sa_{i-1}\subset Sa_i$,
 $A_i=\{x\in S_1\mid
s_ix=a_{i-1}\}$ and $|A_i|>|S|$. Let us define $A_k\subseteq S_1$,
$a_k\in A_k$, $s_k\in S$ such that $A_k\subseteq\{x\in S_1\mid
s_kx=a_{k-1}\}$, $Sa_{k-1}\subset Sa_k$ and $|A_k|>|S|$. From
Theorem \ref{sf}, since it is certainly not empty,
$R(s_{k-1},s_{k-1})=\{( x,y)\in S^2\mid
s_{k-1}x=s_{k-1}y\}=\bigcup_{0\leq i\leq m}(u_i,v_i) S$ for some
$m\in\omega$, $u_i,v_i\in S$ $(0\leq i\leq m)$. Clearly
$s_{k-1}a_{k-1}=s_{k-1}b$ for all $b\in A_{k-1}$ so there exists
$i$, $0\leq i\leq m$, such that $|\{x\in S_1\mid u_ix=a_{k-1}$,
$v_ix\in A_{k-1}\}|>|S|$. Let $s_k=u_i$, $A_k=\{x\in S_1\mid
s_kx=a_{k-1}\}$. As $|A_k|>|S|$ and $|Sa_{k-1}|\leq|S|$ then there
exists $a_k\in A_k$ such that $Sa_{k-1}\subset Sa_k$. Thus, there is
the ascending chain of cyclic $S$-subacts $Sa_i$ $(i\in\omega)$ of
the left $S$--act $S_1$, contradicting our  hypothesis that
Condition (A) holds.
\end{proof}

From Lemma \ref{axsfandacfrs} and Theorem \ref{sfsuperst}
our next result immediately follows.

\begin{corollary}\label{axsfanda} For a monoid $S$, if the
class $\mathcal{SF}$ is axiomatisable
and  $S$ satisfies Condition $(A)$, then $S$ is an
$\mathcal{SF}$--superstabiliser.
\end{corollary}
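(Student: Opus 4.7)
The plan is almost trivially short: the corollary is a direct combination of Lemma~\ref{axsfandacfrs} and Theorem~\ref{sfsuperst}, so there is no real obstacle to surmount; the work has already been done in those two results. I would simply chain them together.

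First I would note the hypotheses: $\mathcal{SF}$ is axiomatisable and $S$ satisfies Condition $(A)$. Applying Lemma~\ref{axsfandacfrs} verbatim, whose hypotheses are precisely these two, I conclude that $S$ satisfies (CFRS). Then I would feed this back into Theorem~\ref{sfsuperst}, whose hypotheses are ``$\mathcal{SF}$ is axiomatisable'' (still true) together with ``$S$ satisfies (CFRS)'' (just established). The conclusion of that theorem is exactly that $S$ is an $\mathcal{SF}$-superstabiliser, which is what we want.

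If there is any ``obstacle'' at all, it is only the bookkeeping observation that the two cited results mesh cleanly: the hypothesis of Theorem~\ref{sfsuperst} is satisfied because Lemma~\ref{axsfandacfrs} gave (CFRS) without disturbing the axiomatisability of $\mathcal{SF}$. Thus the proof is a one-line invocation, and no cardinal arithmetic, ultraproduct construction, or type counting needs to be redone here — all of that work is absorbed into the proofs of Lemma~\ref{axsfandacfrs} (ultrapower and chain argument producing an ascending chain of cyclic subacts, contradicting Condition $(A)$) and Theorem~\ref{sfsuperst} (the bound $|S(B)|\leq \varkappa$ obtained from Lemma~\ref{sftype} via (CFRS)). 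I would write the proof in two sentences and move on.
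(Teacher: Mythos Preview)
Your proposal is correct and matches the paper's own proof exactly: the paper simply states that the corollary follows immediately from Lemma~\ref{axsfandacfrs} and Theorem~\ref{sfsuperst}, which is precisely the chaining you describe.
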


\begin{corollary}\label{psuperst} If the class $\mathcal P$
is axiomatisable for a monoid $S$,
then $S$ is a $\mathcal P$--superstabiliser.
\end{corollary}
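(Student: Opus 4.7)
The plan is to obtain this result as an immediate deduction from the axiomatisability machinery already in place, noting that the hypothesis on $\mathcal{P}$ already forces all of the ingredients needed to invoke Corollary~\ref{axsfanda}. There is essentially no new content here; the statement is a packaging of earlier theorems.

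First I would apply Theorem~\ref{projax}: the axiomatisability of $\mathcal{P}$ implies that $\mathcal{SF}$ is axiomatisable and $S$ is left perfect. Next I would invoke Theorem~\ref{perfect}, from which left perfectness yields both Condition~(A) and the identification $\mathcal{P}=\mathcal{SF}$. At this point the hypotheses of Corollary~\ref{axsfanda} (axiomatisability of $\mathcal{SF}$ together with Condition~(A)) are satisfied, so $S$ is an $\mathcal{SF}$-superstabiliser. Since $\mathcal{P}=\mathcal{SF}$, every infinite left $S$-act in $\mathcal{P}$ is an infinite strongly flat $S$-act, and so its complete theory is superstable; thus $S$ is a $\mathcal{P}$-superstabiliser.

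There is no substantive obstacle; the entire argument is a chain of invocations of labelled results and involves no fresh calculation. The only minor care required is to make certain that the classes $\mathcal{P}$ and $\mathcal{SF}$ are being compared correctly: left perfectness provides the equality of the classes as collections of $S$-acts, so the notions of $\mathcal{P}$-superstabiliser and $\mathcal{SF}$-superstabiliser coincide verbatim for such $S$. Once that observation is noted the proof reduces to a single line citing Theorems~\ref{projax},~\ref{perfect} and Corollary~\ref{axsfanda}.
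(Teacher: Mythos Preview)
Your proposal is correct and follows exactly the same route as the paper: invoke Theorem~\ref{projax} to get axiomatisability of $\mathcal{SF}$ and left perfectness, then Theorem~\ref{perfect} for Condition~(A) and $\mathcal{SF}=\mathcal{P}$, and finish with Corollary~\ref{axsfanda}. There is no difference in approach or in detail.
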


\begin{proof} Let the class $\mathcal P$ be axiomatisable. From Theorem
\ref{projax}, $\mathcal{SF}$  is axiomatisable and $S$ is a
left perfect monoid
so that, according to  Theorem \ref{perfect},  $S$ satisfies
Condition $(A)$ and $\mathcal{SF}=
\mathcal{P}$. Now Corollary  \ref{axsfanda} yields
that $S$ is a
$\mathcal{P}$--superstabiliser.
\end{proof}

\begin{corollary} If the class ${\mathcal F}r$ is axiomatisable
for a monoid $S$,
then $S$ is an ${\mathcal F}r$--superstabiliser.
\end{corollary}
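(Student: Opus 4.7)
The plan is to reduce this immediately to the projective case via the chain of inclusions $\mathcal{F}r \subseteq \mathcal{P} \subseteq \mathcal{SF}$ together with Theorem~\ref{freesax} and Corollary~\ref{psuperst}. Specifically, if $\mathcal{F}r$ is axiomatisable, then by Theorem~\ref{freesax} the class $\mathcal{P}$ is axiomatisable as well (this is part of the equivalence (1)$\Leftrightarrow$(3) of that theorem). Applying Corollary~\ref{psuperst} to $S$ then gives that $S$ is a $\mathcal{P}$-superstabiliser.

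Next I would observe that any free left $S$-act is projective (this follows at once from Theorems~\ref{free} and~\ref{proj}, or equivalently from the inclusion $\mathcal{F}r\subseteq\mathcal{P}$ noted in Section~\ref{fpf}). Consequently, if $A$ is any infinite free left $S$-act, then $A$ is an infinite left $S$-act in $\mathcal{P}$, so $\mathrm{Th}(A)$ is superstable by the conclusion of Corollary~\ref{psuperst}. Since this holds for an arbitrary infinite $A\in\mathcal{F}r$, $S$ is an $\mathcal{F}r$-superstabiliser.

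There is no real obstacle to overcome: the work has already been done in proving the superstability result for $\mathcal{P}$ (which in turn rested on Corollary~\ref{axsfanda}, Theorem~\ref{projax} and the fact that left perfect monoids satisfy Condition (A)). The only thing to check is the elementary observation that superstability, as defined in Section~\ref{stabintro}, is a property of complete theories $\mathrm{Th}(A)$ of individual $S$-acts, and so passes automatically from the larger class $\mathcal{P}$ to the subclass $\mathcal{F}r$.
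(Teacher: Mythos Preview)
Your argument is correct and follows exactly the same route as the paper: use Theorem~\ref{freesax} to pass from axiomatisability of $\mathcal{F}r$ to that of $\mathcal{P}$, invoke Corollary~\ref{psuperst}, and then restrict from $\mathcal{P}$ to the subclass $\mathcal{F}r$.
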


\begin{proof} Let ${\mathcal F}r$ be axiomatisable. From
Theorem \ref{freesax} the class $\mathcal P$  is axiomatisable. Now
Corollary \ref{psuperst} says that $S$ is $\mathcal P$--superstabiliser,
so that certainly
$S$ is an ${\mathcal F}r$--superstabiliser.
\end{proof}

\section
{ $\omega$-stability of ${\mathcal{SF}}$, ${\mathcal P}$ and
 ${\mathcal F}r$}\label{morestability}

All results from this section are again taken from \cite{ste2}.

\begin{lemma}\label{1} If $\theta$ is a left congruence of a monoid $S$
then $1/\theta$ is a submonoid of $S$.
\end{lemma}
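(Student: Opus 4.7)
The plan is to verify the two defining closure properties of a submonoid: that $1 \in 1/\theta$ and that $1/\theta$ is closed under the multiplication of $S$.

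The first is immediate from reflexivity of the equivalence relation $\theta$, giving $1\,\theta\,1$ and hence $1 \in 1/\theta$. The substance of the lemma is the second property. Given $a,b \in 1/\theta$, so that $a\,\theta\,1$ and $b\,\theta\,1$, I would use the left compatibility of $\theta$ with the (left) $S$-action on $S$: since $\theta$ is a left congruence on the left $S$-act $S$, from $b\,\theta\,1$ we may multiply on the left by $a$ (viewed as acting on the $S$-act $S$) to deduce $ab\,\theta\,a\cdot 1 = a$. Combining with $a\,\theta\,1$ via transitivity of $\theta$ yields $ab\,\theta\,1$, i.e.\ $ab \in 1/\theta$.

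There is no real obstacle here — the only point worth flagging is making sure the compatibility used is exactly that stipulated by ``left congruence'' in the sense of Section~\ref{ma} (a congruence on $S$ regarded as a left $S$-act, so that $a\,\theta\,a'$ implies $sa\,\theta\,sa'$ for every $s\in S$); this is precisely what is needed to pass from $b\,\theta\,1$ to $ab\,\theta\,a$. The proof is then a two-line check and requires no further results from the paper.
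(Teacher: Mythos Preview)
Your proof is correct and matches the paper's own argument essentially line for line: both use left compatibility of $\theta$ to pass from one element's membership to the product, then transitivity. The only cosmetic differences are variable names and that you make the trivial step $1\in 1/\theta$ explicit.
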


\begin{proof} If  $u,v\in 1/\theta$ then $1\, \theta\, u$ so that
as $\theta$ is left compatible,
\[vu\,\theta\, v\, 1= v\,\theta\, 1.\]
\end{proof}

\begin{lemma}\label{2} Let $\theta_1,\theta_2$ be strongly flat left
congruences of a monoid $S$. Then
$$\theta_1=\theta_2\mbox{ if and only if }1/\theta_1=1/\theta_2. $$
\end{lemma}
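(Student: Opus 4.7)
The plan is to reduce the lemma to Corollary~\ref{sfcong}, which characterises strongly flat left congruences of $S$: namely, $u\,\theta\,v$ iff there exists $s\in S$ with $s\,\theta\,1$ and $us=vs$. The forward implication of the lemma is of course trivial, so the whole content is the converse.

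Assume $1/\theta_1=1/\theta_2$ and let $u,v\in S$ with $u\,\theta_1\,v$. I would first apply Corollary~\ref{sfcong} to $\theta_1$ in the ``only if'' direction to produce an element $s\in S$ such that $s\,\theta_1\,1$ and $us=vs$. The hypothesis $1/\theta_1=1/\theta_2$ immediately gives $s\,\theta_2\,1$. Now I would apply Corollary~\ref{sfcong} to $\theta_2$ in the ``if'' direction, using the witness $s$, to conclude $u\,\theta_2\,v$. Thus $\theta_1\subseteq\theta_2$, and the reverse inclusion follows by the symmetric argument.

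There is essentially no obstacle, since Corollary~\ref{sfcong} does all the work; the only thing worth flagging is that the characterisation is truly symmetric in $u$ and $v$ (so the same $s$ serves both directions), and that Lemma~\ref{1} (ensuring $1/\theta_i$ is a submonoid, hence in particular contains $1$) is implicit in the fact that $s\,\theta_i\,1$ can be meaningfully transferred between the two congruences. No further computation is needed.
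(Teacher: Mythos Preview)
Your proof is correct and follows exactly the same argument as the paper: apply Corollary~\ref{sfcong} to $\theta_1$ to produce the witness $s$, transfer $s\,\theta_1\,1$ to $s\,\theta_2\,1$ via the hypothesis, then apply Corollary~\ref{sfcong} to $\theta_2$. (In fact you cite the correct result; the paper's printed reference to Theorem~\ref{cyclicproj} at this point appears to be a typo for Corollary~\ref{sfcong}.)
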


\begin{proof} The necessity is obvious. Suppose now that
$1/\theta_1=1/\theta_2$.
Let $u,v\in S$. If $u\,\theta_1\, v$ then from Theorem
~\ref{cyclicproj} there exists $s\in S$
such that $s\, \theta_1\,  1$ and $us=vs$.  Hence
$s\,\theta_2\, 1$ and again from Theorem~\ref{cyclicproj}, $u\,\theta_2\, v$.
\end{proof}

\begin{lemma}\label{3} Let $S$ be a monoid such that
the class of left $S$-acts satisfying Condition (E) is axiomatisable.
 Let $M$ be a left $S$-act satisfying Condition
(E)  and let $Sa$ be a
connected component of $M$. Then the relation $\theta_a=\{(
s,t)\in S^2\mid sa=ta\}$ is a strongly flat left congruence of the monoid $S$
and the mapping  $\phi:Sa\rightarrow S/\theta_a$ given by
$\phi(sa)=s/\theta_a$ $(s\in S)$ is an isomorphism of  left
$S$--acts.
\end{lemma}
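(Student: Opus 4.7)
The plan is to verify each claim in turn, with the strong flatness assertion being the one that actually uses Condition (E) and the hypothesis that $Sa$ is an entire connected component.

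First I would check that $\theta_a$ is a left congruence. Reflexivity, symmetry and transitivity are immediate from the definition. For left compatibility, if $s\,\theta_a\,t$, i.e.\ $sa=ta$, then for any $u\in S$ we have $(us)a=u(sa)=u(ta)=(ut)a$, so $us\,\theta_a\,ut$.

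Next I would handle the isomorphism $\phi$. The map $\phi:Sa\to S/\theta_a$ is well-defined and injective by the very definition of $\theta_a$: $sa=ta$ iff $s\,\theta_a\,t$ iff $s/\theta_a=t/\theta_a$. Surjectivity is immediate. The $S$-linearity follows from $\phi(u(sa))=\phi((us)a)=us/\theta_a=u(s/\theta_a)=u\phi(sa)$.

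The main work is to show $\theta_a$ is a strongly flat congruence, for which I would apply Corollary~\ref{sfcong}: this reduces the task to proving that for $u,v\in S$, $u\,\theta_a\,v$ holds if and only if there exists $s\in S$ with $s\,\theta_a\,1$ and $us=vs$. The easy direction is a direct calculation: from $sa=a$ and $us=vs$ one gets $ua=u(sa)=(us)a=(vs)a=v(sa)=va$. For the hard direction, assume $ua=va$. Since $M$ satisfies Condition (E), there exist $z\in M$ and $s'\in S$ such that $a=s'z$ and $us'=vs'$. Here is the key point where the hypothesis enters: because $a=s'z$, the element $z$ is connected to $a$ in $M$, and since $Sa$ is a full connected component of $M$, we must have $z\in Sa$. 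Thus $z=ta$ for some $t\in S$, which gives $a=s'ta=(s't)a$, i.e.\ $s't\,\theta_a\,1$, while $us't=vs't$ follows by multiplying $us'=vs'$ on the right by $t$. Setting $s=s't$ supplies the required witness.

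The main obstacle is this last manoeuvre: Condition (E) supplies a divisor of $a$ in $M$, not in $Sa$, and it is precisely the hypothesis that $Sa$ is a connected \emph{component} (rather than an arbitrary subact) which lets us pull the witness back inside $Sa$ and repackage it as an element of $S$ acting on $a$.
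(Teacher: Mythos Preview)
Your argument is correct and follows the same outline as the paper's: verify that $\theta_a$ is a left congruence, check the isomorphism $\phi$, and then establish strong flatness via Corollary~\ref{sfcong} by producing, from $ua=va$, a witness $s$ with $s\,\theta_a\,1$ and $us=vs$, using that the divisor supplied by Condition~(E) must lie in the connected component $Sa$.

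The one noteworthy difference is in how the witness from (E) is obtained. The paper invokes the axiomatisability hypothesis through Proposition~\ref{e}: it uses that $r(u,v)$ is finitely generated and that the corresponding axiom $\xi_{(u,v)}$ holds in $M$, yielding $a=u_ib$ for some generator $u_i$ of $r(u,v)$. You instead apply Condition~(E) to $M$ directly, which immediately gives $a=s'z$ with $us'=vs'$. Your route is more elementary and, in fact, shows that the hypothesis ``the class of left $S$-acts satisfying Condition~(E) is axiomatisable'' is not actually needed for this lemma: the conclusion holds for any $M$ satisfying~(E). The paper's detour through Proposition~\ref{e} is harmless but unnecessary here.
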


\begin{proof} It is obvious that the relation $\theta_a$ is a left
congruence of  $S$: we claim that $\theta_a$ is flat.
Suppose that $s\theta_a t$, so
that  $sa=ta$. From Proposition~\ref{e} we have that
$r(s,t)\neq \emptyset$ and the following sentence is true in $M$
\[ (\forall x)(sx=tx\rightarrow(\exists y)\bigvee\limits_{0\leq
i\leq k}x=u_iy)\]
where $\{ u_0,\hdots ,u_k\}$ is a set
of generators of the right ideal $r(s,t)$. Then $a=u_ib$ for
some $i$, $0\leq i\leq k$, and $b\in M$. Since $Sa$ is a connected
component of a left $S$-act $M$, then $Sa=Sb$, i.e. $b=ka$ for some
$k\in S$. Consequently $a=u_ika$, that is, $1\theta_au_ik$.
Furthermore the equation $su_i=tu_i$ implies the equation
$s(u_ik)=t(u_ik)$. According to Corollary~\ref{sfcong}, $\theta_a$
 is a
flat left congruence of the monoid $S$. The mapping
$\phi:Sa\rightarrow S/\theta_a$, where
$\phi(sa)=s/\theta_a$, is obviously an $S$-isomorphism.
\end{proof}

\begin{lemma}\label{4} If for a monoid $S$
the class $\mathcal{SF}$ is axiomatisable and
$S$ satisfies Condition $(A)$,
then any $M\in\mathcal{SF}$ is
 a coproduct  cyclic left $S$--acts.
\end{lemma}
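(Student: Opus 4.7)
The plan is to decompose $M$ into its connected components and show that each is cyclic, yielding the desired coproduct decomposition. By Section~\ref{ma}, any left $S$-act is the coproduct of its maximal connected $S$-subacts (the $\sim$-classes), so it suffices to prove that every connected component $C$ of $M$ is cyclic.

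First I would invoke Condition (A): the set of cyclic subacts of $C$ is nonempty (it contains $Sc$ for any $c\in C$) and has no strictly ascending infinite chain, so it contains a maximal element $Sa$ for some $a\in C$. I would then show $C=Sa$ by proving the following claim: for any $b\in C$ there exists $c\in M$ with $\{a,b\}\subseteq Sc$. Granted the claim, $Sa\subseteq Sc$, and since $c$ is connected to $a$ we have $c\in C$, so $Sc$ is a cyclic subact of $C$; maximality forces $Sa=Sc$, whence $b\in Sa$, and consequently $C\subseteq Sa$.

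The claim is proved by induction on the length $n$ of a zigzag $a=a_0,a_1,\ldots,a_n=b$ witnessing $a\sim b$. The case $n=0$ is trivial. For $n\geq 1$ apply the inductive hypothesis to $a_0,a_{n-1}$ to obtain $c'\in M$ and $u,v\in S$ with $a_0=uc'$ and $a_{n-1}=vc'$. If $a_n=s_n a_{n-1}$, then $b=s_nvc'\in Sc'$ and we take $c=c'$. Otherwise $a_{n-1}=s_n a_n=s_n b$, so $vc'=s_n b$, and since $M$ is strongly flat, Condition (P) from Theorem~\ref{stronglyflat} supplies $d\in M$ and $v',s'_n\in S$ with $c'=v'd$, $b=s'_n d$ and $vv'=s_n s'_n$; then $a_0=uv'd\in Sd$ and $b=s'_n d\in Sd$, so $c=d$ works.

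The main obstacle is the ``backward arrow'' step of the zigzag induction: when the bridging element sits above rather than below the already-merged pair, one must descend to a common predecessor, which is exactly the content of Condition (P). Note that Condition (E) plays no role in the argument; the proof uses strong flatness only through (P), together with Condition (A) to supply a maximal cyclic subact in each connected component. The hypothesis that $\mathcal{SF}$ is axiomatisable is not directly invoked here, but is natural context since without it one cannot combine such a result with the stability-theoretic applications pursued in the sequel.
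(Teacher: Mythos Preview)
Your proof is correct and follows essentially the same route as the paper's: decompose into connected components, use that any two elements of a connected strongly flat act lie in a common cyclic subact (which you prove by zigzag induction via Condition~(P), while the paper asserts it as ``easy to see''), and then invoke Condition~(A). The only cosmetic difference is that you extract a maximal cyclic subact from (A) and show it exhausts the component, whereas the paper argues contrapositively by building an infinite strictly ascending chain; your observations that only (P) is used and that axiomatisability of $\mathcal{SF}$ plays no role are accurate and apply equally to the paper's argument.
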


\begin{proof} Suppose that $\mathcal{SF}$ is axiomatisable and
$S$ satisfies Condition $(A)$. Let $M\in\mathcal{SF}$ and write
$M=\coprod_{i\in I}M_i$, where $M_i$ is a connected component of $M$
$(i\in I)$. It is clear that $M_i\in\mathcal{SF}$ $(i\in I)$. Assume
that $M_i$ is not a cyclic left $S$--act for some $i\in I$. Then for
any $a\in M_i$ there is $b\in M_i$ such that $Sa\subset Sa\cup Sb$.
It is easy to see that for elements $u,v$ of a connected component
of a strongly flat $S$-act there is an element $w$ such that $Su\cup
Sv\subseteq Sw$. Since $M_i\in\mathcal{SF}$ then there exists $c\in
M_i$ such that $Sa\cup Sb\subseteq Sc$, so that $Sa\subset Sc$. Thus
there exists $a_j\in M_i$ $(j\in\omega)$ such that $Sa_j\subset
Sa_{j+1}$, contradicting the fact that $S$ satisfies Condition
$(A)$.
\end{proof}

For a subset $X$ of a monoid $S$ we put $\rho_X=\rho(X\times X)$,
that is, $\rho_X$ is the left
congruence on $S$ generated by $X\times X$.

\begin{lemma}\label{V} Let $T$ be a submonoid of a monoid $S$.
Then
$T$ is a class of $\rho_T$  if and only if
$T$ is a right unitary submonoid.
\end{lemma}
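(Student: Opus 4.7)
The plan is to pass between ``$T$ is a $\rho_T$-class'' and the more manageable statement ``$T=1/\rho_T$'' and then use the explicit description of $\rho_T$ given by Proposition~\ref{cong}.

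First I would observe that $T\subseteq 1/\rho_T$ is automatic: for any $t\in T$, applying Proposition~\ref{cong} with $n=1$, $s_1=1$, $p_1=1$, $q_1=t$ (all in $T$ since $T$ is a submonoid) yields the chain $1=1\cdot 1$, $1\cdot t=t$, whence $1\,\rho_T\,t$. Consequently, if $T$ is any $\rho_T$-class then, since $1\in T$, this class must be $1/\rho_T$, and so ``$T$ is a $\rho_T$-class'' is equivalent to $T=1/\rho_T$. From here the lemma reduces to showing $T=1/\rho_T$ if and only if $T$ is right unitary.

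For the easier direction, assume $T=1/\rho_T$ and suppose $t\in T$, $s\in S$ with $st\in T$. Taking $n=1$, $s_1=s$, $p_1=1$, $q_1=t$ in Proposition~\ref{cong} gives the chain $s=s\cdot 1$, $s\cdot t=st$, so $s\,\rho_T\,st$. Since $st\in T\subseteq 1/\rho_T$, transitivity yields $s\,\rho_T\,1$, hence $s\in 1/\rho_T=T$. Thus $T$ is right unitary.

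For the other direction, assume $T$ is right unitary. I need only check $1/\rho_T\subseteq T$. Take $a\in 1/\rho_T$ with $a\neq 1$; Proposition~\ref{cong} produces $p_1,\dots,p_n,q_1,\dots,q_n\in T$ and $s_1,\dots,s_n\in S$ with $1=s_1p_1$, $s_1q_1=s_2p_2$, $\dots$, $s_nq_n=a$. I would argue by induction on $i$ that $s_iq_i\in T$. The base case $i=1$ uses $s_1p_1=1\in T$ and $p_1\in T$, so right unitarity forces $s_1\in T$ and hence $s_1q_1\in T$ (as $T$ is a submonoid). For the inductive step, $s_ip_i=s_{i-1}q_{i-1}\in T$ with $p_i\in T$ gives $s_i\in T$, and then $s_iq_i\in T$. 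Taking $i=n$ yields $a=s_nq_n\in T$, as desired.

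The only subtle point—and in fact the main thing to be careful about—is ensuring that in each application of Proposition~\ref{cong} the elements $p_i,q_i$ really do lie in $T$ (so that $(p_i,q_i)\in T\times T$); this is automatic because of how $\rho_T=\rho(T\times T)$ was defined, and the symmetry of $T\times T$ removes the need to consider reversed pairs $(q_i,p_i)$.
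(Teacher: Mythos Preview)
Your proof is correct and follows essentially the same route as the paper's: both directions hinge on the explicit chain description of $\rho_T$ from Proposition~\ref{cong}, and the harder implication is handled by the same induction along the chain showing each $s_i\in T$ via right unitarity. The only cosmetic difference is that for the forward direction the paper uses left compatibility of $\rho_T$ directly (from $t\,\rho_T\,1$ deduce $st\,\rho_T\,s$) rather than rebuilding this from a length-one chain, and it leaves the observation $T\subseteq 1/\rho_T$ implicit.
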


\begin{proof} Suppose first that $T$ is a class
of $\rho_T$ and $st\in T$, where
$s\in S$, $t\in T$. Since $t\,\rho_T\, 1$,  then $st\, \rho_T s$ and $s\in
T$. So $T$ is a right unitary submonoid.

Conversely, suppose that  $T$ is  a right unitary submonoid of
$S$ and $x\,\rho_T\, y$ where $ x\in T$. We claim that $y\in T$.
 By Proposition~\ref{cong}, $x=y$ (so that certainly $y\in T$), or there exist $n\in \omega$,
$t_0,\ldots,t_{2n+1}\in T$, $s_0,\ldots,s_n\in S$ such that
\begin {equation}
\tag {v} x=s_0t_0,\,
s_0t_1=s_1t_2,\hdots ,s_it_{2i+1}=s_{i+1}t_{2i+2},\hdots ,s_{n}t_{2n+1}=y
\end {equation}
for any $i$, $0\leq i\leq n-1$. By the induction on $i$ we prove
that $s_i\in T.$ Note that $s_0t_0,t_0\in T$ give that
$s_0\in T$ since $T$ is right unitary. For $i> 0$, if
 $s_{i-1}\in T$, then the
equality $s_{i-1}t_{2 i-1}=s_it_{2i}$ implies $s_{i}t_{2i}\in
T$. Since  $T$ is a right unitary submonoid of $S$ then
$s_i\in T$. So $s_n\in T$ and thus $y=s_nt_{2n+1}\in T$.
\end{proof}

\begin{lemma}\label{VI} Let $T$ be a right unitary submonoid of
$S$. Then $T$ is right collapsible if and only if
the left congruence $\rho_T$ is strongly flat.
\end{lemma}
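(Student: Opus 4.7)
The plan is to prove both implications by combining Corollary~\ref{sfcong} (strong flatness of a left congruence on $S$) with Lemma~\ref{V} (which tells us that $T = 1/\rho_T$, since $T$ is right unitary). The key observation is that Corollary~\ref{sfcong} says $\rho_T$ is strongly flat iff for all $u,v\in S$, $u\,\rho_T\,v$ iff there exists $s\in S$ with $s\,\rho_T\,1$ and $us=vs$. By Lemma~\ref{V}, the condition $s\,\rho_T\,1$ is equivalent to $s\in T$. So throughout, $s\,\rho_T\,1$ and $s\in T$ are interchangeable.

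For the forward direction, assume $T$ is right collapsible. The ``if'' part of the criterion in Corollary~\ref{sfcong} is automatic: if $s\in T$ and $us=vs$, then since $\rho_T$ is a left congruence and $s\,\rho_T\,1$, we have $u=u\cdot 1\,\rho_T\,us=vs\,\rho_T\,v\cdot 1=v$. For the ``only if'' part, suppose $u\,\rho_T\,v$. If $u=v$, take $s=1\in T$. Otherwise, Proposition~\ref{cong} provides $t_0,\ldots,t_{2n+1}\in T$ and $s_0,\ldots,s_n\in S$ as in equation (v) of Lemma~\ref{V}. Apply Lemma~\ref{rightcollapsible} inside $T$ (which inherits right collapsibility) to obtain a single $r\in T$ with $t_0r=t_1r=\cdots=t_{2n+1}r$. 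Then a direct telescoping computation along the chain (v) yields
\[ur=s_0t_0r=s_0t_1r=s_1t_2r=s_1t_3r=\cdots=s_nt_{2n+1}r=vr,\]
so $r\in T$ witnesses the required equality.

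For the converse, assume $\rho_T$ is strongly flat and let $u,v\in T$ be arbitrary. Since $T$ is a single $\rho_T$-class by Lemma~\ref{V}, we have $u\,\rho_T\,v$. Applying Corollary~\ref{sfcong} gives $s\in S$ with $s\,\rho_T\,1$ and $us=vs$; but $s\,\rho_T\,1$ means $s\in 1/\rho_T=T$. Thus $T$ is right collapsible.

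The only mildly delicate point is ensuring the collapsing element produced in the forward direction actually lies in $T$ rather than merely in $S$; this is handled by invoking Lemma~\ref{rightcollapsible} inside the submonoid $T$, which is legitimate precisely because right collapsibility is an internal property and $T$ is a submonoid. Everything else is a routine combination of the two earlier results.
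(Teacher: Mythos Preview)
Your proof is correct and follows essentially the same route as the paper's: both directions rely on Corollary~\ref{sfcong} combined with Lemma~\ref{V}, and the forward direction uses Proposition~\ref{cong} to obtain the chain~(v) and then Lemma~\ref{rightcollapsible} applied within $T$ to collapse all the $t_i$'s simultaneously. Your write-up is slightly more explicit than the paper's in checking that the ``if'' half of the biconditional in Corollary~\ref{sfcong} is automatic for any left congruence, but the argument is otherwise the same.
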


\begin{proof} Let  $T$ be a right unitary submonoid of $S$, and
suppose first that $T$ is right collapsible and
$x\,\rho_T\, y$ where $x,y\in S$. If $x=y$ then $x\, 1=y\, 1$.
Otherwise there exist $n\in \omega$,
$t_0,\ldots,t_{2n+1}\in T$, $s_0,\ldots,s_n\in S$ such that (v)
holds. From Lemma~\ref{rightcollapsible} there exists $r\in T$ such that
$t_ir=t_jr$, $0\leq i,j\leq 2n+1$. Hence
\[xr=s_0t_0r=s_0t_1r=s_1t_2r=\hdots =s_nt_{2n+1}r=yr\]
and certainly $r\,\rho_T\, 1$. From Corollary~\ref{sfcong}, $\rho_T$
is a strongly flat left congruence.

Conversely, assume that $\rho_T$ is a strongly flat left congruence and $x,y\in
T$. Then $x\,\rho_T\, y$ and from Corollary~\ref{sfcong},
and Lemma~\ref{V} there exists
$z\in T$ such that $xz=yz$. Hence $T$ is right collapsible.
\end{proof}

We will write $CU^S$ for the set of all right collapsible and right
unitary submonoids of a monoid $S$.

\begin{theorem}\label{sfomegastab} Let $S$ be a monoid
with $|S|\leq\omega$. Suppose that the class $\mathcal{SF}$ is
axiomatisable
 and $S$ satisfies Condition $(A)$. Then $S$ is an
$\mathcal{SF}$--$\omega$--stabiliser if and only if
$|CU^S|\leq\omega$.
\end{theorem}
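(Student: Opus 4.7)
The plan uses the classification of strongly flat acts: by Lemma~\ref{4}, every $\mathcal{SF}$-act decomposes as a coproduct of cyclic strongly flat components; by Lemma~\ref{3} each such component is isomorphic to $S/\theta$ for some strongly flat left congruence $\theta$; and by Lemmas~\ref{1}, \ref{2}, \ref{V} and \ref{VI}, the strongly flat left congruences of $S$ are in bijection with $CU^S$ via $T\mapsto\rho_T$ and $\theta\mapsto 1/\theta$. Moreover, by Lemma~\ref{axsfandacfrs}, $S$ satisfies (CFRS).

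For the forward direction, suppose $|CU^S|\leq\omega$. Let $A\in\mathcal{SF}$ be infinite and let $M$ be the monster model of $\mbox{Th}(A)$; since $\Sigma_{\mathcal{SF}}\subseteq\mbox{Th}(A)$, we have $M\in\mathcal{SF}$. Given $X\subseteq M$ with $|X|\leq\omega$, Theorem~\ref{updownlst} produces a countable elementary substructure $B'\preccurlyeq M$ with $X\subseteq B'$, and again $B'\in\mathcal{SF}$. Every $p\in S(X)$ is realised in $M$. Realisers $c\in B'$ contribute at most $|B'|=\omega$ distinct types, while for $c\in M\setminus B'$ the final clause of Lemma~\ref{sftype} (applied with its ``$B$'' equal to the substructure $B'$ and its ``$B'$'' equal to the set $X$) reduces counting $\mbox{tp}(c,X)$ to counting $\mbox{tp}(c,\emptyset)$.

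The main obstacle is to bound the number of complete $1$-types over $\emptyset$ realised in $M$. I claim such a type is determined by the pointed isomorphism class of the connected component $(M_c,c)$ containing $c$. If $(M_{c_1},c_1)\cong(M_{c_2},c_2)$ via an $S$-isomorphism $\phi$, then $\phi$ extends to an $S$-automorphism of $M$: when $M_{c_1}=M_{c_2}$, extend $\phi$ by the identity on the remaining components of the decomposition from Lemma~\ref{4}; when $M_{c_1}\neq M_{c_2}$, swap these two components using $\phi$ and $\phi^{-1}$ and keep the identity on all others. Lemma~\ref{typesandisos} then yields $\mbox{tp}(c_1,\emptyset)=\mbox{tp}(c_2,\emptyset)$. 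Since each $M_c$ is cyclic strongly flat it is isomorphic to some $S/\rho_T$ with $T\in CU^S$, so every pointed cyclic strongly flat act has the form $(S/\rho_T,s/\rho_T)$ for some $T\in CU^S$ and $s\in S$. The number of pointed isomorphism classes, and thus the number of $1$-types over $\emptyset$ realised in $M$, is at most $|CU^S|\cdot|S|\leq\omega\cdot\omega=\omega$. Combining, $|S(X)|\leq\omega$, so $\mbox{Th}(A)$ is $\omega$-stable.

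For the converse, suppose $|CU^S|>\omega$ and set $A=\coprod_{T\in CU^S}S/\rho_T$, which by Lemma~\ref{VI} is a coproduct of strongly flat $S$-acts and hence lies in $\mathcal{SF}$, with $|A|\geq|CU^S|>\omega$. By Lemma~\ref{V}, the element $a_T=1/\rho_T\in A$ satisfies $sa_T=a_T$ iff $s\in T$. Whenever $T\neq T'$, any $s$ in the symmetric difference of $T$ and $T'$ makes $sx=x$ hold of precisely one of $a_T,a_{T'}$, so $\mbox{tp}(a_T,\emptyset)\neq\mbox{tp}(a_{T'},\emptyset)$. Hence $|S(\emptyset)|\geq|CU^S|>\omega$, and $\mbox{Th}(A)$ is not $\omega$-stable.
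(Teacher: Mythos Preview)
Your proof is correct and follows essentially the same strategy as the paper's: reduce types over parameters to types over $\emptyset$ via Lemma~\ref{sftype}, then bound the latter by pointed isomorphism classes of cyclic strongly flat components, which are parametrised by $CU^S\times S$; for the other direction, distinguish the elements $1/\rho_T$ by formulae $tx=x$. Two minor remarks: your claimed bijection between strongly flat left congruences and $CU^S$ is not quite contained in Lemmas~\ref{1}, \ref{2}, \ref{V}, \ref{VI} alone---you still need the easy checks (using Corollary~\ref{sfcong} and left compatibility) that $1/\theta$ is right unitary and right collapsible for every strongly flat $\theta$, which the paper carries out explicitly in the body of its proof; and your final line should perhaps note that $|S(\emptyset)|>\omega$ forces $|S(X)|>\omega$ for any countable $X$, or invoke the remark in Section~\ref{stabintro} that $\omega$-stability implies $|S_n(T)|\leq\omega$.
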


\begin{proof} Suppose the given hypotheses hold.
Assume first that $S$ is an $\mathcal{SF}$--$\omega$--stabiliser.
Let
$C$ denote the set of strongly flat left congruences on $S$, so that
$C\neq\emptyset$ as the equality relation $\iota$ lies in $C$. Put
\[A=\coprod\{S/\rho\mid\rho \in C\}\mbox{ and }B=\coprod\limits_{i\in\omega}A_i,\] where the $A_i$'s are
disjoint copies of the left $S$--act $A$ and  $1_i/\rho\in A_i$ is a
copy of $1/\rho\in A$ $(i\in\omega)$. It is clear that $A$ and $B$
lie in $\mathcal{SF}$. Put $T=$ Th$(B)$.

By assumption,  the theory $T$ is $\omega$--stable and
$|S(\emptyset)|\leq\omega$. Let $U\in CU^S$ so that from
Lemma~\ref{VI}, $\rho_U$ is a strongly flat left congruence of $S$,
that is, $S/\rho_U\in\mathcal{SF}$, and from  Lemma~\ref{V},
$U=1/\rho_U$. Suppose $\alpha:{CU^S}\rightarrow$ $S(\emptyset)$ is
the mapping such that $\alpha(U)=$ tp$(1_0/\rho_U,\emptyset)$. We
claim that $\alpha$ is an injection. Let $U,V\in CU^S$ with  $U\neq
V$ and without loss of generality choose $t\in U\setminus V$. Then
$t\cdot1_0/\rho_U=1_0/\rho_U$ and $t\cdot1_0/\rho_V\neq1_0/\rho_V$.
Hence $tx=x$ lies in tp$(1_0/\rho_U,\emptyset)$ but not in
tp$(1_0/\rho_V,\emptyset)$. Hence $\alpha$ is an injective mapping
and consequently
  $|CU^S|\leq\omega$.

Conversely, assume that $|CU^S|\leq\omega$.
Let $s,t\in S$. For
$d\in D\in \mathcal{SF}$, where $Sd$ is
connected, we denote the relation $\{(s,t)\in
S^2\mid sd=td\}$  by $\theta_d$; clearly, $\theta_d$ is a left
congruence on $S$. In view of
the axiomatisability of $\mathcal{SF}$, Theorem~\ref{sf} gives that the set
$r(s,t)$ is empty or is finitely generated as a right ideal of $S$.
Lemma~\ref{3} now gives that the
 relation $\theta_d$ is a strongly flat left congruence of
$S$.

Let  $F\in\mathcal{SF}$ with $|F|\geq\omega$ and let $T=$ Th$(F)$; we
must show that $T$ is $\omega$-stable. To this end, let $M$ be
the monster model of $T$, so that as $\mathcal{SF}$ is
axiomatisable, certainly $M$ is a strongly flat left $S$-act. Let
$A\subseteq M$ with $|A|\leq \omega$; by Theorem~\ref{updownlst}, there is a model
$B$ of $T$ with $A\subseteq B$ and $|B|=\omega$. By
construction of $M$,
$B\preccurlyeq M$.

Let $c_1,c_2\in M$. According to
 Lemma~\ref{4}, $M$ is a coproduct of cyclic left $S$-acts.
Hence there
exists $d_1, d'\in M$ such that $c_1\in Sd_1$, $c_2\in Sd^{\prime}$
with $Sd_1,Sd^{\prime}$   connected components of $M$. It is
clear that either $Sd_1=Sd^{\prime}$, or $Sd_1\cap
Sd^{\prime}=\emptyset$. We claim that for $c_1,c_2\in C\setminus B$,
tp$(c_1,A)=$ {tp}$(c_2,A)$ if and only if
\begin {equation}
\tag{vi}\exists d_2\in M,t\in S:
\;Sd_2=Sd^{\prime},\;1/\theta_{d_1}=1/\theta_{d_2},\;c_1=td_1,\;c_2=td_2.
\end {equation}

Let tp$(c_1,A)=$ tp$(c_2,A)$. Then there exists an $S$-automorphism
$\phi:C\rightarrow C$ such that $\phi(c_1)=c_2$, and  $\phi|_A=I_A$.
Putting $d_2=\phi(d_1)$ we have that $Sd_2$ is a connected component
containing $c_2$, so that $Sd_2=Sd'$. Now $c_1=td_1$ for some $t\in
S$, so that $c_2=td_2$. Furthermore
$$ud_1=vd_1\Leftrightarrow ud_2=vd_2$$ for any $u,v\in S$, i.e.
$\theta_{d_1} =\theta_{d_2}$, in particular
$1/\theta_{d_1}=1/\theta_{d_2}$. Hence (vi) holds.

Suppose conversely that (vi) holds. From Lemma~\ref{1},
 the sets $1/\theta_{d_1},
1/\theta_{d_2}$ are submonoids of  $S$. Since
$\theta_{d_1},\theta_{d_2}$ are strongly flat left congruences and
$1/\theta_{d_1}=1/\theta_{d_2}$,  Lemma~\ref{2} gives that
$\theta_{d_1} =\theta_{d_2}$ and so
$S/\theta_{d_1}=S/\theta_{d_2}$. From Lemma~\ref{3} there is an
isomorphism $\phi:Sd_1\rightarrow Sd_2$ of left $S$--acts such that
$\phi(d_1)=d_2$. Since $c_1=td_1$ and $c_2=td_2$, then
$\phi(c_1)=c_2$. From Lemma~\ref{axsfandacfrs},  $S$ has Condition
(CFRS) so that by Lemma~\ref{cfrsprec}, $Sd_i\cap B=\emptyset$ for
$i\in \{1,2\}$. Define an automorphism $\psi:M\rightarrow M$ as
follows: $\psi|_{Sd_1} =\phi|_{Sd_1}$,
$\psi|_{Sd_2}=\phi^{-1}|_{Sd_2}$ (unless $Sd_1=Sd_2$) and
$\psi|_{M\setminus (Sd_1\cup Sd_2)}=I$. Since in either case
$\psi(c_1)=c_2$, and $\psi|_A=I_A$, then tp$(c_1,A)=$ tp$(c_2,A)$.

Thus the type of any element $m\in M\setminus B$ over $A$ is
determined by some element of $S$ and by some flat congruence
$\theta_d$, where $Sd$ is a connected component and $m\in Sd$.

If $Sm$ is a connected component of $M$, then as
noted above, $\theta_m$ is a strongly flat left
congruence of  $S$. Let
$U=1/\theta_m$. We know that $U$ is a submonoid of $S$; if
$t, st\in U$ then $tm=m=stm$ so that $sm=m$ and $m\in U$. Hence
$U$ is right unitary.  On the other hand, if $p,q\in U$, then
$pm=m=qm$. As $M$ is strongly flat we have that $pr=qr$ for some $r\in
S$
and
with $m=rk$ for some $k\in Sm$. Since $Sm$ is a connected component we
deduce that $k=r'm$ for some $r'\in S$. Then $prr'=qrr'$ and
$m=rr'm$, so that $rr'\in U$ and $U$ is right collapsible, that is,
$U\in CU^S$. From Lemma~\ref{2},
 \[|\{\theta_c\mid Sc\mbox{
is a connected component of }M\}|\leq|CU^S|\leq\omega.\]
Since
\[|\{\mbox{tp}(b,A)\mid b\in B\}|\leq |B|=\omega\]
we deduce that  $|S(A)|\leq\omega$ as required.
\end{proof}

From Theorem~\ref{sf} and Theorem~\ref{sfomegastab} we have

\begin{corollary} If $S$ is a finite monoid then $S$ is an
$\mathcal{SF}$--$\omega$--stabiliser.
\end{corollary}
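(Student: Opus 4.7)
The plan is to verify that a finite monoid $S$ meets all four hypotheses of Theorem~\ref{sfomegastab}, namely: $|S|\leq\omega$, the class $\mathcal{SF}$ is axiomatisable, $S$ satisfies Condition (A), and $|CU^S|\leq\omega$. Once these are in place the conclusion is immediate.

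The first hypothesis is trivial. For the second, I would invoke Theorem~\ref{sf}(4): for any $s,t\in S$, the set $r(s,t)$ is a subset of $S$ and $R(s,t)$ is a subset of $S\times S$, both finite, and in particular either empty or finitely generated as a right ideal, respectively as an $S$-subact of $S\times S$. Hence $\mathcal{SF}$ is axiomatisable.

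For Condition (A), I would argue directly from cardinality. Given any left $S$-act $A$ and an ascending chain
\[Sa_1\subseteq Sa_2\subseteq Sa_3\subseteq\cdots\]
of cyclic subacts, each $Sa_i$ is the image of $S$ under $s\mapsto sa_i$, so $|Sa_i|\leq |S|$. The sequence of cardinalities $|Sa_1|\leq |Sa_2|\leq\cdots$ is non-decreasing and bounded by $|S|$, hence eventually constant, and since each strict inclusion $Sa_i\subsetneq Sa_{i+1}$ forces a strict increase in cardinality, the chain itself must stabilise. Finally, $CU^S$ is a set of submonoids of the finite monoid $S$, so $|CU^S|\leq 2^{|S|}<\omega$.

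Having verified all four conditions, Theorem~\ref{sfomegastab} immediately yields that $S$ is an $\mathcal{SF}$--$\omega$--stabiliser. No step here is a serious obstacle; the only point worth a brief explicit argument (rather than a bare citation) is the verification of Condition (A), which is handled by the elementary cardinality observation above.
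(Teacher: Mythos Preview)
Your proof is correct and follows exactly the route the paper indicates (it simply cites Theorems~\ref{sf} and~\ref{sfomegastab}): you have just filled in the easy verifications of the four hypotheses, and your cardinality argument for Condition~(A) is sound since each cyclic subact has size at most $|S|$ and strict inclusions of finite sets force strict increases in cardinality.
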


\begin{corollary} If $S$ is a countable group then $S$ is an
$\mathcal{SF}$--$\omega$--stabiliser.
\end{corollary}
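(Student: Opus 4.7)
The plan is to verify that a countable group $S$ satisfies all the hypotheses of Theorem~\ref{sfomegastab}, namely that $\mathcal{SF}$ is axiomatisable, $S$ has Condition (A), and $|CU^S|\leq\omega$, so that the theorem delivers the result.

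First I would check axiomatisability of $\mathcal{SF}$ via Theorem~\ref{sf}. For $s,t\in S$ the set $r(s,t)=\{u\in S\mid su=tu\}$ is empty when $s\neq t$ (by right cancellation in a group) and equals $S=1\cdot S$ when $s=t$, so is always finitely generated; similarly $R(s,t)=\{(u,v)\in S\times S\mid su=tv\}$ equals $(1,t^{-1}s)S$ as a subact of $S\times S$ (since any $u$ forces $v=t^{-1}su$), hence is cyclic. Condition (A) is automatic in a group, since if $A$ is any left $S$-act and $Sa\subseteq Sb$, then $a=sb$ for some $s$, giving $b=s^{-1}a\in Sa$ and so $Sa=Sb$; there are no proper inclusions of cyclic subacts to form an infinite ascending chain.

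The only content is in the computation of $CU^S$. I claim $CU^S=\{\{1\}\}$. Indeed, if $T$ is a right unitary submonoid of a group $S$ and $t\in T$, then $t^{-1}\cdot t=1\in T$, so right unitariness yields $t^{-1}\in T$; hence $T$ is a subgroup. If moreover $T$ is right collapsible, then for any $s,t\in T$ there exists $u\in T$ with $su=tu$; cancelling $u$ in the group gives $s=t$. So $T=\{1\}$, and $|CU^S|=1\leq\omega$.

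With all three hypotheses of Theorem~\ref{sfomegastab} verified, that theorem immediately yields that $S$ is an $\mathcal{SF}$-$\omega$-stabiliser. I do not anticipate any genuine obstacle: the argument is a matter of checking each finitary condition against the group axioms, and the key observation — that $CU^S$ collapses to the trivial submonoid in any group — is forced by cancellation. In spirit the proof parallels the preceding corollary for finite monoids, except that axiomatisability of $\mathcal{SF}$ and Condition (A) are now delivered by the group structure rather than by finiteness.
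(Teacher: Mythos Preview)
Your proof is correct and follows essentially the same approach as the paper: verify the hypotheses of Theorem~\ref{sfomegastab}, the key point being that right collapsibility together with cancellation in a group forces any $U\in CU^S$ to be trivial, so $|CU^S|=1$. The only differences are cosmetic --- the paper obtains axiomatisability of $\mathcal{SF}$ and Condition~(A) by citing that $\mathcal{P}$ is axiomatisable for groups (hence $S$ is left perfect), whereas you verify these directly; and your detour through ``right unitary submonoid of a group is a subgroup'' is not needed, since right collapsibility alone already forces $U=\{1\}$ by cancellation.
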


\begin{proof} Let $S$ be a countable group.  As remarked in
\cite{gould1}, both
$\mathcal{SF}$ and $\mathcal{P}$ are axiomatisable, so that $S$ is left
perfect and Condition (A) holds.
 Let $U$ be a right collapsible submonoid of $S$; then
for any $u,v\in U$ we have that $ur=vr$ for some $r\in S$.
We deduce that $U=\{ 1\}$ and $|CU^S|=1$. From
Theorem~\ref{sfomegastab}, the monoid $S$ is an
$\mathcal{SF}$--$\omega$--stabiliser.
\end{proof}

\begin{corollary}\label{X} If $|S|\leq\omega$ and the class $\mathcal P$
is axiomatisable, then $S$ is an $\mathcal P$--$\omega$--stabiliser.
\end{corollary}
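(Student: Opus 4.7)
The plan is to apply Theorem~\ref{sfomegastab} after verifying its hypotheses and the finiteness condition $|CU^S|\leq\omega$. First, from the axiomatisability of $\mathcal{P}$ and Theorem~\ref{projax}, the class $\mathcal{SF}$ is axiomatisable and $S$ is left perfect; then Theorem~\ref{perfect} gives that $S$ satisfies Condition $(A)$ and that $\mathcal{SF}=\mathcal{P}$. In particular an infinite left $S$-act is projective if and only if it is strongly flat, so $S$ is a $\mathcal{P}$-$\omega$-stabiliser exactly when it is an $\mathcal{SF}$-$\omega$-stabiliser. With $|S|\leq\omega$, axiomatisability of $\mathcal{SF}$, and Condition $(A)$ in hand, Theorem~\ref{sfomegastab} reduces the problem to showing $|CU^S|\leq\omega$.

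To bound $|CU^S|$, I would use Lemmas~\ref{V},~\ref{2} and~\ref{VI}, which together show that $U\mapsto\rho_U$ is a bijection between $CU^S$ and the set of strongly flat left congruences on $S$. Given such a congruence $\theta$, the quotient $S/\theta$ lies in $\mathcal{SF}=\mathcal{P}$, so by Corollary~\ref{cyclicproj} it is isomorphic to $Sa$ for some right $e$-cancellable element $a\in S$ and $e\in E$, under an isomorphism sending $1/\theta$ to $a$. From this description $\theta$ coincides with $\theta_a:=\{(s,t)\in S^2\mid sa=ta\}$, so the assignment $a\mapsto\theta_a$ from $S$ to left congruences on $S$ is a surjection onto the set of strongly flat left congruences.

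Since $|S|\leq\omega$, it follows that there are at most countably many strongly flat left congruences on $S$, whence $|CU^S|\leq\omega$. Theorem~\ref{sfomegastab} then gives that $S$ is an $\mathcal{SF}$-$\omega$-stabiliser, and hence a $\mathcal{P}$-$\omega$-stabiliser. I do not anticipate any serious obstacle here, because the argument is essentially a chain of applications of previously established results; the only mildly non-routine observation is that left perfectness forces every strongly flat cyclic quotient of $S$ to take the concrete form $Sa$ for a right $e$-cancellable $a$, which is what allows $CU^S$ to be parametrised by a subset of the countable monoid $S$.
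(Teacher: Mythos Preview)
Your proposal is correct and follows the same overall strategy as the paper: reduce to Theorem~\ref{sfomegastab} by deducing from Theorem~\ref{projax} and Theorem~\ref{perfect} that $\mathcal{SF}$ is axiomatisable, $S$ satisfies Condition~(A), and $\mathcal{SF}=\mathcal{P}$, and then bound $|CU^S|$ using the identification of strongly flat cyclic quotients with projectives.

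The only difference is in how $|CU^S|$ is bounded. The paper builds an explicit injection $CU^S\to E$: for $U\in CU^S$ it takes an isomorphism $\alpha:S/\rho_U\to Se$, writes $\alpha(1/\rho_U)=se$ and $\alpha(t/\rho_U)=e$, checks that $g=set\in E$ satisfies $u\,\rho_U\,v\iff ug=vg$, and sets $\phi(U)=g$. You instead note that any isomorphism $S/\theta\to Se$ sends $1/\theta$ to some $a\in Se\subseteq S$ with $\theta=\theta_a$, so $a\mapsto\theta_a$ surjects $S$ onto the strongly flat left congruences; combined with the injection $U\mapsto\rho_U$ from $CU^S$ into that set (Lemmas~\ref{V} and~\ref{VI}), this gives $|CU^S|\leq|S|\leq\omega$. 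Your route is a little more direct, avoiding the construction of a specific idempotent, while the paper's yields the sharper inequality $|CU^S|\leq|E|$.

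One small citation point: Corollary~\ref{cyclicproj} produces a right $e$-cancellable generator of the cyclic act itself, not an element of $S$. What you actually want is Theorem~\ref{proj}, which for a cyclic projective gives $S/\theta\cong Se$ with $e\in E$; then the image of $1/\theta$ lies in $Se\subseteq S$ and your argument goes through verbatim. This is a matter of reference, not of substance.
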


\begin{proof} Suppose that $|S|\leq\omega$ and $\mathcal P$ is
axiomatisable. From Theorem~\ref{projax}, $\mathcal {SF}$ is
axiomatisable and $S$ is a left perfect monoid. Hence from
Theorem~\ref{perfect}, $\mathcal{SF=P}$ and $S$ satisfies
Condition $(A)$. Let us construct an embedding $\phi$ of the set
$CU^S$ into the  set $E$ of idempotents of $S$.

Let $U\in CU^S$. From  Lemma~\ref{V},
$U=1/\rho_U$ and from Lemma~\ref{VI}, $\rho_U$ is a
strongly flat left congruence. In view of the equality $\mathcal{SF}=
\mathcal{P}$
there
exist an idempotent $e\in E$ and an $S$-isomorphism
$\alpha:S/\rho_U\rightarrow Se$. Put  $a=1/\rho_U$, so that
$S/\rho_U=Sa$ and $u\,\rho_U\, v$ if and only if $ua=va$.
Now
$\alpha(a)=se$ and $\alpha(ta)=e$ for some $s,t\in S$.
It is easy to see that
consequently, $e=tse$, $eta=ta$ and $seta=a=sta$. Hence
$setset=seet=set$, i.e. $g=set$ is an idempotent of $S$ and
$ga=a$. Moreover, for any $u,v\in S$,
\[\begin{array}{rcl}
u\,\rho_U\, v&\Leftrightarrow& ua=va\\
&\Rightarrow& use=vse\\
&\Rightarrow&ug=vg\\
&\Rightarrow&useta=uga=vga=vseta\\
&\Rightarrow&ua=va\end{array}.\]

Now define $\phi:CU^S\rightarrow E$ by $\phi(U)=g$.
If $\phi(U)=\phi(V)$, then $\rho_U=\rho_V$, so
that $U=1/\rho_U=1/\rho_V=V$. Thus
 $\phi$ is an injection and
 $|CU^S|\leq|E|\leq|S|\leq\omega$.
\end{proof}

\begin{corollary} If $|S|\leq\omega$ and the class
$\mathcal{F}$$r$ is axiomatisable than $S$ is an
$\mathcal{F}$$r$--$\omega$--stabiliser.
\end{corollary}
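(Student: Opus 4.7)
The plan is to deduce this corollary immediately from Corollary~\ref{X} by exploiting the inclusion $\mathcal{F}r\subseteq \mathcal{P}$ noted in Section~\ref{fpf}. There is no genuine obstacle here, since the present result is a direct specialisation of the projective case once the axiomatisability hypothesis has been propagated along the chain $\mathcal{F}r\to\mathcal{P}\to\mathcal{SF}$.

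First I would invoke Theorem~\ref{freesax}: the axiomatisability of $\mathcal{F}r$ entails the axiomatisability of $\mathcal{P}$. Since we assume $|S|\leq\omega$, Corollary~\ref{X} then applies and yields that $S$ is a $\mathcal{P}$--$\omega$--stabiliser, that is, $\mathrm{Th}(A)$ is $\omega$--stable for every infinite $A\in\mathcal{P}$.

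Next I would observe that by Theorem~\ref{free} (or by the remark immediately following Theorem~\ref{proj}) every free left $S$-act is projective, so $\mathcal{F}r\subseteq\mathcal{P}$. Hence any infinite $F\in\mathcal{F}r$ is in particular an infinite member of $\mathcal{P}$, and the preceding paragraph gives that $\mathrm{Th}(F)$ is $\omega$--stable. By the definition at the end of Section~\ref{stabintro}, this says precisely that $S$ is an $\mathcal{F}r$--$\omega$--stabiliser, which completes the argument.
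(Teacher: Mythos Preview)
Your argument is correct and follows exactly the paper's own route: invoke Theorem~\ref{freesax} to pass from axiomatisability of $\mathcal{F}r$ to that of $\mathcal{P}$, apply Corollary~\ref{X} to obtain that $S$ is a $\mathcal{P}$--$\omega$--stabiliser, and then use $\mathcal{F}r\subseteq\mathcal{P}$ to conclude. The paper's proof is the same, only more terse.
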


\begin{proof} If $\mathcal{F}$$r$ is axiomatisable then by
Theorem~\ref{freesax}, $\mathcal P$ is axiomatisable. From
Corollary~\ref{X}, $S$ is a $\mathcal P$--$\omega$--stabiliser, and hence
in
particular an $\mathcal{F}$$r$--$\omega$--stabiliser.
\end{proof}

\end{document}